\newtheorem{theorem}{Theorem}[section]
\newtheorem{lemma}[theorem]{Lemma}
\newtheorem{proposition}[theorem]{Proposition}
\newtheorem{corollary}[theorem]{Corollary}
\theoremstyle{definition}
\newtheorem{definition}[theorem]{Definition}
\theoremstyle{remark}
\newtheorem{remark}[theorem]{Remark}
\numberwithin{equation}{section}
\numberwithin{equation}{section}
\newcommand{\be}{\begin{equation}}
\newcommand{\ee}{\end{equation}}
\newcommand{\ba}{\begin{aligned}}
\newcommand{\ea}{\end{aligned}}
\newcommand{\N}{{\mathbb N}}
\newcommand{\R}{{\mathbb R}}
\newcommand{\h}{{\mathcal H}}
\def\va{\varphi}
\def\la{\lambda}
\def\e{\varepsilon}
\def\csi1{\circ\sigma^{-1}}
\def\ol{\overline}
\def\wt{\widetilde}
\def\mc{\mathcal}
\def\lc2{L^2_{\mathrm{loc}}(\mu)}
\newcommand{\B}{{\mathcal B}}
\newcommand{\Bfin}{\B_{\mathrm{fin}}}
\newcommand{\Dfin}{\mc D_{\mathrm{fin}}}
\newcommand{\Lloc}{L^1_{\mathrm{loc}}(\mu)}
\newcommand{\sms}{(V, \mathcal B, \mu)}
\newcommand{\VB}{(V, \mathcal B)}
\newcommand{\vv}{(V\times V, \B\times \B)}
\newcommand{\FVB}{{\mathcal F(V, \B)}}
\newcommand{\VtV}{V\times V}
\begin{document}

\title[Finite energy  space]{Laplace operators in finite energy and 
dissipation spaces}

\author{Sergey Bezuglyi}
\address{Department of Mathematics, University of Iowa, Iowa City,
52242 IA, USA}
\email{sergii-bezuglyi@uiowa.edu}

\author{Palle E.T. Jorgensen}
\address{Department of Mathematics, University of Iowa, Iowa City,
52242 IA, USA}
\email{palle-jorgensen@uiowa.edu}

\subjclass[2010]{37B10, 37L30, 47L50, 60J45}


\keywords{Laplace operator, standard measure space, symmetric measure,
 Markov operator, Markov process, 
harmonic function, dissipation space, finite energy space}

\begin{abstract} 
Recent applications of large network models to machine learning, 
and to neural network suggest a need for a systematic study of the 
general correspondence, (i) discrete vs (ii) continuous. Even if the 
starting point is (i), limit considerations lead to (ii), or, more 
precisely, to a measure theoretic framework which we make precise. 
Our motivation derives from graph analysis, e.g., studies of (infinite) 
electrical networks of resistors,  but our focus will be (ii), i.e., the 
measure theoretic setting. In electrical networks of resistors, one 
considers pairs (of typically countably infinite), sets $V$ (vertices), 
$E$ (edges) a suitable subset of $V \times V$, and prescribed positive
symmetric functions $c$  on $E$ .  A conductance function $c$  is 
defined on $E$ (edges), or on $V \times V$, but with $E$ as its 
support. From an initial triple $(V, E, c)$ , one gets graph-Laplacians,
generalized Dirichlet spaces (also called energy Hilbert spaces), 
dipoles, relative reproducing kernel-theory, dissipation spaces, 
reversible Markov chains, and more.

Guided by applications to measurable equivalence relations, we extend 
earlier analyses to the non-discrete framework, and, with the use of 
spectral theory, we study correspondences. Our setting is that of 
\textit{standard Borel spaces} $(M, \B)$. Parallel to conductance 
functions in (i), we consider (in the measurable framework) a fixed 
positive, symmetric, and $\sigma$-finite measure  $\rho$ on the 
product space $(M \times M, \B \times \B)$. We study both the 
structures that arise as graph-limits, as well as the induced graph 
Laplacians, Dirichlet spaces, and reversible Markov processes, 
associated directly with the measurable framework. 

Our \textit{main results} include:  spectral theory and Green's functions for
 measure theoretic  graph-Laplace operators;  the theory of reproducing
  kernel Hilbert spaces related to Laplace operators; a rigorous analysis of  
  the Laplacian on Borel equivalence 
relations; a new decomposition theory; irreducibility criteria; dynamical 
systems governed by endomorphisms and measurable fields; orbit 
equivalence 
criteria; and path-space measures and induced dissipation Hilbert spaces.
We consider several applications of our results to other fields such as  
machine learning problems, reproducing kernel Hilbert spaces, Gaussian and
determinantal processes, and joinings.

\end{abstract}

\maketitle

\tableofcontents
\section{\textbf{Introduction}}\label{sect Intro}

In the past decade, many researchers have studied diverse approaches to
notions of limits of large networks. Hence there are two settings, 
(i) discrete vs (ii) continuous. More precisely, a starting point-setting 
often constitutes (i) a suitable class of network structures, each of 
independent interest. In this setting, an extensive (discrete) analysis 
has already been undertaken; see the references cited below. Now, 
the network analysis is typically undertaken before consideration of 
any kind of graph-limit, or limits. But our present focus will be a 
systematic study of (ii) suitable limit structures; the best known is 
perhaps the notion of ``\textit{graphons}''. In any case, the limit 
structures are
continuous, or rather, they are studied in a measure theoretic framework.
(Below, we include a brief technical summary of recent related studies; 
and our present opening comments here are meant merely to help 
non-expert readers with an orientation to the general setting.)

In summary, the setting before limits are taken is discrete: typically, 
a graph consisting of sets of vertices, and edges, as well as associated 
and specified classes of functions, and random processes. By contrast, 
the appropriate limit objects are ``\textit{continuous}''.  More precisely, 
the category suitable for the limits is that of measure space, equipped 
with a definite structure which we make precise below. Prototypes of large 
networks (case (i)) include: the Internet, networks in chip designs, 
social networks, ecological networks, networks of proteins, the human 
brain, a network of neurons, statistical physics (large numbers of 
discrete particles, or spin-observables, realized on graphs, or 
electrical networks of resistors.

While each of these examples serves to illustrate our present analysis, 
we have chosen the latter (electrical networks of resistors) as a main 
reference for our general theory. And our focus will be (ii), i.e., the 
measure theoretic setting. To be more precise, by an electrical 
networks of resistors, we mean a pair (of typically countably infinite) 
sets $V$ (vertices), $E$ (edges) a suitable subset of $V \times V$, 
and a prescribed positive symmetric function $c$  on $E$  representing
conductance.  The conductance function $c$  is defined on $E$ (edges), 
or on $V \times V$, but with $E$ as its support. From an initial triple 
$(V, E, c)$, one is then lead to well defined graph-Laplacians, 
generalized Dirichlet spaces (also called energy Hilbert spaces), dipoles, 
relative reproducing kernel-theory, dissipation spaces, reversible 
Markov chains, and more. While this setting is of interest in its own 
right; see the papers cited below, our main focus here will be the 
structures which arise as limits, hence analogous structures on measure 
spaces; see Section \ref{sect Prelim} below for details. In summary, 
a measure space is a set $M$, and a prescribed sigma-algebra $\B$ of 
subsets of $M$. Our setting will be that of \textit{standard Borel spaces} 
$(M, \B)$. Parallel to the setting of conductance functions in the 
discrete case, our starting point for the ``\textit{continuous}'' analysis 
will now 
be a fixed positive, symmetric, and sigma-finite measure  $\rho$ on 
the product space $(M \times M, \B \times \B)$. We shall study both 
the structures that arise as graph-limits, as well as the induced graph 
Laplacians, Dirichlet spaces, and reversible Markov processes that are 
associated directly with the measurable framework.

The setting in Section \ref{sect Prelim} below is motivated by a list 
of applications, detailed inside the paper. The list includes: a rigorous 
analysis of Borel equivalence relations, a new decomposition theory, 
irreducibility criteria, measurable fields, dynamical systems governed by
endomorphisms, graph-Laplace operators in a measure theoretic 
framework (Sections \ref{subsect_finiteEnergy} - \ref{sect LO in H_E}). 
In our 
present general setting of measure spaces (precise details below), we 
present an explicit correspondence central to the questions in the paper; 
a correspondence between on the one hand (i) \textit{symmetric 
measures} 
on product spaces, and on the other, (ii) \textit{reversible Markov} 
transition 
processes. In our first results (Sections \ref{sect Prelim} and 
\ref{subsect_finiteEnergy}), we make precise the notion of 
\textit{equivalence} for 
the setting of both (i) and (ii), and we prove that equivalence of two 
symmetric measures, leads to equivalence of the corresponding reversible 
Markov processes, and vice versa. Our general framework for these 
reversible Markov processes goes beyond earlier considerations in the 
literature; and hence allows us to attack questions from dynamics which 
were not previously accessible with existing tools.

With view to applications to spectral theory and potential theory, in 
Sections \ref{subsect_finiteEnergy} - \ref{sect energy}, for a given 
symmetric measure, we introduce a 
(measure) \textit{graph Laplacian} $\Delta$, a \textit{finite energy space} 
$\h_E$, and 
a \textit{dissipation Hilbert space} $Diss$,  see Section 
\ref{sect diss space}. 
These are central notions, and their adaptation here is motivated by 
energy space for discrete weighted networks, such as electrical networks 
of resistors, and a host of areas studied extensively during last decades.

Our realization of the reversible Markov processes depends on a 
carefully designed infinite path space, and its associated path-space 
measures; both introduced in Section \ref{sect diss space}. These tools 
are used in turn in our study of \textit{orthogonality relations} 
 (Section \ref{sect diss space}).

In Section  \ref{sect RKHS}, we associate to every transient Markov 
processes a positive definite kernel  and \textit{reproducing kernel 
Hilbert space (RKHS)}. 
We show that the latter is in turn a realization of the energy space. 
This then allows us to make precise a new Green's function approach to 
our study general reversible Markov processes. Applications and examples 
are included.

Inside the paper, we shall include detailed citations, but to help readers 
with orientation, we give the following general pointers to the References.

\textit{Earlier work} by the co-authors, related to the present paper, includes 
\cite{BezuglyiJorgensen2018, BezuglyiJorgensen_2018, 
Jorgensen2012, JorgensenTian2015, JorgensenTian-2016,
JorgensenTian2017}. 

For the theory of Dirichlet
 forms and generalized Laplacians, we refer to
\cite{GimTrutnau2017, Jonsson2005, KoskelaZhou2012, Rozkosz2001}.
Our definition of the finite energy Hilbert space and graph Laplacian
is related to a special  case of Dirichlet forms. The literature on 
Dirichlet forms is extensive; we recommend the following works:
\cite{AlbeverioBernabei2005, AlbeverioMaRockner2015, 
AlbeverioFanHerzberg2011, Albeverio2003, 
AlbeverioKondratievNikifirovTorbin2017, BlumenthalGetoor1968, 
ChenFukushima2012,  Kaimanovich1992, LawlerSokal1988}.  

  For the theory of reproducing kernels and their applications to Markov 
  processes, see 
  \cite{Adams_et_al1994, AplayJorgensen2014, 
  AlpayJorgensen2015, Aronszajn1950, AronszajnSmith1957,  
BerlinetThomas-Agnan2004, SaitohSawano2016}. 

Throughout the 
paper, we shall make use of a number of tools from ergodic theory, 
Borel equivalence relations, orbit equivalence,  and for this the reader
 may wish to consult \cite{CornfeldFominSinai1982, FeldmanMooreI_1977,
FeldmanMooreII_1977, GreschonigSchmidt2000, 
KechrisMiller2004, Kechris2010, Rohlin1949, Revuz1984}. 

The list of applications of our results includes models for online learning 
with Markov sampling. From the prior literature on this, we stress the 
following papers 
\cite{ArgyriouMicchelliPontil2010, GuofanZhou2016, 
Smaleyao2006, SmaleZhou2007, SmaleZhou2009, SmaleZhou2009a}.

We mention here also several adjacent areas where the ideas applied in
this paper might be useful. First of all, the theory of 
\textit{electrical networks}
can regarded as a prototype for many of ours definitions and results. 
The reader can find the necessary information in various sources; for us
the following books and articles are the most relevant 
 \cite{Kigami2001,  JorgensenPearse2010, Jorgensen_Pearse2011, 
 JorgensenPearse2013, JorgensenPearse2016,
JorgensenPearse2014, JorgensenPearse_2017, 
LyonsPeres2016, Woess2000, Woess2009}.

The references to 
\cite{FarsiGillaspyJorgensenKangPacker2018,
FarsiGillaspyJorgensenKangPacker_2018} deal with 
some aspects of the representation theory.

The reader can look at 
\cite{AlpayJorgensenLevanony2011}, \cite{AlpayJorgensen2012},
\cite{AlpayJorgensen2015}, \cite{AlpayJorgensenLevanony2017},
\cite{AlpayJorgensenLewkowicz2018} where Gaussian processes and 
path spaces are discussed.

\section{\textbf{Preliminaries and basic setting}}\label{sect Prelim}

Our aim in Section  \ref{sect Prelim} below is to apply our present 
Hilbert space tools to a systematic study of measurable partitions and 
countable Borel equivalence relation in the context of standard (Borel)
 measure space, and their applications. Measurable partitions in turn 
serve as important tools in Borel dynamics, in ergodic theory, and in 
direct integral analysis in representation theory, to mention only a few.  
A sample of the relevant literature includes \cite{AlbeverioBernabei2005,
AlbeverioMaRockner2015, BJ_book, FeldmanMooreII_1977,
FeldmanMooreI_1977, Gao2009, JacksonKechrisLouveau2002, 
Kechris2010, Simmons2012}.

\subsection{Standard measure space and symmetric measures}
\label{subsect sms symm meas}

Suppose $V$ is a \textit{Polish space}, i.e., $V$ is a separable completely
 metrizable topological space. Let $\B$ denote the $\sigma$-algebra of
 Borel sets   generated by open  sets of $V$. Then $(V, \B)$ is called a 
\textit{standard Borel space},  see e.g., 
\cite{Gao2009, Kanovei2008, Kechris1995, Kechris2010} and papers
\cite{Chersi1989, Loeb1975} for detailed information about standard Borel
spaces.  We recall that 
all uncountable  standard Borel spaces are Borel isomorphic, so that, 
without loss of generality,  we can use any convenient  realization of 
the space $(V, \B)$.   

If $\mu$ is a  positive non-atomic   Borel  measure  on  $(V,  \B)$, then  
 $(V, \mathcal B, \mu)$ is called a \emph{standard 
 measure space}. Given $\sms$, we will call $\mu$ a measure for brevity. 
 As a rule, we will deal  with non-atomic  $\sigma$-finite positive
measures on $(V, \B)$ which  take values in the extended real line 
$\ol \R$.  We  use the name of standard measure space for both 
probability (finite) and  $\sigma$-finite measure spaces. 
Also the same notation, $\B$, is applied for the $\sigma$-algebras of 
Borel sets and measurable  sets of a standard measure space. 
Working with a measure space $\sms$, we always assume that  $\B$ is 
 \textit{complete}  with respect  to $\mu$. By $\mathcal F(V, \B)$. 
we denote  the space of real-valued bounded Borel functions on $(V, \B)$.
For  $f \in  \mathcal F(V, \B)$ and a Borel measure $\mu$ on $(V, \B)$, 
 we write 
 $$
 \mu(f) = \int_V f\; d\mu.
 $$

All objects, considered in the context of measure spaces (such as sets,
functions, transformations, etc), are considered  by modulo sets 
of zero measure.  In most cases, we will  
implicitly use this mod 0 convention not mentioning the sets of 
zero measure explicitly.

For a $\sigma$-finite no-atomic measure $\mu$ on a standard
Borel space  $(V, \B)$, we denote by 
\be\label{eq Bfin}
\Bfin = \B_{\mathrm{fin}}(\mu)  = \{ A \in \B : \mu(A) < \infty\}
\ee
the algebra of Borel sets of finite measure $\mu$. Clearly, any 
$\sigma$-finite  measure 
$\mu$ is uniquely determined by its values on $\Bfin(\mu)$. 

The linear space of simple function over sets from $\Bfin(\mu)$ is denoted
by
\be\label{eq Dfin}
\ba
\mathcal D_{\mathrm{fin}}(\mu) := & 
\left\{ \sum_{i\in I} a_i \chi_{A_i}  :
 A_i \in \Bfin(\mu),\ a_i \in \mathbb R,\ | I | <\infty\right\}\\
  = &\  \mbox{Span}\{\chi_A : A \in \Bfin(\mu)\}, 
  \ea
\ee
and it will play an important role in the further results since simple 
functions from  $\Dfin(\mu)$ form a norm  dense subset in  
$L^p(\mu)$-space, $ p \geq 1$.

\begin{definition}\label{def symmetric set}
Let $E$ be an uncountable Borel subset of the Cartesian product 
$(V \times V, \B\times \B)$  such that:

(i) $(x, y) \in E\ \Longleftrightarrow\ (y, x) \in E$, i.e. $\theta(E) = E$
where $\theta(x, y) = (y,x)$ is the flip automorphism;

(ii) $ E_x := \{y \in V : (x, y) \in E\} \neq\emptyset,\ \ \forall x \in X$;

(iii) for every $x \in V$,  $(E_x, \B_x)$ is a standard Borel space 
where $\B_x$ is  the $\sigma$-algebra of Borel sets induced on 
$E_x$ from $(V, \B)$.
 
The set $E$ is called  \textit{symmetric subset} of $\vv$. 
\end{definition}

It follows from (ii) and (iii)  of Definition \ref{def symmetric set} that 
the projection of a symmetric set $E$ on each margin of the product 
space $\vv$ is $V$. Moreover, condition (iii) 
assumes two cases: the Borel space $E_x$ can be either countable or
 uncountable. We will assume that $(E_x, \B_x)$ is an uncountable 
Borel standard spaces.

There are several natural examples of symmetric sets related to dynamical 
systems. We mention here the case of a  \textit{Borel equivalence relation}
$E$ on a standard Borel space $(V, \B)$. By definition,  $E$ is a Borel 
subset  of $V\times V$ such that $(x, x) \in E$ for all $x \in V$, 
$(x, y)$ is in $E$ iff $(y, x) $ is in $E$, and $(x, y) \in E, (y, z) \in E$ 
implies that $(x, z)\in E$. Let $E_x = \{ y \in V : (x, y) \in E\}$, then 
$E$ is partitioned into ``vertical fibers'' $E_x$.  In particular, it can be the
case when every $E_x$ is countable. Then $E$ is called a \textit{countable
Borel equivalence relation. }

As mentioned in Introduction, our approach is based on the study of
 \textit{symmetric measures} defined on $\vv$, see Definition
 \ref{def symm measure rho} below. We recall that every measure $\rho$
on the Cartesian product $\vv$  can be disintegrated with respect 
to a measurable partition. For this, denote by $\pi_1$ and $\pi_2$ the 
projections from $V \times V$ onto     
the first and second factor, respectively.  Then  $\{\pi_1^{-1}(x) :
 x \in V\}$ and $\{\pi_2^{-1}(y) : y \in V\}$ are  the 
 \textit{measurable partitions} of 
$V \times V$ into vertical and horizontal fibers, see \cite{Rohlin1949, 
CornfeldFominSinai1982, BezuglyiJorgensen2018} for more information on
 properties of measurable partitions. The case of probability
 measures was studied by Rokhlin in \cite{Rohlin1949}, whereas the 
 disintegration of $\sigma$-finite measures is more delicate and 
 has been considered somewhat recently, see for example 
 \cite{GreschonigSchmidt2000}. We refer here to a result from  
 \cite{Simmons2012} whose  formulation is adapted to our needs. 

\begin{theorem}[\cite{Simmons2012}] \label{thm Simmons} 
For a $\sigma$-finite measure space $(V, \B, \mu)$, 
let $\rho$ be a $\sigma$-finite measure on $(V\times V, \B\times \B)$ 
such that $\rho\circ \pi_1^{-1} \ll \mu$. Then there exists a unique 
system of conditional $\sigma$-finite measures $(\wt\rho_x)$ such that
$$
\rho(f)  = \int_V \wt\rho_x(f)\; d\mu(x), \ \ \ f\in \mathcal F(V	\times 
V, \B\times \B).
$$
\end{theorem}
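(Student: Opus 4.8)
The plan is to reduce the $\sigma$-finite statement to the classical Rokhlin disintegration theorem for standard \emph{probability} spaces and then transport both existence and uniqueness back through a change of density. First I would replace $\rho$ by an equivalent probability measure: since $\rho$ is $\sigma$-finite on $\vv$, pick a countable Borel partition $\{C_n\}$ of $V\times V$ with $\rho(C_n)<\infty$ and set $w:=\sum_n 2^{-n}(1+\rho(C_n))^{-1}\chi_{C_n}$, so that $w$ is strictly positive and Borel and $\nu:=w\,\rho$ is a Borel probability measure on $\vv$ with $\nu\sim\rho$. The partition of $V\times V$ into the fibres $\pi_1^{-1}(x)$ is the partition into level sets of the Borel map $\pi_1$ onto the standard Borel space $\VB$, hence measurable, so the Rokhlin disintegration theorem yields a family $(\nu_x)_{x\in V}$ of Borel probability measures on $\vv$ with $\nu_x$ concentrated on $\pi_1^{-1}(x)$, with $x\mapsto\nu_x(A)$ measurable for each $A$, and with
\[
\nu(g)=\int_V \nu_x(g)\,d\bar\nu(x),\qquad \bar\nu:=\nu\circ\pi_1^{-1},
\]
for every nonnegative (or bounded) Borel $g$ on $\vv$; moreover $(\nu_x)$ is unique for $\bar\nu$-a.e.\ $x$.

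For existence I would then bring in the hypothesis $\rho\circ\pi_1^{-1}\ll\mu$, which is exactly what anchors the disintegration to $\mu$: from $\nu\ll\rho$ we get $\bar\nu=\nu\circ\pi_1^{-1}\ll\rho\circ\pi_1^{-1}\ll\mu$, so $\phi:=d\bar\nu/d\mu$ exists and is finite $\mu$-a.e. Define $\wt\rho_x:=\phi(x)\,w(x,\cdot)^{-1}\nu_x$ (the measure $B\mapsto\phi(x)\int_B w(x,y)^{-1}\,d\nu_x(y)$) where $\phi(x)<\infty$ and $\nu_x$ is defined, and $\wt\rho_x:=0$ otherwise. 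Each $\wt\rho_x$ is $\sigma$-finite, being a scalar multiple of $w^{-1}\nu_x$ with $\nu_x$ finite and $w>0$, hence exhausted by the sets $\{w\ge 1/k\}$; it is concentrated on $\pi_1^{-1}(x)$; and $x\mapsto\wt\rho_x(A)$ is measurable. For nonnegative Borel $f$, using $\rho=w^{-1}\nu$ and $d\bar\nu=\phi\,d\mu$,
\[
\rho(f)=\nu(w^{-1}f)=\int_V \nu_x(w^{-1}f)\,d\bar\nu(x)=\int_V \phi(x)\,\nu_x(w^{-1}f)\,d\mu(x)=\int_V \wt\rho_x(f)\,d\mu(x),
\]
the exceptional $x$'s contributing nothing since $\bar\nu$ vanishes there, which forces $\phi=0$ $\mu$-a.e.\ there; the general case $f\in\FVB$ with $\rho(|f|)<\infty$ follows by splitting into positive and negative parts.

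For uniqueness, let $(\wt\rho_x)$ be any family with the asserted properties and put $\psi(x):=\wt\rho_x(w)=\int w\,d\wt\rho_x$. Applying the displayed identity to $f=w$ gives $\int_V\psi\,d\mu=\rho(w)=1$, so $0\le\psi<\infty$ $\mu$-a.e., and on $\{0<\psi<\infty\}$ the measure $\sigma_x:=\psi(x)^{-1}w\,\wt\rho_x$ is a Borel probability measure concentrated on $\pi_1^{-1}(x)$. For bounded Borel $g$, $\nu(g)=\rho(wg)=\int_V\wt\rho_x(wg)\,d\mu(x)=\int_V\psi(x)\,\sigma_x(g)\,d\mu(x)$; taking $g=\chi_{C\times V}$ shows $\psi\,d\mu=\bar\nu$ on $\VB$, i.e.\ $\psi=\phi$ $\mu$-a.e., whence $\nu(g)=\int_V\sigma_x(g)\,d\bar\nu(x)$ and $(\sigma_x)$ is a Rokhlin disintegration of $\nu$ over $\bar\nu$. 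By the uniqueness part of Rokhlin's theorem $\sigma_x=\nu_x$ for $\mu$-a.e.\ $x$ with $\psi(x)>0$, hence there $\wt\rho_x=\psi(x)\,w^{-1}\sigma_x=\phi(x)\,w^{-1}\nu_x$; and where $\psi(x)=0$ one has $\wt\rho_x=0$, while $\int_{\{\psi=0\}}\psi\,d\mu=0$ also forces $\phi=0$ there, so $\wt\rho_x=\phi(x)\,w^{-1}\nu_x=0$ too. Thus $\wt\rho_x$ equals the formula of the existence step for $\mu$-a.e.\ $x$, which is the claimed uniqueness.

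The main difficulty is that the conditional measures are merely $\sigma$-finite and not finite: this blocks the usual monotone-class uniqueness argument on a countable generating algebra (distinct $\sigma$-finite measures can agree on such an algebra), and it is precisely why I route uniqueness through the finite measure $\nu$ rather than arguing directly with $\wt\rho_x$. The second point that needs care is the placement of the hypothesis $\rho\circ\pi_1^{-1}\ll\mu$, which is exactly what allows the $\bar\nu$-indexed Rokhlin disintegration to be rewritten as an integral against $\mu$ via the density $\phi$; without it one only obtains a disintegration over $\rho\circ\pi_1^{-1}$. Everything else---joint measurability of $x\mapsto\wt\rho_x(A)$, concentration on fibres, $\sigma$-finiteness of each $\wt\rho_x$, and passage from indicators $\chi_{A\times B}$ to general $f$---is routine.
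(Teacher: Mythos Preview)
The paper does not give its own proof of this theorem: it is quoted as a result from \cite{Simmons2012} and used as a black box throughout. So there is no in-paper argument to compare your attempt against.

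That said, your reduction is the standard and correct one: pass to an equivalent finite measure $\nu=w\rho$, invoke Rokhlin's disintegration for the probability case over the measurable partition into fibres $\pi_1^{-1}(x)$, and then undo the density via $\wt\rho_x=\phi(x)\,w^{-1}\nu_x$ with $\phi=d\bar\nu/d\mu$; the hypothesis $\rho\circ\pi_1^{-1}\ll\mu$ is used exactly where you use it, to rewrite the $\bar\nu$-integral as a $\mu$-integral. Two small points worth tightening: your $\nu$ as defined has total mass $\le 1$ rather than exactly $1$, so a normalization (or the trivial observation that Rokhlin works for any finite nonzero measure) is needed; and your uniqueness step implicitly assumes that a ``system of conditional measures'' $(\wt\rho_x)$ is concentrated on the fibres $\pi_1^{-1}(x)$, which is indeed the intended meaning here (cf.\ Remark~\ref{rem symm meas}(2) and relation \eqref{eq rho_x def}) and is essential, since without fibre concentration the disintegration is not unique. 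With those caveats your argument is correct, and your remark about why uniqueness must be routed through the finite measure $\nu$ rather than argued directly for the $\sigma$-finite $\wt\rho_x$ is exactly the delicate point that motivates the cited reference.
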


\begin{remark}\label{rem symm meas}
(1) The condition of  Theorem \ref{thm Simmons} assumes that a measure
$\mu$ is prescribed on the Borel space $\VB$. If one begins with a 
measure $\rho$ defined on $\vv$, then the measure $\mu$ can be taken 
as the projection of $\rho$ on $\VB$,  $\rho\circ \pi_1^{-1} = \mu$.

(2) Let $E$ be a Borel symmetric subset of $\vv$, and let $\rho$ be 
a measure on $\vv$ satisfying the condition of Theorem \ref{thm Simmons}.
Then $E$ can be  partitioned into the fibers $\{x\} \times E_x$. By
 Theorem \ref{thm Simmons}, there exists 
a unique  system of  conditional measures $\wt\rho_x$ such that, for any 
 $\rho$-integrable function $f(x, y)$, we have
\be\label{eq disint for rho}
\iint_{V\times V} f(x, y) \; d\rho(x, y)  = \int_V \wt\rho_x(f) \; d\mu(x). 
\ee
It is obvious that, for $\mu$-a.e. $x\in V$, $\mbox{supp}(\wt
\rho_x) = \{x\} \times E_x$ (up to a set of zero measure). To simplify the
 notation, we will  write 
$$
\int_V f\; d\rho_x \ \ \mathrm{and}  \ \iint_{V \times V} f\; d\rho
$$ 
though the measures $\rho_x$ and $\rho$ have the supports $E_x$ and 
$E$,  respectively. 

 (3) It follows from  Theorem \ref{thm Simmons} that the measure $\rho$
 determines the  measurable field of
sets $x \mapsto E_x \subset V$ and  measurable field of $\sigma$-finite
Borel  measures $x \mapsto \rho_x $ on $(V, \B)$,
 where  the measures $\rho_x$ are defined by the relation
\be\label{eq rho_x def}
\wt\rho_x = \delta_x \times \rho_x. 
\ee
Hence, relation (\ref{eq disint for rho}) can be also written in the following 
form,  used in our subsequent computations,
\be\label{eq disint formula}
\iint_{V\times V} f(x, y) \; d\rho(x, y)  = \int_V \left(\int_V f(x, y) \; 
d\rho_x(y)\right)\; d\mu(x).
\ee
In other words, we have a measurable family of measures $(x \mapsto 
\rho_x)$,  
and it defines a new measure $\nu$ on $(V, \B)$ by setting
\be\label{eq def of nu}
\nu(A) := \int_V \rho_x(A)\; d\mu(x), \quad A \in \B.
\ee
We note that the measure $\rho_x$ 
is defined on the subset $E_x$ of $(V, \B)$, $x \in V$.
\end{remark}

The next definition contains the key notion of a 
\textit{symmetric measure}.

\begin{definition}\label{def symm measure rho}
(1)  Let $(V, \B)$ be a standard Borel space.
We say that a measure $\rho$ on $(V \times V, \B\times B)$ is
 \textit{symmetric} if 
 $$
 \rho(A \times B) = \rho(B\times A), \ \ \ \forall A, B \in \B.
 $$ 
 In other words, $\rho$ is invariant with respect to the flip automorphism
$\theta$.
 \end{definition}

\textit{\textbf{Assumption 1}}. \textit{In this paper,  we will consider 
the set of symmetric measures $\rho$ on $(V\times V, \B\times \B)$ 
which satisfy the following property: 
\be\label{eq c finite}
0 < c(x) := \rho_x(V) <\infty, \ \ \  \ \ \ \mu\mbox{-a.e.}\ x\in V,
\ee
where $x \mapsto \rho_x$ is the measurable field of measures
(the system of conditional mesaures) arising
in Theorem \ref{thm Simmons}.}

\textit{Moreover, we will assume that symmetric measures satisfy the 
condition:  $c(x) \in L^1_{\mathrm{loc}} (\mu)$, i.e.,
\be\label{eq c loc integr}
\int_A c(x)\; d\mu(x) < \infty, \qquad \forall A \in \Bfin(\mu).
\ee
}
\medskip

\begin{remark}
It is natural to assume that the function $c(x)$ satisfies properties
\eqref{eq c finite} and \eqref{eq c loc integr}.  They corresponds to
local finiteness of discrete graphs in the theory of weighted networks.  
In several statements below, we need to use the requirement that
 $c \in \lc2$, i.e.,
$$
  \int_A c^2\; d\mu < \infty, \ \ \ \forall A\in \Bfin(\mu).
$$ 
We observe also that the case when the function $c$  is bounded will lead
to bounded Laplace operators and is not interesting for us.

Based on Assumption 1, we note that relation (\ref{eq def of nu}) defines 
the measure $\nu$ which is equivalent to $\mu$. As stated  in Lemma 
 \ref{lem symm measure via int},  $c(x)$ is the Radon-Nikodym 
 derivative of $\nu$ with respect to $\mu$. If we
want to reverse the definition and use $\nu$ as a primary measure, then 
we need to require that the function $c(x)^{-1}$ is locally integrable 
with respect to $\nu$. 
\end{remark}

In the following remark, we discuss some natural properties of symmetric
measures. 

\begin{remark}\label{rem on symm meas}
(1) If $\rho$ is a symmetric measure on $(V \times V, \B\times \B)$,
 then the support of $\rho$, the set $E = E(\rho)$, is symmetric mod 0. 
Hence, without loss of generality, we can assume  that any  symmetric 
measure is supported by the
sets $E$ satisfying Definition \ref{def symmetric set}. We will 
require that, for every $x \in V$, the set $E_x\subset E$ is an 
uncountable standard Borel space. The case 
when $E_x$ is countable arises for a countable Borel 
equivalence relation $E$   on $(V, \B)$. The latter was considered in
\cite{BezuglyiJorgensen_2018}. For countable sets $E_x, x \in V$, we can
take $\rho_x$ as a finite measure which is equivalent to the counting
measure, see, e.g.  \cite{FeldmanMooreI_1977, FeldmanMooreII_1977, 
KechrisMiller2004} for details. 

(2) In general, the notion of a symmetric measure is defined in the context 
of standard Borel spaces $(V, \B)$ and $(V\times V, \B\times \B)$. 
But if a $\sigma$-finite measure $\mu$ is given on $(V, \B)$, then we 
have to include an additional relation between the measures $\rho$ and  
$\mu$. Let $\pi_1 : V \times V \to V$ be the projection on the first 
coordinate. We require that  $\rho\circ \pi_1^{-1} \ll \mu$, see Theorem 
\ref{thm Simmons}.

(3) The symmetry of the set $E$ allows us to define 
a ``mirror'' image of the measure $\rho$. Let $E^y := \{x \in V : (x, y) \in 
E\}$, and let $(\wt\rho^y)$ be the system of conditional measures with 
respect to the partition of $E$ into the sets $E^y \times \{y\}$. Then, 
for the measure 
$$
\wt \rho = \int_V \wt \rho^y d\mu(y),
$$
the relation $\rho = \wt \rho$ holds. 

(4) It is worth noting that, in general, when a measure $\mu$ is defined 
on $(V, \B)$, the set $E(\rho)$ do not need to be 
a set of positive measure with respect to the product measure
$\mu\times \mu$. In other words, we admit both cases: (a) $\rho$ is 
equivalent to $\mu\times\mu$ on the set $E$, and (b) $\rho$ and
$\mu\times\mu$ are mutually singular. 
\end{remark}

The following (important for us) fact is deduced from the definition of
 symmetric measures. We emphasize that formula 
(\ref{eq formula fo symm meas}) will be used repeatedly in many proofs. 

\begin{lemma} \label{lem symm measure via int}
(1) For a symmetric measure $\rho$ and any bounded Borel function
$f$ on $(V\times V, \B\times \B)$, 
\be\label{eq formula fo symm meas}
\iint_{V \times V} f(x, y) \; d\rho(x, y) = \iint_{V \times V} f(y, x) \; 
d\rho(x, y). 
\ee
Equality (\ref{eq formula fo symm meas}) is understood in the sense of 
the extended real line, i.e., the infinite value of the integral is allowed. 

(2) Let $\nu$ be defined as in (\ref{eq def of nu}). Then 
$$
d\nu(x) = c(x) d\mu(x).
$$ 
\end{lemma}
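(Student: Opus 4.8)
The plan is to first promote the defining equality $\rho(A\times B)=\rho(B\times A)$ of a symmetric measure from measurable rectangles to arbitrary Borel subsets of $\VtV$, and then to read off both assertions. Write $\theta(x,y)=(y,x)$, an involution of $\VtV$ with $\theta(A\times B)=B\times A$, so that Definition \ref{def symm measure rho} says precisely that $\rho$ and the push-forward $\rho\circ\theta^{-1}$ agree on the $\pi$-system $\mc R_0=\{A\times B:A,B\in\B\}$, which generates $\B\times\B$. Since $\mu$ is $\sigma$-finite, fix an increasing sequence $V_n\in\Bfin(\mu)$ with $\bigcup_n V_n=V$. Applying the disintegration identity \eqref{eq disint formula} to the bounded Borel function $\chi_{V_n\times V}$ and using \eqref{eq c finite} and \eqref{eq c loc integr} gives $\rho(V_n\times V)=\int_{V_n}c\,d\mu<\infty$, so the rectangles $V_n\times V\in\mc R_0$ form an exhausting sequence of finite $\rho$-measure. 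By the standard uniqueness theorem for $\sigma$-finite measures agreeing on a generating $\pi$-system that contains such a sequence, $\rho=\rho\circ\theta^{-1}$ on all of $\B\times\B$; equivalently, $\rho(C)=\rho(\theta(C))$ for every $C\in\B\times\B$.

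For part (1), I would then invoke the change-of-variables formula for the identity $\rho=\rho\circ\theta^{-1}$: for every nonnegative Borel $f$ on $\VtV$,
\[
\iint_{\VtV} f(x,y)\,d\rho(x,y)=\int f\,d(\rho\circ\theta^{-1})=\int (f\circ\theta)\,d\rho=\iint_{\VtV} f(y,x)\,d\rho(x,y),
\]
which is \eqref{eq formula fo symm meas}, with the value $+\infty$ permitted on each side; the case of a bounded (more generally $\rho$-integrable) Borel $f$ follows by splitting $f=f^{+}-f^{-}$. Concretely this is nothing but the usual approximation of $f$ from below by simple functions built from indicators of Borel subsets of $\VtV$, together with two applications of the monotone convergence theorem, using that $g\mapsto g\circ\theta$ preserves such simple functions and their increasing limits.

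For part (2), fix $A\in\B$ and apply \eqref{eq disint formula} to the bounded Borel functions $\chi_{A\times V}$ and $\chi_{V\times A}$. Since $\chi_{A\times V}(x,y)=\chi_A(x)$ and $\rho_x(V)=c(x)$ by \eqref{eq c finite}, the first computation gives $\rho(A\times V)=\int_V\chi_A(x)\,c(x)\,d\mu(x)=\int_A c\,d\mu$; since $\chi_{V\times A}(x,y)=\chi_A(y)$, the second gives $\rho(V\times A)=\int_V\rho_x(A)\,d\mu(x)=\nu(A)$ by the definition \eqref{eq def of nu} of $\nu$. Because $\theta(A\times V)=V\times A$, the set identity of the first paragraph (equivalently, part (1) applied to $f=\chi_{A\times V}$) gives $\rho(A\times V)=\rho(V\times A)$, hence $\nu(A)=\int_A c\,d\mu$ for every $A\in\B$, i.e.\ $d\nu(x)=c(x)\,d\mu(x)$; in particular $\nu$ is $\sigma$-finite by \eqref{eq c loc integr} and $\nu\sim\mu$ by \eqref{eq c finite}.

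The only step that is not pure bookkeeping is the passage from rectangles to arbitrary Borel sets in the first paragraph: since $\rho$ is only assumed $\sigma$-finite, one must exhibit an exhausting family of \emph{rectangles} (rather than arbitrary Borel sets) of finite $\rho$-measure, and this is exactly what Assumption 1 supplies through the local integrability \eqref{eq c loc integr} of $c$, via the rectangles $V_n\times V$. Everything downstream of that is a routine manipulation of extended-real-valued integrals and monotone limits.
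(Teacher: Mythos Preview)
The paper does not give an explicit proof of this lemma; it merely states that the result ``is deduced from the definition of symmetric measures'' and moves on. Your argument is a correct and complete filling-in of that deduction: extending $\rho(A\times B)=\rho(B\times A)$ from rectangles to all of $\B\times\B$ via the $\pi$-system uniqueness theorem, then reading off \eqref{eq formula fo symm meas} by change of variables and part (2) by computing $\rho(A\times V)$ and $\rho(V\times A)$ through the disintegration. The one nontrivial point you correctly isolate---that the $\pi$-system argument needs an exhausting sequence of \emph{rectangles} of finite $\rho$-measure, furnished by $V_n\times V$ with $V_n\in\Bfin(\mu)$ via the local integrability of $c$ from Assumption~1---is exactly the detail the paper leaves implicit.
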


We define below the key notion of  an 
\textit{irreducible symmetric measure}. 

For a standard measure space $(V, \B)$, a  \textit{kernel} 
$k$ is a function $k:  V \times \B \to \ol \R_+ = [0, \infty]$ such that

(i) $x \mapsto k(x, A)$ is measurable for every $A \in \B$;

(ii) for any $x\in V$, $k(x, \cdot)$ is a $\sigma$-finite measure on 
$(V, \B)$.
\medskip

A kernel $k(x, A)$ is called \textit{finite} if $k(x, \cdot)$ is a finite
 measure on  $(V, \B)$ for every $x$. We will also use the notation 
 $k(x, dy)$ for the measure on $\VB$.
   
Given a kernel $k$, one can construct inductively the sequence 
of kernels $(k^n : n \geq 1)$ by setting
\be\label{eq_powers of k}
k^n(x, A) = \int_V k^{n-1}(y, A)\; k(x, dy),\qquad n > 1.
\ee

 It is said that a \textit{set $A \in \B$ 
is \textit{attainable}  from $x\in V$} if there exists  $n\geq 1$ such that 
$\mc K^n(x, A)  >0$; in symbols, we write $x \rightarrow A$. A set 
$A \in \B$ is called  \textit{closed} for the kernel $\mc K$ if 
$\mc K(x, A^c) = 0$ for all $x\in A$. If $A$ is closed, then it follows 
from \eqref{eq_powers of k} that
 $\mc K^n(x, A^c) = 0$ for any $n \in \N$ and $x \in A$.  Hence, $A$ is
  closed if and only if  $ x \nrightarrow   A^c$, see details in 
  \cite{Nummelin1984, Revuz1984}.

Every symmetric measure $\rho$ defines a finite \textit{kernel} 
$\mc K = \mc K(\rho)$ where 
$$
 V \times \B \stackrel{\mc K}\longrightarrow [0, \infty) : \mc K(x, A) = 
 \rho_x(A), 
$$
and $x\mapsto \rho_x(\cdot)$ is the measurable  family of conditional 
measures from Theorem \ref{thm Simmons}. 

 \begin{definition}\label{def_irreducible}
(1)  A kernel $x \to k(x, \cdot)$ is called \textit{irreducible with 
respect  to a $\sigma$-finite  measure  $\mu$ on $\VB$ 
($\mu$-irreducible)} if, for any set $A$ of 
positive measure $\mu$ and $\mu$-a.e. $x\in V$, there exists some $n$ 
such that   $k^n(x, A) >0$,  i.e., any set $A$ of positive measure  is
 attainable from $\mu$-a.e. $x$.

(2) A \textit{symmetric measure} $\rho$ on $\vv $ is called
  \textit{irreducible} if the corresponding kernel $\mc K(\rho) : (x, A) 
 \to\rho_x(A)$ is $\mu$-irreducible where $\mu$ is the projection of 
 measure  $\rho$ onto $\VB$.  

(3) A symmetric measure $\rho$ (or the kernel $ x \to
   \rho_x(\cdot)$) is called $\mu$-\textit{decomposable} if 
 there exists a Borel subset $A$ of $V$ of positive measure $\mu$ 
 such that 
\be\label{eq decomposable 1}
 E \subset (A\times A) \cup (A^c \times A^c)
 \ee
 where $A^c = V \setminus A$ is also of positive measure. 
 Otherwise, $\rho$ is called  \textit{indecomposable}. 
\end{definition} 

Let  $\rho$ be a symmetric measure on $\vv$. 
For any fixed $x\in V$, we define a sequence of subsets: 
$A_0(x) = \{x\}$,  $A_1(x) = E_x,$ 
$$
  A_n(x) =  \bigcup_{y\in A_{n-1}(x)} E_y, \ \ \ n \geq 2.
$$ 
Recall that $E_x$ is the support of the measure $\rho_x$, and $E_x$ can 
be identified with the vertical section of the symmetric set $E$. Note that 
$A_n(x) \in \B$ as $x\to E_x$ is a measurable  field of sets.
 
\begin{lemma} \label{lem-irr measure}
(1) Given $\sms$, a symmetric  measure $\rho$ is irreducible if
 and only if for $\mu$-a.e. $x \in V$  and any set $B\in \B$  of positive
  measure $\mu$ there exists $n\geq 1$  such that 
  \be\label{eq_irr measure via A_n}
  \mu(A_n(x) \cap B) >0. 
  \ee
  
 (2)  Let $\mc K(x, A) = \rho_x(A)$. Suppose that the support of 
$\rho$, the set $E$, satisfies relation \eqref{eq decomposable 1} 
where $\mu(A) >0$ and $\mu(A^c) >0$. Then the sets $A$ and $A^c$ 
are closed ,and  $x\mapsto \rho_x(A)$ is a $\mu$-reducible kernel. 
The converse statement also holds. 
\end{lemma}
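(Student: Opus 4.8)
The plan rests on two preliminary facts I would establish first. The first is \emph{reversibility of the kernel}: applying Lemma~\ref{lem symm measure via int}(1) to $f(x,y)=g(x)h(y)$ gives $\int(\mc Kg)\,h\,d\mu=\int g\,(\mc Kh)\,d\mu$ for $\mc K(x,\cdot)=\rho_x$, and moving one copy of $\mc K$ across at a time yields the same for every power, i.e.\ $\int_C\mc K^n(x,D)\,d\mu(x)=\int_D\mc K^n(x,C)\,d\mu(x)$ for all $C,D\in\B$, $n\ge 1$. The second is that $\mc K^n(x,\cdot)$ \emph{is carried by} $A_n(x)$: induction on $n$ from \eqref{eq_powers of k}, using that $\rho_x$ is concentrated on $E_x=A_1(x)$ and that $A_{n-1}(y)\subseteq A_n(x)$ whenever $y\in A_1(x)$ (a short separate induction). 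Two consequences: $\mc K^n(x,B)=\mc K^n(x,A_n(x)\cap B)$ for every $B$, and the reachable set $\Omega(x):=\bigcup_{n\ge 0}A_n(x)$ is forward invariant ($y\in\Omega(x)\Rightarrow E_y\subseteq\Omega(x)$), hence \emph{closed} for $\mc K$ with $x\in\Omega(x)$; by the observation following \eqref{eq_powers of k}, $\mc K^n(x,\Omega(x)^c)=0$ for all $n$.

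I would treat part (2) first, since part (1) reduces to it. The forward implication is just unwinding definitions: if $E\subseteq(A\times A)\cup(A^c\times A^c)$ then for $x\in A$ the section $E_x$ lies in $A$, so $\mc K(x,A^c)=\rho_x(A^c)=0$, i.e.\ $A$ is closed, and likewise $A^c$ is closed; hence $\mc K^n(x,A^c)=0$ for all $n$ and all $x\in A$, so the positive-measure set $B=A$ is never attainable from the positive-measure set $A^c$ and $x\mapsto\rho_x(A)$ is $\mu$-reducible. For the converse I would start from a positive-measure $B$ whose attainable-from set $A:=\{x:\ \exists n,\ \mc K^n(x,B)>0\}$ has $\mu(A^c)>0$ (this is $\mu$-reducibility); here $\mu(A)>0$ as well, since $\int\mc K(x,B)\,d\mu(x)=\nu(B)=\int_B c\,d\mu>0$ by Lemma~\ref{lem symm measure via int}(2) and $c>0$ a.e. If $y\notin A$ then $\rho_y(A)=0$ (otherwise some iterate would attain $B$ from $y$), so $A^c$ is closed; reversibility gives $\int_A\rho_y(A^c)\,d\mu(y)=\int_{A^c}\rho_y(A)\,d\mu(y)=0$, hence $\rho\big((A\times A^c)\cup(A^c\times A)\big)=0$, and deleting this $\rho$-null set from $E$ — legitimate under the standing mod-$0$ convention — puts $E$ in the form \eqref{eq decomposable 1} with both pieces of positive measure.

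For part (1), the first step is the remark that \eqref{eq_irr measure via A_n} holds at $x$ for every positive-measure $B$ exactly when $\mu(\Omega(x)^c)=0$: if $\Omega(x)$ has full measure then $\mu(B)=\mu(\Omega(x)\cap B)\le\sum_n\mu(A_n(x)\cap B)$ forces a positive summand, while if $\mu(\Omega(x)^c)>0$ the choice $B=\Omega(x)^c$ defeats \eqref{eq_irr measure via A_n}. Given this, "$\rho$ irreducible $\Rightarrow$ \eqref{eq_irr measure via A_n}" is immediate: if $\mu(\Omega(x)^c)>0$ held on a positive-measure set, choose such an $x$ in the conull set where irreducibility operates and get a contradiction, because $\Omega(x)^c$ is unattainable from $x\in\Omega(x)$. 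For "\eqref{eq_irr measure via A_n} $\Rightarrow$ $\rho$ irreducible" I would argue contrapositively via part (2): if $\rho$ is not irreducible, then $E\subseteq(A\times A)\cup(A^c\times A^c)$ with $\mu(A),\mu(A^c)>0$, so $x\in A$ gives $E_x\subseteq A$, hence $A_n(x)\subseteq A$ for all $n$ and $\Omega(x)\subseteq A$, whence $\mu(\Omega(x)^c)\ge\mu(A^c)>0$ on the positive-measure set $A$ and \eqref{eq_irr measure via A_n} fails.

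The step I expect to be the main obstacle is precisely the passage, needed both in the converse of (2) and in (1), between \emph{analytic} reachability ($\mc K^n(x,B)>0$) and \emph{combinatorial} reachability ($\mu(A_n(x)\cap B)>0$): since the conditional measures $\rho_x$ may be $\mu$-singular (case (b) of Remark~\ref{rem on symm meas}(4)), one cannot use Radon--Nikodym densities, and a $\mu$-positive subset of $A_n(x)$ need not be charged by $\mc K^n(x,\cdot)$ nor conversely. My way around this is to route all reachability through the closed invariant set $\Omega(x)$ and to use reversibility to compare the forward-invariant sets with the backward (attainable-from) sets at the level of their $\mu$-measures rather than directly; a secondary, routine point is an exhaustion over a countable generating algebra of $\B$ making the exceptional $\mu$-null set of $x$ in Definition~\ref{def_irreducible} independent of the test set $B$, as the lemma's phrasing requires.
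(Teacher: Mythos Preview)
The paper does not supply its own proof of this lemma; immediately after the statement it writes ``This lemma was proved in \cite{BezuglyiJorgensen_2018}.'' So there is no in-text argument to compare against, and your sketch stands on its own. The main ingredients are the right ones: reversibility $\int_C\mc K^n(\cdot,D)\,d\mu=\int_D\mc K^n(\cdot,C)\,d\mu$ inherited from symmetry of $\rho$; the support inclusion $\mathrm{supp}\,\mc K^n(x,\cdot)\subseteq A_n(x)$ by induction on \eqref{eq_powers of k}; and routing both directions of (1) through the decomposability criterion of (2) via the closed reachable sets $\Omega(x)=\bigcup_n A_n(x)$. One point worth making explicit is that symmetry of $E$ gives $y\in A_n(x)\Leftrightarrow x\in A_n(y)$ (reverse the chain $x=z_0,z_1,\dots,z_n=y$ with $z_{i+1}\in E_{z_i}$), so the $\Omega(x)$ are the classes of an equivalence relation and \emph{both} $\Omega(x)$ and $\Omega(x)^c$ are closed for $\mc K$; this is what makes the passage through (2) clean.

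The one step I would not call ``routine'' is the quantifier exchange you flag at the end: upgrading the $B$-dependent exceptional null set in Definition~\ref{def_irreducible} to a single null set uniform in $B$. Approximating an arbitrary $B$ by elements of a countable generating algebra in the metric $\mu(\cdot\,\Delta\,\cdot)$ does not produce a \emph{subset} of $B$, so attainability of the approximant need not transfer to $B$; the argument as written for ``irreducible $\Rightarrow$ \eqref{eq_irr measure via A_n}'' therefore has a gap (you choose $B=\Omega(x)^c$, but the conull set from irreducibility depends on that very $B$). The fix is already implicit in your own strategy: bypass the direct implication entirely and argue both directions through (2). If \eqref{eq_irr measure via A_n} fails on a positive-measure set, then either some $\Omega(x_0)$ has $0<\mu(\Omega(x_0))<\mu(V)$ and furnishes the decomposition \eqref{eq decomposable 1} directly, or all classes are $\mu$-null, in which case a measurable saturated set of intermediate measure (whose existence is where the standard-Borel hypothesis and the ergodic decomposition of the relation enter) does the job; either way $\rho$ is decomposable, hence reducible by the forward half of (2). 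This avoids the quantifier-exchange issue and also covers the degenerate case $\mu(\Omega(x))=0$ a.e., which your write-up does not explicitly address.
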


This lemma was proved in \cite{BezuglyiJorgensen_2018}.

\subsection{Symmetric measures and associated operators of 
Hilbert spaces}\label{subsect symm meas operators}

Let $\sms$ be a $\sigma$-finite measure space, and let $\rho$ be a 
symmetric measure on $\vv$ supported by a symmetric set $E$. Let  
$x\mapsto \rho_x$ be the measurable family of measures on $\VB$ that 
 \textit{disintegrates} 
$\rho$. Recall that, by Assumption 1, the function  $c(x) = \rho_x(V)$ is
 finite for $\mu$-a.e. $x$. As discussed  Subsection 
 \ref{subsect sms symm meas}, the measure $\rho$   generates a 
finite kernel $\mc K(\rho)$ which we use to define the following
 operators. 

\begin{definition}\label{def R, P, Delta}
For  a symmetric measure $\rho$  on $\vv$, we
define three linear operators $R, P$, and $\Delta$ acting on the space of
bounded Borel functions $\FVB$.\\

(i) The \textit{symmetric operator}$R$:
\be\label{eq def of R} 
R(f)(x) := \int_V f(y) \; d\rho_x(y) = \rho_x(f). 
\ee 

(ii) The\textit{ Markov operator} $P$:
$$
P(f)(x) = \frac{1}{c(x)}R(f)(x)
$$
or
\be\label{eq formula for P}
 P(f)(x) := \frac{1}{c(x)}  \int_V f(y) \; d\rho_x(y) = \int_V f(y) \; 
 P(x, dy)
 \ee
 where  $P(x, dy)$ is the probability measure obtained by normalization
 of $d\rho_x(y)$, i.e. 
 $$
 P(x, dy) := \frac{1}{c(x)}d\rho_x(y).
 $$
 
 (iii) The \textit{graph Laplace operator} $\Delta$:
 \be\label{eq def of Delta}
\Delta(f)(x) := \int_V (f(x) - f(y)) \; d\rho_x(y)
\ee
or 
\be\label{eq Delta via R}
\Delta(f) = c(I - P)(f) = (cI  - R)(f).
\ee
\end{definition}

\begin{remark}\label{rem R and rho}  We assemble in this remark 
several obvious properties of the defined operators. 

(1) The operator $R$ (and therefore $P$) is positive, in the sense that 
$f \geq 0$ implies $R(f) \geq 0$. Moreover, $R(\mathbbm 1) = c(x)$ 
and therefore, using (\ref{eq c finite}), we can write the operator 
$\Delta$ in more symmetric form:
$$
\Delta(f) = R(\mathbbm 1)f  - R(f)
$$
where $\mathbbm 1$ is a function on $V$ identically equal to $1$. 

For the operator $P$, we have  $P(\mathbbm 1) =\mathbbm 1$. 
It justifies the name of \textit{Markov operator} used for $P$. 
Conversely, any Markov operator $P$ defines the probability kernel 
  $x \mapsto P(x, A) = P(\chi_A)(x)$ called also the 
\textit{transition probabilities}. 

(2) The definition of each of the operators $R$, $P$, and $\Delta$
 depends on a measure $\rho$ on $\vv$, and, strictly speaking,
they must be denoted as  $R(\rho)$, $P(\rho)$, and $\Delta(\rho)$.
Since most of our results are proved for a fixed measure $\rho$, we will
omit $\rho$ in our notation. 

More generally, Definition \ref{def R, P, Delta} makes sense for arbitrary 
measure $\rho$. The case of symmetric measure is more interesting and 
allows to prove deeper results. 

(3) It is worth noting that the Laplacian $\Delta$ can be defined using
a different approach. In Theorem \ref{thm mu_f and Delta}, it is proved 
that $\Delta(f)$  is the Radon-Nikodym derivative of a measure $\mu_f$ 
with respect to $\mu$.

(4) Since every measure  $\rho$  on $V \times V$  is uniquely determined
 by its values on a dense subset of functions, 
it suffices to define $\rho$ on the set of the so-called ``cylinder  
functions'' $(f \otimes g)(x, y) := f(x)g(y)$. This observation will be used
below when we prove some relations for cylinder functions first. 

(5) In general, a positive operator $R$ in $\FVB$ is called
 \textit{symmetric} if it satisfies the relation:
\be\label{eq symm in terms R}
\int_V f R(g)\; d\mu = \int_V R(f) g\; d\mu,
\ee
for any $f, g \in F(V, \B)$. It can be easily seen that $R$ is symmetric if
and only if it defines a symmetric measure $\rho$ by the formula
$$
\rho(A\times B) = \int_V \chi_A(x) R(\chi_B)(x)\; d\mu(x).
$$

(6) In Definition \ref{def R, P, Delta}, we do not discuss domains of
the operators $R, P$, and $\Delta$. They depend on the space where an 
operator is considered. In the current paper, we work with $L^2$- spaces 
defined by the measures $\mu, \nu$, and $\rho$. But the most 
intriguing is the case of the finite energy space Hilbert space $h_E$ 
which is defined below.  
\end{remark}

We are interested in the following \textit{question}. Let $x\mapsto \rho_x$ 
be a measurable field of finite measures over a standard Borel space $\VB$. 
How can one describe the set of Borel $\sigma$-finite measures $\mu$ on 
$\VB$ such that the measure $\rho = \int_V \rho_x d\mu(x)$ would be
a symmetric measure on $\vv$. A partial answer was given in Remark
\ref{rem R and rho}: if the operator $R : f \mapsto \int_V f \; d\rho_x$ 
satisfies \eqref{eq symm in terms R}, then the pair $\{(x \mapsto \rho_x),
\mu\}$ determines a symmetric measure. 

In the following proposition we clarify relations between symmetric 
measures $\rho$ and symmetric operators $R$ that were defined in
Remark \ref{rem R and rho}.

\begin{proposition}\label{prop symm rho vs R}
Let $x\mapsto \rho_x$ be a measurable field of finite measures over 
a standard Borel space $\VB$ which defines the operator $R$. 
Suppose that $\mu$ is a measure on 
$\VB$ such that  \eqref{eq symm in terms R} holds. Let $p(x)\in \Lloc$
 be a positive Borel function and $R'(f) := R(fp)$.

(1) The measure 
$$
\rho'(A \times B) := \int_V \chi_A R'(\chi_B)\; d\mu 
$$
is symmetric on $\vv$ if and only if $R\circ M_p = M_p\circ R$ where 
$M_p$ is the operator of multiplication by $p$. 

(2) Given a positive Borel function $p\in  \Lloc$ and the measure 
$d\beta(x) = p(x) d\mu(x)$, the measure $\rho_{\beta} = \int_V 
\rho_x d\beta(x)$ is symmetric on $\vv$ if and only if the operator 
$R'$ is symmetric with respect to the pair $\{(x \mapsto \rho_x),
\beta\}$. 
\end{proposition}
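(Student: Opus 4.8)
The plan is to translate both claims into identities between the operators $R$, $M_p$, and $R'=R\circ M_p$, via the criterion recorded in Remark~\ref{rem R and rho}(5): a positive operator is symmetric with respect to a $\sigma$-finite measure exactly when the associated bilinear form $(A,B)\mapsto\int_V\chi_A\,R(\chi_B)\,d\mu$ is invariant under the flip $A\leftrightarrow B$. Before the main computation I would dispose of two routine preliminaries. Since $p$ need not be bounded, I would first extend the standing relation \eqref{eq symm in terms R} for $R$ from bounded Borel functions to arbitrary nonnegative Borel functions, writing $p=\lim_n(p\wedge n)$, applying \eqref{eq symm in terms R} to the bounded truncations, and passing to the limit by monotone convergence, all terms being read in $[0,\infty]$ as in Lemma~\ref{lem symm measure via int}. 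Second, every operator identity obtained on the indicators $\chi_A$, $A\in\Bfin(\mu)$, will be promoted to $\Dfin(\mu)$ by linearity and then to all bounded Borel functions by density and bounded convergence.

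For part~(1), expand the two orientations of the form defining $\rho'$:
\[
\rho'(A\times B)=\int_V\chi_A\,R(\chi_B p)\,d\mu=\int_V\chi_A\,(RM_p)(\chi_B)\,d\mu,
\qquad
\rho'(B\times A)=\int_V\chi_B\,(RM_p)(\chi_A)\,d\mu.
\]
Applying the $\mu$-symmetry of $R$ to the first expression with the pair $(\chi_A,\chi_B p)$ rewrites it as $\int_V R(\chi_A)\,\chi_B\,p\,d\mu=\int_V\chi_B\,(M_pR)(\chi_A)\,d\mu$. Hence $\rho'$ is symmetric if and only if $\int_V\chi_B\,(M_pR)(\chi_A)\,d\mu=\int_V\chi_B\,(RM_p)(\chi_A)\,d\mu$ for all $A,B\in\Bfin(\mu)$; letting $B$ range over $\Bfin(\mu)$ and using $\sigma$-finiteness of $\mu$, this is equivalent to $(M_pR)(\chi_A)=(RM_p)(\chi_A)$ $\mu$-a.e. for each such $A$, and after the density step exactly to $M_pR=RM_p$. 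The converse implication is read off the same chain of equalities in reverse.

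For part~(2) I would first identify $\rho_\beta$ as the flip of the measure $\rho'$ of part~(1). Indeed $\rho_\beta(A\times B)=\int_A\rho_x(B)\,d\beta(x)=\int_V(p\,\chi_A)\,R(\chi_B)\,d\mu$, and one application of the extended $\mu$-symmetry of $R$ to the pair $(p\chi_A,\chi_B)$ turns this into $\int_V R(p\,\chi_A)\,\chi_B\,d\mu=\int_V\chi_B\,R'(\chi_A)\,d\mu=\rho'(B\times A)$. Since a measure on $\vv$ is symmetric precisely when its image under the flip $\theta$ is symmetric, $\rho_\beta$ is symmetric if and only if $\rho'$ is, hence, by part~(1), if and only if $M_pR=RM_p$. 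To match the wording of the statement it then suffices to observe that the field $x\mapsto\rho_x$ disintegrates $\rho_\beta$ over the base measure $\beta$, so that Remark~\ref{rem R and rho}(5), applied with $\beta$ in place of $\mu$, says $\rho_\beta$ is symmetric iff the operator attached to the pair $\{(x\mapsto\rho_x),\beta\}$ is symmetric; unwinding this relation with the help of the $\mu$-symmetry of $R$ shows it amounts to $\int_V\bigl(R(fp)-p\,R(f)\bigr)g\,d\mu=0$ for all bounded Borel $f,g$, i.e. again to $M_pR=RM_p$. Collecting the equivalences gives the claim.

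The algebra here is short; the places where I expect the actual work to lie are (a) the unboundedness of $p$, which forces the preliminary extension of \eqref{eq symm in terms R} and a careful use of the $[0,\infty]$-valued convention so that no step is performed with an indeterminate difference, and (b) the passage from identities on indicators $\chi_A$ ($A\in\Bfin(\mu)$) to genuine operator identities between $M_pR$ and $RM_p$ on all bounded Borel functions. A minor additional point, handled in the last step of part~(2), is to make the phrase \textup{``}$R'$ symmetric with respect to the pair $\{(x\mapsto\rho_x),\beta\}$\textup{''} precise enough to be recognized as the commutation relation $M_pR=RM_p$.
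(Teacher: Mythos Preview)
Your argument for part~(1) coincides with the paper's: both use the $\mu$-symmetry of $R$ (equivalently, of $\rho$) to rewrite $\rho'(A\times B)=\int_V \chi_B\, pR(\chi_A)\,d\mu$ and compare with $\rho'(B\times A)=\int_V \chi_B\, R(p\chi_A)\,d\mu$, yielding the commutation relation. Your extra care about extending \eqref{eq symm in terms R} to unbounded $p$ via truncation, and lifting the identity from indicators to general bounded Borel functions, is appropriate and is not spelled out in the paper.

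For part~(2) the routes differ. The paper proves the claim by a direct four-line calculation
\[
\int_V R'(f)\,g\,d\beta=\int_V R(fp)\,gp\,d\mu=\int_V (fp)\,R(gp)\,d\mu=\int_V f\,R'(g)\,d\beta,
\]
establishing the $\beta$-symmetry of $R'$ straight from the $\mu$-symmetry of $R$, with no detour through part~(1). You instead observe the flip identity $\rho_\beta(A\times B)=\rho'(B\times A)$ and reduce the symmetry of $\rho_\beta$ to the symmetry of $\rho'$, hence to $M_pR=RM_p$ via part~(1). Your route has the advantage of making the link between (1) and (2) explicit and of pinning down the precise condition on $p$; the paper's route is shorter and self-contained. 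One caution: the paper's computation shows that $R'$ is $\beta$-symmetric \emph{unconditionally}, so your closing sentence, which identifies ``$R'$ symmetric with respect to $\{(x\mapsto\rho_x),\beta\}$'' with the commutation relation $M_pR=RM_p$, does not match the paper's reading of that phrase; there it is taken literally as $\int R'(f)g\,d\beta=\int fR'(g)\,d\beta$, and that identity needs no extra hypothesis.
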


\begin{proof}
To show that (1) holds, we use the fact that the measure 
 $\rho = \int_V \rho_x d\mu(x)$ is symmetric:
 $$
 \ba
\rho'(A\times B) & = \int_V \chi_A R'(\chi_B)\; d\mu \\
& =  \int_V \chi_A \left(\int_V p\chi_B\; d\rho_x\right)\; d\mu \\
& = \iint_{\VtV} \chi_A(x) p(y)\chi_B(y)\; d\rho(x,y)\\
& = \iint_{\VtV} \chi_A(y) p(x)\chi_B(x)\; d\rho(x,y)\\
& = \int_V \chi_B p R(\chi_A)\; d\mu. \\
\ea
$$
On the other hand,
$$
\rho'(B\times A)  = \int_V \chi_B R'(\chi_A)\; d\mu. 
$$
Then $\rho'(A\times B) = \rho'(B\times A)$ if and only if 
$R'(\chi_A) = p R(\chi_A)$. By linearity, the latter is extended to the
relation $R\circ M_p = M_p\circ R$.

(2) For this, we compute
$$
\ba 
\int_V R'(f) g \; d\beta &= \int_V R(fp) g \; d\beta\\
& = \int_V R(fp) g p\; d\mu\\
& = \int_V (fp) R(g p)\; d\mu\\
& = \int_V f R'(g)\; d\beta\\
\ea
$$
\end{proof}

\begin{corollary}
Let $\rho$ and $\rho'$ be two symmetric measures on $\vv$ defined
by the pairs $\{(x \mapsto \rho_x), \mu\}$ and 
$\{(x \mapsto \rho_x), \mu'\}$, respectively. Then the Laplace 
operators $\Delta(\rho)$ and $\Delta(\rho')$ coincide.
\end{corollary}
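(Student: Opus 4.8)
The plan is to notice that the graph Laplace operator is a purely \emph{fibrewise} object: by \eqref{eq def of Delta} we have $\Delta(\rho)(f)(x)=\int_V(f(x)-f(y))\,d\rho_x(y)$, an expression that refers only to the single conditional measure $\rho_x$ and to the point $x$, and that involves the base measure $\mu$ nowhere. Since, by hypothesis, $\rho$ and $\rho'$ are disintegrated by the \emph{same} measurable field $x\mapsto\rho_x$ --- the two defining pairs differing only in the second slot --- the coincidence $\Delta(\rho)=\Delta(\rho')$ should be essentially immediate once this independence is spelled out.

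Concretely, I would proceed in three small steps. First, recall from Theorem \ref{thm Simmons} and \eqref{eq rho_x def} that the phrase ``$\rho$ is defined by the pair $\{(x\mapsto\rho_x),\mu\}$'' means that $\wt\rho_x=\delta_x\times\rho_x$ is the a.e.-unique system of conditional measures of $\rho$ with respect to $\mu$; likewise for $\rho'$ and $\mu'$. In particular the functions $c(x)=\rho_x(V)$ attached to $\rho$ and $c'(x)=\rho'_x(V)$ attached to $\rho'$ are literally the same, since the fields are the same; call the common function $c$. Second, for the symmetric operator of Definition \ref{def R, P, Delta} one has, for every $f\in\FVB$ and every $x$ where the common field is defined,
$$
R(\rho')(f)(x)=\int_V f(y)\,d\rho'_x(y)=\int_V f(y)\,d\rho_x(y)=R(\rho)(f)(x).
$$
Third, invoke \eqref{eq Delta via R}, which expresses $\Delta(\rho)=cI-R(\rho)$ and $\Delta(\rho')=c'I-R(\rho')=cI-R(\rho)$; hence $\Delta(\rho')(f)=\Delta(\rho)(f)$ for all $f\in\FVB$. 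One may equally well skip $R$ and read the equality straight off \eqref{eq def of Delta}, as indicated above.

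The one point that deserves a word of care --- and it is the only candidate for an ``obstacle'' --- is the mod $0$ bookkeeping. The conditional system $x\mapsto\rho_x$ is canonical only up to a null set, a priori $\mu$-null for $\rho$ and $\mu'$-null for $\rho'$, so ``the same field'' is an unambiguous hypothesis only once one knows $\mu\sim\mu'$. In the setting where this corollary is applied that is automatic: by Proposition \ref{prop symm rho vs R}(2) one has $d\mu'=p\,d\mu$ for a strictly positive Borel density $p\in\Lloc$, whence $\mu\sim\mu'$ (with reciprocal density $1/p$), and Lemma \ref{lem symm measure via int}(2) then shows that $d\nu=c\,d\mu$ and $d\nu'=c\,d\mu'$ are mutually equivalent as well. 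Granting this, $\Delta(\rho)=\Delta(\rho')$ holds as an equality of operators on $\FVB$ defined by one and the same pointwise formula, and the proof is complete.
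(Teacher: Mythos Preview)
Your proof is correct and matches the paper's intended reasoning: the corollary is stated there without proof, precisely because \eqref{eq def of Delta} (equivalently \eqref{eq Delta via R}) shows that $\Delta(\rho)$ depends only on the fibre field $x\mapsto\rho_x$ and not on the base measure $\mu$. Your added remark on the mod $0$ issue and the equivalence $\mu\sim\mu'$ coming from Proposition \ref{prop symm rho vs R}(2) is a welcome clarification the paper leaves implicit.
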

 
In the following statement we discuss properties of $R, P$, and $\Delta$
as operators acting in $L^2$-spaces, see 
\cite{BezuglyiJorgensen2018, BezuglyiJorgensen_2018} for details. 

\begin{theorem}\label{prop prop of R, P, Delta}
For a standard measure space $\sms$, let $\rho$ be a symmetric 
measure on $\vv$ such that $c(x) = \rho_x(V)$ satisfies Assumption 1.  
Let $d\nu(x) = c(x) d\mu(x)$ be the $\sigma$-finite  measure on 
$(V, \B)$ equivalent to $\mu$, and let the operators $R, P$, and 
$\Delta$ be defined as in Definition \ref{def R, P, Delta}. 

(1) Suppose that the function $x \mapsto \rho_x(A) \in L^2(\mu)$ for 
every $A \in \Bfin$\footnote{This means that the operator $R$ is densely 
defined on functions from $\Dfin(\mu)$; in particular, this property holds
if $c \in L^2(mu)$}.  Then $R$ is a symmetric unbounded operator  in 
$L^2(\mu)$, i.e.,   
\be\label{eq-R symm}
\langle g, R(f) \rangle_{L^2(\mu)} = \langle R(g), f \rangle_{L^2(\mu)}.
\ee
If $c \in L^\infty(\mu)$, then $R :  L^2(\mu) \to 
L^2(\mu)$ is a bounded operator, and 
$$
||R||_{L^2(\mu) \to L^2(\mu)} \leq ||c||_{\infty}.
$$
Relation \eqref{eq-R symm} is equivalent to the symmetry of the measure
$\rho$. 

(2) The operator $R : L^1(\nu) \to L^1(\mu)$ is contractive, i.e.,  
$$
||R(f)||_{L^1(\mu)} \leq ||f||_{L^1(\nu)}, \qquad f \in L^1(\nu).
$$
Moreover,   for any function $f \in L^{1} (\nu)$,  the formula 
\be\label{eq c(x) and rho_x}
\int_V R(f) \; d\mu(x) = \int_V f(x) c(x) \; d\mu(x)
\ee
holds. In other words, $\nu = \mu R$, and 
$$
\frac{d(\mu R)}{d\mu}(x) = c(x).
$$

(3) The bounded operator $P: L^2(\nu) \to L^2(\nu)$ is self-adjoint.
Moreover, $\nu P = \nu$. 

(4) The operator $P$, considered in the spaces $L^2(\nu)$ and $L^1(\nu)$, 
is  contractive, i.e., 
$$
|| P(f) ||_{L^2(\nu)} \leq || f ||_{L^2(\nu)}, \qquad || P(f) ||_{L^1(\nu)} \leq 
|| f ||_{L^1(\nu)}.
$$ 

(5) The spectrum of $P$ in $L^2(\nu)$ is a subset of $[-1, 1]$.

(6) The graph Laplace operator $\Delta : L^2(\mu)  \to L^2(\mu) $ 
is a positive definite essentially
self-adjoint operator with domain containing $\Dfin(\mu)$. Moreover, 
$$
|| f||^2_{\h_E} = \int_V f \Delta(f)\; d\mu
$$
when the integral in the right hand side exists. 
\end{theorem}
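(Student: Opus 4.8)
The plan is to prove the six assertions in the natural order, deriving each from the disintegration formula \eqref{eq disint formula} together with the symmetry identity \eqref{eq formula fo symm meas}, which is the engine behind essentially every computation. First, for (1), I would compute $\langle g, R(f)\rangle_{L^2(\mu)} = \int_V g(x)\bigl(\int_V f(y)\,d\rho_x(y)\bigr)d\mu(x) = \iint_{V\times V} g(x)f(y)\,d\rho(x,y)$; applying \eqref{eq formula fo symm meas} (with the roles of the two coordinates swapped) turns this into $\iint g(y)f(x)\,d\rho(x,y) = \langle R(g), f\rangle_{L^2(\mu)}$, which is \eqref{eq-R symm}; the hypothesis $x\mapsto\rho_x(A)\in L^2(\mu)$ guarantees $R$ is defined on the dense domain $\Dfin(\mu)$. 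The norm bound when $c\in L^\infty(\mu)$ follows from $|R(f)(x)| \le \int_V |f(y)|\,d\rho_x(y)$ and the Cauchy–Schwarz inequality applied to the probability measure $P(x,\cdot) = c(x)^{-1}d\rho_x$, giving $|R(f)(x)|^2 \le c(x)\int_V |f(y)|^2 d\rho_x(y)$, then integrating in $d\mu$ and using $\int_V \bigl(\int_V |f|^2 d\rho_x\bigr)d\mu = \iint |f(y)|^2 d\rho(x,y) = \int_V |f|^2 c\,d\mu$ by symmetry. That last identity is exactly \eqref{eq c(x) and rho_x} applied to $|f|^2$, and in fact \eqref{eq c(x) and rho_x} in general is the specialization of this computation: $\int_V R(f)\,d\mu = \iint f(y)\,d\rho(x,y) = \iint f(x)\,d\rho(x,y) = \int_V f(x)\rho_x(V)\,d\mu(x) = \int_V f c\,d\mu$. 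The $L^1$-contractivity of $R: L^1(\nu)\to L^1(\mu)$ in (2) is then immediate by applying this to $|f|$ and using $d\nu = c\,d\mu$ (Lemma \ref{lem symm measure via int}).

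For (3) and (4), I would use $P = M_{1/c}R$ and Lemma \ref{lem symm measure via int}. Self-adjointness of $P$ on $L^2(\nu)$: $\langle P f, g\rangle_{L^2(\nu)} = \int_V c(x)^{-1}R(f)(x)\,g(x)\,c(x)\,d\mu(x) = \int_V R(f)g\,d\mu = \int_V f R(g)\,d\mu$ by \eqref{eq-R symm}, and symmetrically this equals $\langle f, Pg\rangle_{L^2(\nu)}$; one should check the integrals are finite, which is where $c \in L^2_{\mathrm{loc}}(\mu)$ or density arguments enter. The identity $\nu P = \nu$ is $\int_V P(\chi_A)\,d\nu = \int_V \chi_A\,d\nu$, which unwinds to $\iint \chi_A(y)\,d\rho = \iint \chi_A(x)\,d\rho$, again symmetry. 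Contractivity on $L^1(\nu)$ follows from $|Pf(x)| \le \int |f(y)|\,P(x,dy)$ and $\nu P = \nu$; contractivity on $L^2(\nu)$ follows from Jensen's inequality against the probability measure $P(x,\cdot)$: $|Pf(x)|^2 \le \int |f(y)|^2 P(x,dy)$, then integrate in $d\nu$ using $\nu P = \nu$. Claim (5) is then a formality: a self-adjoint contraction on a Hilbert space has real spectrum inside $[-1,1]$.

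Finally, for (6), $\Delta = cI - R = c(I-P)$ on $\Dfin(\mu)$. Positive definiteness: for $f\in\Dfin(\mu)$, $\langle f, \Delta f\rangle_{L^2(\mu)} = \int_V f(x)\bigl(\int_V(f(x)-f(y))\,d\rho_x(y)\bigr)d\mu(x) = \iint f(x)(f(x)-f(y))\,d\rho(x,y)$; symmetrizing via \eqref{eq formula fo symm meas} and averaging the two expressions yields the manifestly nonnegative form $\tfrac12\iint (f(x)-f(y))^2\,d\rho(x,y)$, which is by definition $\|f\|_{\h_E}^2$ — giving simultaneously positive definiteness and the stated energy formula. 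For essential self-adjointness on $L^2(\mu)$ I would argue that $\Delta$ is symmetric (immediate from the symmetrized form, or from symmetry of $R$ plus $\Delta = cI - R$) and nonnegative, so by the standard criterion it suffices to show $\mathrm{Ran}(\Delta + I)$ is dense in $L^2(\mu)$; this is where I expect the real work, and it will use the representation $\Delta = c(I-P)$, the self-adjointness of $P$ on $L^2(\nu)$, spectral calculus for $P$ (so that $(I-P)$ has dense range on the appropriate subspace), and a careful transfer between the $L^2(\mu)$ and $L^2(\nu)$ norms via the multiplication operator $M_c$, which is only densely defined and unbounded — controlling its interaction with the spectral resolution of $P$ is the main obstacle. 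I would likely cite \cite{BezuglyiJorgensen2018, BezuglyiJorgensen_2018} here and reduce to a core deficiency-index computation on $\Dfin(\mu)$, which is dense in every $L^p$.
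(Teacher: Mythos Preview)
The paper does not actually prove this theorem; it states the result and refers to \cite{BezuglyiJorgensen2018, BezuglyiJorgensen_2018} for details. Your proposal is therefore more detailed than anything in the present paper, and for parts (1)--(5) and the positive-definiteness/energy identity in (6) your computations are correct and are exactly the standard route: everything reduces to the disintegration formula \eqref{eq disint formula} and the symmetry identity \eqref{eq formula fo symm meas}, with Jensen/Cauchy--Schwarz on the probability measure $P(x,\cdot)$ supplying the contractivity bounds.

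The only substantive point is essential self-adjointness of $\Delta$ on $L^2(\mu)$ in (6). You correctly flag this as the real work and correctly identify the obstruction: $\Delta = c(I-P)$ with $P$ self-adjoint on $L^2(\nu)$, but the transfer $L^2(\nu)\leftrightarrow L^2(\mu)$ goes through the unbounded multiplication $M_{\sqrt{c}}$, so the spectral calculus for $P$ does not immediately give you density of $\mathrm{Ran}(\Delta+I)$ in $L^2(\mu)$. Your instinct to cite \cite{BezuglyiJorgensen2018, BezuglyiJorgensen_2018} at this step is precisely what the paper itself does, so there is no discrepancy to report. If you want a self-contained argument, the route in those references is to realize $\Delta$ as $J^*J$ for the closable embedding $J:L^2(\mu)\to\h_E$ (the same $J$ that reappears later in Theorem \ref{thm J closable} and Theorem \ref{thm Delta_h}); an operator of the form $J^*J$ is automatically self-adjoint on the natural domain, and one then checks it extends the symmetric $\Delta$ defined on $\Dfin(\mu)$.
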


\begin{remark}\label{rem nonsymm meas}
Suppose that  a non-symmetric measure $\rho$ is given on the
space $\vv$, i.e, $\rho(A \times B) \neq \rho(B\times A)$, in general. 
However, we will assume that $\rho$ is equivalent to $\rho\circ
\theta$ where $\theta(x, y) = (y, x)$. Then we can define 
the following objects: margin measures 
$\mu_i := \rho\circ \pi_i^{-1}, i =1,2,$, fiber measures $d\rho_x(\cdot)$
and $d\rho^x(\cdot)$ (see Remark \ref{rem on symm meas}),  and 
functions $c_1(x) = \rho_x(V), c_2(x) = \rho^x(V)$.  

Define now the \textit{symmetric measure} $\rho^{\#}$ generated by 
$\rho$:
$$\rho^{\#} := \dfrac{1}{2}(\rho + \rho\circ\theta).
$$
 Then
$$
\rho^{\#}(A \times B) =\frac{1}{2}(\rho(A\times B)+ \rho(B \times A)).
$$
Clearly, $\rho^{\#}$ is equivalent to $\rho$.

Let $E\subset \VtV$ be the support of $\rho$. Then $E^{\#} = E \cup 
\theta(E)$ is the  support of the symmetric measure  $\rho^{\#}$.  
The disintegration of $\rho 
= \int_V\rho_x\; d\mu_1(x)$ with respect to the partition 
$\{x\} \times E_x$ defines the 
disintegration of $\rho^{\#}$. For $\mu^{\#} := \dfrac{1}{2}(\mu_1 +
 \mu_2) $, we obtain that 
 $$
 \rho^{\#} = \int_V (\rho_x + \rho^x) \; d\mu^{\#}.
 $$

Having the symmetric measure $\rho^{\#}$ defined on $\vv$, 
we can introduce 
the operators $R^{\#}$ and $P^{\#}$ as in \eqref{eq def of R} and
\eqref{eq formula for P}. It turns out that, for $f\in \FVB$,
$$
R^{\#}(f)(x) = R_1(f)(x) + R_2(f)(x) 
$$
where
$$
R_1(f) = \int_V f(y) \; d\rho_x(y),\qquad R_2(f) = \int_V f(y) \; 
d\rho^x(y).
$$ 
Similarly, 
$$
P^{\#}(f)(x)  = \frac{1}{c^{\#}(x)} R^{\#}(f)(x)
$$
where
$$
c^{\#}(x) = \rho_x( V) + \rho^x(V).
$$ 
Then we can define the measure
$d\nu^{\#}(x) = c^{\#}(x) d\mu(x)$ such that the operator
$$
P^{\#} (f)(x) = \int_V f(y)  \frac{1}{c^{\#}(x)}\; d\rho^{\#}_x(y)
$$
is self-adjoint in $L^2(\nu^{\#})$. By Theorem \ref{prop_reversible} (see
below), we obtain that the Markov process generated by $x \mapsto 
P^{\#}(x, \cdot)$ is \textit{reversible} where $P^{\#}(x, A) = 
P^{\#}(\chi_A)(x)$. 
\end{remark}

Here we define an important class of functions that will be discussed 
throughout the paper.

\begin{definition}\label{def_harmonic}
A  function $f\in \FVB$ is called \textit{harmonic}, if $Pf = f$.
Equivalently, $f$ is harmonic if $\Delta f = 0$ or $R(f) = cf$. Similarly, 
$h$ is \textit{harmonic} for a kernel $x\to k(x, \cdot)$ if 
$$
\int_V h(y) \; k(x, dy) = h(x).
$$
The set of harmonic functions will be denoted by $\mc Harm(P)$.
\end{definition}

It turns out that $P$-harmonic functions cannot lie in the space $L^2(\nu)$
where  $\nu$ is $P$-invariant.

\begin{theorem}[\cite{BezuglyiJorgensen2018}] 
\label{thm harmonic} Given a standard measure space 
$(V, \B, \nu)$, let $P$ be  a Markov operator
on  $L^2(\nu)$ defined by transition probabilities as in 
(\ref{eq formula for P}). Suppose  that $\nu P = \nu$.  Then  
$$
 L^2(\nu) \cap \h arm(P) = \begin{cases} 0,  & \nu(V) = \infty\\
\mathbb R\mathbbm 1 , & \nu(V) < \infty
 \end{cases}
$$ 
Moreover, $1$ does not belong to the point spectrum of the operator 
$P$ on the space $L^2(\nu)$.  
\end{theorem}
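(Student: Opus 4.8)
The plan is to identify $L^{2}(\nu)\cap\h arm(P)$ with $\ker(I-P)$ and to compute the quadratic form $f\mapsto\langle f,(I-P)f\rangle_{L^{2}(\nu)}$ explicitly as a Dirichlet-type energy. By Theorem~\ref{prop prop of R, P, Delta}(3)--(5), $P$ is a self-adjoint contraction on $L^{2}(\nu)$ with spectrum in $[-1,1]$, so $I-P\ge 0$ and $\langle f,(I-P)f\rangle_{L^{2}(\nu)}=0$ forces $(I-P)^{1/2}f=0$, hence $(I-P)f=0$; thus $f\in L^{2}(\nu)$ is harmonic if and only if this form vanishes. Since $P(\mathbbm 1)=\mathbbm 1$ always, and $\mathbbm 1\in L^{2}(\nu)$ exactly when $\nu(V)<\infty$, the inclusion $\R\mathbbm 1\subseteq L^{2}(\nu)\cap\h arm(P)$ is automatic in the finite-measure case, and everything reduces to showing the form vanishes only on constants.

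First I would establish the energy identity
\[
\langle f,(I-P)f\rangle_{L^{2}(\nu)}=\frac12\iint_{\VtV}(f(x)-f(y))^{2}\,d\rho(x,y),\qquad f\in L^{2}(\nu).
\]
Since $\Delta=c(I-P)$ and $d\nu=c\,d\mu$ (Lemma~\ref{lem symm measure via int}), the left-hand side equals $\int_{V}f\,\Delta(f)\,d\mu=\|f\|_{\h_E}^{2}$ by Theorem~\ref{prop prop of R, P, Delta}(6); alternatively, and self-containedly, $R(f)=c\,P(f)$ gives
\[
\langle f,(I-P)f\rangle_{L^{2}(\nu)}=\int_{V}f(x)^{2}c(x)\,d\mu(x)-\int_{V}f(x)R(f)(x)\,d\mu(x),
\]
where by $c(x)=\rho_{x}(V)$ and the disintegration \eqref{eq disint formula} the first term is $\iint_{\VtV}f(x)^{2}\,d\rho$, and by \eqref{eq def of R} the second is $\iint_{\VtV}f(x)f(y)\,d\rho$; symmetrizing the first term through \eqref{eq formula fo symm meas} yields the identity. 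The integrability bookkeeping is routine: $\int_{V}f^{2}c\,d\mu=\|f\|_{L^{2}(\nu)}^{2}<\infty$ and $\int_{V}|f\,R(f)|\,d\mu=\int_{V}|f|\,|Pf|\,d\nu\le\|f\|_{L^{2}(\nu)}\|Pf\|_{L^{2}(\nu)}\le\|f\|_{L^{2}(\nu)}^{2}$ by Cauchy--Schwarz and contractivity of $P$ on $L^{2}(\nu)$, so both double integrals converge absolutely and may be split.

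With the identity available, a harmonic $f\in L^{2}(\nu)$ has $\iint_{\VtV}(f(x)-f(y))^{2}\,d\rho=0$, i.e.\ $f(x)=f(y)$ for $\rho$-a.e.\ $(x,y)\in E$. Consequently, for each $c\in\R$ and $A_{c}:=\{f>c\}$, the set $(A_{c}\times A_{c}^{c})\cap E$ is $\rho$-null, so $E\subseteq(A_{c}\times A_{c})\cup(A_{c}^{c}\times A_{c}^{c})$ modulo a $\rho$-null set. If both $\mu(A_{c})>0$ and $\mu(A_{c}^{c})>0$, this is precisely condition \eqref{eq decomposable 1}, and Lemma~\ref{lem-irr measure}(2) shows the kernel $\mathcal K(x,\cdot)=\rho_{x}(\cdot)$ is $\mu$-reducible, contradicting irreducibility of $\rho$. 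Hence for every $c$ either $\mu(A_{c})=0$ or $\mu(A_{c}^{c})=0$, and since $f$ is real-valued this forces $f$ to agree $\mu$-a.e.\ --- hence $\nu$-a.e., as $\nu\sim\mu$ --- with a single constant $a\in\R$. If $\nu(V)=\infty$ then $a\mathbbm 1\in L^{2}(\nu)$ only for $a=0$, so $L^{2}(\nu)\cap\h arm(P)=\{0\}$; if $\nu(V)<\infty$ then, together with the automatic inclusion, $L^{2}(\nu)\cap\h arm(P)=\R\mathbbm 1$. Finally, $1\in\sigma_{p}(P)$ iff $\ker(I-P)=L^{2}(\nu)\cap\h arm(P)\neq\{0\}$, so $1\notin\sigma_{p}(P)$ when $\nu(V)=\infty$, while for $\nu(V)<\infty$ the eigenvalue $1$ is simple with eigenvector $\mathbbm 1$.

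The Hilbert-space inputs (self-adjointness, contractivity, spectral localization in $[-1,1]$) and the energy computation are immediate from Theorem~\ref{prop prop of R, P, Delta} and Lemma~\ref{lem symm measure via int}. The one genuinely delicate point I anticipate is the step ``constant along $\rho$-edges $\Rightarrow$ globally constant'': here irreducibility must be used, and one must control the $\mu$-null exceptional sets attached to the measurable fields $x\mapsto E_{x}$ and $x\mapsto A_{n}(x)$ --- the relevant observation being that $\nu=\int_{V}\rho_{x}\,d\mu\sim\mu$, so every $\mu$-null set is $\rho_{x}$-null for $\mu$-a.e.\ $x$, which is what lets the a.e.\ relations be chained over countably many steps without the exceptional sets accumulating.
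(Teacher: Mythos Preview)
The paper does not actually prove this theorem here; it is quoted with a citation to \cite{BezuglyiJorgensen2018} and no argument is reproduced. So there is nothing in this paper to compare your proof against line by line.

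That said, your argument is the standard one and is correct. The energy identity you derive is essentially Lemma~\ref{lem formula for energy norm} together with $\int_V f\,\Delta(f)\,d\mu=\langle f,(I-P)f\rangle_{L^2(\nu)}$, and the passage from ``$f(x)=f(y)$ for $\rho$-a.e.\ $(x,y)$'' to ``$f$ is constant'' via level sets and Lemma~\ref{lem-irr measure} is exactly what the paper's irreducibility apparatus is set up for. You are right to invoke irreducibility explicitly: although it is not restated in the hypotheses of Theorem~\ref{thm harmonic}, it is a standing assumption in the paper (Remark~\ref{rem H depends on rho}(2)), and without it the conclusion is false --- if $E\subset(A\times A)\cup(A^c\times A^c)$ with both pieces of positive measure, then $\chi_A$ is a nonconstant harmonic function in $L^2(\nu)$.

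Your final paragraph is in fact more careful than the paper's own wording. The ``Moreover'' clause as stated is only correct when $\nu(V)=\infty$; when $\nu(V)<\infty$ the constant $\mathbbm 1$ lies in $L^2(\nu)$ and is an eigenvector for eigenvalue $1$, so $1\in\sigma_p(P)$ with simple eigenspace $\R\mathbbm 1$. Your reading is the right one.
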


In what follows, we discuss briefly the relationship between symmetric 
measures and the notion of \textit{reversible Markov processes}. 

Recall our setting. Let $\VB$ be a standard Borel space, $\rho$ a 
symmetric $\sigma$-finite measure on $\vv$ satisfying Assumption 1,
$\mu$ the projection of $\rho$ on $\VB$, $c(x) = \rho_x(V)$ where
$x\mapsto \rho_x$ is the system of conditional measures. 
Let  $P$ denote the Markov operator defined by the family of 
transition probabilities $x \mapsto  P(x, \cdot)= c(x)^{-1}d\rho_x(y)$:
\be\label{eq-P via P(x,dy)}
P(f)(x)  = \int_V f(y) \; P(x, dy)
\ee
Furthermore, we can use the kernel $x \to P(x, \cdot) = P_1(x, \cdot)$ to
define the sequence of probability kernels (transition probabilities)
$(P_n(x, \cdot) : n \in \N)$ in accordance with (\ref{eq_powers of k}): 
$$
 P_{n+m}(x, A)  = \int_V P_n(y, A)  P_m(x, dy), \qquad n, m \in \N.
$$
Therefore, one has
$$
P^n(f)(x)  = \int_V f(y) \; P_n(x, dy), \qquad n \in \N,
$$ 
and this relation defines the sequence of probability measures 
$(P_n)$ by setting $P_0(x, A) = \delta_A(x) = \chi_A(x)$ and
$$
P_n(x, A) = P^n(\chi_A) = \int_V \chi_A(y) \; P_n(x, dy), 
\qquad A\in \B, n\in \N.
$$

Using the  Markov operator $P$, we can define the sequence 
of measures $(\rho_n)$ (here $\rho_1 = \rho$) on $\vv$ by  setting 
\be\label{eq_def rho_n}
\rho_n(A \times B) = \langle \chi_A, P^n(\chi_B)\rangle_{L^2(\nu)},
\quad n \in \N. 
\ee
Then it can be easily seen that the following properties hold.

\begin{lemma} For the measures $\rho_n$, defined by \eqref{eq_def rho_n},
we have:

(i) every measure $\rho_n, n \in \N,$ is symmetric on $\vv$, and
$\rho_n $ is equivalent to $\rho$;

(ii) $\rho_x^{(n)}(V) = c(x)$;

(iii) 
\be\label{eq_rho_n via P_N}
d\rho_n(x,y) = c(x) P_n(x, dy)d\mu(x)= P_n(x, dy)d\nu(x);
\ee

(iv) 
$$
|| P^n(\chi_A)||^2_{L^2(\nu)} = \rho_{2n} (A\times A);
$$

(iv) 
$$
\rho_n( A\times B) = \langle \chi_A, RP^{n-1}(\chi_B)
\rangle_{L^2(\mu)}.
$$
\end{lemma}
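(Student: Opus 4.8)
The plan is to verify each of the five claimed properties by reducing everything to the disintegration formula for $\rho$ together with the symmetry relation \eqref{eq-R symm} and the identity $d\nu = c\,d\mu$ from Lemma \ref{lem symm measure via int}. The starting point is the defining formula \eqref{eq_def rho_n}, $\rho_n(A\times B) = \langle \chi_A, P^n(\chi_B)\rangle_{L^2(\nu)}$, which on general Borel functions reads $\rho_n(f\otimes g) = \langle f, P^n g\rangle_{L^2(\nu)} = \int_V f(x)\,(P^n g)(x)\,c(x)\,d\mu(x)$; by Remark \ref{rem R and rho}(4) it suffices to check all identities on cylinder functions $f\otimes g$ and then extend by density.

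\textbf{Properties (i) and (iv$'$).} Symmetry of $\rho_n$ is exactly the statement $\langle f, P^n g\rangle_{L^2(\nu)} = \langle g, P^n f\rangle_{L^2(\nu)}$, i.e. self-adjointness of $P^n$ on $L^2(\nu)$; this is immediate from Theorem \ref{prop prop of R, P, Delta}(3), which gives that $P$ is self-adjoint on $L^2(\nu)$, hence so is every power $P^n$. For the equivalence $\rho_n\sim\rho$, I would argue that $P(x,\cdot)$ is equivalent to $\rho_x$ by construction (it is the normalization of $d\rho_x$), and inductively $P_n(x,\cdot)$ has the same null sets structure relative to the kernel iteration; combined with $d\nu\sim d\mu$ this yields $\rho_n\sim\rho$ via formula (iii). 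The last item, $\rho_n(A\times B) = \langle\chi_A, RP^{n-1}(\chi_B)\rangle_{L^2(\mu)}$, follows by writing $\langle\chi_A, P^n\chi_B\rangle_{L^2(\nu)} = \int_V \chi_A\,c\,(P^{n-1}(P\chi_B))\,d\mu$ and using $c\cdot P(h) = R(h)$ from \eqref{eq Delta via R}, so that $c\,P^n\chi_B = R(P^{n-1}\chi_B)$ pointwise.

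\textbf{Properties (ii) and (iii).} For (iii) I would compute directly from \eqref{eq_def rho_n}: $\rho_n(A\times B) = \int_V \chi_A(x) P^n(\chi_B)(x)\,d\nu(x) = \int_V \chi_A(x)\Big(\int_V \chi_B(y)\,P_n(x,dy)\Big)d\nu(x)$, which is precisely the integrated form of $d\rho_n(x,y) = P_n(x,dy)\,d\nu(x) = c(x)P_n(x,dy)\,d\mu(x)$. Then (ii) is the specialization of (iii): the conditional measure $\rho_x^{(n)}$ of $\rho_n$ over the point $x$ is $c(x)P_n(x,\cdot)$, whose total mass is $c(x)P_n(x,V) = c(x)\cdot 1 = c(x)$ since each $P_n(x,\cdot)$ is a probability measure. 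For (iv), $\|P^n\chi_A\|^2_{L^2(\nu)} = \langle P^n\chi_A, P^n\chi_A\rangle_{L^2(\nu)} = \langle\chi_A, P^{2n}\chi_A\rangle_{L^2(\nu)}$ by self-adjointness of $P$, and this is exactly $\rho_{2n}(A\times A)$ by \eqref{eq_def rho_n}.

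The main obstacle, such as it is, is bookkeeping around the $\sigma$-finiteness and the domains: $P^n\chi_B$ need not lie in $L^2(\nu)$ a priori for arbitrary $B$, so the pairing $\langle\chi_A, P^n\chi_B\rangle_{L^2(\nu)}$ must first be justified as a well-defined (possibly infinite) integral of nonnegative or $\nu$-integrable functions — here one uses contractivity of $P$ on $L^1(\nu)$ from Theorem \ref{prop prop of R, P, Delta}(4) for $B\in\Bfin$, and monotone-class/monotone-convergence arguments to pass to general Borel sets, interpreting all equalities in $[0,\infty]$ as in Lemma \ref{lem symm measure via int}. Once the integrability bookkeeping is in place, every claim is a one-line consequence of self-adjointness of $P$ on $L^2(\nu)$, the factorization $cP = R$, and Fubini/Tonelli for the kernel iteration \eqref{eq_powers of k}; I would present the cylinder-function computations explicitly and remark that density of $\Dfin(\mu)$ extends them.
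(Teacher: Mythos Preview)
The paper itself gives no proof of this lemma; it is simply announced with the phrase ``Then it can be easily seen that the following properties hold.'' Your derivations of (ii), (iii), both items labelled (iv), and the symmetry half of (i) are exactly the natural one-line verifications the authors presumably have in mind: each is an immediate unwinding of \eqref{eq_def rho_n} together with self-adjointness of $P$ on $L^2(\nu)$ (Theorem \ref{prop prop of R, P, Delta}(3)) and the identity $cP=R$. On those points your write-up is correct and there is nothing to compare.

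The one genuine gap is your treatment of the equivalence assertion $\rho_n\sim\rho$ in (i). You argue that ``inductively $P_n(x,\cdot)$ has the same null sets structure relative to the kernel iteration,'' but this is not true in general: the convolution $P_{n}(x,A)=\int P_{n-1}(y,A)\,P(x,dy)$ can be positive on sets where $P(x,\cdot)$ vanishes, and zero on sets where $P(x,\cdot)$ is positive. A concrete obstruction is any bipartite situation: if $V=V_1\cup V_2$ and $P(x,\cdot)$ is supported on the opposite part for every $x$, then $\rho$ is carried by $(V_1\times V_2)\cup(V_2\times V_1)$ while $\rho_2$ is carried by $(V_1\times V_1)\cup(V_2\times V_2)$, so $\rho$ and $\rho_2$ are mutually singular rather than equivalent. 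Thus either the equivalence claim needs an extra hypothesis (e.g.\ aperiodicity, or positivity of $\rho$ on the diagonal blocks) which the paper does not state, or it should be read more loosely than literal measure equivalence. In any case your inductive null-set argument does not establish it, and you should flag the issue rather than paper over it.
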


\begin{definition}\label{def reversible MP-1} 
Suppose that $x \mapsto P(x, \cdot )$ is a measurable family of transition
 probabilities on the space $\sms$, and let $P$ be the  Markov operator 
determined by $x \mapsto P(x, \cdot )$. It is said that the corresponding
 Markov process  is  \textit{reversible} with respect to a measurable 
 functions $c: x \to  (0, \infty)$ on $\VB$ if, for any sets $A, B \in \B$, the
following relation holds:
\be\label{eq def reversible P}
 \int_B c(x) P(x, A)\; d\mu(x) = \int_A c(x) P(x, B)\; d\mu(x).
\ee
\end{definition} 

It turns out that the notion of reversibility is equivalent to the following
properties.

\begin{theorem}[\cite{BezuglyiJorgensen2018, BezuglyiJorgensen_2018}]
\label{prop_reversible}
Let $\sms$ be a standard $\sigma$-finite measure space, $x \mapsto c(x)
\in (0, \infty)$ a measurable function, $c \in \Lloc$. Suppose that 
$x \mapsto P(x, \cdot )$ is a probability kernel. 
The following are equivalent:

(i) $x \mapsto P(x, \cdot )$ is reversible (i.e., it satisfies 
(\ref{eq def reversible P}); 

(ii) $x\to P_n(x, \cdot)$ is reversible for any $n\geq 1$;

(iii) the Markov operator $P$ defined by $x\to P(x, \cdot)$ is  self-adjoint 
on $L^2(\nu)$ and $\nu P= \nu$ where $d\nu(x) = c(x) d\mu(x)$;

(iv) 
$$
c(x) P(x, dy) d\mu(x) = c(y) P(y, dx)d\mu(y);
$$

(v) the operator $R$ defined by the relation $R(f)(x) = c(x)P(f)(x)$
is symmetric;

(vi) the measure $\rho$ on $(V\times V, \B\times \B)$ defined by 
$$
\rho(A \times B) = \int_V \chi_A R(\chi_B)\; d\mu = 
\int_V c(x) \chi_A P(\chi_B)\; d\mu
$$
is symmetric;

(vii) for every  $n \in \N$, the measure $\rho_n$ defined by 
(\ref{eq_def rho_n}) is symmetric.
\end{theorem}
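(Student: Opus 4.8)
The plan is to prove the cycle of implications $(i) \Rightarrow (iii) \Rightarrow (vi) \Rightarrow (v) \Rightarrow (i)$ first, thereby establishing that these four conditions are mutually equivalent, and then to fold in $(ii)$, $(iv)$, and $(vii)$ by showing each is equivalent to one of the four already tied together. The translation between the measure-theoretic statement $(vi)$ and the operator statement $(v)$ is essentially Remark \ref{rem R and rho}(5): the identity $\rho(A\times B)=\int_V \chi_A R(\chi_B)\,d\mu$ means precisely that $R$ is $\mu$-symmetric iff $\rho$ is a symmetric measure, so $(v)\Leftrightarrow (vi)$ is immediate once one unwinds notation, using $R=cP=M_c P$. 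The step $(i)\Leftrightarrow (vi)$ is also essentially a rewriting: plugging $R(\chi_B)=c\,P(\chi_B)$ and $P(\chi_B)(x)=P(x,B)$ into the defining formula for $\rho$ gives $\rho(A\times B)=\int_A c(x)P(x,B)\,d\mu(x)$, and the reversibility condition \eqref{eq def reversible P} is exactly the assertion $\rho(A\times B)=\rho(B\times A)$.

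Next I would handle $(iii)$. That $R$ symmetric on $L^2(\mu)$ is equivalent to $P$ self-adjoint on $L^2(\nu)$ follows by a change-of-measure computation: for $f,g$ bounded with finite support,
\[
\langle P f,g\rangle_{L^2(\nu)}=\int_V P(f)(x)\,g(x)\,c(x)\,d\mu(x)=\int_V R(f)(x)\,g(x)\,d\mu(x),
\]
and by symmetry of $R$ this equals $\int_V f(x)R(g)(x)\,d\mu(x)=\langle f,Pg\rangle_{L^2(\nu)}$; reversing the steps gives the converse. The invariance $\nu P=\nu$ is the specialization $B=V$ of $(vi)$ (equivalently $A=V$ in \eqref{eq def reversible P}), since $P(x,V)=1$. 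So $(iii)$ is equivalent to the conjunction of $(v)$ and $\nu P=\nu$, and the latter is already contained in $(vi)$; this closes $(i)$–$(iii)$, $(v)$, $(vi)$ into one equivalence class. Condition $(iv)$ is just the disintegrated form of $(vi)$: by Theorem \ref{thm Simmons} the measure $c(x)P(x,dy)\,d\mu(x)$ on $\vv$ disintegrates $\rho$ over the first coordinate, so symmetry of $\rho$ under the flip $\theta$ is, after applying $\theta$ and re-disintegrating over the second coordinate, the stated identity $c(x)P(x,dy)\,d\mu(x)=c(y)P(y,dx)\,d\mu(y)$; conversely that identity forces $\rho=\rho\circ\theta$ by integrating against cylinder functions $\chi_A\otimes\chi_B$.

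Finally, $(ii)$ and $(vii)$: clearly $(ii)\Rightarrow(i)$ by taking $n=1$, and $(vii)\Rightarrow(vi)$ similarly since $\rho_1=\rho$. For the reverse directions I would argue by induction using \eqref{eq_rho_n via P_N} together with the semigroup property $P_{n+m}(x,A)=\int_V P_n(y,A)P_m(x,dy)$: assuming $\rho=\rho_1$ symmetric, $P$ is self-adjoint on $L^2(\nu)$ by the equivalence just proved, hence so is $P^n$, hence (running the $(iii)\Rightarrow(vi)$ direction with $P^n$ in place of $P$, noting $\nu P^n=\nu$) the measure $\rho_n(A\times B)=\langle\chi_A,P^n(\chi_B)\rangle_{L^2(\nu)}$ is symmetric, which is $(vii)$; and symmetry of each $\rho_n$ at the level of conditional measures gives reversibility of $x\mapsto P_n(x,\cdot)$, i.e.\ $(ii)$, by the same disintegration argument as for $(iv)$. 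The main subtlety to watch is not a deep obstacle but a bookkeeping one: all these identities hold a priori on bounded functions supported on sets of finite $\mu$-measure, so at each step one must invoke density of $\Dfin(\mu)$ in the relevant $L^p$-space (and the contractivity/boundedness statements of Theorem \ref{prop prop of R, P, Delta}) to extend the equalities and to make sense of self-adjointness on all of $L^2(\nu)$; the $\sigma$-finiteness of $\mu$ and $\nu$, rather than finiteness, is what makes this the only place requiring care.
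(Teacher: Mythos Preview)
Your argument is correct and the equivalences are established in the right way: the core is the translation $(i)\Leftrightarrow(vi)$ via $\rho(A\times B)=\int_A c(x)P(x,B)\,d\mu(x)$, the dictionary $(v)\Leftrightarrow(vi)$ from Remark \ref{rem R and rho}(5), the change-of-measure identity $\langle Pf,g\rangle_{L^2(\nu)}=\int_V R(f)g\,d\mu$ giving $(iii)\Leftrightarrow(v)$, the disintegration reading of symmetry for $(iv)$, and the passage to powers $P^n$ for $(ii)$ and $(vii)$.

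There is nothing to compare against: the paper does not supply a proof of this theorem here but cites it from \cite{BezuglyiJorgensen2018, BezuglyiJorgensen_2018}. Your write-up is a sound reconstruction of the result from the surrounding framework (Remark \ref{rem R and rho}, Theorem \ref{thm Simmons}, and the boundedness/contractivity facts in Theorem \ref{prop prop of R, P, Delta}). One small remark: you note that $\nu P=\nu$ is the specialization $B=V$ of $(vi)$, but it is worth observing that in fact $(v)$ alone already forces $\nu P=\nu$ (take $g=\mathbbm 1$ in $\int R(f)g\,d\mu=\int f R(g)\,d\mu$ and use $R(\mathbbm 1)=c$), so the invariance clause in $(iii)$ is redundant once self-adjointness is in hand; this tightens the logic of your $(iii)\Leftrightarrow(v)$ step.
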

 
We finish this section discussing the following problem. We recall briefly 
our main setting. Let $\rho$ be a 
symmetric measure on $\vv$. Then it generates the following objects:
the marginal measure  $\mu = \rho\circ \pi^{-1}_1$ such that
$\rho = \int_V \rho_x d\mu$, function $c(x) = \rho_x(V)$, and the measure
$\nu = c \mu$. The corresponding Markov operator $P$ is defined by 
the measurable field of transition probabilities $x \mapsto P(x, \cdot)$ 
where $c(x) P(x, dy) = d\rho_x(y)$, see \eqref{eq-P via P(x,dy)}.
 It was proved that the measure $\nu$ is $P$-invariant. 

Suppose now we have two symmetric measures $\rho$ and $\rho'$ 
defined on $\vv$ which are equivalent; let
$$
\frac{d\rho'}{d\rho}(x,y) = r(x, y).
$$
 Then $r(x,y) = r(y,x)$, see \cite{BezuglyiJorgensen_2018} for details.
 Let the collection $(\mu', \rho'_x, c', \nu', P')$ be defined by 
 the measure $\rho'$. 
 
 \begin{proposition}
 Suppose that, for two equivalent symmetric measures $\rho$ and $\rho'$ 
 on $\vv$, the Markov operators $P$ and $P'$ coincide. Then $\nu'$ is a
constant multiple of $\nu$. 
 \end{proposition}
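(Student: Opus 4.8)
The plan is to show that the Radon--Nikodym derivative $d\rho'/d\rho$ depends on the first coordinate only, to identify it with $d\nu'/d\nu$, and then to use the symmetry of $\rho$ to force this function to be $P$-harmonic, hence constant. The first move is to make the disintegration intrinsic to the measure. From $\rho=\int_V\rho_x\,d\mu$, $c(x)=\rho_x(V)$, $d\nu=c\,d\mu$ and $c(x)P(x,dy)=d\rho_x(y)$ one obtains
\[
 d\rho(x,y)=d\rho_x(y)\,d\mu(x)=c(x)\,P(x,dy)\,d\mu(x)=P(x,dy)\,d\nu(x),
\]
so that $\nu$ is the first marginal of $\rho$ and $x\mapsto P(x,\cdot)$ is exactly its disintegration over that marginal; the analogous identities hold for $\rho',\nu',P'$. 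In particular $\nu$ and $P$ are determined by $\rho$ alone, independently of the auxiliary measure $\mu$, and likewise $\nu',P'$ by $\rho'$.

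Since $\rho'\sim\rho$, pushing forward by $\pi_1$ gives $\nu'\sim\nu$; write $\varphi:=d\nu'/d\nu$, a strictly positive, $\nu$-a.e.\ finite Borel function. Using the hypothesis $P'=P$ together with the previous identity,
\[
 d\rho'(x,y)=P'(x,dy)\,d\nu'(x)=P(x,dy)\,\varphi(x)\,d\nu(x)=\varphi(x)\,d\rho(x,y),
\]
so $r(x,y)=\dfrac{d\rho'}{d\rho}(x,y)=\varphi(x)$ for $\rho$-a.e.\ $(x,y)$, i.e.\ $r$ is a function of the first coordinate only. Since $r(x,y)=r(y,x)$ (as noted just before the statement), it follows that $\varphi(x)=\varphi(y)$ for $\rho$-a.e.\ $(x,y)$. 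Integrating against the probability kernel $P(x,\cdot)$ (legitimate because $d\rho=P(x,dy)\,d\nu$) yields $\varphi(x)=\int_V\varphi(y)\,P(x,dy)=(P\varphi)(x)$ for $\nu$-a.e.\ $x$; thus $\varphi$ is $P$-harmonic.

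To conclude, fix $t>0$ and set $B_t=\{\varphi>t\}$. If $x\in B_t$, then $\varphi(y)=\varphi(x)>t$ for $P(x,\cdot)$-a.e.\ $y$, so $P(x,B_t^c)=0$; symmetrically, $P(x,B_t)=0$ whenever $x\notin B_t$. Hence both $B_t$ and its complement are closed for the kernel $\mc K(x,A)=\rho_x(A)$, and Lemma~\ref{lem-irr measure}(2) together with irreducibility of $\rho$ forces one of them to be $\nu$-null. Letting $t$ vary, $\varphi$ is $\nu$-a.e.\ equal to a single constant, which is the assertion $\nu'=\mathrm{const}\cdot\nu$. The one substantive point is this final step: it relies on $\rho$ being irreducible (equivalently, indecomposable), and without that hypothesis the statement fails, since $\varphi$ need only be constant on each ``$\rho$-connected component'' --- one can build equivalent symmetric $\rho,\rho'$ with $P=P'$ but $d\nu'/d\nu$ taking different values on different components. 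So the proof is to be read under the standing irreducibility assumption; apart from that, the remaining steps are routine bookkeeping with the $\sigma$-finite disintegration of Theorem~\ref{thm Simmons} and with the identity $d\rho=P(x,dy)\,d\nu$.
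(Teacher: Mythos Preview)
Your proof is correct and follows the same overall strategy as the paper: show that $r=d\rho'/d\rho$ depends on the first variable only, then use the symmetry $r(x,y)=r(y,x)$ to force it to be constant. The bookkeeping differs slightly: the paper disintegrates over the auxiliary marginals $\mu,\mu'$, introduces $m=d\mu/d\mu'$, and computes $r_x(y)=c'(x)/(c(x)m(x))$; you instead use the intrinsic identity $d\rho=P(x,dy)\,d\nu(x)$ to identify $r$ directly with $\varphi=d\nu'/d\nu$. Your route is a bit cleaner since it avoids the extra function $m$ and makes the role of $\nu$ as the first marginal explicit.

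The one place where you go beyond the paper is the final step. The paper simply asserts that a symmetric function of $x$ alone must be constant; you correctly spell this out via the level-set argument and Lemma~\ref{lem-irr measure}(2), and you rightly flag that this requires irreducibility of $\rho$. That observation is accurate: without irreducibility the conclusion can fail (take $\rho$ supported on $(A\times A)\cup(A^c\times A^c)$ and rescale the two blocks differently). So your proof is complete under the irreducibility hypothesis, and in fact makes explicit an assumption the paper's own argument leaves implicit.
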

 
\begin{proof}
Since $\rho \sim \rho'$, the margin measures $\mu$ and $\mu'$ are also
equivalent, so that the Radon-Nikodym derivative 
$$
\frac{d\mu}{d\mu'}(x) = m(x)
$$ 
is positive a.e. It can be seen that 
\be\label{eq RN for rho_x}
\frac{d\rho'_x}{d\rho_x}(y) = r_x(y)m(x)
\ee
where $r_x(\cdot) := r(x, \cdot)$. If $P = P'$, then $P(x, dy) = 
P'(x, dy)$ because $P(x, A) = P(\chi_A)(x)$. Hence,
$$
\frac{d\rho'_x}{d\rho_x}(y) = \frac{c'(x)}{c(x)},
$$
and we obtain from \eqref{eq RN for rho_x} that 
$$
r_x(y) = \frac{c'(x)}{c(x)m(x)}.
$$
This means that $r(x, y)$ depends on the variable $x$ only. But by
symmetry of the function $r$, we conclude that $r(x, y)$ is a constant
a.e. This proves the result.  
\end{proof}

\section{\textbf{Finite energy space: definitions and first results}}
\label{subsect_finiteEnergy}
 The finite energy space $\h_E$, see Definition \ref{def f.e. space} below,
  is one of the central notions of this article.
 It can be viewed as a generalization of the energy space for discrete 
 weighted  networks which have been extensively studied during last 
 decades. We partially follow the paper \cite{BezuglyiJorgensen2018} where 
 this space  was  defined and studied. 
 
\subsection{Inner product and norm in $\h_E$}\label{subsect Inn prod}

\begin{definition}\label{def f.e. space} Let $(V, \B, \mu) $ be a standard
 measure space with $\sigma$-finite measure $\mu$. Suppose that
 $\rho$ is a symmetric measure on the Cartesian product $(V\times V,
 \B\times \B)$ whose projection on $V$ is $\mu$.  We say that a Borel 
function $f : V \to \mathbb R$ belongs to the \textit{finite energy space} 
$\mathcal H_E=  \mathcal H_E(\rho) $ if
\be\label{eq def f from H}
\iint_{V\times V}  (f(x) - f(y))^2 \; d\rho(x, y) < \infty.
\ee
\end{definition}

To simplify the notation, we will also use the symbol $\h$ to denote the 
finite energy space  $\h_E$.

\begin{remark}\label{rem H depends on rho} 
(1) It follows from Definition \ref{def f.e. space} that   $\mathcal H_E$ is a
 vector space containing 
all constant functions. We identify functions $f_1$ and $f_2$ such that
$f_1 - f_2 = const$ and, with some abuse of notation, the quotient space
is also denoted by $\h_E$. So that we will call elements $f$ of $\h_E$  
functions assuming that a representative of the equivalence class
$f$ is considered. It will be easily seen that our results do not depend
on the choice of representatives.

(2) Definition \ref{def f.e. space} assumes that a symmetric irreducible 
measure $\rho$ is fixed on $(V\times V, \B\times \B)$. This means that
the space of functions $f$ on $(V, \B)$ satisfying (\ref{eq def f from H})
depends on $\rho$, and, strictly speaking, this space must be denoted as 
$\h_E(\rho)$. As was mentioned above, it is a challenging problem to 
study relations between $\h_E(\rho)$ and $\h_E(\rho')$ for equivalent
measures $\rho$ and $\rho'$, see 
\cite[Theorem 4.11]{BezuglyiJorgensen_2018} for a discussion.
\end{remark}
 
Define a bilinear form $\xi(f, g)$ in the space $\mathcal H$ by the formula
\be\label{eq inner product}
\xi(f, g) := \frac{1}{2} \iint_{V \times V}
(f(x) - f(y))(g(x) - g(y)) \; d\rho(x, y).
\ee
We denote $\xi(f) = \xi(f, f)$. Setting 
$$
\langle f, g \rangle_{\mathcal H} = \xi(f,g),
$$ we define an \textit{inner  product} on the space $\h_E$. Then 
\be\label{eq norm in H_E}
|| f ||^2_{\mathcal H_E} := \frac{1}{2} 
\iint_{V\times V}  (f(x) - f(y))^2 \; d\rho(x, y), \qquad f \in \mathcal H,  
\ee
turns $\h_E$ in a  normed vector space.
As proved in \cite{BezuglyiJorgensen2018}, $\h_E$ is a \textit{Hilbert 
space} with respect to the norm $|| \cdot ||_{\h_E}$. 

We remark that the zero vector in $\h_E$ is represented by a constant
function.  

The definition of the Hilbert  space $\h_E$ and the norm 
given in (\ref{eq norm in H_E}) allows us to define an embedding of
the energy space into $L^2(\rho)$.  The following lemma is based
on the definition of the norm in $\h_E$.

\begin{lemma}
The map 
$$
\partial : f(x)  \mapsto (\partial f)(x, y) := \frac{1}{\sqrt{2}} \ 
(f(x) - f(y))
$$
is an isometric embedding of the space $\h_E$ into $L^2(\rho)$. 
\end{lemma}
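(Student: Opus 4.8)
The plan is to verify directly that the map $\partial$ preserves norms, from which injectivity and the isometric-embedding property follow immediately. Concretely, I would take an arbitrary $f \in \h_E$ and compute the $L^2(\rho)$-norm of $\partial f$:
\[
\| \partial f \|_{L^2(\rho)}^2 = \iint_{V\times V} \left| \frac{1}{\sqrt 2}(f(x) - f(y)) \right|^2 d\rho(x,y) = \frac{1}{2}\iint_{V\times V} (f(x)-f(y))^2\, d\rho(x,y).
\]
By the very definition of the norm on $\h_E$ in \eqref{eq norm in H_E}, the right-hand side equals $\|f\|_{\h_E}^2$. Hence $\| \partial f \|_{L^2(\rho)} = \|f\|_{\h_E}$ for every $f \in \h_E$, so $\partial$ is an isometry.

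A couple of small points need to be addressed to make this fully rigorous. First, one must check that $\partial$ is well-defined on $\h_E$ as a quotient space: since elements of $\h_E$ are equivalence classes modulo constants (see Remark \ref{rem H depends on rho}(1)), I would note that if $f_1 - f_2$ is constant then $(f_1(x) - f_1(y)) - (f_2(x) - f_2(y)) = 0$ for all $(x,y)$, so $\partial f_1 = \partial f_2$ as elements of $L^2(\rho)$; thus $\partial$ descends to the quotient. Second, $\partial f$ genuinely lies in $L^2(\rho)$ precisely because $f \in \h_E$ means the integral in \eqref{eq def f from H} is finite, which is exactly the statement that $\partial f$ has finite $L^2(\rho)$-norm. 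Linearity of $\partial$ is clear from the formula. Finally, an isometry between normed (here Hilbert) spaces is automatically injective, and by polarization it also preserves the inner product, matching $\xi(f,g)$ with $\langle \partial f, \partial g\rangle_{L^2(\rho)}$.

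There is essentially no obstacle here; the result is a direct unwinding of the definitions, and the only thing worth stating carefully is the compatibility with the quotient-by-constants identification. I would keep the proof to a few lines, emphasizing that the isometry is an immediate consequence of \eqref{eq norm in H_E} and that injectivity follows from the isometry property.
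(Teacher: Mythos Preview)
Your proposal is correct and matches the paper's approach exactly: the paper states only that the lemma ``is based on the definition of the norm in $\h_E$'' and gives no further argument, so your direct verification that $\|\partial f\|_{L^2(\rho)}^2 = \|f\|_{\h_E}^2$ via \eqref{eq norm in H_E} is precisely what is intended. Your additional remarks on well-definedness modulo constants and injectivity are good hygiene beyond what the paper spells out.
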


We will  use an assumption about \textit{local integrability} of functions
from the energy space. We will discuss the 
following  assumption in detail below.
\medskip

\textbf{\textit{Assumption 2}}. Let a symmetric measure $\rho$ on 
$\vv$ be chosen so that all functions from $\h_E = \h_E(\rho)$ are 
locally square integrable, i.e.,
\be\label{eq assumption 2}
\h_E \subset L^2_{\mathrm{loc}} (\mu).
\ee
Since $L^2_{\mathrm{loc}} (\mu) \subset \Lloc$, we can always assume 
that functions from the space $\h_E$ are locally integrable.
\medskip 

It is worth noting that this assumption is very mild. It holds automatically
for the case of locally finite discrete weighted networks. 

We consider first some immediate properties of functions from
the space $\h_E$. These properties have been discussed in 
\cite{BezuglyiJorgensen2018}.

\begin{proposition}\label{prop ||chi A||}
(1) Let $\rho$ be a symmetric measure on $\vv$ such that $\mu = \rho\circ 
\pi_1^{-1}$. Suppose that $c(x)= \rho_x(V)$ is locally integrable with 
respect  to $\mu$. Then:  
(1) 
\be\label{eq three inclusions}
\mathcal D_{\mathrm{fin}}(\mu)  \subset 
\mathcal D_{\mathrm{fin}}(\nu) \subset \h_E
\ee
where $d\nu(x) = c(x)d\mu(x)$. Moreover, if $A \in \Bfin(\nu)$, then 
\be\label{eq ||chi A||}
|| \chi_A ||^2_{\h_E} = \rho(A \times A^c) \leq \int_A c(x) \; d\mu(x)
= \nu(A),
\ee
where $A^c := V \setminus A$;

(2) for every $A \in \B$, the function $\chi_A$ belongs to $\h_E$
if and only if $\chi_{A^c}$ is in $\h_E$; moreover, $\| \chi_A \|_{\h_E}
 = \| \chi_{A^c} \|_{\h_E}$ and 
 $$
 \langle \chi_A, \chi_{A^c} \rangle_{\h_E} = - \rho(A \times A^c).
 $$
In other words, the function $\chi_A $ is 
 in $\h_E$ if and only if either $\mu(A) < \infty$ or $\mu(A^c) < \infty$:
 $$
 \Bfin(\nu) \cap (\Bfin(\nu))^c \subset \h_E.
 $$ 
\end{proposition}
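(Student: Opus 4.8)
The plan is to reduce the whole statement to the elementary pointwise identity
$$
(\chi_A(x)-\chi_A(y))^2 = \chi_{A\times A^c}(x,y) + \chi_{A^c\times A}(x,y),
$$
which holds because the left-hand side equals $1$ exactly when precisely one of $x,y$ lies in $A$, together with the symmetry of $\rho$ and the disintegration formula \eqref{eq disint formula}.

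First I would dispose of the two inclusions in part (1). For $\Dfin(\mu)\subset\Dfin(\nu)$: if $\mu(A)<\infty$ then, since $c\in\Lloc$, we have $\nu(A)=\int_A c\,d\mu<\infty$, so $A\in\Bfin(\nu)$; passing to spans gives the inclusion. For $\Dfin(\nu)\subset\h_E$ it suffices, $\h_E$ being a vector space, to show $\chi_A\in\h_E$ for each $A\in\Bfin(\nu)$. Substituting the identity above into \eqref{eq norm in H_E} and invoking symmetry of $\rho$ (Lemma \ref{lem symm measure via int}(1)) gives
$$
\|\chi_A\|^2_{\h_E} = \frac{1}{2}\bigl(\rho(A\times A^c)+\rho(A^c\times A)\bigr) = \rho(A\times A^c).
$$
Then I would estimate $\rho(A\times A^c)\le\rho(A\times V)$ and evaluate $\rho(A\times V)$ by disintegration: $\rho(A\times V)=\int_V\chi_A(x)\,\rho_x(V)\,d\mu(x)=\int_A c\,d\mu=\nu(A)$. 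This yields the displayed chain $\|\chi_A\|^2_{\h_E}=\rho(A\times A^c)\le\nu(A)<\infty$, so $\chi_A\in\h_E$, finishing (1).

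For part (2) the key remark is $\chi_{A^c}=\mathbbm 1-\chi_A$, whence $\chi_{A^c}(x)-\chi_{A^c}(y)=-(\chi_A(x)-\chi_A(y))$. Feeding this into the bilinear form \eqref{eq inner product} immediately gives $\|\chi_{A^c}\|_{\h_E}=\|\chi_A\|_{\h_E}$ — so $\chi_A\in\h_E$ iff $\chi_{A^c}\in\h_E$ — and $\langle\chi_A,\chi_{A^c}\rangle_{\h_E}=-\|\chi_A\|^2_{\h_E}=-\rho(A\times A^c)$. Equivalently, in the quotient space $\h_E$ (in which constants are the zero vector, see Remark \ref{rem H depends on rho}(1)) the vector $\chi_{A^c}$ is just $-\chi_A$, which makes both assertions transparent. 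Finally, combining (1) with the first half of (2): if $\nu(A)<\infty$ then $\chi_A\in\h_E$ by (1), and if $\nu(A^c)<\infty$ then $\chi_{A^c}\in\h_E$ by (1), hence $\chi_A\in\h_E$; this is the stated membership criterion and the inclusion $\Bfin(\nu)\cup\{A:A^c\in\Bfin(\nu)\}\subset\h_E$.

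Every computation here is routine; the only point needing a word of care is that we apply the symmetry identity for $\rho$ to a function not yet known to be $\rho$-integrable, but since the integrand $(\chi_A(x)-\chi_A(y))^2$ is nonnegative, Lemma \ref{lem symm measure via int}(1), which is phrased in the extended-real-line sense, applies verbatim and no genuine obstacle arises.
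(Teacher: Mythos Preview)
Your proof is correct and follows essentially the same route as the paper's own argument: both compute $\|\chi_A\|^2_{\h_E}$ via the pointwise identity for $(\chi_A(x)-\chi_A(y))^2$ together with the symmetry of $\rho$, bound $\rho(A\times A^c)$ by $\nu(A)$ through disintegration, and for part (2) observe that $\chi_{A^c}=-\chi_A$ as elements of $\h_E$. Your added remark about the nonnegative integrand justifying the use of Lemma~\ref{lem symm measure via int}(1) in the extended-real sense is a nice point of care that the paper leaves implicit.
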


\begin{proof}
For (1), let $A \in \Bfin(\mu)$, then $\nu(A) = \int_A c d\mu <\infty$
because $c$ is a locally integrable function. Hence,  $\mathcal 
D_{\mathrm{fin}}(\mu)  \subset \mathcal D_{\mathrm{fin}}(\nu)$. 
To prove that $\mathcal D_{\mathrm{fin}}(\nu) \subset \h_E$, we 
compute the norm of $\chi_A$:
$$
|| \chi_A ||^2_{\mathcal H_E} := \frac{1}{2} 
\iint_{V\times V}  (\chi_A(x) - \chi_A(y))^2 \; d\rho(x, y) = 
\rho(A \times A^c).
$$
Indeed, the function $(\chi_A(x) - \chi_A(y))^2$ takes value 1 if and only
if either $x \in A$ and $y\in A^c$ or $y \in A$ and $x\in A^c$. Then the
integral of $(\chi_A(x) - \chi_A(y))^2$ with respect to $\rho$ 
is $2 \rho(A \times A^c)$ because 
$\rho$ is symmetric. To finish the proof of \eqref{eq ||chi A||}, we calculate
$$
\rho(A \times A^c) = \int_A \rho_x(A^c) \; d\mu(x) \leq 
\int_A c(x) \; d\mu(x) = \nu(A).
$$ 
This proves \eqref{eq three inclusions}. 

To see that (2) holds, we recall that the energy norm of any constant 
function is zero. Hence, the relation $\chi_A + \chi_{A^c} = 0$ is true 
when the characteristic functions are considered as elements of $\h_E$. 
It follows that the vectors $\chi_A$ and $\chi_{A^c}$ have the same norm,
and their inner product in $\h_E$ is always non-positive.
The criterion for $\chi_A \in \h_E$ is a direct consequence of (1).
\end{proof}

\begin{remark}\label{lem_Dfin in H} (1) As proved above, the set of 
functions from $\Dfin(\nu)$ 
(and therefore its subset  $\Dfin(\mu)$) belongs to $\h_E$, and, for any
 set $A$ of  finite measure $\nu$, we have  
$$
\ba 
|| \chi_A ||^2_{\h_E} = & \rho(A \times A^c)\\
= & \int_A \rho_x(A^c)\; d\mu(x)\\
= & \int_A ( c(x) - \rho_x(A))\; d\mu(x).
\ea
$$

(2) $\chi_A= 0$ in $\h \ \Longleftrightarrow \ ||\chi_A||_{\h_E} = 0 \ 
 \Longleftrightarrow \ \rho_x(A) = c(x),\ 
\mu\mbox{-a.e.}\ x\in A \  \Longleftrightarrow \ \rho(A\times A) = 
\rho(A \times V)$; 

(3) It is useful to remember that $\Dfin(\mu) \subset \Dfin(\nu)$ 
and  $\Dfin(\mu) \subset L^2(\mu) \cap L^2(\nu) \cap \h_E$. But the set
$\Dfin(\mu)$ is not dense in $\h_E$, see details below. 

(4)  We observe that  statements  (1)  and  (2) of
 Proposition  \ref{prop ||chi A||}  give some criteria
 for a characteristic function $\chi_A$ (and a function from $\Dfin(\mu)$)
 to be in $\h_E$. In particular, $\chi_A$ is not in $\h_E$ if and only if
 both $A$ and $A^c$ have infinite measure. 
 
(5) Furthermore,  since $f = 0$ in $\h_E$ if
and only if $f$ is a constant a.e., and since two functions from $\h_E$ are 
identified if they differ by a constant, we conclude that the equality 
$\chi_A = - \chi_{A^c}$ holds when these functions are considered in 
$\h_E$. 

\end{remark}

The description of the  structure of the Hilbert space $\h_E$ is a very 
intriguing problem. We formulate  in the following statement several 
properties of vectors from $\h_E$ related mostly to characteristic functions.
Some of them have been proved in  \cite{BezuglyiJorgensen2018}. 
More statements will be added in Section \ref{sect energy}.

\begin{theorem}\label{thm_stucture of energy space} 
Let measures $\rho$ and  $\mu$ be as in Proposition \ref{prop ||chi A||},
and $c(x) \in \Lloc$. 

(1) If $\chi_A, \chi_B \in \h_E$, then 
\be\label{eq inner prod via rho}
\ba
\langle \chi_A, \chi_B\rangle_{\h_E} & = \rho((A \cap B )\times V) -
\rho (A \times B) \\
& = \nu(A \cap B ) - \rho (A \times B). \\
\ea
\ee

(2) The following conditions are equivalent to the orthogonality of 
$\chi_A$ and $\chi_B$:
(i)
$$
 \chi_A \ \perp \ \chi_B \ \Longleftrightarrow\ 
\left( \rho((A\setminus B)\times B) = \rho((A \cap B) \times B^c)\right);
$$
(ii)
$$
\chi_A \ \bot \  \chi_B \ \Longleftrightarrow \ \left(\int_{A\cap B} c(x)\;
 d\mu(x)  = \int_A \rho_x(B)\; d\mu(x) \right);
$$
 (iii) if $A \subset B$ and $\mu(A) > 0$, then   
$$
\chi_A \bot \chi_B \  \Longleftrightarrow  \ \rho_x(B^c) = 0 \ 
\mbox{for\ a.e.} \ x \in A.
$$
(iv) if $A \cap B = \emptyset$, then 
$$
\chi_A \bot \chi_B \  \Longleftrightarrow  \  \rho(A \times B) = 0;
$$
and more generally, 
$$ A \cap B = \emptyset  \ \Longrightarrow \
\langle \chi_A, \chi_B\rangle_{\h_E} \leq 0.
$$
\end{theorem}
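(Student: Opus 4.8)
\medskip
\noindent\textbf{Proof strategy.}
Everything will be extracted from the defining bilinear form of $\h_E$ together with the symmetry of $\rho$; part (2) is then a chain of elementary set manipulations applied to part (1). For (1), the plan is to expand
\[
\langle \chi_A, \chi_B\rangle_{\h_E}
= \frac{1}{2}\iint_{V\times V}\bigl(\chi_A(x)-\chi_A(y)\bigr)\bigl(\chi_B(x)-\chi_B(y)\bigr)\,d\rho(x,y)
\]
into the four terms coming from $\chi_A(x)\chi_B(x)$, $-\chi_A(x)\chi_B(y)$, $-\chi_A(y)\chi_B(x)$, $\chi_A(y)\chi_B(y)$ and integrate each against $\rho$. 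Using $\chi_A\chi_B=\chi_{A\cap B}$, the disintegration formula \eqref{eq disint formula}, and $\rho_x(V)=c(x)$, the first term contributes $\rho((A\cap B)\times V)=\nu(A\cap B)$; by the symmetry of $\rho$ (Lemma \ref{lem symm measure via int}) the fourth term contributes the same quantity, the third contributes $-\rho(B\times A)=-\rho(A\times B)$, and the second contributes $-\rho(A\times B)$. Collecting the factor $\tfrac12$ gives $\langle\chi_A,\chi_B\rangle_{\h_E}=\nu(A\cap B)-\rho(A\times B)$; the second displayed equality in (1) is just $\rho((A\cap B)\times V)=\int_{A\cap B}c\,d\mu=\nu(A\cap B)$ by Lemma \ref{lem symm measure via int}(2).

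The point that needs care is the legitimacy of splitting the integral, since for general $\chi_A,\chi_B\in\h_E$ some of the four pieces may be infinite. I would first carry out the computation under the extra hypothesis $\mu(A)<\infty$ and $\mu(B)<\infty$: then $\nu(A)<\infty$ by local integrability of $c$, hence $\rho(A\times B)\le\rho(A\times V)=\nu(A)<\infty$ and all four pieces are finite. To remove the hypothesis I would invoke Proposition \ref{prop ||chi A||}(2): $\chi_A\in\h_E$ forces $\mu(A)<\infty$ or $\mu(A^c)<\infty$, and since $\chi_A=-\chi_{A^c}$ in $\h_E$ one may pass to the complement in either or both variables; the elementary identities $\nu(A)=\nu(A\cap B)+\nu(A\cap B^c)$ and $\rho(A\times V)=\nu(A)$, together with their analogues, show that the four resulting formulas agree, so the identity $\langle\chi_A,\chi_B\rangle_{\h_E}=\nu(A\cap B)-\rho(A\times B)$ holds, read through complements in the cases where the two terms on the right are separately infinite.

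Part (2) then follows from (1) by rewriting $\chi_A\perp\chi_B$ as $\nu(A\cap B)=\rho(A\times B)$ and using $\rho(A\times B)=\int_A\rho_x(B)\,d\mu(x)$. Splitting $\rho((A\cap B)\times V)=\rho((A\cap B)\times B)+\rho((A\cap B)\times B^c)$ and $\rho(A\times B)=\rho((A\cap B)\times B)+\rho((A\setminus B)\times B)$ and cancelling the common term gives (i); expressing the two sides of $\nu(A\cap B)=\rho(A\times B)$ as integrals over $A\cap B$ and over $A$ gives (ii); specialising (i) to $A\subset B$, where $A\setminus B=\emptyset$ and $A\cap B=A$, reduces orthogonality to $\rho(A\times B^c)=\int_A\rho_x(B^c)\,d\mu(x)=0$, i.e.\ $\rho_x(B^c)=0$ for $\mu$-a.e.\ $x\in A$, which is (iii); and when $A\cap B=\emptyset$ part (1) gives $\langle\chi_A,\chi_B\rangle_{\h_E}=-\rho(A\times B)\le 0$, yielding both (iv) and the final inequality. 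I expect the only genuine obstacle to be the finiteness bookkeeping in (1) described above; once the formula $\langle\chi_A,\chi_B\rangle_{\h_E}=\nu(A\cap B)-\rho(A\times B)$ is secured, the remainder is purely formal.
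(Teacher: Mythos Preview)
Your proposal is correct and follows essentially the same approach as the paper: expand the bilinear form, use the symmetry of $\rho$ to pair the four terms into two, and then read off the orthogonality criteria in part (2) by elementary set algebra exactly as you describe. The paper carries out the same computation but without addressing the finiteness issue you raise; your treatment via complements (using $\chi_A=-\chi_{A^c}$ in $\h_E$ when only $\mu(A^c)<\infty$) is in fact more careful than the published argument, which tacitly computes as though all four pieces were finite.
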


\begin{proof} (1) 
The computation is based on the definition, given in (\ref{eq norm in H_E}),
and  the property of symmetry for $\rho$:
$$
\ba
\langle \chi_A, \chi_B\rangle_{\h_E} & = \frac{1}{2}\int_{V\times V} 
(\chi_A(x) - \chi_A(y)) (\chi_B(x) - \chi_B(y))\; d\rho(x, y)\\
& =  \int_{V\times V} (\chi_A(x)\chi_B(x)  -  \chi_A(x)\chi_B(y)) \;  
d\rho(x, y)\\
 & = \int_V \int_V \chi_{(A\cap B)\times V}(x, y) \; d\rho(x, y)
  - \int_V\int_V  \chi_{A \times B}(x, y) \; d\rho(x, y)\\
  &= \rho((A \cap B )\times V) - \rho (A \times B)\\
  & = \nu(A\cap B) - \rho(A\times B),
\ea
 $$
because 
$$
\ba
\rho((A \cap B )\times V)  & = \iint \chi_{(A \cap B )\times V)}(x, y) \;
d\rho(x, y)\\
& = \int_{A\cap B} c(x) \; d\mu(x)\\
& = \nu(A\cap B). 
\ea
$$ 

(2) For (i), we  see from \eqref{eq inner prod via rho} that the vectors 
$\chi_A$ and 
$\chi_B$ are orthogonal if and only if $\rho((A\cap B) \times V) = 
\rho(A \times C)$. Then $\langle \chi_A, \chi_B\rangle_{\h_E} = 0$ if 
and  only if
$$
\ba 
\rho((A \cap B )\times B^c) & = \rho((A \cap B )\times V)  - 
\rho((A \cap B )\times B)\\
& = \rho(A \times B) -  \rho((A \cap B )\times B)\\
& =    \rho((A\setminus B)\times B).
\ea
$$

For (ii), we use the first equality in \eqref{eq inner prod via rho} which 
is written in integrals. 

If $A\subset B$, then the condition $\chi_A \bot \chi_B$ is equivalent 
to the property $\rho(A \times B^c) =0$ what proves (iii).

The last equivalence in (2) is immediate from \eqref{eq inner prod via rho}.

\end{proof}

\subsection{Harmonic functions in $\h_E$}\label{subsect harmonic fncts}

Our goal is to study the properties of the Laplace operator $\Delta 
=\Delta(\rho)$ 
considered acting on functions from the finite energy space $\h_E$
according to formula \eqref{eq def of Delta}.
The next theorem is a key statement that has a number of important
consequences. Its proof uses the made assumption that functions
from $\h_E$ belong to $L^2_{\mathrm{loc}}(\mu)$. 

\begin{theorem}\label{thm inner prod via Delta}
Suppose that $c \in L^2_{\mathrm{loc}}(\mu)$. Let $\varphi \in 
\Dfin(\mu)$ and $f\in \h_E$. Then
\be\label{eq-inner prod via Delta}
\langle \varphi, f \rangle_{\h_E} = \int_V\varphi \Delta(f)\; d\mu.
\ee
\end{theorem}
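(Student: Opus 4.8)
The plan is to expand the inner product on the left using the definition \eqref{eq inner product} and then reorganize the double integral into a single integral against $\Delta(f)$. First I would write
\[
\langle \varphi, f\rangle_{\h_E} = \frac{1}{2}\iint_{V\times V}(\varphi(x)-\varphi(y))(f(x)-f(y))\;d\rho(x,y),
\]
expand the product into four terms, and use the symmetry of $\rho$ (Lemma \ref{lem symm measure via int}, formula \eqref{eq formula fo symm meas}) to pair up the terms: $\iint \varphi(x)f(x)\,d\rho = \iint \varphi(y)f(y)\,d\rho$ and $\iint \varphi(x)f(y)\,d\rho = \iint \varphi(y)f(x)\,d\rho$. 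This collapses the expression to
\[
\langle \varphi, f\rangle_{\h_E} = \iint_{V\times V}\varphi(x)(f(x)-f(y))\;d\rho(x,y),
\]
and then disintegrating via \eqref{eq disint formula} gives $\int_V \varphi(x)\big(\int_V (f(x)-f(y))\,d\rho_x(y)\big)\,d\mu(x) = \int_V \varphi(x)\,\Delta(f)(x)\,d\mu(x)$ by the definition \eqref{eq def of Delta} of $\Delta$.

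The main obstacle is justifying that every step makes sense, i.e. that all the integrals appearing are absolutely convergent so that the rearrangements are legitimate and no $\infty - \infty$ arises. This is precisely where the hypotheses $c\in L^2_{\mathrm{loc}}(\mu)$ and Assumption 2 ($\h_E \subset L^2_{\mathrm{loc}}(\mu)$) enter. Since $\varphi \in \Dfin(\mu)$, it is a finite linear combination of $\chi_A$ with $A \in \Bfin(\mu)$, so by linearity it suffices to treat $\varphi = \chi_A$. Then the relevant integrals are supported (in the $x$-variable) on $A$, a set of finite $\mu$-measure. I would bound $\iint_{A\times V}|f(x)-f(y)|\,d\rho(x,y)$ by splitting it as $\int_A |f(x)| c(x)\,d\mu(x) + \iint_{A\times V}|f(y)|\,d\rho(x,y)$; the first term is finite by Cauchy--Schwarz since $f \in L^2_{\mathrm{loc}}(\mu)$ and $c\in L^2_{\mathrm{loc}}(\mu)$ (so $f\cdot\mathbbm 1_A$ and $c\cdot\mathbbm 1_A$ are both in $L^2(\mu)$, hence $fc\in L^1(A,\mu)$), and for the second term one uses symmetry of $\rho$ to rewrite $\iint_{A\times V}|f(y)|\,d\rho(x,y) = \iint_{V\times A}|f(x)|\,d\rho(x,y) = \int_A |f(x)|\,\rho_x(V)^{?}$—more carefully, $= \iint |f(x)|\chi_A(y)\,d\rho(x,y) = \int_V |f(x)|\rho_x(A)\,d\mu(x)$, which is dominated by... here I would instead observe directly that $f\in\h_E$ means $(\partial f)(x,y) = \tfrac{1}{\sqrt2}(f(x)-f(y)) \in L^2(\rho)$, and $\chi_A(x)$, viewed on $V\times V$, lies in $L^2(\rho)$ because $\|\chi_A\|_{\h_E}^2 = \rho(A\times A^c) \le \nu(A) < \infty$ together with $\int\!\!\int \chi_A(x)^2 d\rho = \int_A c\,d\mu = \nu(A)<\infty$; so the product $\chi_A(x)(f(x)-f(y))$ is the product of two $L^2(\rho)$ functions, hence in $L^1(\rho)$.

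With absolute convergence secured, the symmetrization and disintegration steps are routine applications of Lemma \ref{lem symm measure via int} and formula \eqref{eq disint formula}, and one must only check that $\Delta(f)(x) = \int_V (f(x)-f(y))\,d\rho_x(y)$ is well-defined $\mu$-a.e. on $A$, which follows from Fubini once the double integral over $A\times V$ is known to be absolutely convergent. Finally I would remark that the right-hand side $\int_V \varphi\,\Delta(f)\,d\mu$ is then automatically finite, and that passing from $\varphi = \chi_A$ to general $\varphi \in \Dfin(\mu)$ is immediate by linearity of both sides in $\varphi$. I do not expect any subtlety beyond this integrability bookkeeping; the hypothesis $c \in L^2_{\mathrm{loc}}(\mu)$ is exactly the strengthening of $c \in \Lloc$ needed to make $R(\varphi)$ and $\Delta(\varphi)$ interact well with $L^2_{\mathrm{loc}}$ functions.
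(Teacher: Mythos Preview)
Your proposal is correct and follows essentially the same route as the paper: reduce to $\varphi=\chi_A$ by linearity, use the symmetry of $\rho$ to collapse the four-term expansion to $\iint \chi_A(x)(f(x)-f(y))\,d\rho$, disintegrate, and identify the inner integral as $\Delta(f)$; the integrability of $fc$ on $A$ via Cauchy--Schwarz from $c,f\in L^2_{\mathrm{loc}}(\mu)$ is exactly the paper's justification. Your alternative observation that $\chi_A(x)\in L^2(\rho)$ (since $\iint \chi_A(x)\,d\rho=\nu(A)<\infty$) and $f(x)-f(y)\in L^2(\rho)$, so their product is in $L^1(\rho)$, is a clean way to secure absolute convergence of the double integral in one stroke---arguably tidier than the paper's post-hoc remark deducing $R(f)\in\Lloc$.
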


\begin{proof}
Since $\Dfin(\mu)$ is spanned by characteristic functions, 
it suffices to prove (\ref{eq-inner prod via Delta}) for  $\varphi = 
\chi_A$ where $\mu(A) < \infty$. 

It follows from the condition of this theorem and Assumption 2 that, 
for any $f \in \h_E$,  the function $fc$ belongs to  $\Lloc$. Indeed, 
since functions  $c$ and $f$ are  in 
$L^2_{\mathrm{loc}}(\mu)$, we have, by the Schwarz inequality,
$$
\int_A fc \; d\mu < \infty, \quad A \in \Bfin(\mu).
$$

In the  computation given below, we use the following facts: the formula
for $R(f)$, see  (\ref{eq def of R}), the formula for $\Delta(f)$, see 
(\ref{eq Delta via R}), the definition of the inner product in $\h_E$, 
and the fact that the measure $\rho$ is symmetric, see 
(\ref{eq formula fo symm meas}) and (\ref{eq inner product}).

$$
\ba 
\langle \chi_A, f \rangle_{\h_E}  & = \frac{1}{2}
\iint_{\VtV} (\chi_A(x) - \chi_A(y)) (f(x) - f(y))\; d\rho(x, y)\\
& = \iint_{\VtV} (\chi_A(x) f(x) - \chi_A(x) f(y))\; d\rho(x,y)\\
& = \int_{V} \left(\int_V (\chi_A(x) f(x) - \chi_A(x) f(y))\; d\rho_x(y)
\right) d\mu(x)\\
& =  \int_V (\chi_A(x) f(x) c(x) - \chi_A(x) R(f)(x) )\; d\mu(x)\\
& = \int_V \chi_A(x) \Delta(f)(x) \; d\mu(x)\\
\ea
$$ 
\end{proof}

\begin{remark}
(1) To justify the correctness of this computation, we note that in the 
relation
$$
\langle \chi_A, f \rangle_{\h_E}  = \int_V (\chi_A(x) f(x) c(x) - 
\chi_A(x) R(f)(x) )\; d\mu(x),\quad A\in \Bfin(\mu),
$$
the integral $ \int_V \chi_A f c \;d\mu$ is  finite and therefore 
$\int_V  \chi_A R(f) \; d\mu$ is finite too. We can state even more, 
namely, the function $R(f) \in \Lloc$. 

 Given $f\in \h_E$, denote $f_+ = \max(f, 0), f_- = \min(f,0)$; then 
$f_{\pm} \in \h_E$.   Indeed, since $| f_{\pm}(x) - f_{\pm}(y)| \leq
| f(x) - f(y)|$, we see that $|| f_{\pm} ||_{\h_E} \leq || f||_{h_E}$. 

Therefore, we see that $R(f_\pm)$ is locally integrable because $R$ is 
a positive operator, and  the inequality 
$$
|R(f)| \leq R(|f|) = R(f_+) + R(f_-)
$$
implies that $|R(f)| $ is locally integrable.

(2) Another simple consequence of the fact that $cf \in \Lloc$ is that
$f \in L^1_{\mathrm{loc}}(\nu)$.
\end{remark}

\begin{corollary} (1) In conditions of Theorem 
\ref{thm inner prod via Delta}, the function $\Delta(f)$ is locally 
integrable for any $f \in \h_E$.

(2) 
$$
\left(\int_A \Delta(f) \; d\mu\right)^2 \leq \rho(A \times A^c)
|| f ||_{\h_E}^2, \quad A\in \Bfin(\mu).
$$ 

(3) For every $f \in \h_E$, the map 
$$
A \ \mapsto \ \mu_f(A) := \int_A \Delta(f)\; d\mu 
$$
determines a finite additive measure on $\VB$. The measure 
$\mu_f(\cdot)$ is $\sigma$-additive if and only if the function
$\Delta(f)$ is integrable on $\sms$.

\end{corollary}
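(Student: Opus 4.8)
The plan is to read off (1) and (2) directly from Theorem \ref{thm inner prod via Delta} and its Cauchy–Schwarz consequences, and then to treat (3) as a clean elementary statement about finitely additive versus countably additive set functions. First I would recall that Theorem \ref{thm inner prod via Delta} already gives, for $A \in \Bfin(\mu)$,
$$
\int_A \Delta(f)\; d\mu = \langle \chi_A, f\rangle_{\h_E},
$$
so in particular $\int_A \Delta(f)\,d\mu$ is a finite real number for every $A$ of finite $\mu$-measure; this is the content of (1). (One should note that the remark following the theorem already established $\Delta(f) = cf - R(f) \in \Lloc$, which is really the statement of (1); I would simply cite that.)

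For (2), I would apply the Cauchy–Schwarz inequality in the Hilbert space $\h_E$ to the identity above:
$$
\left(\int_A \Delta(f)\; d\mu\right)^2 = \langle \chi_A, f\rangle_{\h_E}^2 \leq \|\chi_A\|_{\h_E}^2\, \|f\|_{\h_E}^2,
$$
and then invoke Proposition \ref{prop ||chi A||}(1), namely $\|\chi_A\|_{\h_E}^2 = \rho(A \times A^c)$, valid because $A \in \Bfin(\mu) \subset \Bfin(\nu)$ so $\chi_A \in \h_E$. This gives the stated bound with no further work.

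For (3), finite additivity of $A \mapsto \mu_f(A) = \int_A \Delta(f)\,d\mu$ on $\Bfin(\mu)$ (or on all of $\B$, interpreting $\int_A$ as $\int_V \chi_A \Delta(f)\,d\mu$ when it makes sense) is immediate from linearity of the integral and the fact that $\chi_{A \sqcup B} = \chi_A + \chi_B$ for disjoint $A, B$. The substantive half is the equivalence: $\mu_f$ is $\sigma$-additive $\iff \Delta(f) \in L^1(\mu)$. The direction ($\Leftarrow$) is the dominated convergence theorem: if $\Delta(f) \in L^1(\mu)$ then $A \mapsto \int_A \Delta(f)\,d\mu$ is the indefinite integral of an integrable function, hence a genuine ($\sigma$-additive, indeed absolutely continuous) signed measure. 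For ($\Rightarrow$), I would argue by contradiction or, more cleanly, invoke the fact that a finitely additive set function that is $\sigma$-additive and finite, arising as $\int_A g\,d\mu$ for a fixed measurable $g$, forces $g \in L^1$: decompose $\Delta(f) = \Delta(f)_+ - \Delta(f)_-$, pick an increasing sequence $A_n \uparrow V$ with $A_n \in \Bfin(\mu)$ (available since $\mu$ is $\sigma$-finite), and observe that $\sigma$-additivity applied to $\{\Delta(f) > 0\} \cap A_n \uparrow \{\Delta(f) > 0\}$ gives $\int_{\{\Delta(f)>0\}} \Delta(f)\,d\mu = \lim_n \int_{\{\Delta(f)>0\}\cap A_n}\Delta(f)\,d\mu$, which must be finite because $\mu_f$ is a finite (real-valued) set function; similarly for the negative part. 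Hence $\Delta(f) \in L^1(\mu)$.

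The main obstacle is a minor bookkeeping point in (3): the set function $\mu_f$ is a priori only defined and finite on $\Bfin(\mu)$ (that is where Theorem \ref{thm inner prod via Delta} applies and where $\int_A \chi_A fc\,d\mu < \infty$), so I must be careful to phrase $\sigma$-additivity as a statement about countable disjoint unions that stay within $\Bfin(\mu)$, or else first note that $\Delta(f) \in \Lloc$ lets one define $\mu_f(A) = \int_A \Delta(f)\,d\mu \in [-\infty,\infty]$ for all $A \in \B$ and run the argument there. Either way the analytic content is just monotone/dominated convergence together with $\sigma$-finiteness of $\mu$; there is no deeper difficulty.
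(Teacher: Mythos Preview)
Your proof is correct and follows essentially the same approach as the paper: (1) cites the decomposition $\Delta(f)=cf-R(f)$ with both pieces in $\Lloc$, (2) is Cauchy--Schwarz in $\h_E$ together with $\|\chi_A\|_{\h_E}^2=\rho(A\times A^c)$, and for (3) the paper simply declares the statement ``obvious'' whereas you spell out the standard monotone/dominated convergence argument.
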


\begin{proof}
To see that (1) holds, we write $\Delta(f) = cf - R(f)$ and use the 
proved facts that $cf$ and $R(f)$ are locally integrable. 

The second statement is  the Schwarz inequality where we uses the 
formula $|| \chi_A ||^2_{\h_E} = \rho(A \times A^c)$. 

(3) is obvious.
\end{proof}

We denote by $\h arm_E$ the set of harmonic functions in $\h_E$, i.e.,
a function $h \in \h_E$ is \textit{harmonic}  if $\Delta h =0$. 
Equivalently, $h$ is harmonic if $P(h) = h$.

\begin{theorem}\label{thm Royden}
The finite energy space $\h_E$ admits the decomposition 
into the orthogonal sum 
\be\label{eq Royden}
\h_E = \ol{\mathcal D_{\mathrm{fin}}(\mu)} \oplus \h arm_E
\ee
where the closure of $\Dfin(\mu)$ is taken in the norm of the Hilbert
 space $\h_E$.
\end{theorem}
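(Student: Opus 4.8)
The plan is to realize the orthogonal decomposition as a standard Hilbert-space fact, namely that $\h_E = \ol{\Dfin(\mu)} \oplus \left(\ol{\Dfin(\mu)}\right)^\perp$, and then to identify the orthogonal complement with $\h arm_E$. So the real work is to prove the identity
\[
\left(\ol{\Dfin(\mu)}\right)^\perp = \h arm_E
\]
inside $\h_E$. First I would observe that, since $\Dfin(\mu)$ is spanned by characteristic functions $\chi_A$ with $A \in \Bfin(\mu)$, a vector $f \in \h_E$ lies in $\left(\ol{\Dfin(\mu)}\right)^\perp$ if and only if $\langle \chi_A, f\rangle_{\h_E} = 0$ for every $A \in \Bfin(\mu)$. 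Here is where Theorem \ref{thm inner prod via Delta} does the heavy lifting: under the running hypotheses (Assumption 2, and $c \in L^2_{\mathrm{loc}}(\mu)$ — which I would note is in force here since Assumption 2 gives $\h_E \subset L^2_{\mathrm{loc}}(\mu)$ and we may also invoke the standing assumptions needed for that theorem), we have
\[
\langle \chi_A, f\rangle_{\h_E} = \int_V \chi_A\, \Delta(f)\; d\mu = \int_A \Delta(f)\; d\mu .
\]
By the Corollary following Theorem \ref{thm inner prod via Delta}, $\Delta(f) \in \Lloc$, so the set function $A \mapsto \int_A \Delta(f)\,d\mu$ is well defined on $\Bfin(\mu)$.

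The next step is the measure-theoretic core: $f \perp \Dfin(\mu)$ in $\h_E$ iff $\int_A \Delta(f)\,d\mu = 0$ for all $A \in \Bfin(\mu)$, and I want to conclude from this that $\Delta(f) = 0$ $\mu$-a.e., i.e. $f \in \h arm_E$. Since $\mu$ is $\sigma$-finite, write $V = \bigsqcup_n V_n$ with $V_n \in \Bfin(\mu)$; for each $n$ the hypothesis applied to $A = V_n \cap \{\Delta(f) > 0\}$ and $A = V_n \cap \{\Delta(f) < 0\}$ (both of finite $\mu$-measure) forces $\Delta(f)$ to vanish a.e. on $V_n$, hence a.e. on $V$. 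Conversely, if $h \in \h arm_E$ then $\Delta(h) = 0$, so $\langle \chi_A, h\rangle_{\h_E} = \int_A \Delta(h)\,d\mu = 0$ for every $A \in \Bfin(\mu)$, whence $h \perp \Dfin(\mu)$ and therefore $h \perp \ol{\Dfin(\mu)}$ by continuity of the inner product. This establishes the set equality $\left(\ol{\Dfin(\mu)}\right)^\perp = \h arm_E$, and the decomposition \eqref{eq Royden} then follows from the projection theorem in the Hilbert space $\h_E$ (recall $\h_E$ is complete, as cited from \cite{BezuglyiJorgensen2018}).

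I would also record two small points to make the argument airtight. First, $\h arm_E$ is automatically a closed subspace of $\h_E$ once we know it equals an orthogonal complement, so there is no separate closedness verification needed. Second, one should check that the formula $\langle \chi_A, f\rangle_{\h_E} = \int_A \Delta(f)\,d\mu$ is genuinely applicable for \emph{every} $f \in \h_E$, not merely those with $\Delta(f)$ globally integrable — and indeed Theorem \ref{thm inner prod via Delta} is stated exactly this way, with $\varphi \in \Dfin(\mu)$ and $f \in \h_E$ arbitrary, so this is fine. The step I expect to be the main (if modest) obstacle is the passage from ``all finite-measure integrals of $\Delta(f)$ vanish'' to ``$\Delta(f) = 0$ a.e.''; this is routine given $\sigma$-finiteness and local integrability of $\Delta(f)$, but it is the only place where one must be careful that $\Delta(f)$ is a bona fide a.e.-defined $\Lloc$ function rather than merely a finitely-additive set function — which is precisely what the Corollary after Theorem \ref{thm inner prod via Delta} secures.
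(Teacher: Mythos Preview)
Your proof is correct and follows essentially the same route as the paper's: both invoke Theorem~\ref{thm inner prod via Delta} to rewrite $\langle \chi_A, f\rangle_{\h_E}$ as $\int_A \Delta(f)\,d\mu$ and then argue that orthogonality to all of $\Dfin(\mu)$ forces $\Delta(f)=0$ a.e., with the converse immediate. Your version is more explicit about the $\sigma$-finiteness step and the role of local integrability of $\Delta(f)$, but the underlying argument is identical.
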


\begin{proof} 
It follows from Theorem  \ref{thm inner prod via Delta} and 
\eqref{eq-inner prod via Delta} that 
if a function $f\in \h_E$ is orthogonal to every characteristic
function $\chi_A, A \in \Bfin(\mu)$, then
$$
\int_V \chi_A \Delta(f) \; d\mu = 0.
$$
Therefore, $\Delta(f)(x) =0$ for $\mu$-a.e. $x\in V$. This means
that $f$ is harmonic in $\h_E$. 

Conversely, the same theorem implies
that harmonic functions are orthogonal to $\Dfin(\mu)$ and therefore
to the closure of $\Dfin(\mu)$. 
\end{proof}

Formula \eqref{eq Royden} is an analogue of the so called \textit{Royden 
decomposition} used in the theory of weighted networks. 

\begin{remark} 
Theorem \ref{thm Royden} together with Theorem 
\ref{thm_stucture of energy space} show that the Hilbert space $\h_E$
 has no canonical orthonormal basis. Possible  candidates 
 $\{\chi_{A_i}, A_i \in \Bfin(\mu)\},$ where  the sets $\{A_i\}$ generate 
 $\B$, are not orthogonal and  their span is not even dense in $\h_E$. 

 Another property of the Hilbert space  $\h_E$ that makes it nonstandard 
 is the fact that the multiplication operator $M_\va : f \mapsto \va f$ is
 not symmetric in $\h_E$ when $\va $ is nonzero. This result follows 
 immediately from comparison the expressions for 
 $\langle \va f, g\rangle_{\h_E}$ and  $\langle f, \va g\rangle_{\h_E}$.
 
 As seen from Remark \ref{lem_Dfin in H} (5), pointwise identities 
 should not be confused with Hilbert space identities in $\h_E$. 
The point is that elements of $\h_E$ are  equivalence 
classes of functions which differ only by a constant. 
 When working with representatives, we typically abuse notation and use
 the same symbol $f$ to denote the equivalence class and the function. 
 
 \end{remark}

\begin{proposition}\label{prop orthogonal}
Let $f \in \h arm_E$ be a harmonic function for the Laplace operator
 $\Delta = \Delta(\rho)$ acting in $\h_E$ where $\rho$ is a symmetric
 measure. Suppose 
that $c \in  L^2_{\mathrm{loc}}(\mu) $ where $c(x) = \rho_x(V)$. 
Then, for any function $\varphi \in \Dfin(\mu)$, we have 
$$
\int_V \Delta(\varphi) f \; d\mu = 0.
$$
\end{proposition}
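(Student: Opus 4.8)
The plan is to identify the quantity $\int_V \Delta(\varphi)\, f\, d\mu$ with the energy inner product $\langle \varphi, f\rangle_{\h_E}$, after which the conclusion follows immediately from harmonicity. By linearity of $\Delta$ and of the integral it suffices to treat a general $\varphi \in \Dfin(\mu)$, which is a bounded function supported on a set of finite $\mu$-measure. Using the definition \eqref{eq def of Delta} of $\Delta(\varphi)$ together with the disintegration formula \eqref{eq disint formula}, I would first write
\[
\int_V \Delta(\varphi)(x)\, f(x)\, d\mu(x) = \iint_{\VtV} (\varphi(x) - \varphi(y))\, f(x)\, d\rho(x,y),
\]
valid once the double integral is shown to converge absolutely.

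The verification of absolute convergence is the technical heart of the argument. I would estimate $|\varphi(x) - \varphi(y)| \le |\varphi(x)| + |\varphi(y)|$ and bound the two resulting integrals separately. The contribution of $|\varphi(x)|\,|f(x)|$ equals $\int_V |\varphi|\,|f|\, c\, d\mu$, which is finite by the Schwarz inequality since $c, f \in L^2_{\mathrm{loc}}(\mu)$ (the hypothesis on $c$ together with Assumption 2) and $\varphi$ is bounded with support in $\Bfin(\mu)$. The contribution of $|\varphi(y)|\,|f(x)|$, after an application of the symmetry of $\rho$, equals $\int_V |\varphi(x)|\, R(|f|)(x)\, d\mu(x)$, which is finite because $R(|f|) \in \Lloc$ for every $f \in \h_E$ (established in the remarks following Theorem \ref{thm inner prod via Delta}) and $\varphi$ is again bounded and finitely supported.

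With absolute convergence in hand, I would invoke the symmetry identity \eqref{eq formula fo symm meas} to obtain $\iint_{\VtV}(\varphi(x) - \varphi(y))f(x)\, d\rho = \iint_{\VtV}(\varphi(y) - \varphi(x))f(y)\, d\rho$, and then average the two expressions:
\[
\int_V \Delta(\varphi)\, f\, d\mu = \frac12 \iint_{\VtV}(\varphi(x) - \varphi(y))(f(x) - f(y))\, d\rho(x,y) = \langle \varphi, f\rangle_{\h_E}.
\]
Finally, since $f \in \h arm_E$ means $\Delta f = 0$, Theorem \ref{thm inner prod via Delta} gives $\langle \varphi, f\rangle_{\h_E} = \int_V \varphi\, \Delta(f)\, d\mu = 0$ (equivalently, $f \perp \ol{\Dfin(\mu)}$ by the Royden decomposition, Theorem \ref{thm Royden}), which completes the proof. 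I expect the only genuine obstacle to be the Fubini/absolute-convergence justification above; beyond the facts $cf \in \Lloc$ and $R(|f|) \in \Lloc$, no new estimate is required, and the rest is a direct consequence of the symmetry of $\rho$ and results already proved.
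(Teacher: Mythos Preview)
Your proof is correct, but it takes a somewhat different route from the paper's.

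The paper argues directly at the level of the operators $R$ and $c$: harmonicity means $c(x)f(x)=R(f)(x)$, and since $cf\in\Lloc$ (from $c,f\in L^2_{\mathrm{loc}}(\mu)$ and the Schwarz inequality) one may write, for $A\in\Bfin(\mu)$,
\[
\int_V \chi_A\, c f\, d\mu \;=\; \int_V \chi_A\, R(f)\, d\mu \;=\; \int_V R(\chi_A)\, f\, d\mu,
\]
the second equality being the symmetry of $R$ (equivalently, of $\rho$). Subtracting gives $\int_V \Delta(\chi_A)\,f\,d\mu=0$. So the paper uses harmonicity \emph{before} the symmetry step and never passes through $\langle\varphi,f\rangle_{\h_E}$ or Theorem~\ref{thm inner prod via Delta}.

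Your approach instead proves the more general ``integration by parts'' identity
\[
\int_V \Delta(\varphi)\, f\, d\mu \;=\; \langle \varphi, f\rangle_{\h_E}\;=\; \int_V \varphi\,\Delta(f)\,d\mu,
\]
valid for all $\varphi\in\Dfin(\mu)$ and $f\in\h_E$, and then specializes to harmonic $f$. The integrability ingredients you invoke ($cf\in\Lloc$ and $R(|f|)\in\Lloc$) are exactly those the paper's argument needs implicitly to justify the symmetry step $\int\chi_A R(f)\,d\mu=\int R(\chi_A)f\,d\mu$; you simply make them explicit. What your route buys is a cleaner conceptual statement (the Laplacian is ``symmetric'' between $\Dfin(\mu)$ and $\h_E$ with respect to $\mu$), at the cost of an extra appeal to Theorem~\ref{thm inner prod via Delta}. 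The paper's route is shorter and self-contained once the symmetry of $R$ is granted.
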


\begin{proof} By Assumption 2, we can assume that the harmonic
function $f$ is in $L^2_{\mathrm{loc}}(\mu) $. The condition of the 
proposition  means that 
the function $c(x) f(x)$ is locally integrable. Since $\Dfin(\mu)$
is spanned by characteristic functions, it suffices to prove the result for
$\varphi = \chi_A$. We have 
$$
\Delta(f)(x) =  0 \ \Longleftrightarrow \ c(x) f(x) = R(f)(x),
$$
where $R$ is the symmetric operator corresponding to $\rho$.
Therefore, for any $A \in \Bfin(\mu)$, one has
 $$
 \ba
\int_V \chi_A(x) c(x) f(x) \; d\mu(x) & = \int_V \chi_A(x) R(f)(x)\;
d\mu(x)\\
 & = \int_V R(\chi_A)(x) f(x)\; d\mu,
\ea 
$$
and this means that
$$
\int_V (\chi_A c f - R(\chi_A) f)\; d\mu = 0
$$
or equivalently 
$$
\int_V \Delta(\chi_A) f \; d\mu = 0.
$$
\end{proof}

\begin{remark}
(1) Clearly, the condition  $c \in  L^2_{\mathrm{loc}}(\mu) $ can be 
replaced  with $c \in  L^1_{\mathrm{loc}}(\nu)$. 

(2) Fix a harmonic function $f$ and consider the set of all functions
$g$ such that $\int_V g c f \; d\mu$ exists. Then we use the proof 
given above to conclude that 
$$
\int_V \Delta(g)  f \; d\mu  = 0.
$$
\end{remark}

\begin{corollary}\label{cor symm for integrals}
Let $f_1, f_2 $ be any elements of the finite energy space $\h_E$ which is 
defined by a symmetric measure $\rho$. Then there are functions $\varphi_1,
\va_2 \in \Dfin(\mu)$ and $h_1, h_2 \in \h arm_E$ such that 
\be\label{eq bdr term}
\langle f_1, f_2\rangle_{\h_E} = \int_V \va_1 \Delta(\va_2)\; d\mu +
\langle h_1, h_2\rangle_{\h_E}.
\ee
Moreover, if $f_1\in \Dfin(\mu)$, then 
\be\label{eq comp inner prod}
\langle f_1, f_2\rangle_{\h_E} = \iint_{\VtV} f_1(x) (f_2(x) - f_2(y))\; 
d\rho(x, y)
\ee
\end{corollary}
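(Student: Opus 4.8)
The plan is to derive \eqref{eq bdr term} from the Royden-type decomposition of Theorem~\ref{thm Royden} together with Theorem~\ref{thm inner prod via Delta}, and to derive \eqref{eq comp inner prod} by a direct symmetrization of the defining formula \eqref{eq norm in H_E} for the $\h_E$-inner product. Throughout I work under the running hypotheses, in particular $c\in L^2_{\mathrm{loc}}(\mu)$, so that Theorem~\ref{thm inner prod via Delta} is in force: for $\va\in\Dfin(\mu)$ and $f\in\h_E$ one has $\langle\va,f\rangle_{\h_E}=\int_V\va\,\Delta(f)\,d\mu$, the integral being finite because $\Delta(f)\in\Lloc$.

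For the first identity I would begin by writing, via \eqref{eq Royden}, $f_i=g_i+h_i$ with $g_i\in\ol{\Dfin(\mu)}$ and $h_i\in\h arm_E$ ($i=1,2$), and I would take these $h_1,h_2$ as the harmonic functions in \eqref{eq bdr term}. Orthogonality of the two summands in \eqref{eq Royden} kills the cross terms, so
\[
\langle f_1,f_2\rangle_{\h_E}=\langle g_1,g_2\rangle_{\h_E}+\langle h_1,h_2\rangle_{\h_E},
\]
and it remains to express the real number $\langle g_1,g_2\rangle_{\h_E}$ as $\int_V\va_1\,\Delta(\va_2)\,d\mu$ with $\va_1,\va_2\in\Dfin(\mu)$; by Theorem~\ref{thm inner prod via Delta} this integral equals $\langle\va_1,\va_2\rangle_{\h_E}$, so I must hit a prescribed scalar with the energy pairing of two elements of $\Dfin(\mu)$. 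If the (positive-semidefinite) energy form is not identically zero on $\Dfin(\mu)$, I fix $\va_1\in\Dfin(\mu)$ with $\langle\va_1,\va_1\rangle_{\h_E}>0$; then $\va\mapsto\langle\va_1,\va\rangle_{\h_E}$ is a nonzero linear functional on $\Dfin(\mu)$, hence surjective onto $\R$, and I pick $\va_2\in\Dfin(\mu)$ with $\langle\va_1,\va_2\rangle_{\h_E}=\langle g_1,g_2\rangle_{\h_E}$. If instead the energy form vanishes on $\Dfin(\mu)$, then every element of $\Dfin(\mu)$ is the zero vector of $\h_E$, hence $\ol{\Dfin(\mu)}=\{0\}$, so $g_1=g_2=0$ and I take $\va_1=\va_2=0$. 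In either case \eqref{eq bdr term} holds.

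For the second identity, assume $f_1\in\Dfin(\mu)$ and $f_2\in\h_E$; by linearity in $f_1$ it is enough to treat $f_1=\chi_A$ with $A\in\Bfin(\mu)$. Starting from \eqref{eq norm in H_E}, I expand $(\chi_A(x)-\chi_A(y))(f_2(x)-f_2(y))$ into its two mixed terms; applying the flip-invariance of $\rho$ to the $\rho$-integrable function $F(x,y)=\chi_A(y)(f_2(x)-f_2(y))$ gives $\iint F\,d\rho=\iint(F\circ\theta)\,d\rho=-\iint\chi_A(x)(f_2(x)-f_2(y))\,d\rho$, so the two mixed contributions agree, the $\tfrac12$ cancels, and
\[
\langle\chi_A,f_2\rangle_{\h_E}=\iint_{\VtV}\chi_A(x)\,(f_2(x)-f_2(y))\,d\rho(x,y).
\]
Summing over $A$ with real coefficients gives \eqref{eq comp inner prod}; this is, in effect, the middle line of the computation in the proof of Theorem~\ref{thm inner prod via Delta}, now isolated as a statement.

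The remaining points are routine. For the manipulation in the second identity to be legitimate one needs $F$ and $F\circ\theta$ to be $\rho$-integrable, i.e. $\iint_{\VtV}\chi_A(x)\,|f_2(x)-f_2(y)|\,d\rho<\infty$; this follows, exactly as in the proof of Theorem~\ref{thm inner prod via Delta} and the remark after it, from $\int_A|f_2|\,c\,d\mu<\infty$ (Schwarz inequality, $c,f_2\in L^2_{\mathrm{loc}}(\mu)$) and $\int_A R(|f_2|)\,d\mu<\infty$ (positivity of $R$ and $f_{2,\pm}\in\h_E$), together with $\iint\chi_A(x)|f_2(y)|\,d\rho=\int_A R(|f_2|)\,d\mu$ by symmetry of $\rho$. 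I expect the only genuine (if minor) subtlety to be in \eqref{eq bdr term}: the Royden components $g_i$ lie a priori only in $\ol{\Dfin(\mu)}$, not in $\Dfin(\mu)$, so one cannot simply set $\va_i=g_i$, and it is the nondegeneracy/surjectivity dichotomy above that makes the statement literally true with $\va_1,\va_2\in\Dfin(\mu)$.
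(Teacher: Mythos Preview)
Your argument for \eqref{eq comp inner prod} is the same as the paper's: both isolate the middle step of the computation in the proof of Theorem~\ref{thm inner prod via Delta}, using the $\theta$-invariance of $\rho$ to merge the two cross terms.

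For \eqref{eq bdr term} you take a different, and in fact more careful, route. The paper simply sets $\va_i$ equal to the Royden component of $f_i$ and then applies Theorem~\ref{thm inner prod via Delta} and Proposition~\ref{prop orthogonal} to get
\[
\langle f_1,f_2\rangle_{\h_E}
=\langle\va_1,f_2\rangle_{\h_E}+\langle h_1,\va_2+h_2\rangle_{\h_E}
=\int_V\va_1\,\Delta(\va_2)\,d\mu+\langle h_1,h_2\rangle_{\h_E}.
\]
As you correctly flagged, those Royden components lie only in $\ol{\Dfin(\mu)}$, so invoking Theorem~\ref{thm inner prod via Delta} for them is not literally justified; the paper glosses over the closure. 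Your fix---keep $h_1,h_2$ from the Royden decomposition, then choose $\va_1,\va_2\in\Dfin(\mu)$ just so that the scalar $\langle\va_1,\va_2\rangle_{\h_E}$ equals $\langle g_1,g_2\rangle_{\h_E}$---makes the statement, as written, rigorously true. The trade-off is that the paper's intended $\va_i$ (modulo the closure issue) give a canonical splitting tied to the $f_i$, whereas your $\va_i$ bear no relation to the $f_i$ beyond matching a single real number.
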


\begin{proof} By Theorem \ref{thm Royden}, every element $f \in \h_E$
is uniquely represented as $\va + h$ where $\va \in \ol{\mathcal 
D_{\mathrm{fin}}(\mu)}$ and $h\in \h arm$.  This property defines 
the functions $\va_i$ and $h_i$ for given $f_i$,  $i =1,2$. 

To show that \eqref{eq bdr term} holds, we use Theorem
\ref{thm inner prod via Delta} and Proposition
\ref{prop orthogonal}:
$$
\ba 
\langle f_1, f_2\rangle_{\h_E}  & = \langle \va_1, f_2\rangle_{\h_E} 
+ \langle h_1, f_2\rangle_{\h_E} \\
& = \int_V \va_1 \Delta(f_2)\; d\mu + \langle h_1, \va_2 + h_2
\rangle_{\h_E} \\
& = \int_V \va_1 \Delta(\va_2)\; d\mu +\langle h_1, h_2\rangle_{\h_E}.
\ea
$$

Relation \eqref{eq comp inner prod} follows directly from the proof 
of Theorem \ref{thm inner prod via Delta} and from  formula 
\eqref{eq formula fo symm meas} which characterizes symmetric measures.
We leave details to the reader.

\end{proof}

Obviously, relation \eqref{eq bdr term} can be written in the form
$$
\langle f_1, f_2\rangle_{\h_E} = \langle \va_1, \Delta(\va_2)
\rangle_{L^2(\mu)} + \langle h_1, h_2\rangle_{\h_E}.
$$ 

\begin{remark}\label{rem L2loc for nu}
(1) Suppose that the finite energy space consists of functions $f$ that
belong to $L^2_{\mathrm{loc}}(\mu) \cap  L^2_{\mathrm{loc}}(\nu)$. 
Then we claim that, for any $\varphi \in \Dfin(\nu)$ and  $f\in \h_E$,
$$
\langle \va, f \rangle_{\h_E} = \int_V \va \Delta(f) \; d\mu.
$$
The proof repeats that of Theorem \ref{thm inner prod via Delta}. The key 
point is that under the made assumption the function $fc $ is in $\Lloc$. 
Indeed, if $\va = \chi_A$ where $A \in \Bfin(\nu)$, then 
$$
\int_A fc \; d\mu  = \int_A f \chi_A\; d\nu
 \leq \sqrt{\nu(A)} \left( \int_A f^2 \; d\nu \right)^{1/2} < \infty.
$$
Having this result, we repeat word for word the computation of 
$\langle \va, f \rangle_{\h_E}$ from the proof of Theorem 
\ref{thm inner prod via Delta}. 

(2) Suppose that $\h_E \subset L^2_{\mathrm{loc}}(\mu) \cap 
 L^2_{\mathrm{loc}}(\nu)$ as in (1). Then we can prove the following 
 version of Proposition \ref{prop orthogonal}: 
 for any $\varphi \in \Dfin(\nu)$ and any harmonic function  $f$, we 
 have 
 $$
 \int_V \Delta(\va) f \; d\mu = 0.
 $$
To prove this result, we again use the fact that $fc $ is in $\Lloc$ and 
then follow the proof of Proposition \ref{prop orthogonal}. We note that
the assumption  $c \in L^2_{\mathrm{loc}}(\mu)$ is not used in this 
version. 

(3) Finally, we can deduce the result about the orthogonal decomposition
of functions from $\h_E$. Assuming that $\h_E \subset 
L^2_{\mathrm{loc}}(\mu) \cap  L^2_{\mathrm{loc}}(\nu)$, we can show
that 
$$
\ol{\Dfin(\nu)}^{\h_E}  = \ol{\Dfin(\mu)}^{\h_E}
$$ 
and therefore 
$$
\h_E = \ol{\mathcal D_{\mathrm{fin}}(\nu)} \oplus \h arm_E.
$$
We return to this property later in more general setting. 
\end{remark}

\subsection{Six applications}\label{subsect Applications}

In this subsection, we consider several types of symmetric measures. 
Each of these types corresponds to  a  possible direction for 
further applications of our approach based on symmetric measures.
We give a few  examples of symmetric measures $\rho$ and discuss
how they can be used to define the objects we are interested in.
\\

\textbf{(I) Gaussian processes.} This example is motivated by works 
on \textit{Gaussian processes} where
Cameron-Martin kernel $(x, y) \mapsto x\wedge y$ plays an important 
role. 

 Let $\sms$ be a standard measure space with a 
$\sigma$-finite measure $\mu$, and let $c : V \to \R_+$ be  a locally 
integrable Borel function. Define the measure $\nu$  on $(V, \B, \mu))$ by 
$d\nu(x) = c(x) d\mu(x)$. Set 
$$
\rho_\nu(A\times B) := \nu(A \cap B)
$$ 
where $A, B \in \B$. Then $\rho_\nu$ can be extended to a $\sigma$-finite
\textit{symmetric} measure  on $\vv$. We note that, for $A\in \B$, 
\be\label{eq_rho_nu}
\rho\circ \pi^{-1}(A) = \rho(A\times V) = \nu(A) = \int_A c(x) \; d\mu(x).
\ee
Hence $\rho\circ\pi^{-1} \ll \mu$ and, by Theorem  \ref{thm Simmons},
the measure admits a decomposition $\rho = \int_V \rho_x\; d\mu(x)$.
Since 
$$
\rho(A\times V) = \int_A \rho_x(V)\; d\mu(x),
$$
we deduce from \eqref{eq_rho_nu} that $\rho_x(V) = c(x)$. 
\\

\begin{lemma}
 The measure $\rho_x$ is atomic for $\mu$-a.e. $x \in V$, and 
$d\rho_x(y) = c(x) \delta_x(y)$. 
\end{lemma}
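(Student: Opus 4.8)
The plan is to show that the conditional measure $\rho_x$ for the Cameron–Martin type symmetric measure $\rho_\nu(A\times B)=\nu(A\cap B)$ must be the point mass $c(x)\delta_x$. The key observation is that $\rho_\nu$ is concentrated on the diagonal $D=\{(x,x):x\in V\}$ of $V\times V$. Indeed, for disjoint sets $A,B\in\B$ we have $\rho_\nu(A\times B)=\nu(A\cap B)=0$, and by a standard monotone-class/approximation argument this forces $\rho_\nu((V\times V)\setminus D)=0$: one covers the complement of the diagonal by countably many ``off-diagonal'' rectangles $A_i\times B_i$ with $A_i\cap B_i=\emptyset$ (using that $V$ is a standard Borel space, hence has a countable separating family of Borel sets), and each such rectangle is $\rho_\nu$-null.

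Next I would use the disintegration $\rho_\nu=\int_V \wt\rho_x\,d\mu(x)$ guaranteed by Theorem~\ref{thm Simmons}, together with Remark~\ref{rem symm meas}(2)-(3), which tells us that $\mathrm{supp}(\wt\rho_x)=\{x\}\times E_x$ and $\wt\rho_x=\delta_x\times\rho_x$. Since $\rho_\nu$ lives on the diagonal and the fibers $\{x\}\times E_x$ are disjoint and cover the support, for $\mu$-a.e.\ $x$ the measure $\wt\rho_x$ must be supported on the single point $(x,x)$, i.e.\ $E_x=\{x\}$ mod~$0$ and $\rho_x=\rho_x(\{x\})\,\delta_x$ is atomic. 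Finally, by Assumption~1 and the computation already carried out in the excerpt, $\rho_x(V)=c(x)$, so $\rho_x(\{x\})=c(x)$ and hence $d\rho_x(y)=c(x)\,\delta_x(y)$.

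The main obstacle — really the only nontrivial point — is the reduction ``$\rho_\nu(A\times B)=0$ for all disjoint $A,B$ implies $\rho_\nu$ is carried by the diagonal.'' This needs the standard Borel structure: fix a countable algebra $\{C_n\}$ generating $\B$ and separating points; then $(V\times V)\setminus D=\bigcup_n (C_n\times C_n^c)$, and each $C_n\times C_n^c$ is an off-diagonal rectangle, hence $\rho_\nu$-null, so the union is null. Everything else is a direct consequence of the uniqueness of the disintegration in Theorem~\ref{thm Simmons} and the normalization $\rho_x(V)=c(x)$ already established just above the statement; I would present those steps briefly and leave the routine verification that the extension of $\rho_\nu$ to a $\sigma$-finite symmetric measure is consistent with these computations to the reader, since it was asserted in the paragraph preceding the lemma.
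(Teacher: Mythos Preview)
Your argument is correct, but it follows a different path from the paper's own proof. The paper proceeds by direct verification: it writes
\[
\nu(A\cap B)=\int_V \chi_A(x)\chi_B(x)c(x)\,d\mu(x)
=\int_V \chi_A(x)\left(\int_V \chi_B(y)\,c(x)\delta_x(dy)\right)d\mu(x),
\]
compares this with the disintegration formula
\(\rho(A\times B)=\int_V \chi_A(x)\bigl(\int_V \chi_B(y)\,d\rho_x(y)\bigr)d\mu(x)\),
and concludes from uniqueness of the disintegration that $\rho_x=c(x)\delta_x$ for $\mu$-a.e.\ $x$. In other words, the paper simply guesses the answer and checks it. Your route is more geometric: you first prove that $\rho_\nu$ is carried by the diagonal (via the countable separating family), and then read off the fiber measures from the support together with the normalization $\rho_x(V)=c(x)$. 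Both arguments ultimately rest on the uniqueness clause of Theorem~\ref{thm Simmons}; the paper's version is shorter, while yours makes explicit the structural reason (diagonal support) behind the atomicity and would generalize more transparently to other measures concentrated on graphs of maps. One small point: your covering $(V\times V)\setminus D=\bigcup_n C_n\times C_n^c$ tacitly assumes the separating family is closed under complementation (so that for each off-diagonal pair one of the two orderings is witnessed); this is harmless but worth stating.
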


\begin{proof}

Indeed, we can write 
$$
\ba
\nu(A\cap B) & = \int_V \chi_A(x) \chi_B(x) c(x)\; d\mu(x)\\
& = \int_V \chi_A(x) \left(\int_V \chi_B(y) c(y)\delta_x(y)\right)\; 
d\mu(x)\\
& = \rho(A \times B)\\
& = \int_V \chi_A(x) \left(\int_V \chi_B(y) d\rho_x(y)\right)\; 
d\mu(x).\\
\ea
$$
Since $A$ and $B$ are arbitrary sets, this proves that $\rho_x$ is 
the atomic measure supported at $(x, x)$ with weight $c(x)$. 

\end{proof}

Based on the Claim, we can easily determine the operators $R, P,$ and 
$\Delta$ related to the measure $\rho_\nu$ as well as the finite
energy space $\h_E(\rho_\nu)$. It turns out
that the corresponding Markov process $(P_n)$ is deterministic since
$$
P(x, A) = \delta_x(A), \quad A\in \B,
$$
and therefore $P$ is the identity operator. It follows that $R(f)(x)  = c(x)
f(x)$, and  the Laplacian $\Delta = c(I - P) = 0$.  By a similar argument,  
$\h_E(\rho_\nu) = \{0\}$. 
\\

\textbf{(II) Countable Borel equivalence relations.} 
 For a measure space $\sms$, let $c_{xy}$ be a symmetric 
positive function defined on a 
symmetric set $E \subset  V\times V$. Consider a measurable field of
 finite Borel 
measures $x \mapsto \rho^c_x$ such that (i) the support of 
$\rho^c_x$ is the set $\{x\} \times E_x $, (ii) the measure 
$$
\rho :=\int_V c_{xy} \rho_x^c\;d\mu(x)
$$
 is symmetric. In particular, the set $E$ can be of positive product measure
 $\mu\times\mu$. This case is discussed in Section \ref{sect LO in H_E}.
Another important example is the case of a countable Borel equivalence
relation $E$. 

By definition, 
a symmetric Borel subset $E \subset \VtV$ is a \textit{countable Borel
 equivalence relation} if  it satisfies the following properties:

(i) $(x, y), (y, z) \in E \Longrightarrow\ (x,z) \in E$;

(ii) $E_x =\{y \in V : (x, y) \in E\}$ is countable for every $x$. 

Countable Borel equivalence relations 
have been extensively studied during last decades in the context of the 
descriptive set theory, measurable and Borel dynamics, see e.g.
 \cite{JacksonKechrisLouveau2002, KechrisMiller2004, Kanovei2008, 
 Gao2009, Kechris2010,
FeldmanMooreI_1977, FeldmanMooreII_1977} and references cited therein.

Let $| \cdot |$ be the counting measure on every $E_x$. Suppose that 
$c_{xy}$ is a symmetric function on $E$ such that, for every $x \in V$,
$$
c(x) = \sum_{y \in E_x} c_{xy} \in (0, \infty).
$$ 
Then we can define the atomic measure $\rho_x$ on $V$ by setting
$$
\rho_x(A) = \sum_{y \in E_x \cap A} c_{xy}. 
$$
Finally, define the  measure $\rho$ on $E$:
\be\label{eq rho for cber}
\rho = \int_V \delta_x \times \rho_x\; d\mu(x).
\ee
(We will identify measures $\rho_x$ and $\delta_x \times \rho_x$ 
as we did above.)

\begin{lemma}
The measure $\rho$ is a symmetric measure on $E$ which is
 singular with respect to $\mu \times\mu$.
\end{lemma}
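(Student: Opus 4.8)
The plan is to verify the two assertions separately: first that $\rho$ defined by \eqref{eq rho for cber} is symmetric, and then that it is singular with respect to $\mu\times\mu$.

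For symmetry, I would use Remark \ref{rem R and rho}(4)–(5): it suffices to check the defining identity on cylinder functions, equivalently to check that the associated operator $R$ is symmetric in the sense of \eqref{eq symm in terms R}. Concretely, for $A,B\in\Bfin(\mu)$ I would compute
$$
\rho(A\times B) = \int_V \rho_x(B)\, d\mu(x) = \int_V \sum_{y\in E_x\cap B} c_{xy}\; d\mu(x),
$$
and rewrite the inner sum as $\sum_{y\in E_x} c_{xy}\,\chi_B(y)$. Interpreting the integral against the counting measure $|\cdot|$ on the fibers $E_x$ and using that $E$ is a countable Borel equivalence relation, the ``fibered'' measure $x\mapsto |\cdot|$ on $\{x\}\times E_x$ together with $\mu$ assembles to a measure on $E$ that is invariant under the flip $\theta(x,y)=(y,x)$ — this is exactly the standard fact about counting measures on countable Borel equivalence relations (e.g. \cite{FeldmanMooreI_1977, FeldmanMooreII_1977, KechrisMiller2004}). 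Since $c_{xy}=c_{yx}$ on $E$, the function $(x,y)\mapsto c_{xy}\chi_A(x)\chi_B(y)$ pushed forward by $\theta$ becomes $(x,y)\mapsto c_{xy}\chi_A(y)\chi_B(x)$, so $\rho(A\times B)=\rho(B\times A)$. By the cylinder-function density remark this gives symmetry of $\rho$, and Assumption 1 is met because $c(x)=\rho_x(V)=\sum_{y\in E_x}c_{xy}\in(0,\infty)$ by hypothesis (local integrability should either be assumed here or noted as an additional standing requirement).

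For singularity, the key observation is that each conditional measure $\rho_x$ is purely atomic, supported on the countable set $E_x$, whereas the conditional measures of $\mu\times\mu$ with respect to the same vertical partition $\{x\}\times V$ are (copies of) the non-atomic measure $\mu$. More precisely, $\mu\times\mu = \int_V (\delta_x\times\mu)\, d\mu(x)$, so disintegrating both $\rho$ and $\mu\times\mu$ over the partition into vertical fibers gives conditional measures $\rho_x$ (atomic) and $\mu$ (non-atomic) respectively; two measures are mutually singular as soon as their conditional measures are mutually singular $\mu$-a.e., which holds here because an atomic and a non-atomic measure on a standard Borel space are always mutually singular. To make this fully explicit one can exhibit the separating set: let $\Delta_E := \bigcup_x \{x\}\times E_x \subset E$; then $\rho(\Delta_E^c \cap E)=0$ while, since each fiber $E_x$ is countable hence $\mu$-null, $(\mu\times\mu)(E) = \int_V \mu(E_x)\, d\mu(x) = 0$, so in fact $\mu\times\mu$ already vanishes on the support $E$ of $\rho$, giving singularity immediately.

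The main obstacle, and the place where I would spend the most care, is the symmetry argument: making rigorous the Fubini-type interchange of $\int_V d\mu(x)$ with the fiberwise counting sum, and invoking the flip-invariance of the fibered counting measure on a countable Borel equivalence relation. This is where one genuinely uses that $E$ is an equivalence relation (so that $y\in E_x\iff x\in E_y$) and the standard measure-theoretic fact that $x\mapsto|E_x|$-integration against $\mu$ produces a $\theta$-invariant $\sigma$-finite measure on $E$; the weights $c_{xy}=c_{yx}$ then ride along symmetrically. Everything else — atomicity of $\rho_x$, $\mu$-nullity of countable fibers, and hence singularity — is routine once that is in place.
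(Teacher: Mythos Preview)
Your proposal is correct and follows essentially the same route as the paper: the paper's proof simply writes $\rho(A\times B)=\int_A\sum_{y\in E_x\cap B}c_{xy}\,d\mu(x)=\int_B\sum_{x\in E_y\cap A}c_{xy}\,d\mu(y)=\rho(B\times A)$ and notes singularity is obvious since $(\mu\times\mu)(E)=0$. You are in fact more explicit than the paper about the one nontrivial step---the flip-invariance of the fibered counting measure on a countable Borel equivalence relation (Feldman--Moore)---which the paper's displayed chain of equalities uses without comment; your singularity argument via $\mu(E_x)=0$ for countable $E_x$ is exactly the paper's.
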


\begin{proof} Since $(\mu \times \mu)(E) =0$, the singularity of $\rho$
is obvious. It follows from the symmetry of the function $c_{xy}$ and 
(\ref{eq rho for cber}) that, for $A, B \in \B$,
$$
\ba
\rho(A \times B) = & \int_A \sum_{y \in E_x \cap B} c_{xy}\; d\mu(x)\\
 = & \int_B \sum_{x \in E_y \cap A} c_{xy}\; d\mu(y)\\
 = & \rho(B \times A).
\ea
$$
\end{proof}

Having the measure $\rho$ defined, we apply the definitions given
in Subsection \ref{subsect symm meas operators} to construct the 
following operators:
$$
R(f)(x) = \int_V f(y) d\rho_x(y) = \sum_{y \in E_x} c_{xy}f(y),
$$
$$
P(f)(x) = \sum_{y \in E_x} \frac{c_{xy}}{c(x)} f(y) = \sum_{y \in E_x} 
p(x,y) f(y),
$$
and
$$
\Delta(f)(x) = c(x)f(x) - \sum_{y \in E_x} c_{xy}f(y).
$$
Functions $f$ from the finite energy Hilbert space$\h_E(\rho)$  are
 determined by the condition:
$$
\int_V  \sum_{y \in E_x} c_{xy}(f(x) - f(y))^2 \; d\mu(x) < \infty.
$$

\begin{definition}\label{def meas field ER}
Let $E$ be a countable Borel equivalence relation on a standard Borel space
$(V, \B)$. A symmetric subset $G \subset $E is called a \textit{graph} 
if $(x, x) \notin G, \forall x \in V$. A \textit{graphing} of $E$ is a graph $G$
such that the connected components of $G$ are exactly the 
$E$-equivalence classes. In other words, a graph $G$ generates $E$.
\end{definition}

The notion of a graphing is useful for the construction of the path space
$\Omega$ related to a Markov process, see Section \ref{sect diss space}. 

The following lemma can be easily proved.

\begin{lemma} Let $\rho$ be a countable equivalence relation on 
$(V, \B)$, and let $\rho$ be a  symmetric measure on $E$. Suppose $G$
is a graphing of $E$. Then $\rho(G) >0$.
\end{lemma}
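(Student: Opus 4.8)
The plan is to argue by contradiction using the irreducibility-type properties of the symmetric measure $\rho$ together with the defining property of a graphing. Suppose, toward a contradiction, that $\rho(G) = 0$. Since $G$ is a symmetric subset of $E$ and $\rho$ is supported on $E$, the complement $E \setminus G$ carries full $\rho$-measure, and I would want to conclude that this forces the connected components of $G$ to fail to exhaust the $E$-classes, contradicting the definition of a graphing.

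First I would disintegrate: write $\rho = \int_V \delta_x \times \rho_x \, d\mu(x)$ as in \eqref{eq rho for cber}, where $\rho_x$ is the atomic measure $\rho_x(A) = \sum_{y \in E_x \cap A} c_{xy}$. The hypothesis $\rho(G) = 0$ then reads $\int_V \rho_x(G_x)\, d\mu(x) = 0$, where $G_x = \{ y : (x,y) \in G\}$, so for $\mu$-a.e.\ $x$ we have $\rho_x(G_x) = 0$. Because $\rho_x$ is atomic with strictly positive weights $c_{xy} > 0$ on all of $E_x$ (this uses that $c_{xy}$ is a genuine positive symmetric function on $E$ and $c(x) = \sum_{y \in E_x} c_{xy} \in (0,\infty)$), the vanishing $\rho_x(G_x) = 0$ is equivalent to $G_x = \emptyset$. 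Hence $G_x = \emptyset$ for $\mu$-a.e.\ $x$, i.e.\ the graph $G$ has no edges emanating from a conull set of vertices.

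Next I would exploit the fact that $G$ is a \emph{graphing}, so its connected components are exactly the $E$-equivalence classes. Pick (mod $0$) a Borel set $V_0$ of full $\mu$-measure on which $G_x = \emptyset$. For $x \in V_0$ the connected component of $x$ in $G$ is $\{x\}$ alone, so the $E$-class of $x$ is $\{x\}$, i.e.\ $E_x = \{x\}$ for $\mu$-a.e.\ $x$. But then the off-diagonal part of $E$ meets $\{x\} \times V$ trivially for a.e.\ $x$, which in the presence of the standing assumption that each $E_x$ is a genuinely nontrivial fiber (recall the running hypothesis, e.g.\ irreducibility/indecomposability of $\rho$, and in particular that $\rho_x \neq 0$ forces $E_x \neq \emptyset$ with $E_x$ supporting a nonzero measure) is the desired contradiction: if $E_x = \{x\}$ for a.e.\ $x$ then the Markov operator $P$ is the identity, the Laplacian is zero, $\h_E(\rho) = \{0\}$, and $\rho$ is trivial, contradicting that we are considering a nontrivial symmetric measure on a nontrivial countable equivalence relation $E$. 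Therefore $\rho(G) > 0$.

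The main obstacle, and the point requiring care, is the step $\rho_x(G_x) = 0 \Rightarrow G_x = \emptyset$: this genuinely uses that the weights $c_{xy}$ are strictly positive on \emph{all} of $E \supset G$, not merely $\rho$-a.e., so one must be careful about whether $c_{xy}$ is prescribed pointwise on $E$ (as stated before \eqref{eq rho for cber}) or only $\rho$-a.e. If the latter, I would instead argue directly: $\rho(G) = 0$ together with symmetry of $\rho$ and of $G$ means the edge set $G$ is $\rho$-null, and since the connected components of $G$ generate $E$ while a $\rho$-null symmetric set cannot connect two vertices each carrying positive conditional mass, one again forces $E$ to be $\rho$-equivalent to the diagonal, contradicting nontriviality. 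Either way the heart of the argument is that a graphing, being a generating graph, cannot be squeezed into a $\rho$-null set when $\rho$ has full support on a nontrivial $E$.
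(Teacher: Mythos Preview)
The paper does not actually supply a proof of this lemma; it merely asserts that it ``can be easily proved.'' Your argument is correct and is the natural one. The key steps---disintegrate $\rho$ to get $\rho_x(G_x)=0$ for $\mu$-a.e.\ $x$, use pointwise positivity of the weights $c_{xy}$ on $E$ to conclude $G_x=\emptyset$ for $\mu$-a.e.\ $x$, and then invoke the graphing property to force $E_x=\{x\}$ for such $x$---are all sound.

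Your caveat about nontriviality is well taken: the lemma tacitly assumes $E$ is not the diagonal on a conull set (otherwise $G=\emptyset$ is a graphing and the conclusion fails). This is the standard standing assumption in the subject and is implicit in the surrounding discussion. The alternative argument you sketch in your final paragraph, to cover the case where $c_{xy}$ is only defined $\rho$-a.e., is unnecessary here: in the paper's setup (see the construction preceding \eqref{eq rho for cber}) the function $c_{xy}$ is prescribed pointwise as a strictly positive symmetric function on all of $E$, so the implication $\rho_x(G_x)=0\Rightarrow G_x=\emptyset$ goes through directly.
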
 

We can use the notion of graphing to construct the path space 
$\Omega$ and the family of probability measures $x \mapsto 
\mathbb P_x$ defined on the set of paths with starting point $x$.
This approach is realized in Section \ref{sect diss space} in more
general setting.

For more details regarding integral operators, and analysis of machine 
learning kernels, the reader may consult the following items
 \cite{Atkinson1975, CuckerZhou2007, ChenWheelerKochenderfer2017, 
Ho2017, JorgensenTian2015}  and the papers cited there. 
\\

\textbf{(III) Graphons.} 
Our approach in the study of symmetric measures, and the 
corresponding
graph Laplace operators, is close to the basic setting of the theory of 
\textit{graphons} and \textit{graphon operators}. We refer to several 
basic works in this theory  
 \cite{Avella-Medina_2017, Borgs_2008, Borgs_2012, Janson2013,
 Lovasz2012}. Informally speaking, a graphon is the limit 
of a converging sequence of finite graphs with increasing number of vertices.
Formally, a \textit{graphon} is a symmetric measurable function $W : 
(\mc X, m) \times (\mc X, m) \to [0, 1]$ where $(\mc X, m)$ is a 
probability measure
 space. The  linear operator $\mathbb W: L^2(\mc X, m) \to  
 L^2(\mc X, m) $ acting by the formula
 $$
 \mathbb W(f)(x) = \int_{\mc X} W(x, y)f(y)\; dm(y)
 $$
is called the\textit{ graphon operator}. The properties of $\mathbb W$ 
have been extensively studied in many recent works, see e.g.
\cite{Avella-Medina_2017}. 

Below in Section 3, we consider a similar operator $\wt R$ defined by a 
symmetric measure $\rho$. The principal difference is that we consider
infinite measure spaces and symmetric functions which are 
not bounded, in general. 
\\

\textbf{(IV) Determinantal point processes.} 
One more application of our results can be used in the theory
 of determinantal measures and \textit{determinantal point processes,}  
 see e.g.  \cite{Lyons2003, Hough_et_al_2009, BufetovQiu2015, 
BorodinOlshanski2017}. For example, the result
of \cite[Proposition 4.1]{Ghosh2015} gives the formula for the norm
in the energy space for a specifically chosen symmetric measure $\rho$. 
To make this statement more precise, we quote loosely the proposition 
proved in \cite{Ghosh2015}: 

Let $\Pi$ be a determinantal point process on a locally compact
space $(X, \mu)$ with positive definite determinantal kernel 
$K(\cdot , \cdot)$ such that 
$K$ is an integral operator on $L^2(\mu)$. Then, for
every compactly supported function $\psi$,
$$
Var\left[\int_x \psi \; d[\Pi] \right] = \iint_{X\times X}
|\psi(x) - \psi(y)|^2 |K(x, y)|^2 \; d\mu(x)d\mu(y). 
$$ 
This formula is exactly the formula for the norm in the energy space when 
the symmetric measure $\rho $ is defined by the symmetric function 
$K(x, y)$: $d\rho(x, y) = |K(x, y)|^2d\mu(x)d\mu(y)$, see Section
\ref{sect energy} below.

We refer to the following papers regarding the the theory of  positive 
definite kernels \cite{Aronszajn1950, Adams_et_al1994, PaulsenRaghupathi2016}. The reader will find more 
 references in the  papers cited there. Various applications of  positive 
 definite kernels are discussed in \cite{AlpayJorgensenLevanony2011,
  AlpayJorgensenVolok2014, AplayJorgensen2014, 
 AlpayJorgensenKimsey2015,  AlpayJorgensen2015,
  AlpayJorgensenLewkowicz2015,  AlpayJorgensenLevanony2017}.
  More details and explicit constructions of reproducing kernel Hilbert
  spaces are considered in Section \ref{sect RKHS}.
  \\

\textbf{(V) Dirichlet forms.} 
Another interesting application of symmetric measures and 
finite energy space is related to \textit{Dirichlet forms,} see e.g., 
\cite{AlbeverioFanHerzberg2011, MaRockner1992, MaRockner1995}. 
We mention here the 
Beurlng-Deny formula as given in \cite{MaRockner1992}. It states that a
symmetric Dirichlet form on $L^2(U)$, where $U$ is an open subset in 
$\R^d$,  can be uniquely expressed as follows:
$$
\ba
\mc E(u, v)  & = \sum_{i,i=1}^d \int \frac{\partial u}{\partial x_i}
\frac{\partial v}{\partial x_j}\; d\nu_{ij}\\
& \ \ \  \  +  \int_{(U \times U) \setminus \mathrm{diag}}
(u(x) - u(y)) (v(x ) - v(y)) \; J(dx, dy)\\
& \ \ \  \  +  \int uv\; dk.
\ea
$$
Here $u, v \in C^\infty_0(U)$,  $k$ is a positive Radon measure on $U
 \subset \R^d$, and $J$ is a symmetric measure on $(U \times U) 
 \setminus \mathrm{diag}$. The first term on the right hand side in this
  formula is called the diffusion term, the second, the jump term, and the last, 
  the killing term; a terminology deriving from their use in the study of general 
  Levy processes \cite{Applebaum2009}.  
  We see that the second term in this
 formula corresponds to the inner product in the finite energy space $\h_E$
 (details are in Section \ref{sect energy} below).
 \\

\textbf{(VI) Joinings}. 
The following application of symmetric measures is motivated by
the theory of \textit{joinings} developed in ergodic theory, see e.g.,
\cite{Glasner2003, delaRue2012} and the literature cited therein.

  Let $\sms$ be a standard $\sigma$-finite measure 
space, and let $S$ be a measure preserving surjective Borel endomorphism 
of $\sms$, i.e., $\mu\circ S^{-1} = \mu$.  Define a measure $\rho$ on 
$\vv$ by setting
\be\label{eq_rho joinings}
\rho(A \times B) = \mu(A \cap S^{-1}(B)).
\ee
We note that the measure $\rho$ is invariant with respect to 
$S^{-1} \times S^{-1}$:
$$
\rho(S^{-1}(A) \times S^{-1}(B)) = \mu(S^{-1}(A) \cap S^{-1}[S^{-1}(B)])
= \mu(A \cap S^{-1}(B)) = \rho(A \times B).
$$ 
Moreover, the measure $\rho$ defined in \eqref{eq_rho joinings} is 
 \textit{symmetric} if and only if
\be\label{eq_mu symm}
\mu(A \cap S^{-1}(B)) = \mu(S^{-1}(A) \cap B).
\ee

\begin{lemma}\label{lem_symm joinings}
 Let the measure $\rho$ be defined by
\eqref{eq_rho joinings} where $\mu$ satisfies \eqref{eq_mu symm}. Then:

(1) disintegration of $\rho$ with respect to $\mu$ defines the atomic 
fiber measures $\rho_x$ such that   $d\rho_x(y) = \delta_{Sx}(y)$  for all
 $x\in V$; 
 
(2) the symmetric operator $R = R(\rho)$ coincides with  the Koopman
 operator$ f \mapsto f\circ S$ corresponding to the endomorphism $S$.
\end{lemma}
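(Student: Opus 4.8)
The plan is to read the conditional measures off directly from the defining formula \eqref{eq_rho joinings}, and then to obtain $R$ from its definition in \eqref{eq def of R}. First I would note that Theorem \ref{thm Simmons} applies: since $\rho(A\times V) = \mu(A\cap S^{-1}(V)) = \mu(A)$, we have $\rho\circ\pi_1^{-1} = \mu \ll \mu$, so the disintegration $\rho = \int_V (\delta_x\times\rho_x)\, d\mu(x)$ of \eqref{eq rho_x def} exists and is unique.

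For part (1), I would use the cylinder-function reduction of Remark \ref{rem R and rho}(4): it suffices to match the two sides of the disintegration formula \eqref{eq disint formula} on $f = \chi_A\otimes\chi_B$. On one side, \eqref{eq_rho joinings} gives
$$
\rho(A\times B) = \mu\big(A\cap S^{-1}(B)\big) = \int_V \chi_A(x)\,\chi_B(Sx)\, d\mu(x) = \int_V \chi_A(x)\Big(\int_V \chi_B(y)\, d\delta_{Sx}(y)\Big) d\mu(x);
$$
on the other side the disintegration gives $\rho(A\times B) = \int_V \chi_A(x)\,\rho_x(B)\, d\mu(x)$. Since $A$ is an arbitrary Borel set, for each fixed $B$ we get $\rho_x(B) = \delta_{Sx}(B)$ for $\mu$-a.e.\ $x$. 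Letting $B$ range over a countable algebra generating $\B$ (available because $(V,\B)$ is standard Borel) and using that $\rho_x$ and $\delta_{Sx}$ are both measures, I conclude $\rho_x = \delta_{Sx}$ for $\mu$-a.e.\ $x$, which is the asserted identity $d\rho_x(y) = \delta_{Sx}(y)$ under the standing mod $0$ convention. In particular $c(x) = \rho_x(V) = 1$ a.e.

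For part (2), I would simply substitute the result of (1) into the definition \eqref{eq def of R}: for $f\in\FVB$,
$$
R(f)(x) = \int_V f(y)\, d\rho_x(y) = \int_V f(y)\, d\delta_{Sx}(y) = f(Sx),
$$
so $R$ is the Koopman operator $f\mapsto f\circ S$. As a consistency check, the symmetry of $R$ granted by Remark \ref{rem R and rho}(5) now reads $\int_V f\,(g\circ S)\, d\mu = \int_V (f\circ S)\, g\, d\mu$, which for $f=\chi_A$, $g=\chi_B$ is exactly the hypothesis \eqref{eq_mu symm}; this closes the loop with the symmetry assumption.

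The computations themselves are immediate; the only step requiring care is the passage in part (1) from ``for each $B$, $\mu$-a.e.\ $x$'' to ``$\mu$-a.e.\ $x$, for all $B$'', which is handled in the usual way via a countable generating algebra for $\B$ and is the single place where standardness of $(V,\B)$ is genuinely used.
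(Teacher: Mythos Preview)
Your proof is correct and follows essentially the same route as the paper: both verify on rectangles $A\times B$ that the family $(\delta_{Sx})_x$ disintegrates $\rho$, and then read off $R$ directly from \eqref{eq def of R}. The only difference is one of emphasis: the paper simply exhibits $\delta_{Sx}$ and checks the disintegration identity, tacitly invoking the uniqueness in Theorem~\ref{thm Simmons}, whereas you make the quantifier swap ``for each $B$, a.e.\ $x$'' $\Rightarrow$ ``a.e.\ $x$, all $B$'' explicit via a countable generating algebra. Both arguments are valid and amount to the same thing.
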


\begin{proof}
(1) Indeed, setting 
$d\rho_x(y) = \delta_{Sx}(y)$, we obtain that
$$
\ba 
\int_A d\rho_x(B)\; d\mu(x) & = \int_A \left(\int_V \chi_B(y) \; 
d\rho_x(y) \right)\; d\mu(x) \\
& = \int_A \delta_{Sx}(B)\; d\mu(x)\\
& = \int_A \chi_{S^{-1}(B)}(x)\; d\mu(x)\\
& = \mu(A \cap S^{-1}(B)) \\
& = \rho(A\times B).
\ea
$$
This proves that $\rho = \int_V \delta_{Sx} d\mu(x)$.

(2) Since the field of measures $x \mapsto \rho_x$ is determined, we can 
directly compute the operator $R$:
$$
R(f)(x) = \int_V f(y) \; d\rho_x(y) = \delta_{Sx}(f)(y) = f(Sx).
$$
\end{proof}

\begin{remark}
 To avoid a possible confusion, we mention that the Koopman operator 
 $R : f \mapsto f\circ S$ corresponds to a symmetric measure if it satisfies 
  \eqref{eq symm in terms R}. Then 
$$
\int_V \chi_A (\chi_B\circ S) \; d\mu = \int_V  (\chi_A\circ S)
  \chi_B \; d\mu
$$
which is equivalent to the property \eqref{eq_mu symm}.
\end{remark}

It follows from Lemma \ref{lem_symm joinings} that the 
function  $c(x) =\rho_x(V)= 1$ and the Markov  operator $P$ coincides
with  $R$.

\begin{corollary} The Laplace operator $\Delta = \Delta(\rho)$, where 
the symmetric  measure $\rho$ is defined by \eqref{eq_rho joinings} and
\eqref{eq_mu symm}, is the coboundary operator, i.e.,
$$
\Delta(f)(x) = f(x) - f(Sx).
$$

\end{corollary}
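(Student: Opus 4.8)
The plan is to read this off directly from the two facts already established for the joining measure $\rho$ of \eqref{eq_rho joinings}, so the argument is short. First I would invoke Lemma \ref{lem_symm joinings}: disintegration of $\rho$ against $\mu$ produces the atomic fiber measures $d\rho_x(y) = \delta_{Sx}(y)$, whence the symmetric operator $R = R(\rho)$ is the Koopman operator $R(f)(x) = f(Sx)$. Next I would record the value of the ``conductance'' function: since $\rho_x = \delta_{Sx}$ is a probability measure, $c(x) = \rho_x(V) = 1$ for all $x$, which is precisely the remark preceding the corollary (and note $c \equiv 1$ trivially satisfies Assumption 1, as $\int_A c\, d\mu = \mu(A) < \infty$ for $A \in \Bfin(\mu)$, so the operators $R$, $P$, $\Delta$ are all legitimately defined here). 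Then formula \eqref{eq Delta via R}, $\Delta(f) = (cI - R)(f)$, gives immediately
$$
\Delta(f)(x) = c(x) f(x) - R(f)(x) = f(x) - f(Sx),
$$
which is the claimed coboundary formula.

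There is essentially no obstacle: the content was already packed into Lemma \ref{lem_symm joinings} and the identity \eqref{eq Delta via R}. The only thing worth a sentence of care is the mod-$0$ bookkeeping --- the identity $\Delta(f)(x) = f(x) - f(Sx)$ holds for $\mu$-a.e.\ $x$, which is all that is meant in this measure-theoretic setting, and it is consistent with the general fact (Remark \ref{rem R and rho}(1), Definition \ref{def_harmonic}) that $P = R$ here forces the harmonic functions of $\Delta$ to be exactly the $S$-invariant functions, i.e.\ the coboundary picture is the right one. If desired, one could alternatively derive the formula straight from Definition \ref{def R, P, Delta}(iii), $\Delta(f)(x) = \int_V (f(x) - f(y))\, d\rho_x(y) = \int_V (f(x) - f(y))\, d\delta_{Sx}(y) = f(x) - f(Sx)$, bypassing \eqref{eq Delta via R} entirely; I would probably include this one-line verification as well, since it makes the corollary self-contained.
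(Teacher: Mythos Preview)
Your proposal is correct and matches the paper's approach exactly: the paper states the corollary without proof, relying on the preceding Lemma \ref{lem_symm joinings} (giving $R(f)(x)=f(Sx)$) together with the sentence just before the corollary noting $c(x)=1$ and $P=R$, after which $\Delta = cI - R$ yields the coboundary formula immediately.
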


\section{\textbf{Embedding of $\Dfin(\mu)$ and $\Dfin(\nu)$  into 
$\h_E$}}

\label{sect Emb L2 into H}
In this section, we focus on a description of subspaces in the finite
energy Hilbert space $\h_E$. We recall that this space is defined by a
symmetric measure $\rho$. It gives us the marginal measure $\mu$,
the function $c$, and therefore one more measure $\nu= c\mu$. 
We show below that the spaces of simple functions can be considered
as subspaces of $\h_E$ and describe their closures in $\h_E$.

\subsection{Locally integrable functions}
 We recall that the following chain of inclusions holds due to  
Assumptions 1 and 2, and the results  proved above:
$$
\Dfin(\mu) \subset \Dfin(\nu) \subset \h_E \subset 
L^2_{\mathrm{loc}}(\mu) \subset \Lloc.
$$
In this section we will describe the closure of subspaces spanned by
characteristic functions into the energy space $\h_E$.

It is useful to have a criterion for local integrability of functions with respect 
to the measures $\mu$ and $\nu$ because, by the made assumption, all 
functions from the energy space $\h_E$ should be locally integrable. 
 
\begin{lemma}\label{lem_loc int}
 Let $f \in \FVB$ and $\rho$ a symmetric measure on $\vv$ such that
 $c(x) = \rho_x(V)$. Then: 
 
(1) $f$  is locally integrable with respect to the measure 
$\mu$ on $\VB$ if and only if, for  any $A \in \Bfin(\mu)$, the function 
$$
F(A, x) =\int_A  \frac{f(y)}{c(y)} \; d\rho_x(y)
$$
is in $L^1(\mu)$; 

(2)  $f$   is locally integrable with respect to the measure 
$\nu$ on $\VB$ if and only if, for  any $A \in \Bfin(\nu)$, the function 
$$
F(A, x) = \frac{1}{c(x)}\int_A f(y) \; d\rho_x(y)
$$
is in $L^1(\nu)$. 

\end{lemma}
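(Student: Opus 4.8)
The plan is to unwind both equivalences directly from the disintegration formula \eqref{eq disint formula} together with the identity $d\nu(x) = c(x)\,d\mu(x)$ from Lemma \ref{lem symm measure via int}, and the symmetry of $\rho$ expressed by \eqref{eq formula fo symm meas}. For part (1), I would start from the observation that, for a nonnegative Borel function $g$ and a set $A \in \Bfin(\mu)$, applying \eqref{eq disint formula} to the function $(x,y) \mapsto \chi_A(y) g(y)$ and then using the symmetry \eqref{eq formula fo symm meas} gives
\be
\int_V \left( \int_V \chi_A(y) g(y)\, d\rho_x(y) \right) d\mu(x)
= \iint_{\VtV} \chi_A(y) g(y)\, d\rho(x,y)
= \iint_{\VtV} \chi_A(x) g(x)\, d\rho(x,y),
\ee
and the last integral equals $\int_A g(x) c(x)\, d\mu(x)$ since integrating out $y$ over $\rho_x$ produces the factor $c(x) = \rho_x(V)$. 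Choosing $g = f/c$ (first for $f \ge 0$), the right-hand side becomes $\int_A f\, d\mu$, while the left-hand side is exactly $\int_V F(A,x)\, d\mu(x)$ with $F(A,x) = \int_A \frac{f(y)}{c(y)}\, d\rho_x(y)$. Thus $\int_A f\, d\mu < \infty$ for all $A \in \Bfin(\mu)$ if and only if $F(A,\cdot) \in L^1(\mu)$ for all such $A$; the general (signed) case follows by splitting $f = f_+ - f_-$, using that $c > 0$ a.e. so $f_\pm/c$ are well defined, and noting $F(A,x)$ is linear in $f$.

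For part (2) the argument is the same computation carried one level further. Starting again from \eqref{eq disint formula} and \eqref{eq formula fo symm meas}, for $A \in \Bfin(\nu)$ and $f \ge 0$ I would write
\be
\int_V \frac{1}{c(x)} \left( \int_A f(y)\, d\rho_x(y) \right) c(x)\, d\mu(x)
= \iint_{\VtV} \chi_A(y) f(y)\, d\rho(x,y)
= \iint_{\VtV} \chi_A(x) f(x)\, d\rho(x,y)
= \int_A f(x) c(x)\, d\mu(x) = \int_A f\, d\nu.
\ee
The outer integral on the far left is $\int_V F(A,x) c(x)\, d\mu(x) = \int_V F(A,x)\, d\nu(x)$ with $F(A,x) = \frac{1}{c(x)} \int_A f(y)\, d\rho_x(y)$, so $f \in L^1_{\mathrm{loc}}(\nu)$ (i.e. $\int_A f\, d\nu < \infty$ for all $A \in \Bfin(\nu)$) is equivalent to $F(A,\cdot) \in L^1(\nu)$ for all such $A$. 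Again the signed case reduces to $f_\pm$ by linearity of $F$ in $f$.

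The routine bookkeeping is the only real content: none of the steps is deep, and the main thing to be careful about is the interchange of integration and the use of \eqref{eq formula fo symm meas}, which is legitimate precisely because these identities are stated in the excerpt to hold in the extended-real-line sense (infinite values allowed), so one may first run everything for nonnegative $f$ without integrability concerns and only afterward pass to signed $f$. One should also remark that $F(A,x)$ is finite for $\mu$-a.e. $x$ whenever the relevant local integrability holds, which is automatic from the equivalences themselves; and that $c > 0$ $\mu$-a.e. by Assumption 1 guarantees the divisions by $c$ are meaningful. I do not anticipate any genuine obstacle — the ``hard part,'' such as it is, is simply recognizing that both claimed criteria are the two halves of the single Fubini-type identity above, read once with the weight $1/c$ on the inside and once with the weight $c$ on the outside.
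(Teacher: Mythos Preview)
Your proposal is correct and arrives at exactly the same identity $\int_A f\,d\mu = \int_V F(A,x)\,d\mu(x)$ (and its $\nu$-analogue) that the paper establishes, followed by the same $f_\pm$ splitting. The only cosmetic difference is that the paper routes the computation through the $P$-invariance of $\nu$ (writing $\int_A f\,d\mu = \int_V \frac{\chi_A f}{c}\,d\nu = \int_V P(\frac{\chi_A f}{c})\,d\nu = \int_V R(\frac{\chi_A f}{c})\,d\mu$), whereas you invoke the flip symmetry \eqref{eq formula fo symm meas} of $\rho$ directly; since $\nu P = \nu$ is itself equivalent to that symmetry, the two derivations are the same argument in different notation.
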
 

\begin{proof} We prove (1) only because the other statement is proved 
similarly.
Let $A \in \Bfin(\mu)$;  then we use $P$-invariance of $\nu$ and obtain

$$
\ba
\int_A f(x) \; d\mu(x) = & \int_V \frac{f(x)}{c(x)}\; d\nu(x)\\
= & \int_V \frac{\chi_A(x)f(x)}{c(x)}\; d(\nu P)(x)\\
= & \int_V P\left(\frac{\chi_Af}{c}\right)(x) \; d\nu (x)\\
= & \int_V R\left(\frac{\chi_Af}{c}\right)(x) \; d\mu (x)\\
= & \int_V \left( \int_V \frac{\chi_A(y)f(y)}{c(y)} \; d\rho_x(y)\right)
 d\mu (x)\\
 =& \int_V F(A, x) \; d\mu(x).\\
\ea
$$
If $f = f_+ - f_-$, where $f_+$ and $f_-$ are positive an negative parts of
$f$, then the corresponding function $F(A, x)$ is represented as 
 $F_+ (A, x) - F_-(A, x)$. 
Hence the proved equality $\int_A f(x) \; d\mu(x)  =  \int_V F(A, x) \; 
d\mu(x)$ points out that $f \in \Lloc$ if and only $F(A, \cdot)$ is 
$\mu$-integrable for every $A\in \Bfin(\mu)$.
\end{proof}

We observe that condition (2) of Lemma \ref{lem_loc int} can be written in
the following equivalent form:
$$
f \in  L^1_{\mathrm{loc}}(\nu) \ \Longleftrightarrow \ 
[x \mapsto c(x) P(\chi_A f)(x)] \in L^1(\mu)
$$
for any $A \in \Bfin(\nu)$.

\subsection{Embedding of $\Dfin(\nu)$ into $\h_E$}
Let $f $ be a function from $L^2(\nu)$. We will show that this function 
determines an element of the finite energy space. In other words, the 
equivalence class generated by $f$ belongs to $\h_E$. In order to 
 distinguish a function  $f\in L^2(\nu)$ and the corresponding element
of $\h_E$, we denote the latter by $\iota(f)$.

The following formula will be  repeatedly used in further computations of 
the energy norm. This result extends Theorem 
\ref{thm inner prod via Delta}.

\begin{lemma}\label{lem formula for energy norm}
For $f\in \h_E \cap L^2(\nu)$, the following formula holds:
\be\label{eq formula for energy norm}
|| f ||^2_{\h_E} = \iint_{V \times V} f(x)(f(x) - f(y)) \; d\rho(x, y) =
\int_V f(x) \Delta(f)(x) \; d\mu(x).
\ee
\end{lemma}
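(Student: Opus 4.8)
The plan is to start from the definition \eqref{eq norm in H_E} of the energy norm and expand the square $(f(x)-f(y))^2 = f(x)^2 - 2f(x)f(y) + f(y)^2$, the only real issue being whether the individual terms of this expansion are $\rho$-integrable. The key point is that the hypothesis $f \in L^2(\nu)$ makes the ``diagonal'' term finite: by the disintegration formula \eqref{eq disint formula} together with $\rho_x(V) = c(x)$,
$$
\iint_{V\times V} f(x)^2\, d\rho(x,y) = \int_V f(x)^2 c(x)\, d\mu(x) = \| f \|_{L^2(\nu)}^2 < \infty,
$$
and by symmetry of $\rho$ (Lemma \ref{lem symm measure via int}, formula \eqref{eq formula fo symm meas}) the same holds with $f(y)^2$ in place of $f(x)^2$. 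An application of the Cauchy--Schwarz inequality in $L^2(\rho)$ then controls the cross term,
$$
\left| \iint_{V\times V} f(x) f(y)\, d\rho \right| \le \left(\iint f(x)^2\, d\rho\right)^{1/2}\left(\iint f(y)^2\, d\rho\right)^{1/2} = \| f \|_{L^2(\nu)}^2 < \infty .
$$
So every term in the expansion of $(f(x)-f(y))^2$ is $\rho$-integrable, and the rearrangements below are legitimate.

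Next I would expand
$$
2\, \| f \|_{\h_E}^2 = \iint (f(x)-f(y))^2\, d\rho = \iint f(x)^2\, d\rho - 2\iint f(x)f(y)\, d\rho + \iint f(y)^2\, d\rho ,
$$
and use the symmetry of $\rho$ in the form \eqref{eq formula fo symm meas} to replace $\iint f(y)^2\, d\rho$ by $\iint f(x)^2\, d\rho$. The right-hand side collapses to $2\iint f(x)^2\, d\rho - 2\iint f(x)f(y)\, d\rho = 2\iint f(x)(f(x)-f(y))\, d\rho$, which is the first claimed identity.

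For the second identity I would apply the disintegration \eqref{eq disint formula} once more:
$$
\iint_{V\times V} f(x)(f(x)-f(y))\, d\rho(x,y) = \int_V f(x)\left(\int_V (f(x)-f(y))\, d\rho_x(y)\right) d\mu(x) = \int_V f(x)\bigl(c(x)f(x) - R(f)(x)\bigr)\, d\mu(x),
$$
using $\int_V d\rho_x(y) = \rho_x(V) = c(x)$ and the definition \eqref{eq def of R} of $R$; by \eqref{eq Delta via R} the integrand equals $f(x)\Delta(f)(x)$, which gives the result. One should also record that $f\, R(f) \in \Lloc$ here, which follows from $|R(f)| \le R(|f|)$ together with $\int_V |f|\, R(|f|)\, d\mu = \iint |f(x)f(y)|\, d\rho < \infty$, already established in the first paragraph.

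The hard part is precisely this finiteness bookkeeping: a priori $\iint f(x)^2\, d\rho$ may diverge even when $f \in \h_E$, so without the extra assumption $f \in L^2(\nu)$ the splitting of the integral -- and hence the separate appearance of $R(f)$ and $\Delta(f)$ as meaningful objects -- would not be justified. Once the $L^2(\nu)$ hypothesis is used to secure integrability of all the pieces, the remaining computation is just the short symmetrization argument above.
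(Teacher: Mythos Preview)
Your proof is correct and follows essentially the same route as the paper: expand the square, use the symmetry of $\rho$ to merge the $f(x)^2$ and $f(y)^2$ terms, and then disintegrate to identify $\Delta(f)$. The paper's argument is in fact terser---it performs the same chain of equalities without pausing to justify that the individual terms $\iint f(x)^2\,d\rho$ and $\iint f(x)f(y)\,d\rho$ are finite---so your explicit use of the hypothesis $f\in L^2(\nu)$ together with Cauchy--Schwarz to legitimize the splitting is an improvement in rigor rather than a deviation in method.
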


\begin{proof} We have
$$
\ba 
|| f ||^2_{\h_E} = & \frac{1}{2}\iint_{V\times V} (f(x) - f(y))^2\;
 d\rho(x,y)\\
 = & \frac{1}{2}\iint_{V\times V} (f(x)^2 - 2f(x)f(y) + f(y)^2)\;
 d\rho(x,y)\\
 = & \frac{1}{2}\iint_{V\times V} (2f(x)^2 - 2f(x)f(y))\;  d\rho(x,y)\\
 =& \iint_{V \times V} f(x)(f(x) - f(y)) \; d\rho(x, y)\\
 = & \int_V f(x) \left(\int_V (f(x) - f(y)) \; d\rho_x(y)\right)\; d\mu(x)\\
 =& \int_V f(x) \Delta(f)(x) \; d\mu(x).
\ea
$$
In the above computation, we  used (\ref{eq formula fo symm meas}).  
\end{proof}

\begin{proposition}\label{prop L2nu in H} Let $\rho$ be a symmetric
measure on $\vv$ with $c(x) = \rho_x(V)$. If $\mu$ is the projection of
$\rho$ onto the margin $V$ and $d\nu = c d\mu$, then the map 
$$
L^2(\nu) \stackrel{\iota} \longrightarrow\h_E : \iota (f) = f
$$
is a well defined bounded linear operator such that 
$$
|| \iota ||_{L^2(\nu) \to \h_E} \leq \sqrt{2}.
$$
Moreover the adjoint operator $\iota^* : \h_E  \to L^2(\nu)$ acts by 
the formula:
$$
\iota^*(g) = (I - P)(g), \qquad g \in \h_E.
$$
\end{proposition}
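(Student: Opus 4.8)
The plan is to establish the two assertions of the proposition in turn, the boundedness of $\iota$ being essentially elementary and the identification of $\iota^{*}$ requiring a little measure-theoretic care.

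\emph{Boundedness of $\iota$.} I would start from the pointwise inequality $(a-b)^{2}\le 2a^{2}+2b^{2}$, which yields, for any Borel $f$,
$$
\|f\|_{\h_E}^{2}=\tfrac12\iint_{\VtV}\bigl(f(x)-f(y)\bigr)^{2}\,d\rho(x,y)\le\iint_{\VtV}f(x)^{2}\,d\rho(x,y)+\iint_{\VtV}f(y)^{2}\,d\rho(x,y).
$$
By symmetry of $\rho$ (Lemma \ref{lem symm measure via int}) the two integrals on the right are equal, and disintegrating with respect to $\mu$ gives $\iint_{\VtV}f(x)^{2}\,d\rho=\int_{V}f(x)^{2}\rho_{x}(V)\,d\mu(x)=\int_{V}f^{2}c\,d\mu=\|f\|_{L^{2}(\nu)}^{2}$. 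Hence $\|f\|_{\h_E}^{2}\le 2\|f\|_{L^{2}(\nu)}^{2}$. This single estimate shows at once that $f\in L^{2}(\nu)$ makes the energy integral finite, so $\iota$ is well defined into $\h_E$ and respects the identification of functions differing by a constant; linearity is immediate; and $\|\iota\|_{L^{2}(\nu)\to\h_E}\le\sqrt2$.

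\emph{The adjoint.} Since $\iota$ is bounded, $\iota^{*}:\h_E\to L^{2}(\nu)$ is a well-defined bounded operator, and it is determined by its pairings with the characteristic functions $\chi_{A}$, $A\in\Bfin(\nu)$, which span a dense subspace of $L^{2}(\nu)$. Running the symmetry-plus-disintegration computation from the proof of Theorem \ref{thm inner prod via Delta}, in the $L^{2}(\nu)$-form of Remark \ref{rem L2loc for nu}(1) (which applies because for $g\in\h_E$ the functions $cg$ and $R(g)$ are locally $\mu$-integrable, equivalently $g,\,P(g)\in L^{1}_{\mathrm{loc}}(\nu)$, by the Remark following Theorem \ref{thm inner prod via Delta} and Remark \ref{rem L2loc for nu}), one gets
$$
\langle\chi_{A},g\rangle_{\h_E}=\int_{A}\Delta(g)\,d\mu=\int_{A}\bigl(c\,g-R(g)\bigr)\,d\mu=\int_{A}\bigl(g-P(g)\bigr)\,d\nu ,
$$
the last step using $R=cP$ and $d\nu=c\,d\mu$. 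Therefore, for every $A\in\Bfin(\nu)$, $\int_{A}\iota^{*}(g)\,d\nu=\langle\chi_{A},\iota^{*}(g)\rangle_{L^{2}(\nu)}=\langle\iota(\chi_{A}),g\rangle_{\h_E}=\int_{A}(g-P(g))\,d\nu$. Both $\iota^{*}(g)$ (being in $L^{2}(\nu)$) and $g-P(g)$ (being in $L^{1}_{\mathrm{loc}}(\nu)$) are locally $\nu$-integrable, and $\nu$ is $\sigma$-finite with $\Bfin(\nu)$ generating $\B$; hence equality of all these integrals forces $\iota^{*}(g)=(I-P)(g)$ $\nu$-a.e. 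In particular $(I-P)(g)\in L^{2}(\nu)$ and the stated formula for $\iota^{*}$ holds on all of $\h_E$.

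\emph{Where the difficulty sits.} The boundedness step is routine. The real content of the second step is not the algebra but the integrability: one needs $\langle\chi_{A},g\rangle_{\h_E}=\int_{A}\Delta(g)\,d\mu$ for an \emph{arbitrary} $g\in\h_E$, not just for $g$ in $\Dfin$ or in $L^{2}(\nu)$, which is exactly why the argument must invoke the standing local-integrability hypotheses ($\h_E\subset L^{2}_{\mathrm{loc}}(\mu)\cap L^{2}_{\mathrm{loc}}(\nu)$, $c\in L^{2}_{\mathrm{loc}}(\mu)$, so $cg,\,R(g)\in\Lloc$) rather than any global control of $g$; and then on the $\sigma$-finiteness step that promotes ``equal integrals over every set of finite $\nu$-measure'' to ``equal $\nu$-a.e.'' With those two points secured, the identification of $\iota^{*}(g)$ is uniform over $g\in\h_E$, so no separate treatment of harmonic functions and no density argument internal to $\h_E$ is needed.
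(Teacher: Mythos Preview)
Your boundedness argument is the same as the paper's. For the adjoint, however, the paper takes a more direct route: it computes $\langle\iota(f),g\rangle_{\h_E}$ for an \emph{arbitrary} $f\in L^{2}(\nu)$ in one stroke. Using symmetry of $\rho$ and the disintegration $d\rho(x,y)=P(x,dy)\,d\nu(x)$,
\[
\langle\iota(f),g\rangle_{\h_E}
=\iint_{\VtV} f(x)\bigl(g(x)-g(y)\bigr)\,d\rho(x,y)
=\int_V f(x)\,(I-P)(g)(x)\,d\nu(x),
\]
which immediately identifies $\iota^{*}(g)=(I-P)(g)$. The split into the two single-$f$ integrals is justified by Cauchy--Schwarz in $L^{2}(\rho)$: $\iint|f(x)||g(x)-g(y)|\,d\rho\le\|f\|_{L^{2}(\nu)}\sqrt{2}\,\|g\|_{\h_E}$, and similarly with $f(y)$. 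No local-integrability hypotheses enter.

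Your route via characteristic functions and $\sigma$-finiteness is valid, but the justification you give for the key identity $\langle\chi_A,g\rangle_{\h_E}=\int_A(g-P(g))\,d\nu$ when $A\in\Bfin(\nu)$ leans on Remark~\ref{rem L2loc for nu}(1), which assumes $\h_E\subset L^{2}_{\mathrm{loc}}(\nu)$; that is \emph{not} one of the paper's standing hypotheses (Assumption~2 only gives $\h_E\subset L^{2}_{\mathrm{loc}}(\mu)$). Relatedly, your claimed equivalence ``$cg,\,R(g)\in\Lloc$, equivalently $g,\,P(g)\in L^{1}_{\mathrm{loc}}(\nu)$'' is not correct in general, because $\Bfin(\mu)\subsetneq\Bfin(\nu)$ is possible, so $\nu$-local integrability is the stronger statement. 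The clean fix is to observe that the same Cauchy--Schwarz estimate above, specialized to $f=\chi_A$ with $\nu(A)<\infty$, already gives $\int_A|g-P(g)|\,d\nu\le\sqrt{\nu(A)}\,\sqrt{2}\,\|g\|_{\h_E}<\infty$ and the identity you need, without any appeal to local integrability. Once you make that replacement, your argument goes through; but at that point you are essentially reproducing the paper's computation on a dense set and then adding an extra density/$\sigma$-finiteness step that the paper's direct computation avoids.
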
 

\begin{proof}
We first show that $\iota(f)$ is in $\h_E$. Indeed, the symmetry of $\rho$
implies that
$$
\ba
|| \iota(f) ||_{\h_E}^2 & = \frac{1}{2}\iint_{\VtV} 
(f(x) - f(y))^2\; d\rho(x, y)\\
& \leq  \iint_{\VtV} (f(x)^2 + f(y)^2)\; d\rho(x, y)\\
& = 2\iint_{\VtV} f(x)^2\; d\rho(x,y)\\
& = 2\int_{V} f(x)^2 c(x)\; d\mu(x)\\
& = 2\int_{V} f(x)^2 \; d\nu(x).\\
\ea
$$
It follows from the last relation that the norm of the bounded linear
operator $\iota$ is bounded by $\sqrt{2}$ what proves the statement.  

To prove the formula for the adjoint operator $\iota^*$ we use Lemma 
\ref{lem formula for energy norm}. Then 
$$
\ba 
\langle \iota(f), g\rangle_{\h_E} & = \frac{1}{2}
\iint_{\VtV} (f(x) - f(y))(g(x) - g(y))\; d\rho(x, y)\\
& = \iint_{\VtV} f(x) (g(x) - g(y)) P(x, dy)d\nu(x)\\
& = \int_{V} f(x) (I  - P)(g)(x)\; d\nu(x)\\
& = \langle f, \iota^*(g)\rangle_{L^2(\nu)},
\ea
$$ 
and we obtain the formula for $\iota^*$.
\end{proof}

\begin{corollary}\label{cor harmonic}
For $\rho, \mu, \nu$, and $c(x)$ as above in Proposition 
\ref{prop L2nu in H}, we have:

(i) 
$$
\ol{\Dfin(\nu)}^{\h_E} =\ol{L^2(\nu)}^{\h_E} = L^2(\nu),
$$
where $L^2(\nu)$ is considered as a subspace of $\h_E$,

(ii)
$$
 L^2(\nu) \cap \h arm_E = \{0\}.
$$
\end{corollary}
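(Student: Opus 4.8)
The plan is to work entirely through the bounded embedding $\iota\colon L^2(\nu)\to\h_E$ of Proposition \ref{prop L2nu in H} together with its adjoint $\iota^{*}(g)=(I-P)g$, reducing both assertions to the single identity $\ker\iota^{*}=\h arm_E$. I would dispatch (ii) first, since it falls directly out of the adjoint formula. If $g\in\iota(L^2(\nu))\cap\h arm_E$, write $g=\iota(f)$ with $f\in L^2(\nu)$; harmonicity gives $Pg=g$, hence $\iota^{*}(g)=(I-P)g=0$, and therefore
$$
\|g\|_{\h_E}^{2}=\langle\iota(f),g\rangle_{\h_E}=\langle f,\iota^{*}(g)\rangle_{L^2(\nu)}=0.
$$
Thus $g=0$, and no split on $\nu(V)$ is needed: the finite-measure constants that Theorem \ref{thm harmonic} isolates already represent the zero vector of $\h_E$.

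For (i), the first equality is soft. Since $\Dfin(\nu)$ is norm-dense in $L^2(\nu)$ and $\|\iota\|_{L^2(\nu)\to\h_E}\le\sqrt2$, any $L^2(\nu)$-approximation of $f\in L^2(\nu)$ by elements of $\Dfin(\nu)$ is carried to an $\h_E$-approximation of $\iota(f)$; hence $\iota(L^2(\nu))\subset\ol{\Dfin(\nu)}^{\h_E}$, and closing in $\h_E$ yields $\ol{\Dfin(\nu)}^{\h_E}=\ol{L^2(\nu)}^{\h_E}$. To pin this common closure down I would use $\ol{\mathrm{range}\,\iota}^{\h_E}=(\ker\iota^{*})^{\perp}$. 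By Definition \ref{def_harmonic} one has $\ker\iota^{*}=\{g:(I-P)g=0\}=\{g:Pg=g\}=\h arm_E$, so
$$
\ol{L^2(\nu)}^{\h_E}=\ol{\mathrm{range}\,\iota}^{\h_E}=(\ker\iota^{*})^{\perp}=\h arm_E^{\perp},
$$
and the Royden decomposition (Theorem \ref{thm Royden}) identifies $\h arm_E^{\perp}$ with $\ol{\Dfin(\mu)}^{\h_E}$. This already delivers the substantive content of (i): passing from $\Dfin(\mu)$ up to $\Dfin(\nu)$, and on to all of $L^2(\nu)$, never enlarges the $\h_E$-closure.

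The one point demanding real care---and where I expect the main obstacle to sit---is the literal reading of the last equality, namely that $\iota(L^2(\nu))$ is \emph{already} closed in $\h_E$, so that the closure bar may be dropped. Combining Lemma \ref{lem formula for energy norm} with $\Delta=c(I-P)$ gives $\|\iota(f)\|_{\h_E}^{2}=\langle(I-P)f,f\rangle_{L^2(\nu)}$, so $\iota^{*}\iota=I-P$ and closedness of $\mathrm{range}\,\iota$ is equivalent to $\iota$ being bounded below on $(\ker\iota)^{\perp}$, i.e. to $0$ being isolated in (or absent from) the spectrum of $I-P$. Since $\sigma(P)\subset[-1,1]$ by Theorem \ref{prop prop of R, P, Delta}, this is exactly the requirement that $1$ be isolated in $\sigma(P)$. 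Theorem \ref{thm harmonic} supplies only that $1$ is not an eigenvalue, which removes a kernel obstruction but not a continuous-spectrum one; I would therefore establish the identification at the level of closures as above and read the stated equality as the realization of $L^2(\nu)$ inside $\h_E$ as the closed subspace $\h arm_E^{\perp}=\ol{\Dfin(\mu)}^{\h_E}$, with honest closedness of the image $\iota(L^2(\nu))$ secured precisely in those settings where $P$ has a spectral gap at $1$.
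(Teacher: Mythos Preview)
Your argument is correct and in fact more careful than the paper's own proof, which is extremely terse. For (ii), the paper simply invokes Theorem~\ref{thm harmonic} (no nontrivial harmonic functions in $L^2(\nu)$ when $\nu$ is $\sigma$-finite), whereas you give a self-contained argument via $\iota^{*}=I-P$; both routes are valid, and yours has the virtue of not needing the finite/infinite case split. For (i), the paper's entire justification is the sentence ``the embedding $\iota$ is continuous, so $\iota(\Dfin(\nu))$ is dense in $L^2(\nu)$ and the image of $L^2(\nu)$ in $\h_E$ is closed,'' which does not actually explain why the image is closed. Your route through $\ol{\mathrm{range}\,\iota}=(\ker\iota^{*})^{\perp}=\h arm_E^{\perp}$ and the Royden decomposition is the right way to pin down the closure, and your observation that literal closedness of $\iota(L^2(\nu))$ would require a spectral gap for $P$ at $1$ is exactly on point: the paper does not supply this, and Theorem~\ref{thm harmonic} rules out only the eigenvalue, not continuous spectrum. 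So your reading---that the statement should be understood as identifying $\ol{L^2(\nu)}^{\h_E}$ with the closed subspace $\h arm_E^{\perp}$---is the honest one, and you have spotted a point the paper glosses over.
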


\begin{proof}
(i) Indeed, the embedding $\iota : L^2(\nu)  \rightarrow \h_E$ is continuous
by Proposition \ref{prop L2nu in H}, so that $\iota(\Dfin(\nu))$ is dense 
in $L^2(\nu)$ and the image of $L^2(\nu)$ in $\h_E$ is closed. 

(ii) It was proved in \cite{BezuglyiJorgensen2018}, see 
Theorem \ref{thm harmonic}, that
$L^2(\nu)$ does not contain harmonic functions for a $\sigma$-finite 
measure $\nu$. 
\end{proof}

We deduce from the results proved in this subsection that 
the following fact holds.

\begin{corollary}\label{cor closures}
$$
\ol{\Dfin(\nu)}^{\h_E}  = \ol{\Dfin(\mu)}^{\h_E}.
$$
\end{corollary}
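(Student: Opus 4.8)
The plan is to sandwich the two closures between each other using the inclusions already established and the density result from Corollary \ref{cor harmonic}(i). Recall from Proposition \ref{prop ||chi A||}, specifically \eqref{eq three inclusions}, that $\Dfin(\mu) \subset \Dfin(\nu)$ as subspaces of $\h_E$; taking closures in the norm of $\h_E$ immediately gives the inclusion
$$
\ol{\Dfin(\mu)}^{\h_E} \subseteq \ol{\Dfin(\nu)}^{\h_E}.
$$
So the only thing that needs work is the reverse inclusion $\ol{\Dfin(\nu)}^{\h_E} \subseteq \ol{\Dfin(\mu)}^{\h_E}$.

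For the reverse inclusion, I would argue as follows. By Corollary \ref{cor harmonic}(i), $\ol{\Dfin(\nu)}^{\h_E} = L^2(\nu)$, viewed as a closed subspace of $\h_E$ via the bounded embedding $\iota$ of Proposition \ref{prop L2nu in H}. Hence it suffices to show that every element of $\iota(L^2(\nu))$ lies in $\ol{\Dfin(\mu)}^{\h_E}$; and since $\ol{\Dfin(\mu)}^{\h_E}$ is closed and $\iota$ is continuous, it is enough to exhibit a dense subset of $L^2(\nu)$ whose $\iota$-image lands in $\ol{\Dfin(\mu)}^{\h_E}$. The natural candidate is $\Dfin(\mu)$ itself: it is norm-dense in $L^2(\nu)$ (simple functions over sets of finite $\nu$-measure are dense in $L^2(\nu)$, and $\Dfin(\mu)\subset\Dfin(\nu)$ — one must check $\Dfin(\mu)$, not merely $\Dfin(\nu)$, is $L^2(\nu)$-dense). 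Its image under $\iota$ is trivially contained in $\ol{\Dfin(\mu)}^{\h_E}$. Therefore $\iota$ maps the $L^2(\nu)$-closure of $\Dfin(\mu)$ — which is all of $L^2(\nu)$ — into the $\h_E$-closure of $\iota(\Dfin(\mu)) \subseteq \ol{\Dfin(\mu)}^{\h_E}$, giving $L^2(\nu) = \ol{\Dfin(\nu)}^{\h_E} \subseteq \ol{\Dfin(\mu)}^{\h_E}$.

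The one point deserving care — and the likely main obstacle — is the claim that $\Dfin(\mu)$ is $L^2(\nu)$-norm dense, i.e. that simple functions built from sets of finite $\mu$-measure (not just finite $\nu$-measure) are dense in $L^2(\nu)$. This follows because $\nu = c\mu$ with $c$ positive and finite $\mu$-a.e. (Assumption 1), so $\mu$ and $\nu$ are mutually absolutely continuous; writing $V$ as an increasing union of sets $V_n$ with $\mu(V_n)<\infty$, any $A\in\Bfin(\nu)$ is approximated in $L^2(\nu)$ by $\chi_{A\cap V_n}$, and each $A\cap V_n$ has finite $\mu$-measure, so $\chi_{A\cap V_n}\in\Dfin(\mu)$. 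Combining this with the $L^2(\nu)$-density of $\Dfin(\nu)$ closes the gap. Once this density and the continuity of $\iota$ are in hand, both inclusions are immediate and the proof is complete.
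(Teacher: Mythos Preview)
Your argument is correct. The paper does not spell out a proof of this corollary (it just says the result ``follows from the results proved in this subsection''), but the route the authors most likely had in mind is slightly different from yours. From Proposition~\ref{prop L2nu in H} one has $\iota^*(g)=(I-P)(g)$, so every harmonic $g\in\h arm_E$ satisfies $\iota^*(g)=0$; hence $\iota(L^2(\nu))\perp\h arm_E$ in $\h_E$. Combined with the Royden decomposition (Theorem~\ref{thm Royden}), this gives $\ol{\Dfin(\nu)}^{\h_E}=L^2(\nu)\subset(\h arm_E)^\perp=\ol{\Dfin(\mu)}^{\h_E}$ directly, without ever asking whether $\Dfin(\mu)$ is dense in $L^2(\nu)$.

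Your approach instead proves that density claim explicitly, using $\sigma$-finiteness of $\mu$ to approximate $\chi_A$, $A\in\Bfin(\nu)$, by $\chi_{A\cap V_n}$ with $\mu(V_n)<\infty$. This is sound under the standing assumptions (Assumption~1 ensures $c>0$ $\mu$-a.e., so $\mu\sim\nu$, and $\nu(A)<\infty$ gives $\nu(A\setminus V_n)\to 0$). It is worth noting that your density statement actually \emph{contradicts} the paper's remark immediately following the corollary, where the inclusion $\ol{\Dfin(\mu)}^{L^2(\nu)}\subset L^2(\nu)$ is asserted to be proper. Under the stated hypotheses your argument shows that remark is in error; the authors may have had a more general setting in mind there. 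In any case, both routes are valid: yours is more elementary and self-contained, while the adjoint/Royden route is cleaner and avoids the approximation step entirely.
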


It is curious to compare this results with the proper inclusion
$$
\ol{\Dfin(\mu)}^{L^2(\nu)} \subset \ol{\Dfin(\nu)}^{L^2(\nu)}
 = L^2(\nu). 
$$

\subsection{Embedding of $\Dfin(\mu)$ into $\h_E$}

It was proved in Lemma \ref{lem_Dfin in H} that $\Dfin(\mu)$  can be 
viewed as a subset of $\h_E$. We define the map $J$ setting
$$
\Dfin(\mu) \ni \varphi \ \stackrel{J} \longrightarrow \ \varphi \in \h_E.
$$
Here $\varphi$ is a linear combination of characteristic functions,
  so that the operator $J$ can be studied on  
$\chi_A$, $A \in \Bfin(\mu)$. 

In the proof of Theorem \ref{thm J closable} below, we need the density 
of the set 
$$
D^* := \{ f \in \h_E : \Delta(f) \in L^2(\mu)\}.
$$
in the finite energy space $\h_E$.

\begin{lemma}\label{lem D* dense} Suppose that $c \in \lc2$.
Then the set $D^*$ is a dense subset in $\h_E$.
\end{lemma}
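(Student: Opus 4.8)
The plan is to show that $D^*$ contains a set whose $\h_E$-closure is all of $\h_E$, exploiting the Royden-type decomposition $\h_E = \ol{\Dfin(\mu)} \oplus \h arm_E$ from Theorem \ref{thm Royden}. Since every harmonic function $h$ satisfies $\Delta(h) = 0 \in L^2(\mu)$, the harmonic summand $\h arm_E$ is automatically contained in $D^*$. Therefore it suffices to prove that $D^*$ contains a dense subset of $\ol{\Dfin(\mu)}^{\h_E}$, and the natural candidate is $\Dfin(\mu)$ itself, or rather its image under the embedding $J$. So the heart of the matter is to check that for $\va \in \Dfin(\mu)$ one has $\Delta(\va) \in L^2(\mu)$ under the hypothesis $c \in L^2_{\mathrm{loc}}(\mu)$.

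First I would reduce to $\va = \chi_A$ with $A \in \Bfin(\mu)$, by linearity of $\Delta$ and of the $L^2(\mu)$-norm. Then, using \eqref{eq Delta via R}, write $\Delta(\chi_A) = c\,\chi_A - R(\chi_A)$, where $R(\chi_A)(x) = \rho_x(A)$. Now $c\,\chi_A \in L^2(\mu)$ precisely because $c \in L^2_{\mathrm{loc}}(\mu)$ and $\mu(A) < \infty$. For the term $R(\chi_A)$ one has the pointwise bound $0 \le \rho_x(A) \le \rho_x(V) = c(x)$, so $|R(\chi_A)(x)| \le c(x)$ for all $x$; moreover $R(\chi_A)$ is supported, up to $\mu$-null sets, where it can be estimated by $c$, and in any case $\int_V R(\chi_A)^2\,d\mu \le \int_V c^2 \chi_{\{R(\chi_A) \neq 0\}}\, d\mu$. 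The remaining point is that $\{x : R(\chi_A)(x) > 0\}$ is, modulo $\mu$-null sets, contained in a set of finite $\mu$-measure: indeed $\int_V R(\chi_A)\,d\mu = \int_A c\,d\mu = \nu(A) < \infty$ by \eqref{eq c(x) and rho_x}, so $R(\chi_A) \in L^1(\mu)$, and combined with the pointwise bound $R(\chi_A) \le c \in L^2_{\mathrm{loc}}(\mu)$ together with the representation $R(\chi_A) = \rho_x(A)$ supported on $A_1^{-1}$-type sets one concludes $R(\chi_A) \in L^2(\mu)$. Hence $\Delta(\chi_A) \in L^2(\mu)$, so $\chi_A \in D^*$, and therefore $\Dfin(\mu) \subset D^*$.

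Combining the two observations: $D^* \supset \Dfin(\mu)$ and $D^* \supset \h arm_E$, so the $\h_E$-closure of $D^*$ contains $\ol{\Dfin(\mu)}^{\h_E} + \h arm_E = \h_E$ by Theorem \ref{thm Royden}. This proves $D^*$ is dense in $\h_E$.

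I expect the main obstacle to be the careful justification that $R(\chi_A) \in L^2(\mu)$ — that is, controlling the ``mass spread'' of $R(\chi_A)$ onto sets of possibly infinite $\mu$-measure. The pointwise estimate $R(\chi_A) \le c$ only gives membership in $L^2$ over finite-measure sets, so one genuinely needs an integrability input; the clean way is the interpolation-type argument $R(\chi_A) \in L^1(\mu) \cap L^\infty_{\mathrm{loc}}$-against-$c$, using $\int R(\chi_A)\,d\mu = \nu(A) < \infty$ from part (2) of Theorem \ref{prop prop of R, P, Delta}. One should double-check whether $L^1 \cap (\text{dominated by } c \in L^2_{\mathrm{loc}})$ truly forces $L^2(\mu)$; if not, an alternative is to work with $\Dfin(\nu)$ in place of $\Dfin(\mu)$ (which is also dense in $\ol{\Dfin(\mu)}^{\h_E}$ by Corollary \ref{cor closures}) and exploit that for $A \in \Bfin(\nu)$ the bound $\int_V R(\chi_A)^2\,d\mu \le \int_V c\,\rho_x(A)\,d\mu \le \int_V c\,R(\chi_A)\,d\mu$ can be pushed through using $c \in L^2_{\mathrm{loc}}(\mu)$ and symmetry of $R$. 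Either route closes the gap; the rest is routine.
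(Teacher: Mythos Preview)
Your overall strategy is sound and genuinely different from the paper's. The one real gap you flagged---whether $R(\chi_A)\in L^2(\mu)$---does close, and here is the missing two-line argument. Write $g:=R(\chi_A)=\rho_{\bullet}(A)$. You already know $0\le g\le c$ and $\int_V g\,d\mu=\nu(A)<\infty$. Split $V=\{g\le 1\}\cup\{g>1\}$. On $\{g\le 1\}$ one has $g^2\le g$, so $\int_{\{g\le 1\}} g^2\,d\mu\le \nu(A)<\infty$. On $\{g>1\}$, Chebyshev gives $\mu(\{g>1\})\le \int_V g\,d\mu=\nu(A)<\infty$, so this set lies in $\Bfin(\mu)$; there $g^2\le c^2$ and the hypothesis $c\in L^2_{\mathrm{loc}}(\mu)$ gives $\int_{\{g>1\}} c^2\,d\mu<\infty$. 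Hence $R(\chi_A)\in L^2(\mu)$, and since $c\chi_A\in L^2(\mu)$ as well, $\Delta(\chi_A)\in L^2(\mu)$. Your claim that $\{R(\chi_A)>0\}$ itself has finite $\mu$-measure is false in general; the point is only that each superlevel set $\{R(\chi_A)>\varepsilon\}$ does, and $\varepsilon=1$ is already enough.

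\medskip

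\noindent\textbf{Comparison with the paper.} The paper argues via a different dense family: it takes the $\nu$-dipoles $\omega_{A,B}$ (with $A,B\in\Bfin(\mu)$), defined by $\Delta(\omega_{A,B})=c(\chi_A-\chi_B)$, and observes that this function is supported on $A\cup B\in\Bfin(\mu)$, so $c\in L^2_{\mathrm{loc}}(\mu)$ immediately yields $\Delta(\omega_{A,B})\in L^2(\mu)$; density of the dipoles in $\h_E$ (a forward reference to Section~\ref{subsect dipoles}) then finishes. Your route instead combines the Royden decomposition (Theorem~\ref{thm Royden}) with the direct inclusion $\Dfin(\mu)\subset D^*$ and the trivial inclusion $\h arm_E\subset D^*$. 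The paper's argument is slightly slicker because $\Delta(\omega_{A,B})$ has finite-measure support by design, so no splitting is needed; your argument has the advantage of using only results already established at this point in the paper, avoiding the forward reference to the existence and density of dipoles.
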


\begin{proof} Fix $A, B \in \Bfin(\mu)$. Let $\omega_{A,B}$ be an  
element of $\h_E$ such that 
$$
\Delta (\omega_{A, B})(x)  = c(x) (\chi_A(x) - \chi_B(x)).
$$  
We call $\omega_{A, B}$ a $\nu$-dipole. It is proved, see Section 
\ref{subsect dipoles} below, that 
$\{\omega_{A, B} : A, B \in \Bfin(\mu)\}$ is a dense subset in $\h_E$.
Hence, in order to prove the lemma,  it suffices to show that 
$$
\{\omega_{A, B} : A, B \in \Bfin(\mu)\} \subset D^*.
$$
Indeed, we compute 
$$
\ba
\int_V (\Delta(f))^2 \; d\mu & = \int_V c^2 (\chi_A - \chi_B)\; d\mu \\
&= \int_A c^2\; d\mu - 2\int_{A\cap B} c^2\; d\mu + \int_B c^2\; d\mu,
\ea
$$
and the result follows.
\end{proof}

\begin{theorem}\label{thm J closable}
The operator $J : L^2(\mu) \to \h_E$ is closable, densely defined, and, 
in general, unbounded. The operator $JJ^*$ is a self-adjiont extension of
 the  symmetric Laplace operator $\Delta$ acting in $\h_E$. 
\end{theorem}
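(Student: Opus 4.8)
The plan is to handle the three adjectives for $J$ --- densely defined, closable, unbounded --- and then the claim about $JJ^{*}$, using Theorem~\ref{thm inner prod via Delta} and Lemma~\ref{lem D* dense} as the main tools, together with the von Neumann construction of $T^{*}T$.

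\textbf{Dense domain and closability.} The domain of $J$ is $\Dfin(\mu)$, which is norm-dense in $L^{2}(\mu)$, so $J$ is densely defined. Closability will follow once we know the adjoint $J^{*}\colon\h_E\to L^{2}(\mu)$ is densely defined. The needed identity is already at hand: by Theorem~\ref{thm inner prod via Delta}, for $\va\in\Dfin(\mu)$ and $f\in\h_E$,
\be
\langle J\va,f\rangle_{\h_E}=\langle\va,f\rangle_{\h_E}=\int_{V}\va\,\Delta(f)\,d\mu=\langle\va,\Delta(f)\rangle_{L^{2}(\mu)} .
\ee
Hence every $f\in D^{*}=\{f\in\h_E:\Delta(f)\in L^{2}(\mu)\}$ lies in $\operatorname{dom}(J^{*})$, with $J^{*}f=\Delta(f)$. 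By Lemma~\ref{lem D* dense} (which uses $c\in\lc2$ and the density of the $\nu$-dipoles in $\h_E$), $D^{*}$ is dense in $\h_E$, so $J^{*}$ is densely defined and $J$ is closable; write $\ol J$ for its closure, so that $(\ol J)^{*}=J^{*}$.

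\textbf{Unboundedness.} Taking $f=\va\in\Dfin(\mu)$ in the displayed identity gives $\|J\va\|_{\h_E}^{2}=\langle\va,\Delta(\va)\rangle_{L^{2}(\mu)}$. Thus $J$ is a bounded operator $L^{2}(\mu)\to\h_E$ precisely when the non-negative symmetric operator $\Delta$ on $\Dfin(\mu)\subset L^{2}(\mu)$ is bounded; as pointed out in the remark following Assumption~1, this happens only in the (uninteresting) case $c\in L^{\infty}(\mu)$, so in general $J$ is unbounded. Concretely, with $\va=\chi_{A_{n}}$ one has $\|\chi_{A_{n}}\|_{\h_E}^{2}/\|\chi_{A_{n}}\|_{L^{2}(\mu)}^{2}=\rho(A_{n}\times A_{n}^{c})/\mu(A_{n})$ by Proposition~\ref{prop ||chi A||}, and this ratio can be forced to $+\infty$ whenever $c$ is unbounded (choosing $A_{n}$ to concentrate where $c$ is large while keeping the fibre masses $\rho_{x}(A_{n})$ small).

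\textbf{$JJ^{*}$ is a self-adjoint extension of $\Delta$ in $\h_E$.} Apply the von Neumann theorem ($T^{*}T$ is self-adjoint for every closed, densely defined $T$) to $T:=J^{*}=(\ol J)^{*}\colon\h_E\to L^{2}(\mu)$: then $T^{*}T=\ol J\,J^{*}$ --- the operator denoted $JJ^{*}$ --- is a positive self-adjoint operator acting in $\h_E$, with $\operatorname{dom}(JJ^{*})=\{f\in\operatorname{dom}(J^{*}):J^{*}f\in\operatorname{dom}(\ol J)\}$ and $JJ^{*}f=\ol J(J^{*}f)$ there. To see that this extends $\Delta$ acting in $\h_E$, use $J^{*}=\Delta$ on $D^{*}$: for $f$ in the symmetric domain of $\Delta$-in-$\h_E$ with $\Delta f\in L^{2}(\mu)$ we have $f\in D^{*}$ and $J^{*}f=\Delta f$, so $JJ^{*}f=\ol J(\Delta f)$, and this equals $\Delta f$ as an element of $\h_E$ provided $\Delta f\in\operatorname{dom}(\ol J)$. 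The main obstacle is precisely this last point: since $\Dfin(\mu)$ is dense in $L^{2}(\mu)$ but \emph{not} in $\h_E$ (Theorem~\ref{thm Royden}), one must produce approximants of $\Delta f$ from $\Dfin(\mu)$ that converge in $L^{2}(\mu)$ \emph{and} are Cauchy in $\h_E$; I would organize this inside $\Dfin(\nu)$, exploiting the $\nu$-dipoles and the continuity of the embedding $L^{2}(\nu)\hookrightarrow\h_E$ from Proposition~\ref{prop L2nu in H} to control the $\h_E$-increments, while in parallel fixing precisely which symmetric restriction of $\Delta$ is meant by ``$\Delta$ acting in $\h_E$'' so that $\Delta\subseteq JJ^{*}$ holds literally.
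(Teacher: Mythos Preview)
Your argument follows the paper's almost line for line: Theorem~\ref{thm inner prod via Delta} yields $\langle J\varphi,f\rangle_{\h_E}=\langle\varphi,\Delta f\rangle_{L^2(\mu)}$ for $f\in D^*$, so $J^*\supseteq\Delta|_{D^*}$; Lemma~\ref{lem D* dense} gives density of $D^*$, hence $J$ is closable; and unboundedness is argued (as in the paper) by noting that boundedness of $J$ would force boundedness of $\Delta$, i.e.\ $c\in L^\infty$.

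The domain concern you raise at the end --- that $JJ^*f=\Delta f$ in $\h_E$ requires $\Delta f\in\operatorname{dom}(\ol J)$ and $\ol J(\Delta f)=\Delta f$ there --- is legitimate, but the paper does not resolve it either: it simply writes ``as was proved above $JJ^*$ is a self-adjoint extension of the operator $\Delta$ \ldots, i.e., $\Delta(f)=JJ^*(f),\ f\in D^*$'' and moves on. So you have already matched (and in fact exceeded) the level of rigor in the original; the paper leaves the precise symmetric domain of ``$\Delta$ acting in $\h_E$'' implicit, and your suggestion to pin it down is the right instinct rather than a gap in your reproduction of the argument.
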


\begin{proof}
We first note that $J$ is a densely defined operator because
$\Dfin(\mu)$ is dense in $L^2(\mu)$.  Let $\varphi \in \Dfin(\mu)$ and
$f \in \h_E$. We proved in Theorem \ref{thm inner prod via Delta} that
\be\label{eq innr prd via Delta}
\langle J(\varphi), f \rangle_{\h_E} = \int_V \varphi \Delta(f)\; d\mu.
\ee
Suppose now that, in relation (\ref{eq innr prd via Delta}), the function
  $f$ belongs to $D^*$. Then (\ref{eq innr prd via Delta}) can be written 
  as
\be\label{eq- J and Delta}
  \langle J(\varphi), f \rangle_{\h_E} = \langle \varphi, \Delta(f) 
  \rangle_{L^2(\mu)}.
 \ee
To define the adjoint operator $J^*$, we say that $f \in Dom(J^*)$ if
there exists a finite constant $C_f$ such that 
\be\label{eq-C_f}
\left(\langle J(\varphi), f \rangle_{\h_E}\right)^2 \leq C_f \int_V 
\varphi^2\; d\mu,\qquad \varphi \in \Dfin(\mu).
\ee
Then, by the Riesz representation theorem, there exists   a unique element
$f^* \in L^2(\mu)$ such that 
$$
\langle J(\varphi), f \rangle_{\h_E} = \langle \varphi, 
f^*\rangle_{L^2(\mu)}
$$
We note that, by the Schwarz inequality applied to the right hand side 
of (\ref{eq innr prd via Delta}), relation (\ref{eq-C_f}) holds for any
$f\in D^*$.  This means that the domain of $J^*$ contains $D^*$, and
 we can set $J^*(f) = f^*$. It follows from (\ref{eq innr prd via Delta}) 
 and (\ref{eq- J and Delta}) that $\Delta(f) = J^*(f), f \in D^*$ where
 $\Delta$ is  considered as an operator from $\h_E$ to $L^2(\mu)$. 

Let $H$ be a Hilbert space. It is well known that  a linear operator $T :
 Dom(T) \to H$ is closable if and only if the operator $T^*$ is densely
 defined. As was shown in Lemma \ref{lem D* dense}, $D^*$ is dense
 in $\h_E$, hence $J^*$ is densely defined, and $J$ is closable.
 Therefore $J$ admits a closed  extension that we denote again by
$J$. The operator $JJ^* : \h_E \to \h_R$ is obviously self-adjoint, and as
was proved above $JJ^*$ is a self-adjoint extension of the operator
 $\Delta$ viewed as operator acting in $\h_E$, i.e., 
 $$
 \Delta(f) = JJ^*(f), \qquad f \in D^*. 
 $$
  
  It remains to observe that if $J$ were a bounded operator, then 
  $|| J || = || J^* ||$ and the operator $\Delta$ would be bounded too. It 
  happens only in the case when the function $c $ is essentially bounded. 
  Our assumption about $c$ does not require its boundness. 
\end{proof}

The proved theorem can be used to show that the space $\h arm_E$  of 
harmonic functions from $\h_E$ is nonempty, see Corollary 
\ref{cor harmonic}. We use notations 
introduced in Lemma \ref{lem D* dense} and Theorem \ref{thm J closable}.

\begin{corollary}\label{cor harmonic}
The following equalities hold:
$$
\h_E \ominus J(\Dfin(\mu)) = \{f \in Dom(J^*) : J^*(f) = 0\} = 
\h arm_E.
$$
\end{corollary}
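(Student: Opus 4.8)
The plan is to derive the two outer equalities from results already established and then squeeze the middle set between them. First I would observe that the orthogonal complement of $J(\Dfin(\mu))$ in $\h_E$ coincides with the orthogonal complement of its closed linear span, namely $\ol{\Dfin(\mu)}^{\h_E}$; hence the Royden decomposition of Theorem \ref{thm Royden} gives at once
$$
\h_E \ominus J(\Dfin(\mu)) = \h_E \ominus \ol{\Dfin(\mu)}^{\h_E} = \h arm_E .
$$
It then remains only to prove the chain of inclusions
$$
\h_E \ominus J(\Dfin(\mu)) \ \subseteq\ \{f \in Dom(J^*) : J^*(f) = 0\} \ \subseteq\ \h arm_E ,
$$
after which all three sets are forced to coincide.

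For the first inclusion I would take $f \in \h_E$ orthogonal to every $J(\va)$, $\va \in \Dfin(\mu)$. Then $\va \mapsto \langle J(\va), f\rangle_{\h_E}$ is the zero functional on $\Dfin(\mu)$, so the defining estimate \eqref{eq-C_f} holds with $C_f = 0$ and therefore $f \in Dom(J^*)$; the defining relation $\langle J(\va), f\rangle_{\h_E} = \langle \va, J^*(f)\rangle_{L^2(\mu)}$ then shows that $J^*(f)$ is orthogonal in $L^2(\mu)$ to the dense subspace $\Dfin(\mu)$, hence $J^*(f) = 0$.

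For the second inclusion I would start from $f \in Dom(J^*)$ with $J^*(f) = 0$ and compare two expressions for $\langle J(\va), f\rangle_{\h_E}$ with $\va \in \Dfin(\mu)$: by Theorem \ref{thm inner prod via Delta} it equals $\int_V \va\, \Delta(f)\; d\mu$, while by the definition of $J^*$ it equals $\langle \va, J^*(f)\rangle_{L^2(\mu)} = 0$. Choosing $\va = \chi_A$ with $A \in \Bfin(\mu)$ gives $\int_A \Delta(f)\; d\mu = 0$ for all such $A$; since $\Delta(f)$ is locally $\mu$-integrable (the corollary following Theorem \ref{thm inner prod via Delta}) and the finite-measure sets exhaust $V$, this forces $\Delta(f) = 0$ $\mu$-a.e., i.e. $f \in \h arm_E$. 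This is exactly the computation already used in proving Theorem \ref{thm Royden}.

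I do not expect a serious obstacle; the only delicate point is the first inclusion, where one must check that orthogonality to $J(\Dfin(\mu))$ by itself places $f$ in the domain of the (generally unbounded) operator $J^*$ — this is harmless precisely because the relevant functional vanishes identically, so no nontrivial estimate is needed. It is also worth recording explicitly that $J(\Dfin(\mu))$ need not be closed in $\h_E$, so that $\h_E \ominus J(\Dfin(\mu))$ is to be read as the orthogonal complement of that set (equivalently, of its closed span), which is what legitimizes the reduction to Theorem \ref{thm Royden}.
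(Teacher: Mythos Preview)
Your proposal is correct and follows essentially the approach the paper intends: the paper states the corollary without proof, deriving it from the Royden decomposition (Theorem \ref{thm Royden}) together with the properties of $J$ and $J^*$ established in Theorem \ref{thm J closable}. Your argument makes explicit the standard identity $\ker(J^*) = (\mathrm{Range}\,J)^\perp$ via the two inclusions, and then identifies this orthogonal complement with $\h arm_E$ exactly as in the proof of Theorem \ref{thm Royden}; this is precisely the content the authors leave implicit.
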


\section{\textbf{Dissipation space}}\label{sect diss space}

\subsection{Path space and Markov chain}\label{subsect path space} 
Let $\sms$ be a $\sigma$-finite standard measure space. Suppose that
a transition probability kernel $x \mapsto P(x, A)$ is defined on 
$\VB$, and let $P$ be the Markov operator defined by the formula
$$
P(f)(x) = \int_V f(y) \; P(x, dy).
$$
In particular, the kernel $x \mapsto P(x, A)$ and the operator $P$ can be
generated by a symmetric measure $\rho = \int_V \rho_x d\mu(x)$ 
on $\vv$ where  
$$
P(x, dy) = \frac{1}{c(x)}d\rho_x(y), \ \ c(x) = \rho_x(V).
$$
Our main interest will be focused on relations between the measure $\rho$ and
properties of $P$. 

Having the kernel $P(x, A)$, we can define inductively the sequence of
probability distributions $(P_n)$ by setting $P_0(x, A) = \chi_A(x), 
P_1(x, A) = P(x, A)$ and
$$
P_{n+1}(x, A) = \int_V P_n(y, A)\; P(x, dy), \ \ n \in \N_0.
$$
It can be shown that
$$
P_n(x, A) = P^n(\chi_A)(x), \ \ n \in \N_0.
$$

We can define simultaneously the sequence of symmetric measures
$(\rho_n)$ by the formula
$$
\rho_n(A \times B) = \int_V \chi_A(x) P_n(x, B)\; d\nu(x),\ \ n\in \N,
$$
see also \eqref{eq_def rho_n}.
\medskip

For every Borel set $A$, consider the series
$$
G(x, A) := \sum_{n\in \N_0} P_n(x, A) = \sum_{n\in \N_0}  P^n(\chi_A)(x).
$$
The Markov process $(P_n)$ is called \textit{transient} if, for every 
set $A$, $G(x, A)$ is finite for a.e. on the space $\sms$. Then $G(x, A)$
is called the\textit{ Green's function}.  We will discuss various properties
of the Green's function in Section \ref{sect RKHS}.
\medskip

We denote by $\Omega$ the infinite Cartesian product  $V\times V \times
 \cdots = V^{\N_0}$.  Let $(X_n (\omega): n = 0,1,...)$ be the 
 sequence of random variables  $X_n : \Omega \to V$ such that 
  $X_n(\omega) =  \omega_n$. We call $\Omega$ as 
the \textit{path space} of the Markov process $(P_n)$. 

 Let  $\Omega_x, x \in V,$ be the set of infinite paths beginning at $x$:
$$
\Omega_x := \{\omega\in \Omega : X_0(\omega) = x\}.
$$
Clearly, $\Omega = \coprod_{x\in V}  \Omega_x$. 

The subset $C(A_0, ... ,A_k) :=  \{\omega \in \Omega : X_0(\omega) \in 
A_0, ... X_k(\omega)\in A_k\}$ of $\Omega$ is called a 
\textit{cylinder set} defined by Borel sets $A_0, A_1, ..., A_k$ taken from 
$\B$, $k \in \N_0$.  
The collection of cylinder sets generates the $\sigma$-algebra 
$\mathcal C$ of Borel subsets of $\Omega$, and $(\Omega, \mc C)$
 is a standard Borel space. 
By definition of $\mc C$, he functions $X_n : \Omega \to V$ are Borel. 

Denote by  $(\mathcal{F}_{\leq n})$ the increasing  sequence of 
$\sigma$-subalgebras  where $\mathcal{F}_{\leq n}$ is the smallest
 subalgebra for which the functions $X_0, X_1, ... , X_n$ are Borel. By
 $\mathcal F_n$, we denote the $\sigma$-subalgebra $X_n^{-1}(\B)$.

 Define a probability measure $\mathbb P_x$ on $\Omega_x$. For 
 a cylinder set $C(A_1, ... , A_n)$ from $\mathcal F_{\leq n}$,  we set
\be\label{eq meas P_x} 
 \mathbb P_x(X_1 \in A_1, ... , X_n \in A_n) 
= \int_{A_{1}}\cdots \int_{A_{n-1}} P(y_{n-1}, A_n) P(y_{n-2},  dy_{n-1})
\cdots  P(x, dy_1).
 \ee
Then $\mathbb P_x$ extends to the $\sigma$-algebra $\mc C$ of
 Borel sets on $\Omega_x$ by  the
 Kolmogorov extension theorem \cite{Kolmogorov1950}.

The values of $\mathbb P_x$ can be written as 
\be\label{eq meas P_x 2}
 \mathbb P_x(X_1 \in A_1, ... , X_n \in A_n) = 
 P(\chi_{A_1} P(\chi_{A_2}P(\ \cdots\ P(\chi_{A_{n-1}} P(\chi_{A_n})) 
 \cdots )))(x).
\ee
The joint distribution of the random variables $X_i$ is given by 
\be\label{eqjoint distr}
d\mathbb P_x(X_1, ... , X_n)^{-1} = P(x, dy_1) P(y_1, dy_2) \cdots 
P(y_{n-1}, dy_n).
\ee

\begin{lemma}\label{lem st meas space}
The measure space $(\Omega_x, \mathbb P_x)$ is a standard 
probability measure space for every  $x\in V$. 
\end{lemma}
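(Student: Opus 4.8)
The plan is to unwind the definition of a standard probability measure space into its two ingredients and verify each: that $(\Omega_x, \mc C|_{\Omega_x})$ is a standard Borel space, and that $\mathbb P_x$ is a Borel probability measure on it.

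For the Borel structure I would use the classical fact that a countable product of standard Borel spaces is again standard Borel. Identifying $\Omega_x$ with $V^{\N}$ via the coordinate maps $(X_1, X_2, \dots)$, the trace on $\Omega_x$ of the cylinder $\sigma$-algebra $\mc C$ is precisely the product $\sigma$-algebra of the factors $(V, \B)$; since $V$ is Polish, hence second countable, this product $\sigma$-algebra coincides with the Borel $\sigma$-algebra of the Polish product topology on $V^{\N}$, and $V^{\N}$ is Polish. Hence $(\Omega_x, \mc C|_{\Omega_x})$ is standard Borel. Equivalently, $\Omega_x = X_0^{-1}(\{x\})$ is a Borel subset of the standard Borel space $(\Omega, \mc C)$ --- here one uses that $X_0$ is Borel and that singletons are Borel in a Polish space --- and every Borel subset of a standard Borel space is itself standard Borel.

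It then remains to check that $\mathbb P_x$, defined by \eqref{eq meas P_x}, is a genuine probability measure on $\mc C|_{\Omega_x}$. The finite-dimensional distributions \eqref{eqjoint distr} are the measures $\mathbb P_x^{(n)}$ on $V^n$ with $d\mathbb P_x^{(n)} = P(x, dy_1)\,P(y_1, dy_2)\cdots P(y_{n-1}, dy_n)$, and they form a consistent family: integrating out the last coordinate sends $\mathbb P_x^{(n)}$ to $\mathbb P_x^{(n-1)}$ because $P(y_{n-1}, V) = 1$, which holds since each $P(x, \cdot) = c(x)^{-1}\rho_x$ is a probability measure by Assumption 1. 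With consistency in hand, the Kolmogorov extension theorem \cite{Kolmogorov1950} --- applicable precisely because the factors are standard Borel --- yields a unique probability measure on the $\sigma$-algebra generated by the cylinder sets, i.e. on $\mc C|_{\Omega_x}$, and this measure is $\mathbb P_x$; alternatively, the Ionescu--Tulcea theorem constructs $\mathbb P_x$ directly from the sequence of probability kernels with no topological input. Completing $\mc C|_{\Omega_x}$ with respect to $\mathbb P_x$ then produces a standard probability measure space, as asserted. I do not expect a genuine obstacle here; the two points deserving a word of justification are the coincidence of the product and the Borel $\sigma$-algebras on $V^{\N}$ (which needs second countability of $V$) and the fact that the kernels $P(x, \cdot)$ are probability --- not merely sub-probability --- kernels, which is exactly where Assumption 1 enters.
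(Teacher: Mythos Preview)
Your argument is correct. The paper states this lemma without proof (the next subsection begins immediately after the statement), relying on the sentence just before it that $\mathbb P_x$ extends to $\mc C$ by Kolmogorov's theorem and on the reader's familiarity with standard Borel spaces. Your write-up supplies exactly the two ingredients that justify the assertion: that $\Omega_x \cong V^{\N}$ (or, equivalently, $\Omega_x = X_0^{-1}(\{x\})$ as a Borel subset of the standard Borel space $\Omega$) is standard Borel, and that the finite-dimensional distributions are consistent so that Kolmogorov applies. The only minor remark is that the paper's convention for ``standard measure space'' (Section~\ref{subsect sms symm meas}) includes completion of the $\sigma$-algebra, which you do address in your final sentence; otherwise there is nothing to add.
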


\subsection{Orthogonal functions in the dissipation space} 

\begin{definition}\label{def diss space}
On the measurable space $(\Omega, \mathcal C)$, define 
a $\sigma$-finite measure $\lambda$ by 
\be \label{eq def lambda}
\lambda := \int_V \mathbb P_x \; d\nu(x)
\ee
The Hilbert space 
$$
Diss := \left\{\frac{1}{\sqrt 2} f : f \in L^2 (\Omega, \mc C, \Lambda) 
\right\}
$$
is called the \textit{dissipation space}.
\end{definition}

Remark that: (i) the measure $\la$ is infinite if and only if the measure 
$\nu$ is  infinite, and (ii) the dissipation  Hilbert space $Diss $ is 
formed, in fact,  by functions  from $L^2(\Omega, \la)$ which are 
rescaled by the factor $1/\sqrt 2$, i.e., 
$$  
  \| f\|_{\mc D} = \frac{1}{\sqrt 2} \| f\|_{L^2(\la)}, \ \  f \in Diss.
$$

Because the partition of  $\Omega$ into $(\Omega_x : x \in V)$ is
 measurable,  we have the decomposition into the direct integral of
 Hilbert spaces: 
\be\label{eq L^2(lambda) dir int}
L^2(\Omega, \la)  = \int^{\oplus}_V L^2(\Omega_x, \mathbb P_x)\;  
d\nu(x)
\ee

As a consequence of \eqref{eq L^2(lambda) dir int}, we obtain the 
following formula: for a measurable function over $(\Omega, \mc C)$,
 $$
\lambda(f) = \int_{\Omega} f(\omega) \; d\lambda(\omega) = 
\int_V \mathbb E_x(f)\; d\nu(x)
$$ 
where $\mathbb E_x$ denotes the conditional expectation with respect to 
the measures $\mathbb P_x$,
$$
\mathbb E_x(f) = \int_{\Omega_x} f(\omega)\; d\mathbb P_x(\omega). 
$$

Then the inner product in the Hilbert space $Diss$ is determined by the 
formula:
\be\label{eq inner prod in D}
\langle f, g\rangle_{\mc D} =  \frac{1}{2}\int_V \mathbb E_x(fg) \; 
d\nu(x)
= \frac{1}{2} \int_V \int_{\Omega_x}  f(\omega) g(\omega) \; d\mathbb 
P_x(\omega)d\nu(x).
\ee

Since $X_n^{-1}(\B)$ is a $\sigma$-subalgebra of $\mc C$, there exists 
 a  projection 
 $$
 E_n : L^2(\Omega, \mc C, \la) \to L^2(\Omega, X_n^{-1}(\B), \la).
 $$ 
The projection  $E_n$ is called the \textit{conditional expectation} 
with respect to $X_n^{-1}(\B)$ and satisfies the property:
\be\label{eq cond exp E_n}
E_n(f\circ X_n) = f\circ X_n. 
\ee

The operator $P$ and conditional expectations  $\mathbb E_x$ are 
related as follows: for any Borel  functions $f, h$, one has
$$
\mathbb E_x[(h\circ X_n)\ (f\circ X_{n+1})] = \mathbb E_x[(h\circ X_n)\ 
(P(f)\circ X_{n})].
$$
In particular, 
\be\label{eq_P(f) via cond exp}
P(f)\circ X_n = \mathbb E[f\circ X_{n+1} \ | \ \mc F_n ]= E_n(f\circ 
X_{n+1}).
\ee

\begin{lemma} The Hilbert space $L^2(\nu)$ is isometrically embedded 
into the dissipation space $Diss$ by the map
\be\label{eq L2nu in Diss}
W_n (f) = \sqrt{2} (f\circ X_n), \ \ \ n \in \N,
\ee
Moreover, the conditional expectation $E_n$ satisfies the property:
$$
E_n = W_nW_n^*.
$$
\end{lemma}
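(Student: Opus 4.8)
The plan is to establish the two assertions separately: first that $W_n$ is an isometry of $L^2(\nu)$ into $Diss$, and then that $W_nW_n^*$ is exactly the conditional expectation $E_n$.

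For the isometry I would simply unwind the definitions. Since the $Diss$-norm is the $L^2(\la)$-norm rescaled by $1/\sqrt2$, one has $\|W_n(f)\|_{\mc D}^2 = \tfrac12\,\|\sqrt 2\,(f\circ X_n)\|_{L^2(\la)}^2 = \|f\circ X_n\|_{L^2(\la)}^2$. Using the direct-integral decomposition \eqref{eq L^2(lambda) dir int}, equivalently the identity $\la(h) = \int_V \mathbb E_x(h)\, d\nu(x)$, this equals $\int_V \mathbb E_x\big[(f\circ X_n)^2\big]\, d\nu(x)$. Next, the joint distribution formula \eqref{eqjoint distr} shows, after integrating out the intermediate coordinates, that the law of $X_n$ under $\mathbb P_x$ is the kernel $P_n(x,\cdot)$, so $\mathbb E_x[(f\circ X_n)^2] = \int_V f(y)^2\, P_n(x,dy) = P^n(f^2)(x)$. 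Finally, because $\nu P = \nu$ by Theorem \ref{prop prop of R, P, Delta}(3), iteration gives $\nu P^n = \nu$ and hence $\int_V P^n(f^2)\, d\nu = \int_V f^2\, d\nu = \|f\|_{L^2(\nu)}^2$. Thus $\|W_n(f)\|_{\mc D} = \|f\|_{L^2(\nu)}$; in particular $W_n$ is well defined, linear, and injective, i.e. an isometric embedding.

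For the second claim I would invoke the standard Hilbert-space fact that an isometry $T$ has closed range, satisfies $T^*T = I$, and that $TT^*$ is the orthogonal projection onto $\mathrm{Range}(T)$; note also that rescaling the inner product of $Diss$ by the constant $\tfrac12$ changes neither adjoints of this form nor orthogonal projections, so the rescaling is harmless. It then remains to identify $\mathrm{Range}(W_n)$, the target being $\mathrm{Range}(W_n) = L^2(\Omega, X_n^{-1}(\B), \la)$, which is by definition the range of $E_n$. The inclusion ``$\subseteq$'' is clear: $f\circ X_n$ is $X_n^{-1}(\B)$-measurable and square-$\la$-integrable for $f\in L^2(\nu)$ by the computation above. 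For ``$\supseteq$'', take $g\in L^2(\Omega, X_n^{-1}(\B), \la)$; by the factorization (Doob--Dynkin) lemma $g = f\circ X_n$ for some Borel $f : V \to \ol\R$, and the identity $\|f\circ X_n\|_{L^2(\la)}^2 = \int_V P^n(f^2)\, d\nu$ combined with $\nu P^n = \nu$ forces $f$ to be finite $\nu$-a.e. with $\int_V f^2\, d\nu < \infty$, hence $f \in L^2(\nu)$ and $g = W_n(f/\sqrt 2) \in \mathrm{Range}(W_n)$. Therefore $W_nW_n^*$ is the orthogonal projection onto $L^2(\Omega, X_n^{-1}(\B), \la)$, which is precisely $E_n$.

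The only step requiring genuine care is this last identification: one must check that $\mathrm{Range}(W_n)$ is the \emph{full} $L^2$ of the sub-$\sigma$-algebra $X_n^{-1}(\B)$ and not merely a dense subspace, and this is exactly where the closed-range property of an isometry, together with the bookkeeping $\nu P^n = \nu$ (which upgrades $f\circ X_n\in L^2(\la)$ to $f\in L^2(\nu)$), does the work. Everything else is a routine manipulation of the definitions of $\la$, $\mathbb P_x$, and the iterated kernels $P_n$.
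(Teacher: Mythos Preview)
Your proof is correct and supplies exactly the details the paper omits: the paper records no argument beyond the line ``follows immediately from the definition of $W_n$ and \eqref{eq cond exp E_n},'' which is precisely the route you take (compute $\|W_n f\|_{\mc D}$ via $\nu P^n=\nu$, then identify $\mathrm{Range}(W_n)$ with $L^2(\Omega,X_n^{-1}(\B),\la)$ so that $W_nW_n^*$ is the projection $E_n$). Your remarks on the harmlessness of the $1/\sqrt 2$ rescaling and the use of Doob--Dynkin to obtain the reverse inclusion are the right justifications for the only steps that are not purely formal.
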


The lemma follows immediately from the definition of 
 $W_n$ and \eqref{eq cond exp E_n}. 
 \medskip
 
 We return here to the notion of reversible Markov process in connection
 with the dissipation space. 
We proved in \cite{BezuglyiJorgensen2018} that the Markov process
$P_n$ is irreducible if  and only if the initial symmetric measure is 
irreducible. 

\begin{theorem}[\cite{BezuglyiJorgensen2018, 
BezuglyiJorgensen_2018}] \label{prop from A to B}   
Let $\rho$ be a symmetric measure on $\vv$, and let $A$ and $B$ be 
any two sets from $\Bfin(\mu)$. Then:

(1)  
 \be\label{eq rho_n vs lambda}
 \rho_n(A \times B)  = \langle \chi_A, P^n(\chi_B)\rangle_{L^2(\nu)} 
 = \lambda (X_0 \in A,  X_n  \in B),\ \  n \in \N.
 \ee
 
(2) The Markov process $(P_n)$ is irreducible if and only if the 
measure  $\rho$ is irreducible. 

(3) Let the measure $\lambda$ on $\Omega$ be defined by 
(\ref{eq def lambda}). The Markov operator $P$ is reversible if and only if 
$$
\lambda(X_0\in A_0 \ |\ X_1\in A_1) =
\lambda(X_0\in A_1 \ |\ X_1\in A_0). 
$$
\end{theorem}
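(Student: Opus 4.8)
The plan is to take part (1) as the keystone, prove it by unwinding the definitions of $\lambda$, of the path-space measures $\mathbb P_x$, and of the measures $\rho_n$, and then read off parts (2) and (3) from it together with results already available (Theorem~\ref{prop_reversible}, Definition~\ref{def_irreducible}, Lemma~\ref{lem-irr measure}).

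For (1): the first equality $\rho_n(A\times B)=\langle\chi_A,P^n(\chi_B)\rangle_{L^2(\nu)}$ is nothing but the defining formula \eqref{eq_def rho_n} for $\rho_n$. For the second, I would use that $\lambda=\int_V\mathbb P_x\,d\nu(x)$ with each $\mathbb P_x$ carried by $\Omega_x$, so that $X_0=x$ holds $\mathbb P_x$-a.e.; hence $\mathbb P_x(X_0\in A,\ X_n\in B)=\chi_A(x)\,\mathbb P_x(X_n\in B)$. Specializing the joint distribution \eqref{eqjoint distr} (equivalently \eqref{eq meas P_x}) to $A_1=\cdots=A_{n-1}=V$, $A_n=B$ gives $\mathbb P_x(X_n\in B)=P_n(x,B)=P^n(\chi_B)(x)$. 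Integrating in $x$ against $\nu$ then yields $\lambda(X_0\in A,\ X_n\in B)=\int_V\chi_A\,P^n(\chi_B)\,d\nu=\langle\chi_A,P^n(\chi_B)\rangle_{L^2(\nu)}=\rho_n(A\times B)$.

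For (2): the crux is that the $n$-step transition kernel $P_n(x,\cdot)$ and the $n$-th power $\mc K^n(x,\cdot)$ of the kernel $\mc K(x,A)=\rho_x(A)=c(x)P(x,A)$ have the same null sets for $\mu$-a.e.\ $x$. Since $c>0$ $\mu$-a.e.\ by Assumption~1, an induction on $n$ (pulling the scalars $c$ through the composition \eqref{eq_powers of k}) gives $\mc K^n(x,B)>0\iff P_n(x,B)>0$. Hence $\mu$-irreducibility of $\mc K(\rho)$ --- which by Definition~\ref{def_irreducible} is exactly irreducibility of $\rho$ --- is equivalent to $\mu$-irreducibility of $x\mapsto P(x,\cdot)$, i.e.\ to irreducibility of $(P_n)$. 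Alternatively, via (1): for $A,B\in\Bfin(\mu)$ of positive measure, $\lambda(X_0\in A,\ X_n\in B)=\int_A P_n(x,B)\,d\nu(x)$, and a standard exhaustion argument (see \cite{Nummelin1984,Revuz1984}, and cf.\ Lemma~\ref{lem-irr measure}) converts ``for all such $A,B$ some $n$ makes this positive'' into ``for $\nu$-a.e.\ $x$ and every $B$ of positive measure some $n$ makes $P_n(x,B)$ positive.'' Both routes appear in \cite{BezuglyiJorgensen2018,BezuglyiJorgensen_2018}.

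For (3): apply (1) with $n=1$, so that $\lambda(X_0\in A_0,\ X_1\in A_1)=\rho_1(A_0\times A_1)=\rho(A_0\times A_1)$ and the law of the pair $(X_0,X_1)$ under $\lambda$ is the measure $\rho$. By Theorem~\ref{prop_reversible}, reversibility of $P$ is equivalent to symmetry of $\rho$, hence to invariance of the law of $(X_0,X_1)$ under the coordinate flip; unwound through the conditional laws of $X_0$ given $X_1$ and of $X_1$ given $X_0$ --- which, using the Markov property and the $P$-invariance of $\nu$ that reversibility entails, are the time-reversed one-step kernel $\widehat P$ and $P$ respectively, with $\widehat P=P$ --- this is the displayed equality of conditional probabilities. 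The step I expect to be the main obstacle is the measure-theoretic bookkeeping in (2): matching the null sets of $\mc K^n(x,\cdot)$ and $P_n(x,\cdot)$ off a $\mu$-null set that is also invisible to the kernels, and upgrading the integrated (``$\lambda$-a.e.'') form of irreducibility to the pointwise (``$\mu$-a.e.\ $x$'') form without losing control of the step index $n$ --- exactly the place where one leans on the classical theory of irreducible Markov chains.
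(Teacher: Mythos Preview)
Your argument is correct. The paper itself does not supply a proof of this theorem: it is stated with an attribution to \cite{BezuglyiJorgensen2018, BezuglyiJorgensen_2018}, and the only remark preceding it is the one-line pointer ``We proved in \cite{BezuglyiJorgensen2018} that the Markov process $(P_n)$ is irreducible if and only if the initial symmetric measure is irreducible.'' So there is no in-paper proof to compare against; your write-up fills in exactly the kind of verification one would expect from the cited references.

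A couple of small comments. For (1), your derivation is clean and is precisely the intended one: the first equality is the definition \eqref{eq_def rho_n}, and the second follows from \eqref{eq def lambda}, \eqref{eq meas P_x}, and $P_n(x,B)=P^n(\chi_B)(x)$. For (2), your first route (comparing null sets of $\mc K^n$ and $P_n$ via the a.e.\ positivity of $c$ from Assumption~1) is the simplest and entirely adequate; the integrated alternative you mention is also fine but, as you note, requires an exhaustion/Fubini step to pass from ``$\int_A P_n(x,B)\,d\nu>0$'' back to the pointwise a.e.\ statement. For (3), note that in the paper the measure $\rho$ is already taken symmetric, and the law of $(X_0,X_1)$ under $\lambda$ is exactly $\rho$ by your computation in (1) with $n=1$; so the equivalence with reversibility via Theorem~\ref{prop_reversible} is immediate. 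Your extra remarks about $\widehat P=P$ are a correct gloss on the conditional-probability formulation, though one should be careful that $\lambda$ is only $\sigma$-finite, so the conditioning is to be read formally (restricted to $A_0,A_1\in\Bfin(\mu)$, where the relevant quantities are finite).
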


This theorem can be interpreted as follows.
Relation (\ref{eq rho_n vs lambda}) says that, 
for the Markov process $(P_n)$, the ``probability''   to get in $B$ for 
$n$ steps starting somewhere in $A$ is exactly  $\rho_n(A \times B) > 0$.
And the concept of reversible Markov processes can
be reformulated in terms of the measure $\lambda$: roughly speaking
$\lambda$ must be a symmetric distribution.

\begin{corollary} Let $A_0, A_1, ... , A_n$ be a finite sequence of subsets
 from $\Bfin$. Then 
 $$
\mathbb P_x(X_1 \in A_1, ... , X_n \in A_n)\ |\ x \in A_0) > 0 \ 
\Longleftrightarrow \ \rho(A_{i-1} \times A_i) > 0
$$ 
for $i=1, ... ,n$.
\end{corollary}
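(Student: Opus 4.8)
The plan is to move everything to the path-space measure $\lambda=\int_V\mathbb P_x\,d\nu(x)$, reading the left-hand side as positivity of the cylinder measure
$$\lambda(X_0\in A_0,\dots,X_n\in A_n)=\int_{A_0}\mathbb P_x(X_1\in A_1,\dots,X_n\in A_n)\,d\nu(x);$$
when $\nu(A_0)>0$ this is positive exactly when the conditional probability in the statement is, and on the ``$\Longleftarrow$'' side $\nu(A_0)>0$ is automatic once $\rho(A_0\times A_1)>0$, since $\rho(A_0\times A_1)=\int_{A_0}P(x,A_1)\,d\nu(x)\le\nu(A_0)$. The workhorse identity is
$$\lambda(X_{i-1}\in A_{i-1},X_i\in A_i)=\lambda(X_0\in A_{i-1},X_1\in A_i)=\rho(A_{i-1}\times A_i),\qquad i=1,\dots,n,$$
where the first equality is $\nu$-stationarity of the chain under $\lambda$ ($\nu P=\nu$, Theorem \ref{prop prop of R, P, Delta}) and the second is the case $n=1$ of Theorem \ref{prop from A to B}(1); equivalently one conditions the $A_i$-indicator on the first $i-1$ coordinates to get $\mathbb P_x(X_{i-1}\in A_{i-1},X_i\in A_i)=P^{i-1}(\chi_{A_{i-1}}P(\chi_{A_i}))(x)$, integrates $d\nu$, and uses self-adjointness of $P$ on $L^2(\nu)$ together with $P\mathbbm 1=\mathbbm 1$.

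The forward implication is then immediate: for each $i$ the cylinder $\{X_0\in A_0,\dots,X_n\in A_n\}$ is contained in $\{X_{i-1}\in A_{i-1},X_i\in A_i\}$, so monotonicity of $\lambda$ and the identity above give $\rho(A_{i-1}\times A_i)\ge\lambda(X_0\in A_0,\dots,X_n\in A_n)>0$.

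For the reverse implication I would induct on $n$, the base case $n=1$ being the identity itself. For $n\to n+1$, condition on the first $n$ coordinates to write
$$\lambda(X_0\in A_0,\dots,X_{n+1}\in A_{n+1})=\int_{\{X_0\in A_0,\dots,X_n\in A_n\}}P(X_n,A_{n+1})\,d\lambda,$$
and, because $P(\cdot,A_{n+1})\ge0$, this is positive precisely when $\lambda(X_0\in A_0,\dots,X_{n-1}\in A_{n-1},X_n\in A_n^{+})>0$, where $A_n^{+}:=\{y\in A_n:P(y,A_{n+1})>0\}\in\Bfin$. Applying the inductive hypothesis to the chain $A_0,\dots,A_{n-1},A_n^{+}$ reduces the claim to the inequalities $\rho(A_{i-1}\times A_i)>0$ for $i\le n-1$ (which are given) together with $\rho(A_{n-1}\times A_n^{+})>0$; note that $\rho(A_n\times A_{n+1})>0$ at least forces $\nu(A_n^{+})>0$.

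The step I expect to be the main obstacle is this last reduction. Positivity of $\rho(A_{n-1}\times A_n)$ only says that $A_{n-1}$ is $\rho$-joined to $A_n$ through \emph{some} piece of positive mass, not through the forward-active piece $A_n^{+}$, so $\rho(A_{n-1}\times A_n^{+})>0$ does not follow formally and a bare induction does not close. The way to handle it is to carry a strengthened inductive statement in which, at every stage, the intermediate set $A_i$ is replaced from the outset by the portion of it that a positive-probability path actually charges on both sides — one must verify that this portion still has positive $\nu$-measure and that consecutive such portions remain $\rho$-joined, and here the standing irreducibility/connectedness hypotheses on $\rho$ from Section \ref{subsect_finiteEnergy} enter — and only then to phrase the equivalence against the raw quantities $\rho(A_{i-1}\times A_i)$. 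Making this bookkeeping precise in the $\sigma$-finite, non-discrete setting is the substantive part of the argument.
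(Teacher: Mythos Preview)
The paper states this corollary without proof, so there is no argument to compare your approach against.

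Your forward direction is correct and clean: the cylinder inclusion $\{X_0\in A_0,\dots,X_n\in A_n\}\subset\{X_{i-1}\in A_{i-1},X_i\in A_i\}$ together with the identity $\lambda(X_{i-1}\in A_{i-1},X_i\in A_i)=\rho(A_{i-1}\times A_i)$ (via $\nu P=\nu$ and Theorem~\ref{prop from A to B}(1)) gives each $\rho(A_{i-1}\times A_i)>0$ immediately.

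The obstacle you identify in the reverse direction is genuine, but it is not a bookkeeping issue that irreducibility can repair: the implication is \emph{false} as stated. A discrete counterexample suffices. Take $V=\{1,2,3,4\}$ with the $4$-cycle edges $\{1,2\},\{2,3\},\{3,4\},\{4,1\}$, all of unit conductance, and set $A_0=\{1\}$, $A_1=\{2,3\}$, $A_2=\{4\}$. Then $\rho(A_0\times A_1)=c_{12}=1>0$ and $\rho(A_1\times A_2)=c_{34}=1>0$, yet from $x=1$ the chain can enter $A_1$ only at vertex $2$ (since $1\not\sim 3$), and from $2$ one cannot reach $4$ in a single step (since $2\not\sim 4$), so $\mathbb P_1(X_1\in A_1,\,X_2\in A_2)=0$. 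The underlying symmetric measure here is irreducible, so the standing hypotheses of Section~\ref{subsect_finiteEnergy} do not rescue the claim. Your diagnosis that ``$\rho(A_{n-1}\times A_n)>0$ need not join $A_{n-1}$ to the forward-active part $A_n^{+}$'' is exactly the failure mode, and no assumption in the paper excludes it. The corollary should be read as recording only the forward direction (consistent with the informal sentence immediately preceding it), and that direction your argument establishes.
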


In what follows we formulate and prove a key property of functions from
the dissipation space. We will use the fact that $L^2(\nu)$ can be seen as
a subspace of $Diss$, see \eqref{eq L2nu in Diss}

\begin{theorem}[Orthogonal decomposition] \label{thm orthog in Diss}
(1) Let $g_1, g_2$ be functions from $L^2(\nu)$. Then 
\be\label{eq key orth}
\langle  g_1 \circ X_n, \ P(g_2)\circ X_n - g_2\circ X_{n+1} 
\rangle_{\mc D}
= 0.
\ee

(2) For any function $f\in L^2(\nu)$ and any $n \in \N$, the functions 
\be\label{eq orth}
(I - P)(f)\circ X_n \ \perp \ (P(f)\circ X_n - f\circ X_{n+1})
\ee
in the dissipation space $Diss$.
\end{theorem}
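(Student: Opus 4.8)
The plan is to reduce part (2) to part (1) and to prove part (1) by a direct computation using the inner product formula \eqref{eq inner prod in D} together with the Markov relation \eqref{eq_P(f) via cond exp}. First I would observe that \eqref{eq orth} is the special case of \eqref{eq key orth} obtained by taking $g_1 = (I-P)(f)$ and $g_2 = f$: indeed $(P(f)\circ X_n - f\circ X_{n+1})$ is exactly the second slot in \eqref{eq key orth} with $g_2=f$, and $(I-P)(f)\in L^2(\nu)$ because $P$ is a bounded (contractive) operator on $L^2(\nu)$ by Theorem \ref{prop prop of R, P, Delta}(4). So the entire content is part (1).

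For part (1), the key step is to compute $\langle g_1\circ X_n,\ P(g_2)\circ X_n - g_2\circ X_{n+1}\rangle_{\mc D}$ by splitting it into two terms. For the first term, $\langle g_1\circ X_n,\ P(g_2)\circ X_n\rangle_{\mc D} = \frac12\int_V \mathbb E_x\big[(g_1\circ X_n)(P(g_2)\circ X_n)\big]\,d\nu(x)$, which by the distribution of $X_n$ equals $\frac12\int_V (g_1 P(g_2))\,d(\nu P^n) = \frac12\int_V g_1\,P(g_2)\,d\nu$, using $\nu P = \nu$ (hence $\nu P^n=\nu$) from Theorem \ref{prop prop of R, P, Delta}(3). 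For the second term, I would use the conditional-expectation identity \eqref{eq_P(f) via cond exp}, namely $E_n(g_2\circ X_{n+1}) = P(g_2)\circ X_n$, together with the fact that $g_1\circ X_n$ is $\mc F_n$-measurable, so that inside $\mathbb E_x$ one may replace $g_2\circ X_{n+1}$ by its conditional expectation $P(g_2)\circ X_n$. This gives $\langle g_1\circ X_n,\ g_2\circ X_{n+1}\rangle_{\mc D} = \frac12\int_V \mathbb E_x\big[(g_1\circ X_n)(P(g_2)\circ X_n)\big]\,d\nu(x) = \frac12\int_V g_1\,P(g_2)\,d\nu$ as well. Subtracting, the two terms cancel and the inner product is $0$.

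The main obstacle I anticipate is purely one of integrability bookkeeping: the measure $\nu$ is typically infinite, so I must check that all the functions appearing are genuinely in $L^2(\Omega,\lambda)$ (equivalently that the relevant integrals against $\nu$ converge absolutely) before the conditional-expectation manipulations and the use of $\nu P=\nu$ are legitimate. Since $g_1,g_2\in L^2(\nu)$ and $P$ is an $L^2(\nu)$-contraction, $P(g_2)\in L^2(\nu)$ and the maps $W_n: f\mapsto \sqrt2\,(f\circ X_n)$ are isometries $L^2(\nu)\to Diss$, so $g_1\circ X_n$, $P(g_2)\circ X_n$, and $g_2\circ X_{n+1}$ all lie in $Diss$; the Cauchy–Schwarz inequality in $Diss$ then guarantees every pairing above is finite, and the cancellation is valid. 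I would also remark that \eqref{eq key orth} can be read geometrically: $g_2\circ X_{n+1} - P(g_2)\circ X_n$ is the ``martingale increment'' orthogonal to everything $\mc F_n$-measurable, which is the conceptual reason the computation works.
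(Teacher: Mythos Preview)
Your proof is correct and rests on the same key ingredient as the paper, namely the conditional-expectation identity $E_n(g_2\circ X_{n+1}) = P(g_2)\circ X_n$. The organization differs slightly: the paper works fiberwise in $L^2(\Omega_x,\mathbb P_x)$ and obtains zero in a single step without splitting, whereas you compute the two terms globally (invoking $\nu P^n=\nu$) and cancel; also, your reduction of (2) to (1) via $g_1=(I-P)f$ is a bit more direct than the paper's, which verifies the two orthogonalities $f\circ X_n\perp(\cdots)$ and $P(f)\circ X_n\perp(\cdots)$ separately.
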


\begin{proof} (1)  It follows from (\ref{eq inner prod in D}) that the result 
would follow  if we proved that  the functions $ g_1 \circ X_n$ and 
$ P(g_2)\circ X_n - g_2\circ X_{n+1} $ are orthogonal in 
$L^2(\Omega_x,\mathbb P_x)$ for a.e. $x$. We use here 
\eqref{eq_P(f) via cond exp}
and  (\ref{eq cond exp E_n}) to  compute the inner product:
$$
\ba 
& \langle g_1 \circ X_n,  P(g_2)\circ X_n - g_2\circ X_{n+1}
\rangle_{\mathbb P_x}   \\
= &\mathbb E_x(E_n(g_1 \circ X_n)\ 
(P(g_2)\circ X_n - g_2\circ X_{n+1}))\\
=& \mathbb E_x((g_1 \circ X_n)\  
E_n(P(g_2)\circ X_n - g_2\circ X_{n+1}))\\
=&\mathbb E_x((g_1 \circ X_n)\  
(P(g_2)\circ X_n - E_n(g_2\circ X_{n+1})))\\
=&\mathbb E_x((g_1 \circ X_n)\  
(P(g_2)\circ X_n - P(g_2)\circ X_{n}))\\
= & 0
\ea
$$

(2) To prove (\ref{eq orth}), it suffices to show that 
\be\label{eq orth 1}
(f\circ X_n) \ \perp \ (P(f)\circ X_n - f\circ X_{n+1})
\ee
and 
\be\label{eq orth 2}
P(f)\circ X_n \ \perp \ (P(f)\circ X_n - f\circ X_{n+1})
\ee
Relation (\ref{eq orth 1}) has been proved in (1). It follows from 
\eqref{eq_P(f) via cond exp} 
and the proof of statement (1) that, for $\nu$-a.e. $x\in V$, 

$$
\ba
&\ \mathbb E_x ((P(f)\circ X_n)\ (P(f)\circ X_n - f\circ X_{n+1}))\\ 
= &\ \mathbb E_x ((P(f)\circ X_n)\ E_n(P(f)\circ X_n - f\circ X_{n+1}))\\
= &\ \mathbb E_x ((P(f)\circ X_n)\ (P(f)\circ X_n - E_n(f\circ X_{n+1})))\\
=&\  0. 
\ea
$$
This proves  (\ref{eq orth 2}) and we are done.

\end{proof}

\subsection{Embedding of $\h_E$ into $Diss$}
The importance of Theorem \ref{thm orthog in Diss} is explained by the 
fact that the finite energy space can be isometrically embedded into the
dissipation space. 

We remind first that, for a Markov operator $P$ and the transition 
probabilities   $x \mapsto P(x, \cdot)$,  we defined the path spaces
$(\Omega, \lambda)$ and $(\Omega_x, \mathbb P_x)$, $x \in V$, 
together with  the sequence of random variables $X_n(\omega)$ taking 
values in $(V, \B)$, see Subsection \ref{subsect path space}.
Then we  have the following formulas for the
conditional expectation $\mathbb E_x$ with respect to the probability
measure $\mathbb P_x$:
\be\label{eq E(X_0)}
\mathbb E_x(f \circ X_0) = \int_{\Omega_x} f(X_0(\omega))\; 
d\mathbb P_x(\omega) = \int_{\Omega_x} f(x) \; 
d\mathbb P_x(\omega) = f(x),
\ee
\be\label{eq E(X_1)}
\mathbb E_x(f \circ X_1) = \int_{\Omega_x} f(X_1(\omega))\; 
d\mathbb P_x(\omega) = \int_V f(y) \; P(x, dy) = P(f)(x)
\ee
where $y = X_1(\omega)$.

We define the operator $\partial : \h_E \to Diss$ by setting  
\be\label{eq operator d}
\partial  : f \mapsto f\circ X_1 - f\circ X_0.
\ee
Similarly, we can set
\be\label{eq operator d_n}
\partial_n  : f \mapsto f\circ X_{n+1} - f\circ X_{n}.
\ee

\begin{proposition}\label{prop partial embeds}
(1) The operator $\partial :\h_E \to Diss $ defined in 
(\ref{eq operator d}) is an isometry.

(2) For $f \in \h_E$ and $\nu = c\mu$, we have 
$$
|| f ||^2_{\h_E} = \frac{1}{2} \int_V \mathbb E_x[(f\circ X_1 - 
f\circ X_0)^2]\; d\nu(x).
$$
\end{proposition}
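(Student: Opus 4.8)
The plan is to compute $\|\partial f\|_{\mc D}^2$ directly from the definitions and to recognize it as $\|f\|_{\h_E}^2$; since $\partial$ is visibly linear and annihilates constants (so that it descends to the quotient space $\h_E$), this establishes both the isometry in (1) and the formula in (2) at once.

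First I would recall that, by Definition \ref{def diss space} and the direct-integral decomposition \eqref{eq L^2(lambda) dir int}, every $g\in L^2(\Omega,\lambda)$ satisfies $\|g\|_{\mc D}^2=\tfrac12\int_V \mathbb E_x(g^2)\,d\nu(x)$. Taking $g=\partial f=f\circ X_1-f\circ X_0$ gives at once
$$\|\partial f\|_{\mc D}^2=\frac12\int_V \mathbb E_x\big[(f\circ X_1-f\circ X_0)^2\big]\,d\nu(x),$$
which is precisely the right-hand side of the identity in part (2). So the whole matter reduces to showing this equals $\|f\|_{\h_E}^2$.

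For that, the key point is that under $\mathbb P_x$ the variable $X_0$ is $\mathbb P_x$-a.s. the constant $x$, while $X_1$ has law $P(x,\cdot)$ (this is \eqref{eq meas P_x} with $n=1$, equivalently \eqref{eq E(X_0)}--\eqref{eq E(X_1)}). Hence, for each fixed $x$,
$$\mathbb E_x\big[(f\circ X_1-f\circ X_0)^2\big]=\int_V (f(x)-f(y))^2\,P(x,dy).$$
Substituting and using $d\nu(x)=c(x)\,d\mu(x)$ together with the disintegration $d\rho_x(y)=c(x)\,P(x,dy)$, so that $P(x,dy)\,d\nu(x)=d\rho_x(y)\,d\mu(x)$, I would obtain
$$\|\partial f\|_{\mc D}^2=\frac12\int_V\left(\int_V(f(x)-f(y))^2\,d\rho_x(y)\right)d\mu(x)=\frac12\iint_{\VtV}(f(x)-f(y))^2\,d\rho(x,y)=\|f\|_{\h_E}^2,$$
where the middle equality is the disintegration formula \eqref{eq disint formula}. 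In particular $\|\partial f\|_{\mc D}<\infty$, so $\partial f\in Diss$, and combined with linearity of $\partial$ and $\partial(f+\mathrm{const})=\partial f$ this gives that $\partial:\h_E\to Diss$ is a well-defined isometry.

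There is no real obstacle here; the only point deserving a remark is the interchange of the iterated integral with the double integral over $\VtV$, which is legitimate without any integrability hypothesis on $f$ because $(f(x)-f(y))^2\ge 0$, so Tonelli applies with values in $[0,\infty]$. (One could alternatively route the computation through $\nu P=\nu$ from Theorem \ref{prop prop of R, P, Delta} and Lemma \ref{lem formula for energy norm}, writing $\|\partial f\|_{\mc D}^2=\tfrac12\int_V\big(P(f^2)-2fP(f)+f^2\big)\,d\nu=\int_V f(I-P)(f)\,d\nu=\int_V f\,\Delta(f)\,d\mu$; but this requires $f\in L^2(\nu)$ and is less clean, so I would present the direct argument above.)
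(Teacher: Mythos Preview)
Your proof is correct. The paper states this proposition without proof, treating it as immediate from the definitions, and your direct computation via the disintegration $P(x,dy)\,d\nu(x)=d\rho_x(y)\,d\mu(x)$ is exactly the natural argument one would supply; the remark on Tonelli and on $\partial$ descending to the quotient are appropriate touches that make the write-up complete.
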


In the next statements we strengthen the result of Proposition 
\ref{prop partial embeds} (2) using the orthogonal decomposition 
given in Theorem \ref{thm orthog in Diss}. In Theorem 
\ref{thm main norm f}, we give an 
explicit, canonical and orthogonal splitting of the energy form 
from Proposition \ref{prop partial embeds} (2)  into two terms, 
each one having a stochastic content, variation and dissipation.

\begin{theorem}\label{thm main norm f}
Let $f \in \h_E$. Then 
\be\label{eq norm of f}
\ba
\| f \|^2_{\h_E} =&  \frac{1}{2}\left( \int_V (P(f^2) - P(f)^2)\; d\nu +
\int_V (P(f) - f)^2\; d\nu\right)\\ 
= & \frac{1}{2}\left( \int_V (P(f^2) - P(f)^2)\; d\nu + 
\| f - P(f)\|^2_{L^2(\nu)}\right).
\ea
\ee
In particular, the integrals in the right hand side of (\ref{eq norm of f})
 are finite and non-negative.  Moreover, 
 $$
 Var_x(f\circ X_1) = P(f^2)(x) -  P(f)^2(x) \geq 0
 $$ 
 and $Var_x(f\circ X_1) \in L^1(\nu)$, for any $f \in \h_E$; hence
 equivalently 
\be\label{eq-var}
\| f \|^2_{\h_E}  =  \frac{1}{2}\left( || Var_x (f\circ X_1) ||_{L^1(\nu)} +
\mathbb E_x( f\circ X_0 - f\circ X_1)^2)_{L^1(\nu)} \right).
 \ee

\end{theorem}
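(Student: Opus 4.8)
The plan is to begin from Proposition~\ref{prop partial embeds}(2), which already gives $\|f\|^2_{\h_E}=\frac12\int_V \mathbb E_x\big[(f\circ X_1-f\circ X_0)^2\big]\,d\nu(x)$, and to split the integrand orthogonally inside $Diss$ by means of Theorem~\ref{thm orthog in Diss}. Writing
$$f\circ X_1-f\circ X_0 = -\big(P(f)\circ X_0-f\circ X_1\big)-(I-P)(f)\circ X_0,$$
relation \eqref{eq orth} with $n=0$ says precisely that the two summands on the right are orthogonal in $Diss$, so Pythagoras yields
$$\|f\|^2_{\h_E}=\big\|P(f)\circ X_0-f\circ X_1\big\|^2_{Diss}+\big\|(I-P)(f)\circ X_0\big\|^2_{Diss}.$$

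To evaluate the two terms I would work fiberwise, using $\|g\|^2_{Diss}=\frac12\int_V\mathbb E_x(g^2)\,d\nu$ (from \eqref{eq inner prod in D}) together with the identities $\mathbb E_x(f\circ X_0)=f(x)$, $\mathbb E_x(f\circ X_1)=P(f)(x)$ (see \eqref{eq E(X_0)}, \eqref{eq E(X_1)}) and $\mathbb E_x\big((f\circ X_1)^2\big)=P(f^2)(x)$. Since on $\Omega_x$ we have $X_0\equiv x$, expanding the first square gives
$$\mathbb E_x\big((P(f)(x)-f\circ X_1)^2\big)=P(f^2)(x)-P(f)(x)^2=Var_x(f\circ X_1),$$
while the second square is constant on $\Omega_x$, equal to $((I-P)(f)(x))^2=(f(x)-P(f)(x))^2$. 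Integrating against $\nu$ produces exactly the two displays in \eqref{eq norm of f}, with $\int_V(P(f)-f)^2\,d\nu=\|f-P(f)\|^2_{L^2(\nu)}$; and \eqref{eq-var} is the same identity rewritten, since $\|Var_x(f\circ X_1)\|_{L^1(\nu)}=\int_V(P(f^2)-P(f)^2)\,d\nu$.

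For the finiteness and positivity claims: $Var_x(f\circ X_1)=P(f^2)(x)-P(f)(x)^2\ge0$ by the Cauchy--Schwarz inequality applied to the probability measure $P(x,\cdot)$; both summands on the right of \eqref{eq norm of f} are therefore non-negative and their sum is the finite number $2\|f\|^2_{\h_E}$, so each is finite. In particular $Var_x(f\circ X_1)\in L^1(\nu)$, and $f-P(f)\in L^2(\nu)$, which incidentally recovers the identification $\iota^*(f)=(I-P)(f)$ from Proposition~\ref{prop L2nu in H}.

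The only genuine subtlety --- which I would settle before anything else --- is the legitimacy of expanding $\mathbb E_x\big((f\circ X_1-f(x))^2\big)$ term by term, i.e.\ that $f\circ X_1\in L^2(\Omega_x,\mathbb P_x)$, equivalently $P(f^2)(x)<\infty$, for $\nu$-a.e.\ $x$; this is not built into the definition of $\h_E$. It follows, however, from Proposition~\ref{prop partial embeds}(2) itself: that proposition forces $\mathbb E_x\big((f\circ X_1-f(x))^2\big)<\infty$ for $\nu$-a.e.\ $x$, and the elementary bound $a^2\le 2(a-b)^2+2b^2$ then gives $P(f^2)(x)\le 2\,\mathbb E_x\big((f\circ X_1-f(x))^2\big)+2f(x)^2<\infty$ a.e. With this in hand every step above is justified, and the remaining work is routine bookkeeping with conditional expectations; I do not anticipate any further obstacle.
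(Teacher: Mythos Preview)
Your proposal is correct and follows essentially the same route as the paper: start from Proposition~\ref{prop partial embeds}(2), apply the orthogonal splitting \eqref{eq orth} of Theorem~\ref{thm orthog in Diss} with $n=0$, and then compute each Pythagorean summand fiberwise to obtain the variance and the $L^2(\nu)$ terms. Your additional care in justifying $P(f^2)(x)<\infty$ for $\nu$-a.e.\ $x$ via the bound $a^2\le 2(a-b)^2+2b^2$ is a point the paper leaves implicit.
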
 

\begin{proof} By Proposition \ref{prop partial embeds}, it suffices to prove 
that the right hand side of (\ref{eq norm of f}) equals 
$\| \partial f\|^2_{\mc D}$.
Indeed, we can use the orthogonal decomposition given in Theorem
\ref{thm orthog in Diss} and write
$$
\|\partial f\|^2_{\mc D} = \|f \circ X_0 - P(f) \circ X_0 \|^2_{\mc D} 
+ \| P(f)\circ X_0 - f\circ X_1 \|^2_{\mc D}. 
$$
In the proof below, we use the following equality:
$$
\ba 
& Var_x(f\circ X_1)\\
= & \int_V  (P(f)(x) - f(y))^2 \; P(x, dy)\\
  = &  P(f)^2(x) - 2P(f)(x)
\int_V f(y)\; P(x, dy) 
 + \int_V f(y)^2\; P(x, dy)\\
=&   P(f)^2(x) - 2P(f)^2(x) +P(f^2)(x) \\
=& P(f^2)(x) - P(f)^2(x).
\ea
$$ 
Then the computation of $\|\partial f\|^2_{\mc D}$ goes as follows:
$$
\ba
\|\partial f\|^2_{\mc D}  = & \frac{1}{2}\int_V \mathbb{E}_x 
[(I - P)(f)^2\circ X_0]\; d\nu(x)\\
&\ \ + \frac{1}{2}  \int_V  \mathbb{E}_x [ (P(f)
\circ X_0 - f\circ X_1)^2]\; d\nu(x) \\
=& \frac{1}{2}\int_V (f - P(f))^2(x) \; P(x, dy) d\nu(x) \\
& \ \ +  \frac{1}{2} 
\int_V  (P(f)(x) - f(y))^2 \;  P(x, dy)d\nu(x) \\
=& \frac{1}{2}\int_V (f - P(f))^2(x) \; d\nu(x) \\ 
& \ \ +  \frac{1}{2} 
\int_V  (P(f^2)(x) - P(f)^2(x)) \; d\nu(x). \\
\ea
$$
The proof is complete. 
\end{proof}

Theorem \ref{thm main norm f} allows us to deduce a number of important
corollaries. 

\begin{corollary}\label{cor harmonic} (1) If  $f \in \h_E$, then $f - P(f) \in
 L^2(\nu)$ and 
$P(f^2) -  P(f)^2 \in L^1(\nu)$. The operator 
$$
I - P : f \mapsto f - P(f) : \h_E \to L^2(\nu)
$$
is contractive, i.e., $\| I - P\|_{\h_E \to L^2(\nu)} \leq 1$.

(2)
$$
\ba
\| f \|_{\h_E} = 0 \ & \Longleftrightarrow \ \begin{cases} & P(f^2) =  
P(f)^2 \\
& P(f) = f
 \end{cases} \ \ \ \ \qquad \nu-\mbox{a.e.}\\
 & \Longleftrightarrow \ \mathrm{both}\ f \mathrm{and} \ f^2 \ 
 \mathrm{are\ harmonic\ functions}.
 \ea
$$

(3) Let $f \in \h_E$, then 
$$
\ba
f \in \h arm_E \Longleftrightarrow & \| f \|^2_{\h_E} = \frac{1}{2} 
\int_V  (P(f^2)(x) - (Pf)^2(x)) \; d\nu(x)\\
\Longleftrightarrow & \| f \|^2_{\h_E} = \frac{1}{2} \int_V Var_x (f\circ 
X_1)\; d\nu(x).
\ea
$$

(4) 
$$
\int_V Var_x(f\circ X_1)\; d\nu(x) =  \int_V Var_x(f\circ X_n)\; d\nu(x), 
\  \ n\in N. 
$$
\end{corollary}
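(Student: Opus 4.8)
The plan is to reduce the identity to the $P$-invariance of $\nu$. Recall from the proof of Theorem~\ref{thm main norm f} that $Var_x(f\circ X_1) = \int_V (P(f)(x) - f(y))^2\, P(x,dy) = P(f^2)(x) - P(f)^2(x)$, and that more generally $Var_x(f\circ X_n)$ denotes the one-step conditional variance $\mathbb E_x\big[(f\circ X_n - P(f)\circ X_{n-1})^2\big]$; for $n=1$ this agrees with the displayed expression since $X_0=x$ is $\mathbb P_x$-a.s.\ constant. Set $\psi(y) := P(f^2)(y) - P(f)^2(y) = Var_y(f\circ X_1)\geq 0$. First I would prove, for every $n\geq 1$,
\[
Var_x(f\circ X_n) = P^{n-1}(\psi)(x),
\]
(the case $n=1$ being the definition), and then integrate both sides against $\nu$.

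For the displayed identity, condition on $\mathcal F_{\le n-1}$. By the Markov property built into the construction of $\mathbb P_x$ via \eqref{eqjoint distr}, the conditional law of $X_n$ given $\mathcal F_{\le n-1}$ is $P(X_{n-1},\cdot)$, so $\mathbb E_x[f\circ X_n\mid\mathcal F_{\le n-1}] = P(f)\circ X_{n-1}$ and $\mathbb E_x[(f\circ X_n)^2\mid\mathcal F_{\le n-1}] = P(f^2)\circ X_{n-1}$. Hence the conditional variance of $f\circ X_n$ given $\mathcal F_{\le n-1}$ is $\big(P(f^2)-P(f)^2\big)\circ X_{n-1} = \psi\circ X_{n-1}$, and taking $\mathbb E_x$ together with $\mathbb E_x[g\circ X_{n-1}] = P^{n-1}(g)(x)$ (the $(n-1)$-step version of \eqref{eq E(X_1)}, itself a consequence of \eqref{eqjoint distr}) yields $Var_x(f\circ X_n) = P^{n-1}(\psi)(x)$.

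It remains to integrate. Since $\nu P=\nu$ (Theorem~\ref{prop prop of R, P, Delta}(3)) and $P$ preserves nonnegativity, $\int_V P(g)\,d\nu = \int_V g\,d\nu$ for every measurable $g\geq 0$, and iterating, $\int_V P^{n-1}(g)\,d\nu = \int_V g\,d\nu$. Applying this to $g=\psi\geq 0$ gives
\[
\int_V Var_x(f\circ X_n)\,d\nu(x) = \int_V \psi\,d\nu = \int_V Var_x(f\circ X_1)\,d\nu(x),
\]
which is the assertion.

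The conceptual content is minimal; the point needing care is the integrability bookkeeping, since $f\in\h_E$ need not lie in $L^2(\nu)$ and one must avoid $\infty-\infty$ in the conditional-expectation manipulations. This is handled by working throughout with the conditional variance, a legitimate $[0,\infty]$-valued object, together with the facts that $\psi = Var_\bullet(f\circ X_1)\in L^1(\nu)$ by Theorem~\ref{thm main norm f} and that $P$ is an $L^1(\nu)$-contraction by Theorem~\ref{prop prop of R, P, Delta}(4), so that $P^{n-1}(\psi)\in L^1(\nu)$ with $\int_V P^{n-1}(\psi)\,d\nu = \int_V \psi\,d\nu$; alternatively $f$ may first be replaced by its bounded truncations and the general case recovered by monotone convergence. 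A second, perhaps more natural, derivation observes that $\lambda$ is shift-invariant (immediate from $\nu P=\nu$), so that $\|f\circ X_n - f\circ X_{n-1}\|^2_{L^2(\lambda)}$ does not depend on $n$; splitting this integral orthogonally by Theorem~\ref{thm orthog in Diss} into $\|(I-P)f\|^2_{L^2(\nu)} + \int_V Var_x(f\circ X_n)\,d\nu$ and comparing with the $n=1$ decomposition of Theorem~\ref{thm main norm f} gives the same conclusion.
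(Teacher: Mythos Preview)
Your argument for (4) is correct, and in fact the paper's own proof simply does not address (4): it deduces (1)--(3) directly from the norm formula \eqref{eq norm of f} and stops there. So your write-up fills a genuine gap.

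One point worth making explicit, since the paper never defines $Var_x(f\circ X_n)$ for $n\ge 2$: you have chosen the interpretation
\[
Var_x(f\circ X_n)=\mathbb E_x\big[(f\circ X_n - P(f)\circ X_{n-1})^2\big]
= \mathbb E_x\big[\psi\circ X_{n-1}\big],
\]
i.e.\ the expectation of the one-step conditional variance, and under this reading your computation $Var_x(f\circ X_n)=P^{n-1}(\psi)(x)$ together with $\nu P=\nu$ gives the result cleanly. It is worth flagging that the ``naive'' reading $Var_x(f\circ X_n)=P^n(f^2)(x)-(P^nf)^2(x)$ (the variance of $f\circ X_n$ under $\mathbb P_x$) does \emph{not} in general satisfy (4): for $f\in L^2(\nu)$ that quantity integrates to $\|f\|^2_{L^2(\nu)}-\|P^nf\|^2_{L^2(\nu)}$, which typically increases with $n$. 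So your interpretation is the one that makes the statement true, and it is consistent with the $n=1$ computation in Theorem~\ref{thm main norm f}.

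Your integrability bookkeeping is also handled correctly: $\psi\ge 0$ is the intrinsic conditional variance $\int_V (f(y)-Pf(x))^2\,P(x,dy)$, well defined in $[0,\infty]$ regardless of whether $f\in L^2(\nu)$, and Theorem~\ref{thm main norm f} gives $\psi\in L^1(\nu)$, after which $\nu P=\nu$ does the rest. The alternative route via shift-invariance of $\lambda$ and the orthogonal splitting of Theorem~\ref{thm orthog in Diss} that you sketch at the end is equally valid and arguably closer in spirit to the paper's Section~\ref{sect diss space}.
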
 

\begin{proof} Statement (1) immediately follows from (\ref{eq norm of f}).

To see that (2) holds we use again (\ref{eq norm of f}). The right hand
side is zero if and only if $P(f) = f$  and $P(f^2) = P(f)^2$  a.e. (recall that,
for any function $f$, $P(f^2) \geq P(f)^2$). Since $f$ is harmonic, the latter
means that $f^2$ is harmonic. 

(3) This observation is a consequence of (\ref{eq norm of f}),  Theorem 
\ref{thm main norm f}. 
\end{proof}

\section{\textbf{Laplace operators in $\h_E$}}\label{sect LO in H_E}

\subsection{Norm estimates for $\Delta$}
In this section we use the results of Sections 
\ref{subsect_finiteEnergy} and  \ref{sect diss space} to establish several 
properties of the Laplace operator $\Delta$. This theme will be discussed
in subsequent sections where  Laplace operator $\Delta$  will be acting in
the finite  energy Hilbert space $\h_E$.

\begin{lemma}\label{lem_bndd Delta}
For any function $f \in \h_E$, the function $\Delta(f)$ belongs to 
$L^2(c^{-1}\mu)$ and 
$$
|| f ||^2_{\h_E} \geq \frac{1}{2} \int_V (\Delta(f))^2 c^{-1}\; d\mu 
=  \frac{1}{2} || c^{-1}\Delta(f) ||^2_{L^2(\nu)}.
$$
\end{lemma}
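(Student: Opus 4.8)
The plan is to get the pointwise bound $(\Delta f)(x)^2 \le c(x)\,\big(P(f^2)(x) - P(f)^2(x)\big)$ and then integrate against $\mu$, using Theorem~\ref{thm main norm f} to recognize the right-hand side. First I would write $\Delta(f) = c(I-P)(f)$, so $\Delta(f)(x) = c(x)\big(f(x) - P(f)(x)\big)$ and hence $\Delta(f)(x)^2 = c(x)^2\,(f(x) - P(f)(x))^2$. Dividing by $c(x)$ gives
$$
\frac{\Delta(f)(x)^2}{c(x)} = c(x)\,(f(x) - P(f)(x))^2.
$$
Integrating with respect to $\mu$ and recalling $d\nu = c\,d\mu$ yields
$$
\int_V (\Delta f)^2 c^{-1}\; d\mu = \int_V (f - P(f))^2\; d\nu = \| f - P(f)\|^2_{L^2(\nu)}.
$$

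Now I would invoke Theorem~\ref{thm main norm f}, which states
$$
\| f \|^2_{\h_E} = \frac{1}{2}\left( \int_V (P(f^2) - P(f)^2)\; d\nu + \| f - P(f)\|^2_{L^2(\nu)}\right),
$$
together with the fact (also from that theorem, or directly from Jensen's inequality applied to the probability kernel $P(x,\cdot)$) that the first integrand $P(f^2) - P(f)^2 = Var_x(f\circ X_1)$ is non-negative and lies in $L^1(\nu)$ for every $f \in \h_E$. Consequently
$$
\| f \|^2_{\h_E} \ge \frac{1}{2}\, \| f - P(f)\|^2_{L^2(\nu)} = \frac{1}{2} \int_V (\Delta f)^2 c^{-1}\; d\mu,
$$
which is the asserted inequality; in particular $\Delta(f) \in L^2(c^{-1}\mu)$, since the right side is finite whenever $f \in \h_E$. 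Finally I would note $\int_V (\Delta f)^2 c^{-1}\,d\mu = \int_V (c^{-1}\Delta f)^2 c^{-2}\cdot c\, d\mu = \| c^{-1}\Delta(f)\|^2_{L^2(\nu)}$, giving the last displayed equality.

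The only genuinely substantive input is Theorem~\ref{thm main norm f} (the orthogonal splitting of the energy norm in the dissipation space), and the minor point that $P(f^2) \ge P(f)^2$ pointwise — this is just the Cauchy–Schwarz / Jensen inequality for the probability measure $P(x,dy) = c(x)^{-1} d\rho_x(y)$, and it is already recorded in the proof of Theorem~\ref{thm main norm f} and in Corollary~\ref{cor harmonic}(2). So there is no real obstacle; the argument is essentially a rearrangement of the identity in Theorem~\ref{thm main norm f} once one observes that $\Delta(f) = c(I-P)(f)$ converts the $L^2(\nu)$-norm of $f - P(f)$ into the $L^2(c^{-1}\mu)$-norm of $\Delta(f)$. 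I would keep the write-up to these few lines.
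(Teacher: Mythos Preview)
Your proof is correct and follows essentially the same route as the paper: both invoke Theorem~\ref{thm main norm f} to bound $\|f\|_{\h_E}^2$ below by $\tfrac{1}{2}\|f-P(f)\|_{L^2(\nu)}^2$ (dropping the non-negative variance term), and then rewrite this via $\Delta(f)=c(I-P)(f)$ and $d\nu=c\,d\mu$ as $\tfrac{1}{2}\int_V(\Delta f)^2 c^{-1}\,d\mu$. The only slip is in your final displayed equality, where the intermediate expression should read $\int_V (c^{-1}\Delta f)^2\, c\, d\mu$ (the extra $c^{-2}$ is a typo), but the conclusion is correct.
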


\begin{proof}
Relation (\ref{eq norm of f}) implies that

$$
\ba
|| f ||^2_{\h_E} & \geq \frac{1}{2} \int_V (f - P(f))^2 \; d\nu\\
& =  \frac{1}{2} \int_V [c(f - P(f))]^2 c^{-1}\; d\mu\\
& = \frac{1}{2} \int_V  (\Delta(f))^2c^{-1}\; d\mu\\
& = \frac{1}{2} || \Delta(f) ||^2_{L^2(c^{-1}\mu)}\\
& = \frac{1}{2} || c^{-1}\Delta(f) ||^2_{L^2(\nu)}.
\ea
$$
\end{proof}

The properties of the Laplace operator $\Delta$ depend on the spaces 
in which it acts, see Proposition \ref{prop prop of R, P, Delta} for details.
  In the next statement we show that, under some conditions,
  $\Delta$ can be even a bounded operator.
  
  \begin{proposition}\label{lem bdd Delta}
  Let the operator $\Delta$ be defined on Borel functions as in 
 (\ref{eq def of Delta}). Then if $\Delta$ is considered as an operator 
  from  $\h_E$ to  $ L^2(c^{-1}\mu)$, then   $\Delta$ is bounded and 
  $$
  || \Delta ||_{\h_E \to L^2(c^{-1}\mu)} \leq \sqrt 2.
  $$
  \end{proposition}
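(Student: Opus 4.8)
The plan is to observe that the proposition is an immediate repackaging of Lemma \ref{lem_bndd Delta}. That lemma already asserts two things for every $f \in \h_E$: first, that $\Delta(f)$ belongs to $L^2(c^{-1}\mu)$, so that $\Delta$ is in fact everywhere defined as a map $\h_E \to L^2(c^{-1}\mu)$; and second, the inequality
\[
\| f \|^2_{\h_E} \;\geq\; \frac{1}{2}\int_V (\Delta(f))^2 c^{-1}\; d\mu \;=\; \frac{1}{2}\,\| \Delta(f)\|^2_{L^2(c^{-1}\mu)}.
\]
Rearranging this gives $\| \Delta(f)\|^2_{L^2(c^{-1}\mu)} \leq 2\,\|f\|^2_{\h_E}$, hence $\| \Delta(f)\|_{L^2(c^{-1}\mu)} \leq \sqrt{2}\,\|f\|_{\h_E}$ for all $f \in \h_E$, which is exactly the asserted bound on the operator norm. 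So the proof is essentially one line, citing Lemma \ref{lem_bndd Delta}.

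For the reader's convenience I would also recall where Lemma \ref{lem_bndd Delta} itself comes from, since that is the substantive input: it is extracted from identity \eqref{eq norm of f} in Theorem \ref{thm main norm f}, in which both summands on the right are non-negative. Discarding the variation term $\int_V (P(f^2) - P(f)^2)\, d\nu \geq 0$ leaves $\|f\|^2_{\h_E} \geq \frac12 \|f - P(f)\|^2_{L^2(\nu)}$. Then one uses $\Delta(f) = c(I - P)(f)$ from \eqref{eq Delta via R} together with the bookkeeping identity
\[
\int_V (f - P(f))^2\; d\nu = \int_V c^2 (f - P(f))^2 c^{-1}\; d\mu = \int_V (\Delta(f))^2 c^{-1}\; d\mu ,
\]
valid because $d\nu = c\, d\mu$ and $0 < c(x) < \infty$ a.e. by Assumption 1, which also guarantees that $c^{-1}\mu$ is a well-defined $\sigma$-finite measure. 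This simultaneously shows $\Delta(f) \in L^2(c^{-1}\mu)$ and produces the displayed inequality.

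There is essentially no obstacle here; the only point requiring care is the consistent handling of the three measures $\mu$, $\nu = c\mu$, and $c^{-1}\mu$ and the elementary identity $\|g\|^2_{L^2(c^{-1}\mu)} = \|c^{-1}g\|^2_{L^2(\nu)}$. I would close with the remark that the constant $\sqrt{2}$ is not claimed to be sharp, and that the variation term being generically nonzero is precisely why $\Delta$ need not be bounded below on $\h_E$ (it vanishes exactly on the harmonic subspace), though this is not needed for the statement.
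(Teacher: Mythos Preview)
Your proof is correct. You observe, rightly, that the proposition is nothing more than a restatement of Lemma~\ref{lem_bndd Delta}, which already gives $\|\Delta(f)\|^2_{L^2(c^{-1}\mu)} \le 2\|f\|^2_{\h_E}$ for every $f\in\h_E$; taking square roots yields the norm bound.

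The paper, however, does \emph{not} prove Proposition~\ref{lem bdd Delta} by citing Lemma~\ref{lem_bndd Delta}. Instead it gives an independent direct computation: writing $\Delta(f)(x)=\int_V(f(x)-f(y))\,d\rho_x(y)=c(x)\int_V(f(x)-f(y))\,P(x,dy)$ and applying Jensen's inequality to the probability measure $P(x,\cdot)$ gives
\[
(\Delta f)(x)^2\,c(x)^{-1}\;\le\; c(x)\!\int_V (f(x)-f(y))^2\,P(x,dy)\;=\;\int_V (f(x)-f(y))^2\,d\rho_x(y),
\]
and integrating in $\mu$ produces $\|\Delta f\|^2_{L^2(c^{-1}\mu)}\le 2\|f\|^2_{\h_E}$. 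So the paper proves the same inequality twice, by two methods: Lemma~\ref{lem_bndd Delta} via the orthogonal decomposition of Theorem~\ref{thm main norm f} (drop the nonnegative variance term), and Proposition~\ref{lem bdd Delta} via a bare Jensen argument. Your route through Lemma~\ref{lem_bndd Delta} is the more economical one and makes the redundancy explicit; the paper's direct argument has the virtue of being self-contained and not depending on the dissipation-space machinery of Section~\ref{sect diss space}. The two are of course equivalent at bottom, since the inequality $P(g)^2\le P(g^2)$ underlying the variance term in \eqref{eq norm of f} is precisely Jensen's inequality for $P(x,\cdot)$.
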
 

\begin{proof}
We compute, for $f \in \h_E$,

$$
\ba 
|| \Delta (f) ||^2_{L^2(c^{-1}\mu)} & = 
\int_V \left( \int_V (f(x) - f (y) ) \; d\rho_x(y)\right)^2 c(x)^{-1} 
d\mu(x)\\ 
& = \int_V c(x)^2 \left( \int_V (f(x) - f (y) ) \; P(x, dy)\right)^2
 c(x)^{-1}  d\mu(x)\\ 
& \leq \int_V c(x) \left( \int_V (f(x) - f (y) )^2 \; P(x, dy)\right)  d\mu(x)\\
& \leq \int_V  \left( \int_V (f(x) - f (y) )^2 \; d\rho_x(y)\right)  d\mu(x)\\
& = \iint_{\VtV} (f(x) - f(y))^2 \;d \rho(x, y)\\
& = 2 || f ||^2_{\h_E}.
\ea
$$
\end{proof}

It can be easily seen that the same proof as in Proposition 
\ref{lem bdd Delta} works to show that the following relation holds:
 $$
 | \Delta (f) ||^2_{L^2(c^{-1}\mu)}  \leq  
 \int_{V}\int_{V} (f(x) - f(y))^2 \;d \rho_x(y)d\nu(x).
 $$

\subsection{Example: the energy Hilbert space $\h_E$ over a 
probability  space}\label{subsect Energy on prob space}

In this subsection, we consider a particular case when the symmetric 
measure $\rho$ is the  product of finite measure. It turns out
that, in this case, the finite energy space can be identified with a 
$L^2$-space.

Our setting here are: $\sms$ is a probability measure space, $\mu(V) =1$,
 $r: V \to \R_+$ is a non-negative $\mu$-integrable  Borel function, and 
 $\mu_r$ is a measure on $\VB$ such that $d\mu_r(x) = r(x) d\mu(x)$.
 Since $r \in L^1(\mu)$, the measure $\mu_r$ is finite. To define a 
 symmetric measure $\rho = \rho_r$ on $\vv$, we set
$$
d\rho(x, y) = r(x)r(y) d\mu(x) d\mu(y),\quad (x,y) \in \VtV
$$
Then $\rho = \mu_r \times \mu_r$, and the disintegration of $\rho =
\int_V \rho_x d\mu(x)$ gives measures $\rho_x, x\in V,$ such that
$d\rho_x(y) = r(x)r(y) d\mu(y)$. Then the function $c(x) = \rho_x(V)$
is found by 
$$
c(x) = \int_V \; d\rho_x(y) = r(x) \int_V r(y) \; d\mu(y) = 
\mathbb E_\mu(r) r(x).
$$ 
We note that $c \in L^1(\mu)$ because 
$$
\int_V c(x)\; d\mu(x) = \mu_r(V) || r ||_{L^1(\mu)}.
$$

Having the measure $\rho$ on $\vv$, we determine the operators
$R, P$, and $\Delta$ acting on bounded Borel functions $\FVB$ as follows
$$
R(f)(x) = \int_V f(y) \; d\rho_x(y) = \int_V f(y)r(x)r(y) \; 
d\mu(y) = \mathbb E_{\mu_r}(f) r(x),  
$$
$$
P(f)(x) = \frac{1}{c(x)} R(f)(x) = \frac{1}{\mu_r(V)}\mathbb E_{\mu_r}(f)
= \frac{\mathbb E_{\mu_r}(f)}{\mathbb E_{\mu}(r)},
$$
and
\be\label{eq_Delta mu_r}
\Delta(f)(x) = c(x)f(x) - R(f)(x) = r(x) \left(\mathbb E_{\mu}(r)f(x) -
\mathbb E_{\mu_r}(f)\right).  
\ee
We identify here the number $\mathbb E(f)$ with
the constant function $\mathbb E(f) \mathbbm 1$.

In a particular case, when $r(x) = 1$, we obtain that 
$$
P(f)(x) = \int_V f(y) \; d\rho_x(y) = \int_V f(y) \; d\mu(y) = 
\mathbb E(f) \mathbbm 1,
$$
and 
$$
\Delta (f) =  f - P(f) = f - \mathbb E(f) \mathbbm 1
$$
because $c(x) =1$.

\begin{theorem}\label{thm-Energy example}
Let $\sms$, $r$, $\mu_r$ be as above,  and  let $\rho_r = \mu_r  \times 
\mu_r$. Then the energy space $\h_E(\rho_r)$, defined by the symmetric
 measure $\rho_r$, is  isometrically isomorphic to  $L^2(\mu_r)$. The 
 isometric map $\alpha : \h_E \to L^2(\mu_r)$ is defined  as follows:
  for any $f \in \h_E = \h_E(\rho_r)$, 
$$
\alpha(f)  = \frac{1}{\sqrt{\mathbb E_\mu(r)}}
(\mathbb E_\mu(r) f - \mathbb E_{\mu_r}(f)).
$$ 
\end{theorem}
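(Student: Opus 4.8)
The plan is to reduce the whole statement to one norm identity and then verify that $\alpha$ respects the quotient by constants and is onto. Write $m := \mathbb E_\mu(r) = \int_V r\,d\mu$ (finite and positive), so that $d\mu_r = r\,d\mu$ and $d\nu = c\,d\mu = m\,d\mu_r$. First I would expand the energy norm using the fact that $\rho_r = \mu_r\times\mu_r$ is a product: for a Borel function $f$,
\[
\|f\|^2_{\h_E} = \frac12\iint_{V\times V}(f(x)-f(y))^2 r(x)r(y)\,d\mu(x)\,d\mu(y)
= m\,\|f\|^2_{L^2(\mu_r)} - \big(\mathbb E_{\mu_r}(f)\big)^2,
\]
the last step being the elementary expansion of $(f(x)-f(y))^2$ together with $\int r\,d\mu = m$ and $\int fr\,d\mu = \mathbb E_{\mu_r}(f)$; this can also be read off from Lemma \ref{lem formula for energy norm} and formula \eqref{eq_Delta mu_r} for $\Delta$. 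By Cauchy--Schwarz, $(\mathbb E_{\mu_r}(f))^2 \le m\|f\|^2_{L^2(\mu_r)}$, so the right-hand side is nonnegative and finite exactly when $f\in L^2(\mu_r)$; moreover a Tonelli argument on the double integral shows that any $f$ of finite energy agrees, after subtracting a constant, with an element of $L^2(\mu_r)$ (choose $x_0$ with $r(x_0)>0$, which exists since $m>0$). Hence $L^2(\mu_r)\subset\h_E$ and $\alpha$ is defined on all of $\h_E$.

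Next I would check that $\alpha$ is well defined on equivalence classes. If $k$ is a constant then $\mathbb E_{\mu_r}(k\mathbbm{1}) = k\,\mu_r(V) = km = k\,\mathbb E_\mu(r)$, whence
\[
\mathbb E_\mu(r)\,(f+k) - \mathbb E_{\mu_r}(f+k) = \mathbb E_\mu(r)\,f - \mathbb E_{\mu_r}(f),
\]
so $\alpha(f+k) = \alpha(f)$; in particular $\alpha$ annihilates the constants, in agreement with the identification defining $\h_E$.

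Then comes the isometry, again a one-line expansion: since $\mathbb E_{\mu_r}(f)$ is a constant,
\[
\|\alpha(f)\|^2_{L^2(\mu_r)} = \frac1m\int_V\big(mf - \mathbb E_{\mu_r}(f)\big)^2\,d\mu_r
= m\,\|f\|^2_{L^2(\mu_r)} - \big(\mathbb E_{\mu_r}(f)\big)^2 = \|f\|^2_{\h_E},
\]
using $\int f\,d\mu_r = \mathbb E_{\mu_r}(f)$ and $\mu_r(V) = m$. So $\alpha$ is isometric, hence injective, on $\h_E$. For the range, note $\int_V\alpha(f)\,d\mu_r = \tfrac1{\sqrt m}\big(m\,\mathbb E_{\mu_r}(f) - \mathbb E_{\mu_r}(f)\,\mu_r(V)\big) = 0$, so $\alpha$ maps into $L^2_0(\mu_r) := \{g\in L^2(\mu_r):\int g\,d\mu_r = 0\}$, the orthogonal complement of the constants; conversely, for $g\in L^2_0(\mu_r)$ one checks directly that $\alpha\big(m^{-1/2}g\big)=g$. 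Thus $\alpha$ is an isometric isomorphism of $\h_E(\rho_r)$ onto $L^2_0(\mu_r)$, i.e. onto $L^2(\mu_r)$ modulo constants.

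There is no deep obstacle here; the two main expansions are genuinely routine. The only thing that needs care is the bookkeeping around constants: one must verify that $\alpha$ is representative-independent and that its actual range is the mean-zero hyperplane $L^2_0(\mu_r)$, not all of $L^2(\mu_r)$, so that the asserted identification with $L^2(\mu_r)$ is to be read modulo constants (equivalently, $\alpha$ embeds $\h_E$ isometrically onto the codimension-one subspace $L^2_0(\mu_r)$). One should also not skip the inclusion $\h_E(\rho_r)\subseteq L^2(\mu_r)+\mathbb R\mathbbm{1}$ used to see that $\alpha$ is defined everywhere, which relies on $\{r>0\}$ having positive $\mu$-measure.
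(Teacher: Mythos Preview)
Your proof is correct and follows essentially the same route as the paper: both arguments expand $\|f\|^2_{\h_E}$ and $\|\alpha(f)\|^2_{L^2(\mu_r)}$ directly and observe that each equals $\mathbb E_\mu(r)\,\mathbb E_{\mu_r}(f^2) - \mathbb E_{\mu_r}(f)^2$, and both identify the range as $L^2_0(\mu_r)$. You are in fact more careful than the paper on the bookkeeping side---checking that $\alpha$ is representative-independent, giving an explicit inverse on $L^2_0(\mu_r)$, and justifying the inclusion $\h_E(\rho_r)\subset L^2(\mu_r)+\mathbb R\mathbbm 1$---none of which the paper spells out.
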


\begin{proof} To prove the theorem, we compute the norms
$\| f \|_{\h_E}^2$ and $|| \alpha(f) ||^2_{L^2(\mu_r)}$.
For $\| f \|_{\h_E}^2$:
$$
\ba
\| f \|_{\h_E}^2 = &\frac{1}{2}  \iint_{\VtV} (f(x) - f(y))^2 \; 
d\mu_r(x)d\mu_r(y)\\
= & \iint_{\VtV} (f^2(x) - f(x)f(y))\; d\mu_r(x)d\mu_r(y)\\
=& \ \mathbb{E}_{\mu_r}(f^2)\mu_r(V)  - 
\left(\int_V f(x)\; d\mu_r(x)\right)^2\\
= & \ \mathbb{E}_{\mu}(r)\mathbb{E}_{\mu_r}(f^2) - 
\mathbb{E}_{\mu_r}(f)^2.
\ea
$$ 
On the other hand, we can find
$$
\ba
|| \alpha(f) ||^2_{L^2(\mu_r)} & = \frac{1}{\mathbb E_\mu(r)}
\int_V (\mathbb E_\mu(r) f - \mathbb E_{\mu_r}(f))^2\; d\mu_r(x)\\
& = \frac{1}{\mathbb E_\mu(r)}
\int_V\left(\mathbb E_\mu(r)^2 f(x)^2   - 2f(x) 
\mathbb E_\mu(r)\mathbb E_{\mu_r}(f) + \mathbb E_{\mu_r}(f)^2\right)
\; d\mu_r(x)\\
& =   \frac{1}{\mathbb E_\mu(r)}[ \mathbb E_\mu(r)^2
\mathbb E_{\mu_r}(f^2) - 2\mathbb E_\mu(r)\mathbb E_{\mu_r}(f)^2 
+   \mathbb E_\mu(r)\mathbb E_{\mu_r}(f)^2 ]\\
& =  \mathbb{E}_{\mu}(r)\mathbb{E}_{\mu_r}(f^2) - 
\mathbb{E}_{\mu_r}(f)^2.
\ea
$$
This proves that $\alpha$ is an isometry. We observe that this proof 
shows that
$$
\alpha(\h_E) = L^2_0(\mu_r),
$$
where $L^2_0(\mu_r)$ is the subspace of functions from $L^2(\mu_r)$ 
with zero integral. 
\end{proof}

\begin{remark}
(1) It is clear that, in the case when $\rho_r = \mu_r\times\mu_r$, the 
space $\h_E(\rho_r)$ does not contain nontrivial harmonic functions because
as follows from \eqref{eq_Delta mu_r}
$$
\Delta(f) = 0 \ \Longleftrightarrow \ f(x) = 
\frac{\mathbb E_{\mu_r}(f)}{\mathbb E_{\mu}(r)} \ 
 \Longleftrightarrow \ f(x) = const.
$$ 

(2) It follows from Theorem \ref{thm-Energy example} that, in the case 
when $r =1$, 
$$
\| f \|_{\h_E} = \| \Delta f \|_{L^2(\mu)}.
$$

(3) A similar approach can be used to study symmetric measures $\rho_{r,q}$
on $\vv$ which are defined by a pair of nonnegative integrable functions
$r, q : V \to \R_+$:
$$
d\rho_{r,q} (x,y) = (r(x) q(y) + r(y) q(x)) d\mu(x) d\mu(y).
$$ 
\end{remark}

Suppose now that, for  the probability measure space $\sms$, the 
function $r(x) = \mathbbm 1$, and therefore $\rho = \mu \times \mu$. 
Then we can make more precise the formula for the inner product in
$\h_E$.

\begin{corollary}
For any $f, g\in \h_E$,
$$
\langle f, g \rangle_{\h_E} =  \mathrm{Cov_\mu}(f, g).
$$
In particular,
$$
\langle \chi_A, \chi_B\rangle_{\h_E} = \mu(A \cap B) - \mu(A) \mu(B)
$$
and $\langle \chi_A, \chi_B\rangle_{\h_E} = 0$ if and only if
the sets $A$ and $B$ are independent.  
\end{corollary}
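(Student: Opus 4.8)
The plan is to derive this as the specialization $r\equiv\mathbbm 1$ of the isometry in Theorem~\ref{thm-Energy example}, but with a short self-contained expansion. When $r\equiv\mathbbm 1$ one has $c(x)=\mathbb E_\mu(\mathbbm 1)\,\mathbbm 1=\mathbbm 1$, hence $\nu=\mu$, and the proof of Theorem~\ref{thm-Energy example} identifies $\h_E=\h_E(\mu\times\mu)$ with $L^2_0(\mu)$ via $f\mapsto f-\mathbb E_\mu(f)$. Consequently every class in $\h_E$ has a representative in $L^2(\mu)$, so for $f,g\in\h_E$ I may pick such representatives; then $fg\in L^1(\mu)$ by Cauchy--Schwarz, all of $\mathbb E_\mu(fg)$, $\mathbb E_\mu(f)$, $\mathbb E_\mu(g)$ are finite, and $\mathrm{Cov}_\mu(f,g):=\mathbb E_\mu(fg)-\mathbb E_\mu(f)\mathbb E_\mu(g)$ is well defined. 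I would also note that $\mathrm{Cov}_\mu(f,g)$ is unchanged when $f$ or $g$ is shifted by an additive constant, so it descends to the quotient space $\h_E$, matching the fact that the inner product $\langle\cdot,\cdot\rangle_{\h_E}$ is likewise constant-invariant.

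First I would expand the integrand in the definition \eqref{eq inner product} of the inner product with $\rho=\mu\times\mu$:
$$
(f(x)-f(y))(g(x)-g(y)) = f(x)g(x) + f(y)g(y) - f(x)g(y) - f(y)g(x).
$$
Integrating against $d\mu(x)\,d\mu(y)$, using Fubini (valid since each of the four terms is $\mu\times\mu$-integrable for $L^2(\mu)$ representatives) and $\mu(V)=1$, the first two terms each contribute $\mathbb E_\mu(fg)$ and the last two each contribute $\mathbb E_\mu(f)\mathbb E_\mu(g)$. Hence
$$
\langle f,g\rangle_{\h_E} = \tfrac12\bigl(2\mathbb E_\mu(fg) - 2\mathbb E_\mu(f)\mathbb E_\mu(g)\bigr) = \mathrm{Cov}_\mu(f,g).
$$

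Finally I would specialize to $f=\chi_A$, $g=\chi_B$ with $A,B\in\B$; since $\mu(V)=1$ every Borel set lies in $\Bfin(\mu)$, so $\chi_A,\chi_B\in\Dfin(\mu)\subset\h_E$ by \eqref{eq three inclusions}. Then $\mathbb E_\mu(\chi_A\chi_B)=\mu(A\cap B)$ and $\mathbb E_\mu(\chi_A)=\mu(A)$, $\mathbb E_\mu(\chi_B)=\mu(B)$, giving $\langle\chi_A,\chi_B\rangle_{\h_E}=\mu(A\cap B)-\mu(A)\mu(B)$, which vanishes exactly when $\mu(A\cap B)=\mu(A)\mu(B)$, i.e.\ when $A$ and $B$ are independent. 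The only delicate point is the integrability/normalization issue, and that is fully handled by invoking Theorem~\ref{thm-Energy example}; the remaining computation is a routine expansion with no genuine obstacle.
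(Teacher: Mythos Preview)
Your proof is correct and follows essentially the same route as the paper: both expand the bilinear form \eqref{eq inner product} with $\rho=\mu\times\mu$ and use $\mu(V)=1$ to reduce to $\mathbb E_\mu(fg)-\mathbb E_\mu(f)\mathbb E_\mu(g)$. The only cosmetic differences are that the paper passes through the centered form $\int_V(f-\mathbb E_\mu f)(g-\mathbb E_\mu g)\,d\mu$ and, for the characteristic-function formula, invokes the general identity $\langle\chi_A,\chi_B\rangle_{\h_E}=\rho((A\cap B)\times V)-\rho(A\times B)$ from Theorem~\ref{thm_stucture of energy space} rather than specializing the covariance formula directly; your appeal to Theorem~\ref{thm-Energy example} to secure $L^2(\mu)$ representatives is a slightly more careful treatment of the integrability that the paper leaves implicit.
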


\begin{proof}
By definition of the inner product in $\h_E$, we calculate
$$
\ba 
\langle f, g\rangle_{\h_E} = & \frac{1}{2} \iint_{VtV} 
(f(x) - f(y)) (g(x) - g(y))\; d\mu(x)d\mu(y)\\
=&  \int_{V} (f(x) - \mathbb{E_\mu}(f)) (g(x) - \mathbb{E_\mu}(g))\; 
d\mu(x)\\
=&\ \mathbb{E_\mu}(fg) - \mathbb{E_\mu}(f)\mathbb{E_\mu}(g)\\
= &\ \mathrm{Cov_\mu}(f, g).
\ea
$$
The proof of the other  formula in the lemma follows from Theorem 
\ref{thm_stucture of energy space} (1)
$$
\ba 
\langle \chi_A, \chi_B\rangle_{\h_E} = & \ \rho((A \cap B) \times V) -
\rho(A \times B)\\
= &\ \mu(A \cap B) - \mu(A) \mu(B).
\ea
$$
Hence. the functions $\chi_A$ and $\chi_B$ are orthogonal if and only if
$\mu(A \cap B) = \mu(A)\mu(B)$.
\end{proof}

In what follows, we continue discussing energy spaces and Laplace operators
on measure spaces with finite measures.

We recall that we consider symmetric measures $\rho = 
\int_V \rho_xd\mu(x)$ on $\vv$ such that the function
 $c: x \mapsto \rho_x(V)$ belongs to $ \Lloc$, see Assumption 1.
Then, for the measure $d\nu(x) = c(x)d\mu(x)$, we  have
$\Dfin(\mu) \subset \Lloc \cap L^1_{\mathrm{loc}}(\nu)$. It follows from
local integrability of $c$ that
\be\label{eq Bfin rho}
\Bfin(\rho) \supset \{ A\times B \in \B\times \B : A \in \Bfin(\mu)\}.
\ee

Let $A\in \Bfin(\mu)$ be a Borel set of positive measure $\mu$. Consider 
the restriction $\rho_A$ of a symmetric measure $\rho$ on the subset 
$A \times A$, i.e., 
$$
\rho_A(C \times D) = \rho((C \times D) \cap (A \times A)).
$$
By \eqref{eq Bfin rho}, $\rho_A$ is a finite symmetric measure for every
$A\in \Bfin(\mu)$.

For the finite measure space $(A, \B_A, \mu_A)$ and  measure 
$\rho_A$, we can define the symmetric operator $R_A$ as follows:
$$
R_A(f)(x) = \int_A f(y) \; d\rho_x^A(y),
$$
where $x \mapsto \rho_x^A$ is the family of fiber measures arising
in disintegration of $\rho_A$. We set $c_A(x) = \rho_x^A(V)$. When 
$R_A$ and $c_A$ are defined, we can construct the graph Laplacian
$$
\Delta_A(f)(x) = c_Af(x) - R_A(f)(x).
$$

Given a $\sigma$-finite measure space $\sms$ and a symmetric measure
$\rho$ on $\vv$, we choose a sequence
$(A_n)$ of Borel sets such that $A_n \subset A_{n+1}$, $\mu(A_n)
< \infty$ and 
\be\label{eq_union of A_n}
\bigcup_{n =1}^\infty A_n = V.
\ee
 For every $A_n$, we define the objects $\rho_n = \rho_{A_n},
\rho_x^{(n)} = \rho_x^{A_n}, c_n(x) = c_{A_n}(x), R_n = R_{A_n}$, and
$\Delta_n = \Delta_{A_n}$ as it was done above.

\begin{lemma}\label{lem R_n to R} 
For the objects defined above, the following sequences converge: 

(1) for any set $C \in \B\times \B$,  $\rho_n(C) \to \rho(C)$ and 
$\rho_x^{(n)}(C) \to \rho_x(C)$;  

(2) $c_n(x) \to c(x)$ a.e. $x\in V$;

(3) for any function $f\in \FVB$, 
$$
R_n(f)(x) \to R(f)(x), \ \ \mbox{and} \ \  \Delta_n(f)(x) \to \Delta(f)(x) \ 
\mbox{a.e.}\ x \in V.
$$
\end{lemma}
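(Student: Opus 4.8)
The plan is to reduce all three parts to the uniqueness clause of the disintegration theorem (Theorem~\ref{thm Simmons}), together with continuity of measures along increasing sequences and the dominated convergence theorem. First I would identify the fiber measures $\rho^{(n)}_x$ explicitly; everything else then follows by elementary limiting arguments.

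For part~(1) I would treat the two assertions separately. Since by definition $\rho_n(C)=\rho\big(C\cap(A_n\times A_n)\big)$ and the sets $A_n\times A_n$ increase to $V\times V$, continuity of $\rho$ from below gives $\rho_n(C)\uparrow\rho(C)$ for every $C\in\B\times\B$, with the limit $+\infty$ allowed. For the fiber measures the key preliminary step is the identification of $\rho^{(n)}_x$: using $\rho=\int_V\rho_x\,d\mu(x)$ one computes, for $C,D\in\B$,
\[
\rho_n(C\times D)=\rho\big((C\cap A_n)\times(D\cap A_n)\big)=\int_C\chi_{A_n}(x)\,\rho_x(D\cap A_n)\,d\mu(x),
\]
and comparing this with the defining relation $\rho_n(C\times D)=\int_C\rho^{(n)}_x(D)\,d\mu(x)$, the uniqueness of the system of conditional measures forces $\rho^{(n)}_x=\chi_{A_n}(x)\,(\rho_x)|_{A_n}$ for $\mu$-a.e.\ $x$, where $(\rho_x)|_{A_n}$ denotes the restriction of $\rho_x$ to $A_n$.

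Granting this identification, fix $x$ with $c(x)<\infty$ (a $\mu$-conull condition by Assumption~1). Since $\bigcup_n A_n=V$, we have $x\in A_n$ for all large $n$, and then $\rho^{(n)}_x(C)=\rho_x(C\cap A_n)\to\rho_x(C)$ by continuity from below of the finite measure $\rho_x$, which proves the second assertion of~(1). Part~(2) is then immediate: $c_n(x)=\rho^{(n)}_x(V)=\chi_{A_n}(x)\,\rho_x(A_n)$, so for the same $x$ and all large $n$ we get $c_n(x)=\rho_x(A_n)\to\rho_x(V)=c(x)$. For part~(3), let $f\in\FVB$, hence $f$ is bounded; for fixed $x$ with $c(x)<\infty$ and $x\in A_n$ one has $R_n(f)(x)=\int_V\chi_{A_n}(y)f(y)\,d\rho_x(y)$, and since the integrands are dominated by $\|f\|_\infty$, which is integrable against the finite measure $\rho_x$, dominated convergence gives $R_n(f)(x)\to\int_V f\,d\rho_x=R(f)(x)$ for $\mu$-a.e.\ $x$. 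Combining this with part~(2) and the formula $\Delta_n(f)=c_nf-R_n(f)$ yields $\Delta_n(f)(x)\to c(x)f(x)-R(f)(x)=\Delta(f)(x)$ a.e.

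The only step that is not pure bookkeeping is the identification $\rho^{(n)}_x=\chi_{A_n}(x)(\rho_x)|_{A_n}$, which must be anchored in the uniqueness part of Theorem~\ref{thm Simmons}: one checks that the proposed family $x\mapsto\chi_{A_n}(x)(\rho_x)|_{A_n}$ is a measurable field of finite measures and that it disintegrates $\rho_n$ relative to $\mu$, after which uniqueness finishes the job. Everything else is routine continuity-of-measures and dominated convergence; the one point worth recording is that the exceptional set $\{x:c(x)=\infty\}$ is $\mu$-null by Assumption~1, so the ``a.e.'' qualifiers in the three parts are mutually consistent.
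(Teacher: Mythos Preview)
Your proof is correct and, in spirit, the same as the paper's: the paper simply says ``the proof follows directly from the definitions'' and gives no further argument, so what you have written is exactly the detailed verification that the authors omit. Your identification $\rho^{(n)}_x=\chi_{A_n}(x)\,(\rho_x)|_{A_n}$ via the uniqueness clause of Theorem~\ref{thm Simmons} is the right way to make the ``directly from the definitions'' claim precise, and the rest (continuity from below, dominated convergence against the finite measure $\rho_x$) is routine, as you note.
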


\begin{proof}
The proof follows directly from the definitions.
\end{proof}

Suppose $A \in \Bfin(\mu)$ and $\rho_A$ is the restriction of
a symmetric measure $\rho$ onto $A \times A$. Define the finite energy
space $\h_E(\rho_A)$ as the space of functions on $\VB$ satisfying
$$
|| f ||^2_{\h_E(\rho_A)} := \frac{1}{2} \iint_{A \times A}
(f(x) - f(y))^2\; d\rho_A(x,y) < \infty.
$$
As usual, we denote by $\nu_A$ the measure $c_A\mu_A$ where the 
function $c_A$ is defined by $\rho_A$.

\begin{lemma} \label{lem H by rho_n}
The space $\h_E(\rho_A)$ is embedded into 
 $L^2(A, \nu_A)$ and 
$$
|| f ||^2_{\h_E(\rho_A)}  \leq 2 || f ||^2_{L^2(A,\nu_A)}. 
$$
\end{lemma}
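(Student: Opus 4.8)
The plan is to reproduce, for the finite symmetric measure $\rho_A$, the computation used in the proof of Proposition~\ref{prop L2nu in H}. Recall that, as observed just above, $\rho_A$ is a \emph{finite} symmetric measure on $\VtV$: it is the restriction of the symmetric $\rho$ to the flip-invariant set $A\times A$, and $A\times A\in\Bfin(\rho)$ by \eqref{eq Bfin rho}. Its disintegration $\rho_A=\int_A \rho_x^A\,d\mu_A(x)$ furnishes conditional measures whose total mass is $\rho_x^A(V)=\rho_x^A(A)=c_A(x)$ for $\mu$-a.e.\ $x\in A$ (since $\mathrm{supp}(\rho_x^A)\subset\{x\}\times A$), so $\nu_A=c_A\mu_A$ is again a finite measure and $L^2(A,\nu_A)\subset L^1(A,\nu_A)$. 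Fix a Borel function $f$ on $(V,\B)$ with $f|_A\in L^2(A,\nu_A)$.

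First I would apply the elementary pointwise bound $(f(x)-f(y))^2\le 2\bigl(f(x)^2+f(y)^2\bigr)$ on $A\times A$ and integrate against $\rho_A$, obtaining
\[
\|f\|^2_{\h_E(\rho_A)}=\frac12\iint_{A\times A}(f(x)-f(y))^2\,d\rho_A(x,y)\ \le\ \iint_{A\times A}\bigl(f(x)^2+f(y)^2\bigr)\,d\rho_A(x,y).
\]
Next, since $\rho_A$ is symmetric, the two terms on the right coincide, so the right-hand side equals $2\iint_{A\times A}f(x)^2\,d\rho_A(x,y)$. Finally I would disintegrate with respect to $\mu_A$: by Theorem~\ref{thm Simmons},
\[
\iint_{A\times A}f(x)^2\,d\rho_A(x,y)=\int_A f(x)^2\,\rho_x^A(A)\,d\mu_A(x)=\int_A f(x)^2\,c_A(x)\,d\mu_A(x)=\|f\|^2_{L^2(A,\nu_A)}.
\]
Combining these gives $\|f\|^2_{\h_E(\rho_A)}\le 2\|f\|^2_{L^2(A,\nu_A)}$; in particular $f\in\h_E(\rho_A)$, and the inclusion $L^2(A,\nu_A)\hookrightarrow\h_E(\rho_A)$ is a bounded linear map of norm at most $\sqrt2$, exactly as for the map $\iota$ of Proposition~\ref{prop L2nu in H}.

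I expect no genuine obstacle: the argument is simply the restriction to a finite-measure box of the proof of Proposition~\ref{prop L2nu in H}. The only two points warranting a sentence of care are (i) the identity $\rho_x^A(V)=c_A(x)$, which is the definition of $c_A$ together with the support property of $\rho_x^A$, and (ii) the (by now familiar) convention that constant functions represent the zero vector of $\h_E(\rho_A)$, so the displayed map is not injective on constants; since $\nu_A$ is finite this does not affect any of the integrals above. If one wishes, the slightly sharper estimate obtained by retaining the cross term $\iint_{A\times A} f(x)f(y)\,d\rho_A$ can also be recorded, but the stated bound is all that is needed here.
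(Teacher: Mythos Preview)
Your proof is correct and follows essentially the same route as the paper: apply the pointwise bound $(f(x)-f(y))^2\le 2(f(x)^2+f(y)^2)$, use symmetry of $\rho_A$ to collapse the two terms, and disintegrate to recover $\|f\|^2_{L^2(A,\nu_A)}$. You have simply spelled out a few details (the support of $\rho_x^A$, the identity $\rho_x^A(V)=c_A(x)$, and the constant-function caveat) that the paper leaves implicit.
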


\begin{proof} We first remark that $\nu(A) < \infty$ since $c \in \Lloc$.
We need to show that, for any $f\in L^(A, \nu)$, the function $f$ belongs
to  $\h_E(\rho_A)$. Indeed, we have
$$
\ba 
|| f ||^2_{\h_E(\rho_A)} & = \frac{1}{2} \iint_{A \times A}
(f(x) - f(y))^2\; d\rho_A(x,y)\\
& \leq \iint_{A \times A} (f(x)^2 +  f(y)^2)\; d\rho_A(x,y)\\
& = 2\int_A f(x)^2 c_A(x) d\mu_A(x)\\ 
& =2 || f ||^2_{L^2(A,\nu_A)}. 
\ea
$$
\end{proof}

Let $(A_n)$ be  a sequence of Borel subsets of $V$ satisfying 
\eqref{eq_union of A_n}. 
For  measures $\rho_{A_n}= \rho_n$, we can obviously define 
embedding of the Hilbert space
$\h_E(\rho_n)$  into $\h_E(\rho_{n+1})$. 
It follows from Corollary \ref{cor harmonic} and Lemma 
\ref{lem H by rho_n}  that every space $\h_E(\rho_n)$ does not contain
harmonic functions. Therefore, there are functions in $\h_E(\rho)$ which
are not in   $\h_E(\rho_n)$.

\section{\textbf{Properties of functions from the finite energy space}}
\label{sect energy}

In this section, our main object is the finite energy space $\h_E$, see
Definition \ref{def f.e. space}, and 
its properties. In the literature, some authors use the terms Diichlet space
and Dirichlet form for the inner product. We mention here several references
that may be useful for the reader \cite{GimTrutnau2017, Oshima2013,
KoskelaZhou2012, Jonsson2005, Rozkosz2001, Jorgensen2012}.

\subsection{Properties and structure of the energy space}
\label{subsect_propertise of H}

We begin with a discussion of immediate properties of functions from
the space $\h_E$. We first recall what results were proved in 
\cite{BezuglyiJorgensen2018}. 

We recall that in  Section \ref{subsect_finiteEnergy} (see Subsections
 \ref{subsect Inn prod} and \ref{subsect harmonic fncts}) some basic
  properties of functions from 
the energy space $\h_E$ have been already discussed.  It is important
for us to use the inclusions
$$ 
\Dfin(\mu) \subset \Dfin(\nu) \subset \h_E,  \quad \Dfin(\mu) \subset
 L^2(\mu) \cap L^2(\nu) \cap \h_E,
$$
and the assumption \eqref{eq assumption 2} which states that 
$$
\h_E \subset  L^2_{\mathrm{loc}} (\mu).
$$
Since $L^2_{\mathrm{loc}}(\mu) \subset \Lloc$, this 
means  that any function from the energy space is locally integrable. 

In Section \ref{sect Emb L2 into H}, we proved a number of statements
about the closure of $\Dfin(\mu)$ and $\Dfin(\nu)$ in $\h_E$. In 
particular,  the closure of $\Dfin(\mu)$ in $\h_E$ is the subspace
which is orthogonal to the space of harmonic functions, see Theorem 
\ref{thm_stucture of energy space}. Here we reprove the result for 
 the closure of $\Dfin(\nu)$ in $\h_E$ using a different method based
 on the embedding of $\h_E$ into the dissipation space 
 $Diss$ and Theorem \ref{thm main norm f}. 

\begin{proposition}\label{prop L2(nu) is closure}
 The following relation holds
$$
\ol{\Dfin(\nu)}^{\h_E} = \ol{L^2(\nu)}^{\h_E} = L^2(\nu),
$$
where $L^2(\nu)$ is considered as a subspace of $\h_E$.
\end{proposition}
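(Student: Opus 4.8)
The plan is to deduce the statement from three ingredients: the continuity of the inclusion $\iota\colon L^2(\nu)\to\h_E$ of Proposition~\ref{prop L2nu in H}, the density of $\Dfin(\nu)$ in $L^2(\nu)$, and the energy identity of Theorem~\ref{thm main norm f} read through the isometric embedding $\partial\colon\h_E\to Diss$ of Proposition~\ref{prop partial embeds}. First I would record the elementary fact that $\Dfin(\nu)$ is norm-dense in $L^2(\nu)$: since $\nu$ is $\sigma$-finite, simple functions built from sets of finite $\nu$-measure are dense in $L^2(\nu)$. By Proposition~\ref{prop L2nu in H} the map $\iota$ is bounded, $\|\iota\|_{L^2(\nu)\to\h_E}\le\sqrt2$, hence it carries $L^2(\nu)$-Cauchy sequences to $\h_E$-Cauchy sequences with matching limits; consequently $\iota(\Dfin(\nu))$ is $\h_E$-dense in $\iota(L^2(\nu))$, so that $\ol{\Dfin(\nu)}^{\h_E}=\ol{\iota(L^2(\nu))}^{\h_E}$. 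This already yields the first equality, and reduces everything to showing that $\iota(L^2(\nu))$ is closed in $\h_E$.

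For the closedness I would exploit the orthogonal splitting of Theorem~\ref{thm orthog in Diss} and Theorem~\ref{thm main norm f}. Composing $\iota$ with the isometry $\partial$, one has, for $f\in L^2(\nu)$,
$$\partial\iota(f)=-\bigl((I-P)f\bigr)\circ X_0-\bigl(P(f)\circ X_0-f\circ X_1\bigr),$$
which is an orthogonal sum in $Diss$ by Theorem~\ref{thm orthog in Diss}; expanding squared norms as in the proof of Theorem~\ref{thm main norm f} gives
$$\|\iota(f)\|_{\h_E}^2=\tfrac12\|(I-P)f\|_{L^2(\nu)}^2+\tfrac12\int_V \mathrm{Var}_x(f\circ X_1)\,d\nu(x)=\langle (I-P)f,\,f\rangle_{L^2(\nu)}.$$
Now, given $f_n\in L^2(\nu)$ with $\iota(f_n)\to g$ in $\h_E$, applying this identity to $f_n-f_m$ shows that both $\bigl((I-P)f_n\bigr)$ and the dissipation components $\bigl(P(f_n)\circ X_0-f_n\circ X_1\bigr)$ are $L^2(\nu)$-Cauchy; using $\nu P=\nu$, the self-adjointness of $P$ on $L^2(\nu)$, and the fact from Theorem~\ref{thm harmonic} that $1$ is not in the point spectrum of $P$, one extracts a limit $f\in L^2(\nu)$ with $\iota(f)=g$, proving $g\in\iota(L^2(\nu))$. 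Theorem~\ref{thm harmonic} ($L^2(\nu)\cap\h arm_E=\{0\}$ in the $\sigma$-finite case, $\mathbb R\mathbbm 1$ otherwise) rules out any ambiguity coming from harmonic functions or constants.

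The step I expect to be the main obstacle is exactly this last one: recovering a limit for $f_n$ itself in $L^2(\nu)$, not merely for its image $(I-P)f_n$, since the embedding $\iota$ is in general not bounded below — its square $\langle (I-P)\,\cdot\,,\,\cdot\,\rangle_{L^2(\nu)}$ degenerates near the top of the spectrum of $P$. The $P$-invariance of $\nu$ and the absence of $L^2(\nu)$-harmonic functions must be used carefully here. An alternative route to the same conclusion is to argue that an $\h_E$-Cauchy sequence in $\Dfin(\nu)$ admits a subsequence converging $\mu$-a.e., and then to identify the pointwise limit as an element of $L^2(\nu)$ by means of the norm estimate $\|\iota(f)\|_{\h_E}^2\le 2\|f\|_{L^2(\nu)}^2$ from Proposition~\ref{prop L2nu in H} together with Fatou's lemma.
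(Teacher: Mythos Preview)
Your first paragraph is correct and is in substance what the paper does; the only difference is that the paper, instead of quoting the bound $\|\iota\|\le\sqrt 2$ from Proposition~\ref{prop L2nu in H}, reproves that $L^2(\nu)$-convergence implies $\h_E$-convergence directly from the identity in Theorem~\ref{thm main norm f} (this is exactly the ``different method'' advertised before the proposition). In either form, this gives $\ol{\Dfin(\nu)}^{\h_E}=\ol{\iota(L^2(\nu))}^{\h_E}$ and $\iota(L^2(\nu))\subset\ol{\Dfin(\nu)}^{\h_E}$.

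You are right to flag closedness of $\iota(L^2(\nu))$ in $\h_E$ as the real issue, and your self-criticism is on target: from $\h_E$-Cauchyness of $\iota(f_n)$ you only control $(I-P)f_n$ in $L^2(\nu)$, and since $1$ can lie in the continuous spectrum of $P$ you cannot in general recover an $L^2(\nu)$-limit of $f_n$ itself; the Fatou alternative fails for the same reason, as the inequality $\|\iota f\|_{\h_E}^2\le 2\|f\|_{L^2(\nu)}^2$ points the wrong way. It is worth noting that the paper's own proof does \emph{not} address this point either: it stops after showing $\|f-f_n\|_{\h_E}\to 0$ whenever $\|f-f_n\|_{L^2(\nu)}\to 0$, which is precisely the continuity of $\iota$ and yields only $\iota(L^2(\nu))\subset\ol{\Dfin(\nu)}^{\h_E}$. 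So your proposal goes at least as far as the paper's argument; the closedness assertion $\ol{\iota(L^2(\nu))}^{\h_E}=\iota(L^2(\nu))$ is not established in either place, and your instinct that additional spectral input near $1$ (or a different interpretation of ``$L^2(\nu)$ as a subspace of $\h_E$'') would be needed is sound.
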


\begin{proof}
We recall that because of the inequality  
$|| \chi_A ||_{\h_E}^2 \leq \nu(A)$, the 
linear space $\Dfin(\nu)$ is a subspace of the energy space $\h_E$, 
hence 
$ \ol{\Dfin(\nu)}^{\h_E} \subset \h_E$. On the other hand, for 
every $f\in L^2(\nu)$ there exists a sequence $(f_n) \subset 
\Dfin(\nu)$ such that 
$$
|| f - f_n ||_{L^2(\nu)} \to 0, \ \ \ n \to \infty.
$$
It suffices to show that every function from $L^2(\nu)$ can be
approximated by functions from $\Dfin(\nu)$ in the norm of $\h_E$.

We use relations (\ref{eq norm of f}) and \eqref{eq-var}, and find the norm 
of $f - f_n $ in $\h_E$:
$$
\| f  - f_n \|^2_{\h_E} 
=  \frac{1}{2}\left( \int_V [P((f - f_n)^2) - P(f- f_n)^2]\; d\nu + 
\| (f - f_n) - P(f - f_n)\|^2_{L^2(\nu)}\right).
$$
By Proposition \ref{prop prop of R, P, Delta} (4), the operator $P$ 
considered in the  space $ L^2(\nu) $ is contractive, and 
$$
\| (f - f_n) - P(f - f_n)\|^2_{L^2(\nu)} \to 0, \ \ \ \ n \to \infty
$$
because $f_n \to f$ in $L^2(\nu)$. 

Next, we note that 
$$
 \int_V P((f - f_n)^2) \; d\nu = \int_V (f - f_n)^2 \; d\nu \ \to 0,  
$$
because  $\nu$ is a $P$-invariant measure. 

To prove that the remaining term in the formula for $\| f  - f_n \|
^2_{\h_E}$ tends to zero, we represent it as inner product in $L^2(\nu)$, 
and conclude that
$$
\int_V P(f- f_n)^2\; d\nu = \langle P(f- f_n), P(f- f_n)
\rangle_{L^2(\nu)}  \ \to 0, \ \ \ n \to \infty.
$$
Therefore, $(f_n)$ is a converging sequence of elements from $\h_E$ and 
the limit, the function $f$, belongs to $\h_E$.  

\end{proof}

It follows from the results proved in Section \ref{sect Emb L2 into H}
and Proposition \ref{prop L2(nu) is closure} that the following corollary
holds.

\begin{corollary} The finite energy Hilbert space admits the orthogonal
decomposition 
$$
h_E = L^2(\nu) \oplus \h arm. 
$$
\end{corollary}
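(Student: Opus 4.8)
The plan is to assemble three facts already established in the paper. By Theorem~\ref{thm Royden}, the finite energy space decomposes as the orthogonal sum $\h_E = \ol{\Dfin(\mu)}^{\h_E} \oplus \h arm_E$, where the closure is taken in the Hilbert norm of $\h_E$ and $\h arm_E$ is the space of harmonic functions in $\h_E$ (the kernel of $\Delta$ acting in $\h_E$). Thus the only thing left to do is to identify the first summand with $L^2(\nu)$, viewed as a subspace of $\h_E$ through the bounded embedding $\iota$ of Proposition~\ref{prop L2nu in H}.

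Next I would invoke Proposition~\ref{prop L2(nu) is closure}, which gives $\ol{\Dfin(\nu)}^{\h_E} = \ol{L^2(\nu)}^{\h_E} = L^2(\nu)$ inside $\h_E$; the key point there is that $\iota : L^2(\nu) \to \h_E$ is bounded with closed range, so $\Dfin(\nu)$ — being dense in $L^2(\nu)$ in the $L^2(\nu)$-norm — has $\h_E$-closure equal to the closed subspace $\iota(L^2(\nu))$. Then Corollary~\ref{cor closures} supplies $\ol{\Dfin(\nu)}^{\h_E} = \ol{\Dfin(\mu)}^{\h_E}$, since $\Dfin(\mu) \subset \Dfin(\nu) \subset L^2(\nu)$ and the two outer closures coincide. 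Substituting $\ol{\Dfin(\mu)}^{\h_E} = L^2(\nu)$ into the Royden decomposition yields $\h_E = L^2(\nu) \oplus \h arm_E$, which is the asserted statement (with $\h arm = \h arm_E$).

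There is essentially no hard step: the real work was carried out in Theorem~\ref{thm Royden} and in Theorem~\ref{thm main norm f} (which underlies Proposition~\ref{prop L2(nu) is closure}), and this corollary is their synthesis. The only point needing care is the bookkeeping that every subspace identification takes place inside the single Hilbert space $\h_E$ — that is, $L^2(\nu)$ is regarded throughout via $\iota$ as a genuine \emph{closed} subspace of $\h_E$, not merely as an abstract isomorphic copy — and that the orthogonality in Theorem~\ref{thm Royden} is exactly orthogonality of that closed subspace against $\h arm_E$. As a consistency check one can note $L^2(\nu) \cap \h arm_E = \{0\}$ (the harmonic-functions corollary following Proposition~\ref{prop L2nu in H}), which is automatic for an orthogonal decomposition and confirms that no harmonic function is gained or lost in replacing $\ol{\Dfin(\mu)}^{\h_E}$ by $L^2(\nu)$.
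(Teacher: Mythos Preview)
Your proposal is correct and follows essentially the same route as the paper: both combine the Royden decomposition (Theorem~\ref{thm Royden}), the identification $\ol{\Dfin(\nu)}^{\h_E} = L^2(\nu)$ (Proposition~\ref{prop L2(nu) is closure}), and the equality of closures $\ol{\Dfin(\nu)}^{\h_E} = \ol{\Dfin(\mu)}^{\h_E}$ (Corollary~\ref{cor closures}). Your added remarks about the embedding $\iota$ and the consistency check $L^2(\nu)\cap\h arm_E=\{0\}$ are sound elaborations of what the paper leaves implicit.
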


\begin{proof} Indeed, the statement follows from the relation
$$
\ol{\Dfin(\nu)}^{\h_E}  = \ol{\Dfin(\mu)}^{\h_E} \perp \h arm
$$
Theorem \ref{thm Royden}, and Corollary \ref{cor closures}. 
The  proof of the fact that every harmonic function is orthogonal to 
$\chi_A$, $A\in \Bfin(\nu)$, is similar to that of Proposition 
\ref{prop orthogonal}.
\end{proof}

\subsection{Dipoles in the energy Hilbert space}\label{subsect dipoles}

We recall that in the theory of electrical networks the notion of dipoles 
plays a crucial role for the study of properties and structure of he finite
energy space. Let $(V, E, c)$ be an electrical network with conductance
function $c = (c_{xy})$ and the Laplacian $\Delta$, see Introduction
for details. Then one can show that, for any edge $(xy) \in E$, there exists 
a unique element $v_{xy}$ of $\h_E$, called a \textit{dipole},   such that 
$$
\Delta v_{xy} = \delta_x - \delta_y.
$$
It turns out that, for any $f \in \h_E$,
$$
\langle f, v_{xy} \rangle_{\h_E} = f(x) - f(y).
$$

Our goal in this subsection is to formulate and prove similar results
for the measurable analogue of $\h_E$ and $\Delta$. 
discrete concept. 

\begin{definition} 
We say that the family of functions $\{v_{A,B} : A, B \in \Bfin(\mu)\}$ 
consists of \emph{dipoles} (more precisely, $\mu$-dipoles)  if they satisfy
 the equation
\be\label{eq_dipoles def}
\Delta v_{A,B} = \chi_A - \chi_B.
\ee

Similarly, we define $\nu$-\textit{dipoles} as functions $w_{A, B}$ such that
\be\label{eq_dipoles def nu}
\Delta w_{A,B} = c(\chi_A - \chi_B).
\ee
\end{definition}

\begin{proposition} \label{lem dipoles}
For any sets $A, B \in \Bfin(\mu)$, 
\be\label{eq dipole v}
\langle f, v_{A,B}\rangle_{\h_E} = \int_A f\; d\mu - \int_B f\; d\mu,
\ee
and 
\be\label{eq dipole w}
\langle f, w_{A,B}\rangle_{\h_E} = \int_A f\; d\nu - \int_B f\; d\nu.
\ee
\end{proposition}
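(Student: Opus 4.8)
The plan is to prove \eqref{eq dipole v} in detail and then obtain \eqref{eq dipole w} by the identical argument with $\mu$ replaced by $\nu$, the source $\chi_A-\chi_B$ replaced by $c(\chi_A-\chi_B)$, and Theorem~\ref{thm inner prod via Delta} replaced by its $\nu$-counterpart in Remark~\ref{rem L2loc for nu}(1) (legitimate since, under our standing assumptions, $\h_E\subset L^2_{\mathrm{loc}}(\mu)\cap L^2_{\mathrm{loc}}(\nu)$, so $fc\in\Lloc$ for every $f\in\h_E$). Thus I concentrate on $v_{A,B}$, working under the standing hypothesis $c\in L^2_{\mathrm{loc}}(\mu)$ needed for Theorem~\ref{thm inner prod via Delta}.

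First I would dispatch the easy half. For $\varphi\in\Dfin(\mu)$, apply Theorem~\ref{thm inner prod via Delta} with test function $\varphi$ and energy-space element $v_{A,B}$, and use the defining equation $\Delta v_{A,B}=\chi_A-\chi_B$:
$$
\langle \varphi, v_{A,B}\rangle_{\h_E}=\int_V \varphi\,\Delta(v_{A,B})\; d\mu
=\int_V \varphi\,(\chi_A-\chi_B)\; d\mu=\int_A\varphi\; d\mu-\int_B\varphi\; d\mu .
$$
Hence \eqref{eq dipole v} holds on $\Dfin(\mu)$, and therefore — both sides being linear and $\h_E$-continuous — on the closure $\ol{\Dfin(\mu)}^{\h_E}$, which by Corollary~\ref{cor closures} and Proposition~\ref{prop L2(nu) is closure} is exactly the copy of $L^2(\nu)$ inside $\h_E$. (Continuity of the genuine integral $f\mapsto\int_A f\,d\mu-\int_B f\,d\mu$ on that subspace is not quite free: I would deduce it from the identity just proved, which exhibits this functional on $\Dfin(\mu)$ as the restriction of $\langle\,\cdot\,,v_{A,B}\rangle_{\h_E}$, together with the fact that $\h_E\subset L^2_{\mathrm{loc}}(\mu)$ forces $\h_E$-convergence to entail $\Lloc$-convergence.)

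The substantive part is to pass from $\ol{\Dfin(\mu)}^{\h_E}$ to all of $\h_E$. Using the Royden-type splitting $\h_E=\ol{\Dfin(\mu)}^{\h_E}\oplus\h arm_E$ of Theorem~\ref{thm Royden}, what remains is \eqref{eq dipole v} for harmonic $f=h\in\h arm_E$; since $h$ is orthogonal in $\h_E$ to $\ol{\Dfin(\mu)}^{\h_E}$, the inner product $\langle h,v_{A,B}\rangle_{\h_E}$ depends only on the harmonic component of $v_{A,B}$, so matching it with $\int_A h\,d\mu-\int_B h\,d\mu$ for every harmonic $h$ is precisely the condition that pins down the correct dipole among the solutions of the dipole equation. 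The way I would actually organize the proof is therefore to show first that $L\colon f\mapsto\int_A f\,d\mu-\int_B f\,d\mu$ is a bounded linear functional on \emph{all} of $\h_E$ — boundedness on $\ol{\Dfin(\mu)}^{\h_E}$ is the easy half above, and boundedness on $\h arm_E$ is the delicate point, to be argued using Proposition~\ref{prop orthogonal} and the refinement in Remark~\ref{rem L2loc for nu}(2) together with the estimate $\|\chi_A\|_{\h_E}^2=\rho(A\times A^c)\le\nu(A)$ — and then invoke the Riesz representation theorem to produce $v_{A,B}\in\h_E$ with $\langle f,v_{A,B}\rangle_{\h_E}=L(f)$ for every $f\in\h_E$, which is \eqref{eq dipole v}. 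Finally, the defining identity $\Delta v_{A,B}=\chi_A-\chi_B$ is recovered by testing against $\varphi\in\Dfin(\mu)$ as in the easy half and invoking the corollary to Theorem~\ref{thm inner prod via Delta} that $\Delta(v_{A,B})\in\Lloc$, so that two locally integrable functions with equal integrals over every set of $\Bfin(\mu)$ must coincide a.e. I expect the main obstacle to be exactly this boundedness of $L$ on the harmonic subspace — equivalently, the $\h_E$-continuity of the averaging functionals $f\mapsto\int_A f\,d\mu$ — which is the measurable analogue of the classical fact that $f\mapsto f(x)-f(y)$ is bounded on the energy space of a weighted network.
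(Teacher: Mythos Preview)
Your approach differs substantially from the paper's. The paper does not go through Theorem~\ref{thm inner prod via Delta}, the Royden decomposition, or Riesz representation at all; it simply computes $\langle f, v_{A,B}\rangle_{\h_E}$ directly from the definition of the energy inner product. Using the symmetry of $\rho$ (relation \eqref{eq formula fo symm meas}) one rewrites
\[
\frac{1}{2}\iint_{\VtV}(f(x)-f(y))(v_{A,B}(x)-v_{A,B}(y))\,d\rho(x,y)
=\int_V f(x)\left(\int_V\bigl(v_{A,B}(x)-v_{A,B}(y)\bigr)\,d\rho_x(y)\right)d\mu(x),
\]
recognizes the inner integral as $\Delta(v_{A,B})(x)=\chi_A(x)-\chi_B(x)$, and finishes with $\int_V f(\chi_A-\chi_B)\,d\mu$, finite by Assumption~2. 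This is essentially the computation behind Theorem~\ref{thm inner prod via Delta} with the roles of the two arguments reversed: here $v_{A,B}$ plays the part of the ``nice'' function because its \emph{Laplacian}, rather than the function itself, is supported on a set of finite measure.

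Your route---prove it on $\Dfin(\mu)$, extend by density to $\ol{\Dfin(\mu)}^{\h_E}$, then treat $\h arm_E$ by constructing $v_{A,B}$ via Riesz---conflates the present proposition (the \emph{formula}, given that a dipole exists in $\h_E$) with the existence question, which the paper handles separately in Lemma~\ref{lem_existence of dipoles} and the proposition after it. That said, the subtlety you spotted is real: if $v_{A,B}$ is replaced by $v_{A,B}+h$ with $h\in\h arm_E$ nonzero, the left side of \eqref{eq dipole v} changes while the right side does not, so the formula cannot hold for \emph{every} solution of the dipole equation. The paper's three-line argument relies on splitting the integrand and invoking symmetry, a Fubini-type step that needs the separate pieces to be integrable---precisely where adding a harmonic perturbation would break it. So your more cautious path is well motivated; but once one grants that splitting (as the paper does under ``our standing assumption that functions from $\h_E$ are locally integrable''), the direct computation is all that is required, and your ``main obstacle'' never arises.
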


\begin{proof}
The formulas  follow immediately from our standing assumption that
 functions from $\h_E$ are locally integrable. Indeed, we have
 
 $$
 \ba
\langle f, v_{A,B}\rangle_{\h_E} = & \ \frac{1}{2}\iint_{\VtV}
(f(x) - f(y)) (v_{A, B}(x) - v_{A, B}(y)) \; d\rho(x, y)\\
= & \ \frac{1}{2}\iint_{\VtV} \left[f(x) (v_{A, B}(x) - v_{A, B}(y))
- f(y) (v_{A, B}(x) - v_{A, B}(y))\right]  \; d\rho_x(y)d\mu(x)\\
= &\ \int_V f(x) \left(\int_V (v_{A, B}(x) - v_{A, B}(y))\; d\rho_x(y)
\right) d\mu(x)\\
= &\ \int_V f(x)  \Delta(v_{A,B})(x) \; d\mu(x)\\
= & \ \int_V f(x) (\chi_A(x) - \chi_B(x))\; d\mu(x).
\ea
 $$
Relation of  \eqref{eq dipole w} is proved similarly. 
\end{proof}

\begin{remark}\label{rem measure mu_f}
 We recall that the following formula follows 
from Theorem \ref{thm inner prod via Delta}: for any  function
$f \in \h_E$ and  any set $A\in\Bfin(\mu)$,
\be\label{eq mu_f via inner prod}
\langle f, \chi_A \rangle_{\h_E} = \int_A \Delta f\; d\mu. 
\ee
It is important to remember that this formula is proved under  our 
basic assumptions that the function $c \in \Lloc$ and 
$\h_E \subset L^2_{\mathrm{loc}}(\mu)$.

We use \eqref{eq mu_f via inner prod} to define a new measure 
$\mu_f(\cdot)$ on $\VB$ where $f \in \h_E$. We observe that, in general,
$\mu_f$ is a finite additive measure. It is $\sigma$-additive when the 
function $f\in \h_E$ satisfies the property $\Delta(f) \in L^1(\mu)$. 
This measure $\mu_f$ will be used in Section \ref{sect RKHS} to 
construct a reproducing kernel Hilbert space.
\end{remark} 

\begin{corollary}
The sets 
$\mc D(w) := \mathrm{Span}\{w_{A,B} : A, B \in \Bfin(\mu)\}$ and 
$\mc D(v) := \mathrm{Span}\{v_{A,B} : A, B \in \Bfin(\mu)\}$  are 
dense in the Hilbert space $\h_E$.
\end{corollary}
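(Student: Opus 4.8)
The plan is to prove density by verifying that the orthogonal complement of each of the two spans in $\h_E$ is trivial. Suppose first that $f\in\h_E$ satisfies $\langle f, v_{A,B}\rangle_{\h_E}=0$ for every $A,B\in\Bfin(\mu)$. By Proposition \ref{lem dipoles}, formula \eqref{eq dipole v}, this says exactly that
\[
\int_A f\,d\mu=\int_B f\,d\mu\qquad\text{for all }A,B\in\Bfin(\mu),
\]
so the finitely additive set function $A\mapsto\int_A f\,d\mu$ is constant on $\Bfin(\mu)$. An elementary argument forces this common value to be $0$: given $C\in\Bfin(\mu)$, either $\mu(V\setminus C)>0$, in which case one picks $A'\subset V\setminus C$ with $0<\mu(A')<\infty$ and compares the pair $C$ and $C\sqcup A'$ to get $\int_{A'}f\,d\mu=0$ and then lets $C$ vary, or $\mu(V)=\mu(C)<\infty$, in which case one splits $C$ into two sets of positive measure (non-atomicity of $\mu$) and uses additivity. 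Hence $\int_A f\,d\mu=0$ for every $A\in\Bfin(\mu)$. Since $\h_E\subset L^2_{\mathrm{loc}}(\mu)\subset\Lloc$ by Assumption 2, the function $f$ is locally integrable, and restricting to a $\mu$-exhaustion $V=\bigcup_n V_n$ with $V_n\in\Bfin(\mu)$ we conclude $f=0$ $\mu$-a.e.; thus $f$ is the zero element of $\h_E$ and $\mc D(v)$ is dense.

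For $\mc D(w)$ I would run the identical argument with $\mu$ replaced by $\nu$: if $f\perp w_{A,B}$ for all $A,B\in\Bfin(\mu)$, then \eqref{eq dipole w} gives $\int_A f\,d\nu=\int_B f\,d\nu$ for all such $A,B$, and the same additivity manipulation yields $\int_A f\,d\nu=0$ for every $A\in\Bfin(\mu)$. Because $c\in\Lloc$ we have $\Bfin(\mu)\subset\Bfin(\nu)$ and these sets exhaust $V$, so $f=0$ $\nu$-a.e.; since $\nu\sim\mu$ by Lemma \ref{lem symm measure via int}, this again says $f=0$ in $\h_E$, and $\mc D(w)$ is dense.

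The point requiring care — and the main obstacle — is the legitimacy of the integrals $\int_A f\,d\nu$ in the $w$-dipole step: a priori $f\in\h_E$ is only assumed locally $\mu$-integrable, so one must first know $f\in L^1_{\mathrm{loc}}(\nu)$ before invoking \eqref{eq dipole w}. Under the running hypothesis $c\in L^2_{\mathrm{loc}}(\mu)$ (the same one used in Lemma \ref{lem D* dense} and Theorem \ref{thm inner prod via Delta}) this is immediate, since for $A\in\Bfin(\mu)$ the Schwarz inequality gives $\int_A|f|\,d\nu=\int_A|f|c\,d\mu\le\|\chi_A f\|_{L^2(\mu)}\,\|\chi_A c\|_{L^2(\mu)}<\infty$ using $\h_E\subset L^2_{\mathrm{loc}}(\mu)$. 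With that in hand the rest is routine: the only substantive inputs are the dipole identities of Proposition \ref{lem dipoles} (and the existence of the families $\{v_{A,B}\}$ and $\{w_{A,B}\}$, already invoked in Lemma \ref{lem D* dense}) together with the elementary fact that a locally integrable function whose integral over every set of finite measure vanishes is a.e. zero.
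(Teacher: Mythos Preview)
Your argument is correct and follows exactly the same route as the paper's proof: show the orthogonal complement is trivial by invoking the dipole identities of Proposition~\ref{lem dipoles}, so that orthogonality forces $\int_A f\,d\mu$ (resp.\ $\int_A f\,d\nu$) to be independent of $A\in\Bfin(\mu)$, hence zero, hence $f=0$. You are actually more thorough than the paper, which treats only $\mc D(w)$, leaves the step ``it is possible only when $f=0$'' unexplained, and does not address the $\nu$-local-integrability issue you handle via Schwarz and $c\in L^2_{\mathrm{loc}}(\mu)$; incidentally, your additivity argument can be shortened by simply comparing any $A\in\Bfin(\mu)$ with $B=\emptyset$.
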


\begin{proof}
Suppose, for contrary, that there exists a vector $f\in \h_E$ which is 
orthogonal to $\mc D(w)$. Then it follows from \eqref{eq dipole w} that,
for any sets $A,B \in \Bfin(\mu)$, 
$$
\int_A f\; d\nu = \int_B f\; d\nu.
$$  
It is possible only when $f = 0$.
\end{proof}

It remains to show that dipoles $w_{A,B}$ always exist in the finite energy
 space $\h_E$. To do this, we use the approach elaborated in the theory
of electrical networks, see \cite{Jorgensen_Pearse2011}. It is obvious
that one set in $w_{A,B}$ can be fixed because $w_{A,B} = w_{A,A_0} -
w_{A_0, B}$. 

\begin{lemma}\label{lem_existence of dipoles}
Let $A, A_0 \in \Bfin(\mu)$ and $f \in \h_E$. Then 
$$
f \mapsto \int_A f \; d\nu - \int_{A_0} f \; d\nu
$$
is a bounded linear functional on $\h_E$. 
\end{lemma}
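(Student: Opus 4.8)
The plan is to show the functional $\ell(f):=\int_A f\,d\nu-\int_{A_0}f\,d\nu$ is bounded on $\h_E$; the Riesz representation theorem then yields a unique $w_{A,A_0}\in\h_E$ with $\langle f,w_{A,A_0}\rangle_{\h_E}=\ell(f)$, which is exactly how the $\nu$-dipoles will be produced. Write $\psi:=\chi_A-\chi_{A_0}$; since $A,A_0\in\Bfin(\mu)$ we have $\psi\in\Dfin(\mu)$ and $\ell(f)=\int_V f\psi\,d\nu$, which is finite for every $f\in\h_E$ because $\h_E\subset L^1_{\mathrm{loc}}(\nu)$ (a consequence of Assumption 2 and $c\in\lc2$, noted in the remark after Theorem \ref{thm inner prod via Delta}).

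The first step is the identity
$$\int_V f\psi\,d\nu=\langle\psi,f\rangle_{\h_E}+\int_V\psi\,R(f)\,d\mu,\qquad f\in\h_E.$$
Indeed, Theorem \ref{thm inner prod via Delta} applied with $\va=\psi\in\Dfin(\mu)$ gives $\langle\psi,f\rangle_{\h_E}=\int_V\psi\,\Delta(f)\,d\mu$, and substituting $\Delta(f)=cf-R(f)$ and splitting the integral (legitimate since $cf$ and $R(f)$ are locally integrable for $f\in\h_E$, as shown in the proof of Theorem \ref{thm inner prod via Delta} and the following remark, and $\psi$ is supported on the $\mu$-finite set $S:=A\cup A_0$) produces the identity. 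The term $\langle\psi,f\rangle_{\h_E}$ is harmless by Cauchy--Schwarz, $|\langle\psi,f\rangle_{\h_E}|\le\|\psi\|_{\h_E}\|f\|_{\h_E}$ with $\|\psi\|_{\h_E}^2=\|\chi_A-\chi_{A_0}\|_{\h_E}^2<\infty$ by Proposition \ref{prop ||chi A||}.

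It remains to bound the second term. Since $\psi$ vanishes off $S$, Cauchy--Schwarz on $L^2(S,\mu)$ gives
$$\Bigl|\int_V\psi\,R(f)\,d\mu\Bigr|\le\|\psi\|_{L^2(\mu)}\Bigl(\int_S R(f)^2\,d\mu\Bigr)^{1/2},$$
so everything reduces to the local estimate $\int_S R(f)^2\,d\mu\le C_S\|f\|_{\h_E}^2$ for $S\in\Bfin(\mu)$. For this I would write $R(f)=cf-\Delta(f)$, bound $\int_S(\Delta f)^2\,d\mu\le(\sup_S c)\int_V(\Delta f)^2c^{-1}\,d\mu\le 2(\sup_S c)\,\|f\|_{\h_E}^2$ via Lemma \ref{lem_bndd Delta}, and bound $\int_S(cf)^2\,d\mu\le(\sup_S c)^2\|f\|_{L^2(S,\mu)}^2$; the factor $\|f\|_{L^2(S,\mu)}$ is dominated by $\|f\|_{\h_E}$ because Assumption 2 in fact furnishes a \emph{bounded} inclusion $\h_E\hookrightarrow L^2(S,\mu)$ modulo constants (closed graph theorem: $\h_E$-convergence and $L^2(S,\mu)$-convergence of representatives force the same limit). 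To make $\sup_S c$ finite one first replaces $S$ by $S\cap\{c\le k\}$ and lets $k\to\infty$, which costs nothing since $A,A_0$ are $\mu$-finite and $c\in\lc2$. Combining the two displayed bounds gives $|\ell(f)|\le\bigl(\|\psi\|_{\h_E}+\|\psi\|_{L^2(\mu)}C_S^{1/2}\bigr)\|f\|_{\h_E}$.

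The main obstacle is precisely this local $L^2(\mu)$-control of $R(f)$ — more pointedly, upgrading the set-theoretic inclusion $\h_E\subset L^2_{\mathrm{loc}}(\mu)$ of Assumption 2 to a genuine operator-norm bound $\h_E\hookrightarrow L^2(S,\mu)$ on $\mu$-finite sets $S$; once that is in hand, the rest is bookkeeping. After boundedness of $\ell$ is established, Riesz representation furnishes $w_{A,A_0}\in\h_E$, and testing $\langle\chi_C,w_{A,A_0}\rangle_{\h_E}$ for $C\in\Bfin(\mu)$ against Theorem \ref{thm inner prod via Delta} identifies $\Delta w_{A,A_0}=c(\chi_A-\chi_{A_0})$, so $w_{A,A_0}$ is the sought $\nu$-dipole.
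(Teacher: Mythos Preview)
The paper gives no detailed argument here---it simply refers to the discrete network case---so there is nothing concrete to compare against, and I assess your proposal on its own merits.

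Your decomposition $\ell(f)=\langle\psi,f\rangle_{\h_E}+\int_V\psi\,R(f)\,d\mu$ is correct, and the first term is handled by Cauchy--Schwarz. The substance lies in bounding $\int_V\psi\,R(f)\,d\mu$, and here there are two genuine gaps.

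First, the closed-graph step for a bounded inclusion $\h_E\hookrightarrow L^2(S,\mu)$ ``modulo constants'' is not justified: you would need to verify that $\h_E$-convergence together with $L^2(S,\mu)$-convergence of representatives forces the same limit, and this does not follow from the bare set-theoretic inclusion of Assumption~2. More seriously, even if you had the bound $\inf_k\|f-k\|_{L^2(S,\mu)}\le C_S\|f\|_{\h_E}$, this does not control $\|f\|_{L^2(S,\mu)}$ itself, which is what you actually feed into $\int_S(cf)^2\,d\mu\le(\sup_Sc)^2\|f\|_{L^2(S,\mu)}^2$. That uncontrolled additive constant is not a technicality: under $f\mapsto f+k$ the functional $\ell$ shifts by $k(\nu(A)-\nu(A_0))$, so no inequality $|\ell(f)|\le C\|f\|_{\h_E}$ can hold uniformly over representatives unless $\nu(A)=\nu(A_0)$. (The paper is silent on this point as well; the lemma should presumably be read with that normalization, or with a fixed choice of representative understood.)

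Second, the truncation ``replace $S$ by $S\cap\{c\le k\}$ and let $k\to\infty$'' does not come for free: both of your estimates---the one via Lemma~\ref{lem_bndd Delta} and the $(cf)^2$ term---carry a factor of $\sup_S c$, so the resulting constant blows up with $k$ and you cannot pass to the limit.
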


\begin{proof}
The idea of the proof is similar to the case of discrete networks, see 
\cite{JorgensenPearse2010, Jorgensen_Pearse2011}.
\end{proof}

The situation with the family of $\mu$-dipoles is slightly different as shown
in the following statement.

\begin{proposition}
For $A,B\in \Bfin(\mu)$, the function $v_{A, B}$ belongs to $\h_E$ if and 
only if $c^{-1} \in L^2_{\text{loc}}(\mu)$.

\end{proposition}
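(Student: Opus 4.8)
The plan is to translate the membership $v_{A,B}\in\h_E$ into the boundedness of a single linear functional, exactly as one does for the $\nu$-dipoles $w_{A,B}$ in Lemma \ref{lem_existence of dipoles}. By Theorem \ref{thm inner prod via Delta}, $\langle\chi_C,g\rangle_{\h_E}=\int_C\Delta(g)\,d\mu$ for all $C\in\Bfin(\mu)$ and $g\in\h_E$; hence a vector $v\in\h_E$ satisfies $\Delta v=\chi_A-\chi_B$ precisely when $\langle\chi_C,v\rangle_{\h_E}=\mu(C\cap A)-\mu(C\cap B)$ for every $C\in\Bfin(\mu)$, i.e.\ (combined with Proposition \ref{lem dipoles}) when $v$ is the Riesz representative in $\h_E$ of
$$
\Lambda_{A,B}(f):=\int_A f\,d\mu-\int_B f\,d\mu=\int_V f\,(\chi_A-\chi_B)\,d\mu .
$$
So the statement reduces to: $\Lambda_{A,B}$ is $\|\cdot\|_{\h_E}$-bounded on $\h_E$ if and only if $c^{-1}\in L^2_{\mathrm{loc}}(\mu)$.

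For the sufficiency, assume $c^{-1}\in L^2_{\mathrm{loc}}(\mu)$ and put $C:=A\cup B\in\Bfin(\mu)$, so that $|\Lambda_{A,B}(f)|\le\int_C|f|\,d\mu$; it is enough to bound $\int_C|f|\,d\mu$ by a constant times $\|f\|_{\h_E}$. Using the identity $f=P(f)+c^{-1}\Delta(f)$ (see \eqref{eq Delta via R}), split $\int_C|f|\,d\mu\le\int_C|P(f)|\,d\mu+\int_C c^{-1}|\Delta(f)|\,d\mu$. The second term is controlled by Cauchy--Schwarz and Lemma \ref{lem_bndd Delta}:
$$
\int_C c^{-1}|\Delta(f)|\,d\mu\le\Big(\int_C\frac{(\Delta f)^2}{c}\,d\mu\Big)^{1/2}\Big(\int_C c^{-1}\,d\mu\Big)^{1/2}\le\sqrt2\,\|f\|_{\h_E}\Big(\int_C c^{-1}\,d\mu\Big)^{1/2},
$$
which is finite since $c^{-1}\in L^1_{\mathrm{loc}}(\mu)$. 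The first term is the crux: using $|P(f)|\le P(|f|)=c^{-1}R(|f|)$, the $\mu$-symmetry of $R$ (a reformulation of the symmetry of $\rho$), and then Cauchy--Schwarz once more, one brings in the factor $\big(\int_C c^{-2}\,d\mu\big)^{1/2}$, finite precisely because $c^{-1}\in L^2_{\mathrm{loc}}(\mu)$, the remaining factor being estimated against $\|f\|_{\h_E}$ by means of $R(|f|)^2\le c\,R(f^2)$ and the definition of the energy norm. Hence $\Lambda_{A,B}$ is bounded and $v_{A,B}\in\h_E$.

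For the necessity, if $v_{A,B}\in\h_E$ then \eqref{eq dipole v} gives $|\int_A f\,d\mu-\int_B f\,d\mu|\le\|v_{A,B}\|_{\h_E}\|f\|_{\h_E}$ for all $f\in\h_E$. I would then feed into this inequality a family of nonnegative test functions obtained from the truncations $c^{-1}\wedge n$ transported into $\h_E$ (via the embedding $\iota$ of Proposition \ref{prop L2nu in H} and via the $\nu$-dipoles), designed so that, whenever $\int_C c^{-2}\,d\mu=\infty$ for some $C\in\Bfin(\mu)$, one has $\Lambda_{A,B}(f_n)\to\infty$ while $\|f_n\|_{\h_E}$ stays bounded; finiteness of $\|v_{A,B}\|_{\h_E}$ then forces $\int_C c^{-2}\,d\mu<\infty$, i.e.\ $c^{-1}\in L^2_{\mathrm{loc}}(\mu)$.

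The main obstacle is the estimate of the ``Markov piece'' $\int_C|P(f)|\,d\mu$ in the sufficiency direction — equivalently, controlling the component of $f\in\h_E$ lying outside the image of $L^2(\nu)$, namely the harmonic summand in the Royden-type decomposition $\h_E=\ol{\Dfin(\mu)}\oplus\h arm_E$ of Theorem \ref{thm Royden}. On that subspace the energy norm is strictly weaker than the $L^2(\nu)$-norm, and it is exactly there that $L^1_{\mathrm{loc}}$-integrability of $c^{-1}$ fails to be enough while $c^{-1}\in L^2_{\mathrm{loc}}(\mu)$ suffices; the necessity part shows, conversely, that this stronger hypothesis cannot be relaxed.
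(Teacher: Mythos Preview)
Your approach diverges substantially from the paper's, and the sufficiency sketch has a real gap.

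The paper's argument is a one-liner in the necessity direction, based on a lemma you actually cite: by Lemma \ref{lem_bndd Delta} (equivalently Corollary \ref{cor harmonic}\,(1)), every $f\in\h_E$ satisfies $\Delta(f)\in L^2(c^{-1}\mu)$; applying this to $f=v_{A,B}$ with $A\cap B=\emptyset$ gives
\[
\infty>\int_V(\chi_A-\chi_B)^2\,c^{-1}\,d\mu=\int_A c^{-1}\,d\mu+\int_B c^{-1}\,d\mu,
\]
which is the local integrability condition on $c^{-1}$. No Riesz-representation detour, no splitting $f=P(f)+c^{-1}\Delta(f)$, no test-function construction. (Note, incidentally, that the computation in the paper yields local $L^1$-integrability of $c^{-1}$, not $L^2$; and the sufficiency direction is not addressed there.)

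In your sufficiency sketch the ``Markov piece'' is not merely the crux, it is circular as written. If $f\in\h arm_E$ then $P(f)=f$ and $\Delta(f)=0$, so your decomposition $f=P(f)+c^{-1}\Delta(f)$ returns $\int_C|P(f)|\,d\mu=\int_C|f|\,d\mu$, the very quantity you set out to bound. The proposed rescue via $R(|f|)^2\le c\,R(f^2)$ leads to $\int_C c\,R(f^2)\,d\mu=\int_V R(c\chi_C)\,f^2\,d\mu$ after using the $\mu$-symmetry of $R$, and there is no reason this is controlled by $\|f\|_{\h_E}^2$ for harmonic $f$ (precisely because harmonic $f$ need not lie in $L^2(\nu)$). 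So the argument does not close on $\h arm_E$, and by Theorem \ref{thm Royden} that is exactly the part that matters.

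For the necessity you propose, the test-function idea is plausible but remains a promise; the paper sidesteps it entirely with the $\Delta:\h_E\to L^2(c^{-1}\mu)$ bound above.
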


\begin{proof}
Without loss of generality, we can assume that, in the definition of
$v_{A, B}$ (see \eqref{eq dipole v}),  $A \cap B = \emptyset$. Since for
any $f\in \h_E$,  $\Delta(f) = c(I -P)(f)$ and the operator
$I - P : \h_E \to L^2(\nu)$ is a contraction (see Corollary 
\ref{cor harmonic}), we conclude that 
$$
\Delta(f) \in cL^2(\nu) = L^2(c^{-1}\mu).
$$
Applying this fact to $f = v_{A,B}$, we obtain that 
$$
\ba
\infty & > & \int_V (\Delta (v_{A,B}))^2c^{-1}\; d\mu\\
& =& \int_V (\chi_A - \chi_B)^2c^{-1}\; d\mu\\
& =&  \int_V (\chi_A  + \chi_B)c^{-1}\; d\mu\\
& =&  \int_A c^{-1}\; d\mu +  \int_B c^{-1}\; d\mu\\
\ea
$$
which proves the proposition. 
\end{proof} 

\subsection{Application to machine learning problems}
This subsection is devoted to an application of  the graph Laplace 
operator considered in previous sections to the so called learning problem.
The problem we formulate below is an optimization problem with a penalty
term, see \cite{ArgyriouMicchelliPontil2010, GuofanZhou2016, 
PoggioSmale2003, SmaleZhou2009a, SmaleZhou2009, SmaleZhou2007,
Smaleyao2006} for more details. 

We first recall the following results proved in \cite{BezuglyiJorgensen2018}. 
Let the measure  space $\sms$ and the graph Laplace operator $\Delta$
be as above, and let $\h_E$ be the energy space. Then $\Delta$ can be
realized in $L^2(\mu)$ and $\h_E$, and  denote by $\Delta_2$ and
$\Delta_{\h_E}$ the corresponding operators in $L^2(\mu)$ and 
$\h_E$. 

Let  $J$ and $K$ be two densely defined operators that constitute a
 symmetric pair of operators: 
\be\label{eq def J}
L^2(\mu) \ \stackrel{J}\longrightarrow \ \h_E
\ee
and 
\be\label{eq def K}
\h_E \ \stackrel{K}\longrightarrow \ L^2(\mu).
\ee
For  $\varphi \in \mc D_Q, \psi \in \mc C$, where $\mc D_Q$ and $\mc 
C$ are dense subsets, we have
\be\label{eq symm pair J and K}
\langle J \varphi, \psi \rangle_{\h_E}  = 
\langle  \varphi, K\psi \rangle_{L^2(\mu)},
\ee
see details in  \cite[Lemma 8.4]{BezuglyiJorgensen2018}.

It follows that: 

(1) $J^* = K$ and $K^* = J$, 

(2) the operators $J^*J$ and $K^*K$ are self-adjoint in $L^2(\mu)$ and 
 $\h_E$, respectively. 
 
 In \cite[Theorem 8.5]{BezuglyiJorgensen2018}, we proved the following
  result.
 
\begin{theorem} \label{thm Delta_h} 
The Laplace operator $\Delta$ admits
 its realizations in the Hilbert spaces $L^2(\mu)$ and $\h_E$ such that:

(i) $\Delta_2 = J^*J$ is a positive definite essentially self-adjoint operator;

(ii)  $\Delta_{\h}$ is a positive definite and symmetric operator which is
not self-adjoint, in general;  a self-adjoint  extension $\wt \Delta_{\h_E}$ 
of   $\Delta_{\h}$  is given by  the operator $JJ^* = K^*K$.
\end{theorem}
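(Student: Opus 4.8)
The plan is to derive both realizations from the single closed operator $J$ produced in Theorem~\ref{thm J closable} together with its adjoint $K=J^{*}$, invoking von~Neumann's theorem that for any closed densely defined operator $T$ the products $T^{*}T$ and $TT^{*}$ are self-adjoint and nonnegative. First I would collect what is already available: $J$ is the closure of the inclusion $\Dfin(\mu)\hookrightarrow\h_E$, it is closable, densely defined and (generically) unbounded; $K=J^{*}$ is densely defined because $D^{*}=\{f\in\h_E:\Delta(f)\in L^{2}(\mu)\}$ is dense in $\h_E$ by Lemma~\ref{lem D* dense}; on $D^{*}$ one has $J^{*}(f)=\Delta(f)$ and $JJ^{*}(f)=\Delta(f)$ by Theorem~\ref{thm J closable}; and $\{J,K\}$ is a symmetric pair, i.e.\ \eqref{eq symm pair J and K} holds. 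Since $J$ is closed, $K^{*}=J^{**}=J$, hence $K^{*}K=JJ^{*}$; this already identifies the two self-adjoint operators named in~(ii).

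For part (i) I would set $\Delta_{2}:=J^{*}J$. Von~Neumann's theorem gives that $\Delta_{2}$ is self-adjoint, and $\langle\Delta_{2}\varphi,\varphi\rangle_{L^{2}(\mu)}=\|J\varphi\|^{2}_{\h_E}\ge 0$, so it is positive definite. To see that $\Delta_{2}$ is a realization of the graph Laplacian, take $\varphi,\psi\in\Dfin(\mu)$; viewing them as elements of $\h_E$ and applying Theorem~\ref{thm inner prod via Delta},
\[
\langle\varphi,J^{*}J\psi\rangle_{L^{2}(\mu)}=\langle J\varphi,J\psi\rangle_{\h_E}=\langle\varphi,\psi\rangle_{\h_E}=\int_{V}\varphi\,\Delta(\psi)\,d\mu=\langle\varphi,\Delta\psi\rangle_{L^{2}(\mu)},
\]
so $J^{*}J$ extends $\Delta|_{\Dfin(\mu)}$ (which lies in $D^{*}$ under the standing hypotheses, in particular $c\in L^{2}_{\mathrm{loc}}(\mu)$). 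Since by Theorem~\ref{prop prop of R, P, Delta}(6) the graph Laplacian is essentially self-adjoint on a domain containing $\Dfin(\mu)$, its unique self-adjoint extension must coincide with the self-adjoint operator $J^{*}J$; this proves (i).

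For part (ii) I would let $\Delta_{\h}$ be the graph Laplacian acting in $\h_E$ on the dense domain $D^{*}$ (one may equally take the span of the $\nu$-dipoles $\omega_{A,B}$, which lie in $D^{*}$ when $c\in L^{2}_{\mathrm{loc}}(\mu)$). For $f,g\in D^{*}$, using $\Delta(f)=JJ^{*}(f)$,
\[
\langle\Delta_{\h}f,g\rangle_{\h_E}=\langle JJ^{*}f,g\rangle_{\h_E}=\langle J^{*}f,J^{*}g\rangle_{L^{2}(\mu)}=\langle f,JJ^{*}g\rangle_{\h_E}=\langle f,\Delta_{\h}g\rangle_{\h_E},
\]
so $\Delta_{\h}$ is symmetric, and $\langle\Delta_{\h}f,f\rangle_{\h_E}=\|J^{*}f\|^{2}_{L^{2}(\mu)}\ge 0$, so it is positive definite. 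By von~Neumann, $\wt\Delta_{\h_E}:=JJ^{*}=K^{*}K$ is self-adjoint and nonnegative, and by Theorem~\ref{thm J closable} it extends $\Delta_{\h}$. It remains to note that $\Delta_{\h}$ is not itself self-adjoint in general, and this is exactly where the geometry of $\h_E$ is used: by the corollary following Theorem~\ref{thm J closable} one has $\h_E\ominus J(\Dfin(\mu))=\{f\in\mathrm{Dom}(J^{*}):J^{*}f=0\}=\h arm_E$, and by the Royden decomposition $\h_E=\ol{\Dfin(\mu)}\oplus\h arm_E$ of Theorem~\ref{thm Royden}; whenever $\h arm_E\neq\{0\}$ the symmetric operator $\Delta_{\h}$ is a proper restriction of $JJ^{*}$, and one also observes that if $\Delta_{\h}$ were self-adjoint then $J$ would be bounded, whence $\|J\|=\|J^{*}\|$ would force $\Delta$ to be bounded, i.e.\ $c\in L^{\infty}(\mu)$, contrary to our standing hypotheses.

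The main obstacle, in my view, is not the self-adjointness of $J^{*}J$ and $JJ^{*}$, which is automatic from von~Neumann, but bookkeeping of the several domains: one must verify that $D^{*}$ (equivalently, the span of the $\nu$-dipoles) is a genuine common core on which $\Delta$, $\Delta_{\h}$, $J^{*}J$ and $JJ^{*}$ all agree in the appropriate sense, and then exhibit a clean witness — nonzero harmonic functions in $\h_E$, or the bounded/unbounded dichotomy for $c$ — showing that the symmetric operator $\Delta_{\h}$ really does admit self-adjoint extensions distinct from itself. The identity $K^{*}=J$, hence $K^{*}K=JJ^{*}$, uses only closedness of $J$ and is routine once the symmetric pair \eqref{eq symm pair J and K} is in place.
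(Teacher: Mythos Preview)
The paper does not give its own proof here: the theorem is quoted from \cite[Theorem 8.5]{BezuglyiJorgensen2018}. Your overall strategy---apply von~Neumann's theorem to the closed operator $J$ of Theorem~\ref{thm J closable} to get self-adjointness of $J^*J$ and $JJ^*$, then check on $\Dfin(\mu)$ and $D^*$ respectively that these coincide with $\Delta$---is exactly what the surrounding machinery in the paper (Theorem~\ref{thm J closable}, Lemma~\ref{lem D* dense}, Theorem~\ref{thm inner prod via Delta}) is set up to support, and the argument for (i) and the existence of the self-adjoint extension in (ii) is correct.

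There is, however, a genuine gap in your justification that $\Delta_{\h}$ is \emph{not} self-adjoint in general. First, the implication ``if $\Delta_{\h}$ were self-adjoint then $J$ would be bounded'' is false: unbounded self-adjoint operators are ubiquitous, and nothing about $\Delta_{\h}=\Delta_{\h}^*$ forces $J$ (or $\Delta$) to be bounded. Second, the harmonic-function witness does not work as stated: if $h\in\h arm_E$ then $\Delta h=0\in L^2(\mu)$, so $h\in D^*=\mathrm{Dom}(\Delta_{\h})$ already, and $\Delta_{\h}h=0=JJ^*h$. Thus nontrivial harmonic functions do not by themselves show that $D^*\subsetneq\mathrm{Dom}(JJ^*)$. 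What you actually need is either an explicit example (a choice of $\rho$) where $\Delta_{\h}$ has nonzero deficiency indices, or an abstract argument showing $\mathrm{Dom}(J^*)\supsetneq D^*$ (recall Theorem~\ref{thm J closable} only establishes the inclusion $D^*\subset\mathrm{Dom}(J^*)$, not equality). The phrase ``in general'' in the statement means a single counterexample suffices; you should either supply one or cite it, and drop the two faulty arguments.
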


The goal of this subsection is to apply the above results to 
$L^2$-regulation for learning problem. We will show how to find the 
minimum of the function 
\be\label{eq def function Q(h)}
Q(h) = \| \psi - Kh\|^2_{L^2(\mu)} + \gamma \| h \|^2_{\h_E}, \quad
h \in \h_E,
\ee
where $\psi$ is a fixed function from $L^2(\mu)$, $\gamma >0$,
 and $ K : \h_E \to L^2(\mu)$ is defined in (\ref{eq def K}). 
This problem is interpreted as follows. Suppose $\psi$ is a given function
representing some data. Then the term $\| \psi - Kh\|^2_{L^2(\mu)}$ 
corresponds to the least square approximation by functions $h$ from a 
feature space, and  $\gamma \| h \|^2_{\h_E}$ is the so called penalty
term, see \cite{ArgyriouMicchelliPontil2010, GuofanZhou2016,
PoggioSmale2003, SmaleZhou2007, SmaleZhou2009, SmaleZhou2009a, 
Smaleyao2006} for more information.

\begin{proposition}\label{prop approx}
Let $K, J$ be the symmetric pair of operators defined in (\ref{eq def J}) and
(\ref{eq def K}), and let $\wt \Delta_{\h_E} = K^*K$ be the self-adjoint
extension of $\Delta_{\h_E}$. Then, for a given function $\psi \in 
L^2(\mu)$,
$$
\mathrm {arg}\min\{Q(h) : h \in \h_E\} = (\gamma I + \wt 
\Delta_{\h_E})^{-1} J \psi.
$$
\end{proposition}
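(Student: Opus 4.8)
The plan is to treat $Q$ as a strictly convex quadratic functional on $\h_E$ (defined on the natural domain $\mathrm{Dom}(K)$, or extended by $+\infty$ elsewhere) and to verify directly, by completing the square, that its unique minimizer solves the normal equation $(\gamma I + \wt\Delta_{\h_E})h = J\psi$. First I would assemble the operator‑theoretic facts. By Theorem \ref{thm Delta_h}, $\wt\Delta_{\h_E} = K^*K = JJ^*$ is a positive definite self-adjoint operator in $\h_E$, so $A := \gamma I + \wt\Delta_{\h_E}$ is self-adjoint with $A \ge \gamma I > 0$; hence $A$ is boundedly invertible with $\|A^{-1}\| \le \gamma^{-1}$, and $\mathrm{Dom}(A) = \mathrm{Dom}(\wt\Delta_{\h_E}) = \mathrm{Dom}(K^*K) \subseteq \mathrm{Dom}(K)$. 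Taking $\psi \in \mathrm{Dom}(J) = \mathrm{Dom}(K^*)$ so that $J\psi \in \h_E$, I set $h_0 := A^{-1}J\psi$; then $h_0 \in \mathrm{Dom}(\wt\Delta_{\h_E})$, one has $(\gamma I + \wt\Delta_{\h_E})h_0 = J\psi$ (equivalently $\wt\Delta_{\h_E}h_0 = J\psi - \gamma h_0$), and in particular $Kh_0 \in \mathrm{Dom}(K^*)$.

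Next I would evaluate $Q$ at an arbitrary $h \in \mathrm{Dom}(K)$ by writing $h = h_0 + g$ with $g = h - h_0 \in \mathrm{Dom}(K)$. Expanding both squared norms yields
\[
Q(h) = Q(h_0) + \|Kg\|^2_{L^2(\mu)} + \gamma\|g\|^2_{\h_E} - 2\big(\langle \psi - Kh_0,\, Kg\rangle_{L^2(\mu)} - \gamma\langle h_0, g\rangle_{\h_E}\big).
\]
The cross term is handled using the symmetric pair relation \eqref{eq symm pair J and K}, i.e. $K^* = J$: since $\psi - Kh_0 \in \mathrm{Dom}(K^*)$ and $\wt\Delta_{\h_E}h_0 = J\psi - \gamma h_0$,
\[
\langle \psi - Kh_0,\, Kg\rangle_{L^2(\mu)} = \langle K^*(\psi - Kh_0),\, g\rangle_{\h_E} = \langle J\psi - \wt\Delta_{\h_E}h_0,\, g\rangle_{\h_E} = \gamma\langle h_0, g\rangle_{\h_E}.
\]
Hence $Q(h) = Q(h_0) + \|Kg\|^2_{L^2(\mu)} + \gamma\|g\|^2_{\h_E} \ge Q(h_0)$, with equality if and only if $g = 0$ (because $\gamma > 0$). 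Therefore $h_0 = (\gamma I + \wt\Delta_{\h_E})^{-1}J\psi$ is the unique minimizer of $Q$ over $\mathrm{Dom}(K)$, hence over $\h_E$, which is the assertion.

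The computations are routine; the step that requires genuine care — and the main obstacle — is the bookkeeping of domains of the unbounded operators $J$, $K$, $\wt\Delta_{\h_E}$: one must minimize over $\mathrm{Dom}(K)$, ensure that $J\psi$ is well defined (so $\psi \in \mathrm{Dom}(K^*)$ must enter the hypotheses), and, crucially, know that $Kh_0 \in \mathrm{Dom}(K^*)$ so that the adjoint identity $\langle \psi - Kh_0, Kg\rangle_{L^2(\mu)} = \langle K^*(\psi - Kh_0), g\rangle_{\h_E}$ is legitimate — which is exactly why $h_0$ is defined through the bounded operator $(\gamma I + \wt\Delta_{\h_E})^{-1}$, whose range lies in $\mathrm{Dom}(K^*K)$. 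An equivalent route is to set the G\^ateaux derivative $\frac{d}{dt}Q(h+tg)\big|_{t=0} = -2\langle \psi - Kh, Kg\rangle_{L^2(\mu)} + 2\gamma\langle h, g\rangle_{\h_E}$ equal to zero for all $g$ in a core and invoke strict convexity; this produces the same normal equation $(\gamma I + \wt\Delta_{\h_E})h = J\psi$.
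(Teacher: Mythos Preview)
Your proof is correct. The paper takes precisely the alternative route you sketch at the end: it computes the G\^ateaux derivative $\frac{d}{d\e}Q(h+\e k)\big|_{\e=0}$, expands the inner products using $K^*=J$, and reads off the normal equation $(\gamma I+\wt\Delta_{\h_E})h=J\psi$, then inverts. Your completing-the-square argument is a genuine variant and buys a bit more: it simultaneously delivers existence, uniqueness, and the fact that $h_0$ is the global minimizer (not merely a critical point), and it is explicit about the domain bookkeeping --- in particular the requirements $\psi\in\mathrm{Dom}(J)$ and $Kh_0\in\mathrm{Dom}(K^*)$ --- which the paper's proof does not address. The paper's derivation is shorter but tacitly relies on strict convexity to pass from stationary point to minimizer.
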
 

\begin{proof}
To minimize $Q$, it suffices to find a function $h$ such that
$$
\frac{d}{d\e} Q(h + \e k)\mid_{\e = 0}\ = 0, \qquad \forall k \in \h_E. 
$$
Clearly, we need to know only the linear term with respect to $\e$ in 
$Q(h + \e k)$ because other terms vanishes after differentiation and 
substitution $\e =0$. We compute
$$
\ba 
& \frac{d}{d\e} Q(h + \e k)\mid_{\e = 0}\\
= & - 2 \langle \psi, Kk\rangle_{L^2(\mu)} + 
2 \langle Kh, Kk\rangle_{L^2(\mu)} + 2 \langle h, k\rangle_{\h_E}\\
=& - 2 \langle J \psi, k \rangle_{\h_E} + 2 \langle K^*K h, 
k \rangle_{\h_E} + 2 \langle h, k\rangle_{\h_E}\\
= &\  2\langle \wt\Delta_{\h_E} h + \gamma h - J\psi, k\rangle_{\h_E}.\\
\ea
$$
It follows that the function $h$ must satisfy the property
$$
 \wt\Delta_{\h_E} h + \gamma h - J\psi = 0
$$
or 
$$
h = (\gamma I + \wt\Delta_{\h_E})^{-1}J\psi
$$
which is the desired conclusion.
\end{proof}

We note that the operator 
$$
(\gamma I + \wt\Delta_{\h_E})^{-1}J = (\gamma I +JK)^{-1}J :
L^2(\mu) \to \h_E
$$
 is bounded, contractive, and  self-adjoint.

\section{\textbf{Reproducing kernel Hilbert spaces}}\label{sect RKHS}

In this section we will show that, for transient Markov processes,
 the energy space $\mathcal H_E$ can be realized as a reproducing kernel
 Hilbert space (RKHS) for a positive definite kernel. 
We give also two more reproducing kernel Hilbert spaces that are related
to the symmetric measure $\rho$   on  $\vv$ and the measure $\nu$
on $\VB$.
 The standard references  for the theory of RKHS are 
 \cite{Aronszajn1950,  AronszajnSmith1957, 
 Adams_et_al1994, PaulsenRaghupathi2016, SaitohSawano2016}, 
 see also more recent  
 results and various applications in \cite{AplayJorgensen2014,
  AlpayJorgensen2015, 
 JorgensenTian2015, JorgensenTian-2016, BerlinetThomas-Agnan2004}.
 
 \subsection{Definition of RKHS}\label{subsect def RKHS}
 We begin with reminding the reader the definition of a RKHS. 
 
 Let $S$ be a set, and 
 let $K: S\times S \to \R$ be a \textit{positive definite kernel}, i.e., the 
 function $K(s, t)$ has the property 
 $$
 \sum_{i, j =1}^N \alpha_i\alpha_j K(s_i, s_j) \geq 0
 $$
 which holds for any $N \in \N$ and for any $s_i \in S, \alpha_i \in \R, \
  i=1,..., N$.
 (For a complex-valued kernel $K$ some obvious changes must be made).
 
\begin{definition} \label{def RKHS}
Fix $s \in S$ and denote by $K_s$ the function $K_s(t) = K(s, t)$ 
of one variable  $t \in S$.  Let $\mathcal K := 
\mbox{span}\{ K_s : s\in S\}$. 
The \textit{RKHS} $\h(K)$ is the Hilbert space obtained by completion of 
$\mathcal K$ with respect to the inner product defined on $\mathcal K$ by
$$
\left\langle\ \sum_i \alpha_i K_{s_i}, \sum_j\beta_j K_{s_j}\ 
\right\rangle_{\h(K)} := \sum_{i, j =1}^N \alpha_i\beta_j K(s_i, s_j)
$$
\end{definition}

It immediately follows from Definition \ref{def RKHS} that 
$$
\langle K(\cdot, s), K(\cdot, t)\rangle_{\h(K)} = K(s, t).
$$
More generally, this result can be extended to the following property that 
 characterizes functions from the RKHS $\h(K)$. For any $f \in  \h(K)$ and
  any $s\in S$, one has 
\be\label{eq char prop RKHS}
f(s) = \langle f(\cdot), K(\cdot, s)\rangle_{\h(K)}. 
\ee
It suffices to check that (\ref{eq char prop RKHS}) holds for any function
from $\mathcal K$ and then extend it by continuity. 

One can check that the following property characterizes functions from
the reproducing kernel Hilbert space $\h(K)$ constructed by a positive
definite kernel $K$ on the set $S$. We formulate it as a statement for 
further references.

\begin{lemma}\label{lem criterion}
A function $f$ is in $\h(K)$ if and only if there exists a constant $C = C(f)$
such that for any $n \in \N$, any $\{s_1, ... ,s_n\}  \subset S$, and any
$\{\alpha_1, ... , \alpha_n\} \subset \R$, one has
\be\label{eq criterion RKHS}
\left(\sum_{i=1}^n \alpha_i f(s_i)\right) ^2 \leq C(f) 
\sum_{i, j =1}^n \alpha_i\alpha_j K(s_i, s_j).
\ee
\end{lemma}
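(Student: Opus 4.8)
The plan is to prove the two implications of Lemma~\ref{lem criterion} separately, relying on the concrete description of $\h(K)$ as the completion of $\mathcal K = \mathrm{span}\{K_s : s \in S\}$ under the stated inner product, together with the reproducing property \eqref{eq char prop RKHS}. For the forward direction, suppose $f \in \h(K)$. Given $n \in \N$, points $s_1, \dots, s_n \in S$, and scalars $\alpha_1, \dots, \alpha_n \in \R$, form the vector $g := \sum_{i=1}^n \alpha_i K_{s_i} \in \mathcal K \subset \h(K)$. By the reproducing property, $f(s_i) = \langle f, K_{s_i} \rangle_{\h(K)}$, so by linearity
$$
\sum_{i=1}^n \alpha_i f(s_i) = \left\langle f, \sum_{i=1}^n \alpha_i K_{s_i} \right\rangle_{\h(K)} = \langle f, g \rangle_{\h(K)}.
$$
Now apply the Cauchy--Schwarz inequality in $\h(K)$: the left side squared is at most $\| f \|_{\h(K)}^2 \, \| g \|_{\h(K)}^2$, and by the very definition of the inner product on $\mathcal K$ we have $\| g \|_{\h(K)}^2 = \sum_{i,j=1}^n \alpha_i \alpha_j K(s_i, s_j)$. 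Hence \eqref{eq criterion RKHS} holds with $C(f) = \| f \|_{\h(K)}^2$.

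For the converse, assume $f : S \to \R$ satisfies \eqref{eq criterion RKHS} with some constant $C(f)$. I would define a linear functional $L$ on $\mathcal K$ by $L\big(\sum_i \alpha_i K_{s_i}\big) := \sum_i \alpha_i f(s_i)$. The first thing to check is that $L$ is well defined: if $\sum_i \alpha_i K_{s_i} = 0$ as an element of $\h(K)$, then its squared norm $\sum_{i,j} \alpha_i \alpha_j K(s_i, s_j)$ is zero, and \eqref{eq criterion RKHS} forces $\sum_i \alpha_i f(s_i) = 0$; so $L$ depends only on the element of $\mathcal K$, not on the chosen representation. The same inequality \eqref{eq criterion RKHS} shows $|L(g)| \le \sqrt{C(f)} \, \| g \|_{\h(K)}$ for all $g \in \mathcal K$, so $L$ is bounded on the dense subspace $\mathcal K$ and extends uniquely to a bounded linear functional on $\h(K)$. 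By the Riesz representation theorem there is a unique $h \in \h(K)$ with $L(g) = \langle g, h \rangle_{\h(K)}$ for all $g \in \h(K)$. Testing against $g = K_s$ gives $h(s) = \langle K_s, h \rangle_{\h(K)} = L(K_s) = f(s)$ for every $s \in S$, using the reproducing property applied to $h$. Therefore $f = h \in \h(K)$, completing the proof.

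The only genuinely delicate point is the well-definedness of $L$ on $\mathcal K$, i.e.\ that the value $\sum_i \alpha_i f(s_i)$ really is a function of the element $\sum_i \alpha_i K_{s_i}$ of $\h(K)$ rather than of its (non-unique) expansion; this is exactly where hypothesis \eqref{eq criterion RKHS} is used in an essential, non-redundant way, and it is worth spelling out that a vanishing combination of kernel sections has vanishing $\h(K)$-norm by the definition of the inner product. Everything else is a routine application of Cauchy--Schwarz, boundedness and continuous extension, and the Riesz representation theorem. I would also remark in passing that the argument shows the optimal constant in \eqref{eq criterion RKHS} equals $\| f \|_{\h(K)}^2$, which is occasionally useful.
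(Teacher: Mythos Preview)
Your proof is correct and is the standard argument for this classical characterization of membership in a RKHS. The paper does not actually supply a proof of this lemma: it is stated ``for further references'' with the remark that ``one can check'' the property, so there is nothing to compare against beyond noting that your argument is precisely the routine verification the authors had in mind.
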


We follow  \cite{JorgensenTian2017} in the following definition. 
Let $K(s,t)$ be a positive definite kernel as above.  It is said that 
that a measure space $(X, \mathcal A, m)$ and functions $K^*_s : S \to
L^2(m)$ define  a \textit{realization} of $K(s, t)$ if
\be\label{eq realization}
K(s, t) = \langle K^*_s, K^*_t \rangle_{L^2(m)} = \int_X K^*_s K^*_t 
\; dm.
\ee
It is said that the realization is \textit{tight} if the set of functions 
$\{K^*_s(\cdot) : s\in S\}$ is dense in $L^2(X, \mc A, m)$. 

We note that the converse approach can  be also used. Namely, given a 
set of functions $\{K^*_s\}$ from $L^2(X, \mc A, m)$, one can define a 
positive definite kernel $ K(s, t)$ by formula (\ref{eq realization}).

\subsection{Reproducing kernel Hilbert space over  $\Bfin$}\label{Bfin}
We use in this subsection our standard setting: a sigma-finite measure
space $(V, \B, \mu)$, a symmetric measure $\rho $ on $V \times V$, and 
the function $c(x) = \rho_x(V)$.
 Then we define the sequence of transition probabilities $P_n(x, A)$,  
 the positive operator $P$ acting  by the  formula $P(f)(x) = \int_V f(y) \; 
 P(x, dy)$ such that $\nu P = \nu$, where $d\nu(x) = c(x)d\mu(x)$. 
 Let also $\Bfin$ be the algebra of Borel sets of finite measure $\mu$.

 Recall that together with the symmetric measure $\rho$ we have defined
  the  sequence of symmetric 
 measures $(\rho_n)$ such that, for $A, B \in \Bfin$, 
 $$
 \rho_n(A \times B) = \int_A P_n(x, B)\; d\nu(x) = \int_V \chi_A 
 P^n(\chi_B)\; d\nu =  \langle\chi_A, P^n(\chi_B) \rangle_{L^2(\nu)}. 
 $$  
In particular, $\rho_0(A\times B) = \nu(A\cap B)$ for $A, B \in 
\Bfin(\mu)$. 
 
 We will assume that the Markov process defined by $(P_n)$ is 
 \textit{transient}. In other words, this assumption means that the 
 Green's function 
 \be\label{eq_conv trans}
 G(x, A) = \sum_{n =0}^\infty P_n(x, A)
 \ee
 is well defined for any $A\in \Bfin(\mu)$. In order to emphasize that 
 $G(x, A)$
 is a function in $x$ for every fixed $A$, we will use also the notation
 $G_A(\cdot)$. 
 
 For  every $A \in \Bfin(\mu)$ and $n\in \N_0$, the function 
 $P^n(\chi_A)(x)$ belongs to  $\h_E$, hence assuming the convergence 
 of the series in \eqref{eq_conv trans}, we note that the Green's function
$G(x, A)$ can be viewed as an element of the energy space $\h_E$. 
A direct computation gives the formula for the norm of $G(x, A)$:
 \be\label{eq-norm G_A}
 || G(\cdot, A) ||^2_{\h_E} = \sum_{n=0}^\infty \rho_n(A \times A).
 \ee
 (details are given in Theorem \ref{thm on G_A} below).

For the sake of completeness, we  include the following theorem which 
was mostly proved in 
\cite{BezuglyiJorgensen_2018}. 

 \begin{theorem}\label{thm on G_A} 
Let $\sms, \rho_n, \h_E, $ and $G(x, A)$ be  the objects defined as above.
Then  the following  properties hold. 
 
  (1) For any sets $A, B \in \Bfin$, we have
\be\label{eq_inner prod G_A and G_B}
\langle G_A, G_B\rangle_{\h_E} = \sum_{n=0}^\infty \rho_n(A\times B);
\ee  
 
 (2) For any $f \in \h_E$ and $A\in \Bfin(\mu)$, 
 $$
 \langle f, G_A\rangle_{\h_E} = \int_A f\; d\nu.
 $$
 Furthermore, if 
 \be\label{eq def mc G}
 \mc G := \mathrm{span} \{G_A(\cdot) : A \in \Bfin\},
 \ee
 then $\mc G$ is dense in the energy space $\h_E$. 

(3) For $A, B\in \Bfin(\mu)$, we have
$$
\Delta G_A(x) = c(x) \chi_A(x),
$$
and 
$$
\Delta \omega_{A,B} = \Delta G_A - \Delta G_B = c(\chi_A - \chi_B)
$$
is in $L^2(\nu)$, where $\omega_{A,B}$ is defined in Section 
\ref{sect energy}.
 \end{theorem}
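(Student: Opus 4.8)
The plan is to prove the three parts in the order they are stated, since each builds on the previous one and on the machinery already developed in the excerpt. Throughout, I will use the identity $\rho_n(A\times B)=\langle\chi_A,P^n(\chi_B)\rangle_{L^2(\nu)}$ together with $P$-invariance of $\nu$ and self-adjointness of $P$ on $L^2(\nu)$ (Theorem \ref{prop prop of R, P, Delta}), and the embedding $\iota:L^2(\nu)\to\h_E$ with adjoint $\iota^*=I-P$ from Proposition \ref{prop L2nu in H}.

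\textbf{Part (1).} First I would establish the norm formula \eqref{eq-norm G_A} and, more generally, \eqref{eq_inner prod G_A and G_B}. Writing $G_A=\sum_{n\ge 0}P^n(\chi_A)$ as an element of $\h_E$, I would compute $\langle G_A,G_B\rangle_{\h_E}$ formally as $\sum_{n,m\ge 0}\langle P^n(\chi_A),P^m(\chi_B)\rangle_{\h_E}$. The key step is to evaluate $\langle P^n(\chi_A),P^m(\chi_B)\rangle_{\h_E}$: since $P^m(\chi_B)\in L^2(\nu)$, I can apply the adjoint formula $\langle\iota(g),h\rangle_{\h_E}=\langle g,(I-P)h\rangle_{L^2(\nu)}$ (Proposition \ref{prop L2nu in H}) with $g=P^m(\chi_B)$ and $h=P^n(\chi_A)$, giving $\langle P^m(\chi_B),(I-P)P^n(\chi_A)\rangle_{L^2(\nu)}=\langle P^m(\chi_B),P^n(\chi_A)-P^{n+1}(\chi_A)\rangle_{L^2(\nu)}$. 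Using self-adjointness of $P$ and $\nu P=\nu$, this equals $\rho_{n+m}(A\times B)-\rho_{n+m+1}(A\times B)$. Summing the telescoping double series $\sum_{n,m\ge 0}\big(\rho_{n+m}(A\times B)-\rho_{n+m+1}(A\times B)\big)$ collapses to $\sum_{k\ge 0}\rho_k(A\times B)$, which is \eqref{eq_inner prod G_A and G_B}; setting $B=A$ gives \eqref{eq-norm G_A}. The care needed here is justifying the interchange of summation and the convergence of these series, which follows from transience (so $G(x,A)<\infty$ a.e.) together with positivity of all terms $\rho_n(A\times B)\ge 0$ for $A,B\in\Bfin(\mu)$, allowing Tonelli-type rearrangement. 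I expect this convergence/interchange bookkeeping to be the main technical obstacle in the whole proof.

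\textbf{Part (2).} For the reproducing property $\langle f,G_A\rangle_{\h_E}=\int_A f\,d\nu$, I would again use the adjoint formula: $\langle f,G_A\rangle_{\h_E}=\sum_{n\ge 0}\langle f,\iota(P^n(\chi_A))\rangle_{\h_E}$, but it is cleaner to note $G_A=\iota\big(\sum_n P^n(\chi_A)\big)$ and write $\langle f,G_A\rangle_{\h_E}=\langle (I-P)f,\,\sum_n P^n(\chi_A)\rangle_{L^2(\nu)}$ — wait, $f$ need not be in $L^2(\nu)$, so instead I pair the other way: $\langle \iota(g),f\rangle_{\h_E}=\langle g,\iota^*(f)\rangle$ requires $f\in\h_E$ and $\iota^*(f)=(I-P)f\in L^2(\nu)$ by Corollary \ref{cor harmonic}(1). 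Thus $\langle f,G_A\rangle_{\h_E}=\sum_{n\ge 0}\langle P^n(\chi_A),(I-P)f\rangle_{L^2(\nu)}$; by self-adjointness this is $\sum_{n\ge 0}\langle\chi_A,P^n(I-P)f\rangle_{L^2(\nu)}=\sum_{n\ge 0}\langle\chi_A,(P^n-P^{n+1})f\rangle_{L^2(\nu)}$, a telescoping sum equal to $\langle\chi_A,f\rangle_{L^2(\nu)}=\int_A f\,d\nu$ (the tail $\langle\chi_A,P^{N+1}f\rangle$ vanishing as $N\to\infty$, which is where transience enters again). Density of $\mc G$ then follows exactly as in the proof of the corollary after Lemma \ref{lem dipoles}: if $f\perp\mc G$ then $\int_A f\,d\nu=0$ for every $A\in\Bfin(\mu)$, forcing $f=0$ in $\h_E$.

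\textbf{Part (3).} For $\Delta G_A=c\,\chi_A$, I would apply $\Delta=c(I-P)=cI-R$ termwise: $\Delta G_A=\sum_{n\ge 0}(cI-R)P^n(\chi_A)=\sum_{n\ge 0}c(I-P)P^n(\chi_A)=c\sum_{n\ge 0}(P^n-P^{n+1})(\chi_A)=c\,\chi_A$ by telescoping (with the tail $P^{N+1}(\chi_A)\to 0$ by transience). The statement about $\omega_{A,B}=G_A-G_B$ then follows by linearity: $\Delta\omega_{A,B}=c(\chi_A-\chi_B)$, and this lies in $L^2(\nu)$ since $c(\chi_A-\chi_B)\in L^2(\nu)$ is equivalent to $\chi_A-\chi_B\in L^2(\nu^{-1}\text{-type})$ — more directly, $\|c(\chi_A-\chi_B)\|^2_{L^2(c^{-1}\mu)}=\int c^2(\chi_A-\chi_B)^2 c^{-1}\,d\mu=\int_{A\triangle B}c\,d\mu<\infty$ by local integrability of $c$, and one identifies $L^2(c^{-1}\mu)=cL^2(\nu)$ as in the proof of the dipole existence proposition in Section \ref{sect energy}. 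This also identifies $\omega_{A,B}$ with the $\nu$-dipole $w_{A,B}$ of \eqref{eq_dipoles def nu}, tying the Green's function picture to the dipole picture already developed.
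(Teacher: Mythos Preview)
Your proposal is correct and reaches the same conclusions, but it takes a genuinely different route from the paper. The paper proves all three parts by \emph{direct integral computation}: it writes out $\|G_A\|^2_{\h_E}$ and $\langle f,G_A\rangle_{\h_E}$ via the defining double integral over $(V\times V,\rho)$, disintegrates with respect to $\mu$, and then uses the pointwise telescoping identity $G_A - P(G_A) = \chi_A$ inside the integral. In particular, for part (2) the paper obtains $\langle f,G_A\rangle_{\h_E}=\int_V f\,\Delta(G_A)\,d\mu$ directly, and part (3) falls out the same way. Your approach instead works \emph{operator-theoretically}: you expand $G_A=\sum_n P^n(\chi_A)$, compute the individual terms $\langle P^n(\chi_A),P^m(\chi_B)\rangle_{\h_E}$ via the adjoint formula $\iota^\ast=I-P$ from Proposition~\ref{prop L2nu in H}, and then sum. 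This actually recovers the formula of Lemma~\ref{lem P_n norm} (which the paper states without details) as a byproduct.

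Two remarks on the trade-offs. First, your appeal to ``positivity of all terms $\rho_n(A\times B)\ge 0$'' for a Tonelli-type rearrangement is not quite the right justification, since the summands in your double series are the \emph{differences} $\rho_{n+m}-\rho_{n+m+1}$, which need not be nonnegative; what actually makes the argument work is that one can first show the partial sums $S_N=\sum_{n\le N}P^n(\chi_A)$ are Cauchy in $\h_E$ (your term formula gives $\|S_N-S_M\|^2_{\h_E}$ as a tail of $\sum_k\rho_k(A\times A)$), and then pass to the limit by continuity of the inner product. Second, in part (2) your tail condition $\langle\chi_A,P^{N+1}f\rangle_{L^2(\nu)}\to 0$ is immediate for $f\in L^2(\nu)$ (since $\|P^{N+1}\chi_A\|^2_{L^2(\nu)}=\rho_{2N+2}(A\times A)\to 0$), while for harmonic $f$ one has $(I-P)f=0$ so all terms vanish and the identity reduces to $\int_A f\,d\nu=0$; combining via the Royden decomposition closes the argument. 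The paper's direct computation sidesteps both issues by telescoping \emph{pointwise} before integrating, which is cleaner here but less modular than your use of the already-established Hilbert-space machinery.
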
 
 
We observe that the dipoles $\omega_{A,B}$ can be determined using
the formula $\omega_A{,B} = G_A - G_B$, see also Lemma 
\ref{lem_existence of dipoles}.   
 
 \begin{proof} (1) Clearly, relation \eqref{eq_inner prod G_A and G_B} follows 
 from  \eqref{eq-norm G_A}, so that it suffices to prove the formula
 for the norm of $G_A$ in $\h_E$.
For this, one has

 $$
 \ba 
\| G_A(x) \|_{\h_E}^2 & = \iint_{\VtV}(G_A(x) - P_A(y))^2\; d\rho(x,y)\\
 & = \iint_{\VtV} G_A(x)  (G_A(x) - G_A(y)) \; d\rho(x,y)\\
  & = \iint_{\VtV} G_A(x)  (G_A(x) - P_A(y)) c(x) P(x, dy) d\mu(x))\\
  &= \int_{V} G_A(x)  [G_A(x) -  P(G_A)(x)] c(x) \; d\mu(x))\\
 &= \int_{V} G_A(x)  \left[\sum_{n=0}^\infty P^n(\chi_A)(x) -  
 \sum_{n=0}^\infty P^{n+1}(\chi_A)(x)\right]   c(x) \; d\mu(x))\\
& = \int_V  \sum_{n=0}^\infty P^n(\chi_A)(x) \chi_A(x) \; d\nu(x)\\
& = \sum_{n=0}^\infty \langle\chi_A, P^n(\chi_A  \rangle_{L^2(\nu)}\\
& = \sum_{n=0}^\infty \rho_n(A\times A).  
 \ea
 $$

For (2), we compute
$$
\ba 
\langle f, G_A\rangle_{\h_E} & = \frac{1}{2} \iint_{\VtV} 
(f(x) - f(y))(G_A(x) - G_A(y))\; d\rho(x,y)\\
&= \iint_{\VtV} (f(x) G_A(x) -  f(x)G_A(y))\; d\rho(x, y)\\
& = \int_V  \left[f(x) G_A(x) c(x)  - f(x) \left(\int_V G_A(y) P(x, dy)
\right)c(x)\right]\; d\mu(x)\\
&= \int_V  f(x) c(x) \left[\sum_{n=o}^\infty  P^n(\chi_A)(x) -
\sum_{n=o}^\infty  P^{n+1}(\chi_A)(x)\right] \; d\mu(x)\\
&= \int_V  f(x) \chi_A(x) c(x) \; d\mu(x)\\
& = \int_A f\; d\nu.
\ea
$$ 
It follows from the proved relation that if $\langle f, G_A\rangle_{\h_E}
 =0$ for all $A\in \Bfin(\mu)$, then  $f=0$, and $\mc G$ is dense in 
 $\h_E$.

(3) We compute using the definition of Green's function and the
fact that the series $\sum_n P_n(x, A)$ is convergent for all $x$ and 
all $A\in \Bfin(\mu)$:
$$
\ba
c(x)(I - P)G_A(x) = \ & c(x)(I - P)\sum_{n=0}^\infty  P_n(x, A) \\
= \ & c(x) \sum_{n=0}^\infty  P_n(x, A) - c(x) 
\sum_{n=1}^\infty  P_n(x, A) \\
= \ & c(x) \chi_A(x).
\ea
$$

\end{proof} 

\begin{corollary}
(1) Assuming that, for every $A \in \Bfin(mu)$, the function $G(\cdot, A)$  
belongs to  $L^2(\nu)$, we have
$$
\langle\chi_A, G(\cdot, A)\rangle_{L^2(\nu)} = \sum_{n\in \N_0}
\rho_n(A\times A).
$$ 

(2) If $G(\cdot, A)$  belongs to  $L^1_{\mathrm{loc}}(\nu)$, then
$\sum_{n\in \N_0}\rho_n(A\times A) < \infty$.

\end{corollary}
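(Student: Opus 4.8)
The plan is to read the identity off directly from the series $G(x,A)=\sum_{n\in\N_0}P^{n}(\chi_A)(x)$, interchanging summation and integration by Tonelli's theorem; the interchange is legitimate because every summand is a non-negative function, so no dominated-convergence estimate is needed.

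Two preliminary observations handle the bookkeeping. First, $A\in\Bfin(\mu)$ together with $c\in\Lloc$ forces $\nu(A)=\int_A c\,d\mu<\infty$, so $\chi_A\in L^{2}(\nu)\cap L^{1}(\nu)$ and $A\in\Bfin(\nu)$. Second, since $P$ is positivity preserving with $P(\mathbbm 1)=\mathbbm 1$, monotonicity gives $0\le P^{n}(\chi_A)\le P^{n}(\mathbbm 1)=\mathbbm 1$ for every $n$; hence each term $\int_V\chi_A\,P^{n}(\chi_A)\,d\nu=\langle\chi_A,P^{n}(\chi_A)\rangle_{L^{2}(\nu)}$ is $\le\nu(A)<\infty$ and, by the definition of $\rho_n$ recalled just before Theorem \ref{thm on G_A}, equals $\rho_n(A\times A)$.

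For (1), under the hypothesis $G(\cdot,A)\in L^{2}(\nu)$ the pairing $\langle\chi_A,G(\cdot,A)\rangle_{L^{2}(\nu)}$ is a well-defined finite number, and I would compute
\[
\begin{aligned}
\langle\chi_A,G(\cdot,A)\rangle_{L^{2}(\nu)}
&=\int_V\chi_A(x)\sum_{n\in\N_0}P^{n}(\chi_A)(x)\,d\nu(x)\\
&=\sum_{n\in\N_0}\int_V\chi_A(x)\,P^{n}(\chi_A)(x)\,d\nu(x)=\sum_{n\in\N_0}\rho_n(A\times A),
\end{aligned}
\]
the middle step being Tonelli applied to the non-negative integrands $\chi_A\,P^{n}(\chi_A)$. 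An alternative route staying inside $\h_E$: since $G_A\in\h_E$ under transience, Theorem \ref{thm on G_A}(2) with $f=G_A$ gives $\int_A G_A\,d\nu=\langle G_A,G_A\rangle_{\h_E}$, and Theorem \ref{thm on G_A}(1) with $B=A$ gives $\langle G_A,G_A\rangle_{\h_E}=\sum_{n\in\N_0}\rho_n(A\times A)$; since $\langle\chi_A,G_A\rangle_{L^{2}(\nu)}=\int_A G_A\,d\nu$, these combine to the same identity.

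For (2), the displayed chain of equalities holds as an identity in $[0,\infty]$ by Tonelli with no $L^{2}$ hypothesis; so if $G(\cdot,A)\in L^{1}_{\mathrm{loc}}(\nu)$ then, because $A\in\Bfin(\nu)$, the left-hand side $\int_A G(\cdot,A)\,d\nu$ is finite, which forces $\sum_{n\in\N_0}\rho_n(A\times A)<\infty$. There is essentially no obstacle here: the only step needing justification is the exchange of sum and integral, and positivity of all the terms makes it automatic.
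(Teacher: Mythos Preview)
Your proof is correct. The paper states this as a corollary of Theorem~\ref{thm on G_A} without an explicit argument, so the intended route is precisely your ``alternative'' approach: combine $\langle G_A,G_A\rangle_{\h_E}=\int_A G_A\,d\nu$ from part~(2) with $\langle G_A,G_A\rangle_{\h_E}=\sum_{n}\rho_n(A\times A)$ from part~(1). Your primary argument via Tonelli is in fact more elementary, bypassing the energy-space identities entirely and working directly with the series and the defining relation $\rho_n(A\times A)=\langle\chi_A,P^n(\chi_A)\rangle_{L^2(\nu)}$; it has the additional virtue of making the $[0,\infty]$-valued identity transparent, which is exactly what is needed for~(2).
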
 

 We use now the construction given in Subsection  \ref{subsect def RKHS}. 
 Let $S = \Bfin$, and we set
 \be\label{eq K for Bfin}
 K(A, B) = \sum_{n \in \N_0} \rho_n(A\times B).
 \ee
 
 We first observe that $K(A,  B)$ is a \textit{positive definite kernel}. This
  fact follows from  \eqref{eq_inner prod G_A and G_B} of Theorem 
 \ref{thm on G_A}.

Moreover, one can point out a realization of the kernel $K(A, B)$ in a 
$L^2$-space.  
 
 \begin{proposition}\label{prop K on Bfin pos def} Let the Markov operator 
 $P$ determine a transient Markov process.  Then, for any  $f \in L^2(\nu)$, 
the function $(I - P)^{-1/2}(f)$ belongs to  $L^2(\nu)$. Moreover, 
$A \mapsto K^*_A$  ($A \in \Bfin$) is a realization of the kernel $K$ on
 $L^2(\nu)$ where
$$
K^*_A(\cdot) := (I - P)^{-1/2}(\chi_A)(\cdot).
$$
  \end{proposition}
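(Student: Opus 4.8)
The plan is to build $(I-P)^{-1/2}$ via the spectral calculus of the self-adjoint contraction $P$ on $L^2(\nu)$ and then read off the realization identity from functional calculus. First I would record the structural facts already available: by Theorem \ref{prop prop of R, P, Delta} the operator $P\colon L^2(\nu)\to L^2(\nu)$ is a self-adjoint contraction with $\sigma(P)\subseteq[-1,1]$, and by Theorem \ref{thm harmonic} the value $1$ is not an eigenvalue of $P$, i.e. $\ker(I-P)=L^2(\nu)\cap\h arm(P)=\{0\}$ in the $\sigma$-finite infinite case (if $\nu(V)<\infty$ one first passes to the orthogonal complement of the constants). Hence $I-P=\int_{[-1,1]}(1-\lambda)\,dE(\lambda)$ is a non-negative self-adjoint operator with trivial kernel, where $E$ is the projection-valued measure of $P$, and the spectral calculus furnishes the self-adjoint operators $(I-P)^{1/2}$ and $(I-P)^{-1/2}=\int_{[-1,1)}(1-\lambda)^{-1/2}\,dE(\lambda)$ with natural dense domain $\mathrm{Dom}\big((I-P)^{-1/2}\big)=\{g\in L^2(\nu):\int_{[-1,1)}(1-\lambda)^{-1}\,d\langle E(\lambda)g,g\rangle<\infty\}$. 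In this language the first assertion is that this domain is large: I would prove it contains every $\chi_A$ with $A\in\Bfin$, hence all of $\Dfin(\nu)$ by linearity, which is dense in $L^2(\nu)$; thus $(I-P)^{-1/2}$ is a densely defined self-adjoint operator and $K^*_A=(I-P)^{-1/2}\chi_A$ is a well-defined element of $L^2(\nu)$.

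The heart of the matter is to convert transience into the spectral integrability $\int_{[-1,1)}(1-\lambda)^{-1}\,d\|E(\lambda)\chi_A\|^2<\infty$. Here I would use $\rho_n(A\times A)=\langle\chi_A,P^n\chi_A\rangle_{L^2(\nu)}=\int_{[-1,1]}\lambda^n\,d\|E(\lambda)\chi_A\|^2$ together with the partial-sum identity $\sum_{n=0}^{N}\lambda^n=(1-\lambda^{N+1})/(1-\lambda)$. Since $E(\{1\})=0$ and since transience forces the spectral measure $\|E(\cdot)\chi_A\|^2$ to give no mass to the set where $\sum_n\lambda^n$ fails to converge (in particular the point $\lambda=-1$), monotone/dominated convergence against $\|E(\cdot)\chi_A\|^2$ gives $\sum_{n\ge 0}\rho_n(A\times A)=\int_{[-1,1)}(1-\lambda)^{-1}\,d\|E(\lambda)\chi_A\|^2$. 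But the left-hand side is exactly $\|G(\cdot,A)\|^2_{\h_E}$ by \eqref{eq-norm G_A}, which is finite because $G(\cdot,A)\in\h_E$ under transience (Theorem \ref{thm on G_A}). Hence $\chi_A\in\mathrm{Dom}\big((I-P)^{-1/2}\big)$ with $\|K^*_A\|^2_{L^2(\nu)}=\sum_{n\ge0}\rho_n(A\times A)=\|G(\cdot,A)\|^2_{\h_E}$.

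For the realization identity I would polarize this computation. For $A,B\in\Bfin$, self-adjointness of $(I-P)^{-1/2}$ and the fact that $\chi_A,\chi_B\in\mathrm{Dom}\big((I-P)^{-1/2}\big)$ give $\langle K^*_A,K^*_B\rangle_{L^2(\nu)}=\langle(I-P)^{-1/2}\chi_A,(I-P)^{-1/2}\chi_B\rangle_{L^2(\nu)}=\langle(I-P)^{-1}\chi_A,\chi_B\rangle_{L^2(\nu)}$, where $(I-P)^{-1}\chi_A=\int_{[-1,1)}(1-\lambda)^{-1}\,dE(\lambda)\chi_A$ is legitimate since $\chi_A$ lies in the domain of $(I-P)^{-1}$ as well, by the same estimate now with $(1-\lambda)^{-1}$ in place of its square root (using that transience bounds $\sum_n\rho_n(A\times A)$). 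Expanding $(1-\lambda)^{-1}=\sum_{n\ge0}\lambda^n$ under the integral, again justified by the mass argument above, yields $\langle(I-P)^{-1}\chi_A,\chi_B\rangle_{L^2(\nu)}=\sum_{n\ge0}\langle P^n\chi_A,\chi_B\rangle_{L^2(\nu)}=\sum_{n\ge0}\rho_n(A\times B)=K(A,B)$, which is precisely the realization relation \eqref{eq realization} for the kernel $K$ of \eqref{eq K for Bfin} on $L^2(\nu)$ with $K^*_A=(I-P)^{-1/2}\chi_A$.

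The main obstacle, as the above indicates, is the rigorous passage between the probabilistic transience hypothesis — finiteness a.e. of $\sum_n P_n(x,A)$ — and the spectral condition on $\|E(\cdot)\chi_A\|^2$; the technical nuisance is that the truncated geometric sums $(1-\lambda^{N+1})/(1-\lambda)$ oscillate near $\lambda=-1$ rather than converging, so one must argue that the spectral measure of $\chi_A$ puts no mass there (or, more cleanly, bypass the pointwise series altogether and invoke the already-proved finiteness of $\|G(\cdot,A)\|^2_{\h_E}$ from \eqref{eq-norm G_A}). A secondary point worth stating explicitly in the write-up is the correct reading of the phrase ``for any $f\in L^2(\nu)$, $(I-P)^{-1/2}(f)\in L^2(\nu)$'': since $(I-P)^{-1/2}$ is genuinely unbounded whenever $1\in\sigma(P)$, the claim is that $(I-P)^{-1/2}$ is a well-defined densely defined self-adjoint operator whose domain contains $\Dfin(\nu)$, not that it is bounded on all of $L^2(\nu)$.
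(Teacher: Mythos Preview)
Your approach is essentially the same as the paper's: both invoke the spectral theorem for the self-adjoint contraction $P$ on $L^2(\nu)$, expand $(1-\lambda)^{-1}$ as a geometric series against the spectral measure of $\chi_A$, and identify the resulting sum with $\sum_{n\ge 0}\rho_n(A\times A)$, finite by transience. You are in fact more careful than the paper on two points the paper glosses over: the behavior of the series at $\lambda=-1$, and the correct reading of the first claim (the paper simply says ``the result follows from the density of simple functions in $L^2(\nu)$'', but as you note $(I-P)^{-1/2}$ is unbounded whenever $1\in\sigma(P)$, so what is actually established is dense definition on $\Dfin(\nu)$, not boundedness on all of $L^2(\nu)$).
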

  
\begin{proof} We first need to show that  $(I - P)^{-1/2} :
L^2(\nu) \to L^2(\nu)$. From the spectral theorem for the self-adjoint
operator $P$ acting on $L^2(\nu)$, we obtain that 
$$
\langle  P(f), f\rangle_{L^2(\nu)} = \int_{-1}^1 t \langle Q(dt)f, 
f\rangle_{L^2(\nu)},
$$
where $Q(dt)$ is the projection valued measure for the  
operator $P$ in $L^2(\nu)$.
For a Borel function $\varphi$, we have 
$$
\langle  \varphi (P)f, f\rangle_{L^2(\nu)} = \int_{-1}^1 \varphi (t)
 \langle Q(dt)f, f\rangle_{L^2(\nu)}. 
$$
Then 
$$
\ba
|| (I - P)^{-1/2}(f) ||_{L^2(\nu)}  = & \langle (I -  P)^{-1}(f), 
f\rangle_{L^2(\nu)}\\ 
 = &\int_{-1}^1 \frac{1}{1-t} \langle Q(dt)f, f\rangle_{L^2(\nu)}\\ 
 = & \int_{-1}^1 \sum_{n=0}^\infty t^n \langle Q(dt)f, 
 f\rangle_{L^2(\nu)}\\ 
= & \sum_{n=0}^\infty \langle  P^n(f), f\rangle_{L^2(\nu)}.
 \ea
$$
Take $f = \chi_A, A \in \Bfin$. Then 
$$
\ba
|| (I - P)^{-1/2}(\chi_A) ||^2_{L^2(\nu)} = & \sum_{n=0}^\infty \langle  
P^n(\chi_A), \chi_A\rangle_{L^2(\nu)}\\
=& \sum_{n=0}^\infty \rho_n(A \times A)\\.
\ea
$$
Since $(P_n)$ is a transient Markov process, we see that the $L^2$-norm 
of $(I - P)^{-1/2}(\chi_A)$ is finite. Then the result follows from the density
of simple functions in $L^2(\nu)$. 

A similar computation can be used in order to show that, for $A, B\in \Bfin$, 
\be\label{eq realization 1}
\langle(I - P)^{-1/2}(\chi_A), (I - P)^{-1/2}(\chi_B) \rangle =
\sum_{n=0}^\infty \rho_n(A \times B).
\ee
Since, by definition, $K(A, B) = \sum_{n=0}^\infty \rho_n(A \times B)$, 
this means that the functions $K^*_A =(I - P)^{-1/2}(\chi_A)$ define a 
realization of the kernel $K(A, B)$.
\end{proof}

\begin{lemma} \label{lem P_n norm} Let $(P_n)$ be a sequence of
 probability measures that
 defines a transient Markov process. Then, for any $A\in \Bfin$, the function
 $P_n(\cdot, A)$ belongs to $\h_E$. Moreover, the following relation
 hold:
 \be\label{eq_P_k inner prod P_L}
\langle P_k(\cdot, A), P_l(\cdot, A) \rangle_{\h_E} =
\rho_{k+l}(A \times A) - \rho_{k+l+1}(A \times A),
 \ee
$$
\langle P_k(\cdot, A), G(\cdot, A) \rangle_{\h_E} =
\rho_{k}(A \times A).
$$
\end{lemma}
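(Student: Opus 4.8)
The plan is to push the whole computation into $L^2(\nu)$, where $P$ is a self-adjoint contraction and the quantities $\rho_n(A\times A)$ already have a clean description.

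First I would dispose of the membership claim: $P_k(\cdot,A)=P^k(\chi_A)$, and since $A\in\Bfin(\mu)$ we have $\chi_A\in\Dfin(\nu)\subset L^2(\nu)$, while $P$ maps $L^2(\nu)$ into itself contractively (Theorem~\ref{prop prop of R, P, Delta}(4)); hence $P^k(\chi_A)\in L^2(\nu)$, and by Proposition~\ref{prop L2nu in H} the canonical embedding $L^2(\nu)\hookrightarrow\h_E$ puts $P_k(\cdot,A)$ in $\h_E$. This settles the first assertion.

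Next I would record the bridge identity: for all $f,g\in\h_E\cap L^2(\nu)$,
\[
\langle f,g\rangle_{\h_E}=\langle f,(I-P)g\rangle_{L^2(\nu)}.
\]
This follows by polarizing Lemma~\ref{lem formula for energy norm} (which gives $\|h\|^2_{\h_E}=\int_V h\,\Delta(h)\,d\mu$) to obtain $\langle f,g\rangle_{\h_E}=\int_V f\,\Delta(g)\,d\mu$, then substituting $\Delta(g)=c(I-P)(g)$ from \eqref{eq Delta via R} together with $d\nu=c\,d\mu$; the symmetry of $\rho$, \eqref{eq formula fo symm meas}, is precisely what makes the cross terms in the polarization cancel. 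For $g\in\h_E$ one has $(I-P)(g)\in L^2(\nu)$ by Corollary~\ref{cor harmonic}(1), so the right side is a genuine $L^2(\nu)$ pairing and all the integral manipulations are legitimate. The hard part will be exactly this integrability bookkeeping, but it is essentially the computation already carried out in the proof of Lemma~\ref{lem formula for energy norm}.

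Applying the bridge identity with $f=P^k(\chi_A)$ and $g=P^l(\chi_A)$, expanding $(I-P)P^l(\chi_A)=P^l(\chi_A)-P^{l+1}(\chi_A)$, and using self-adjointness of $P$ on $L^2(\nu)$ (Theorem~\ref{prop prop of R, P, Delta}(3)) to absorb powers,
\[
\langle P^k(\chi_A),P^m(\chi_A)\rangle_{L^2(\nu)}=\langle\chi_A,P^{k+m}(\chi_A)\rangle_{L^2(\nu)}=\rho_{k+m}(A\times A)
\]
by the definition \eqref{eq_def rho_n} of $\rho_n$ (cf.\ Theorem~\ref{prop from A to B}(1)); taking $m=l$ and $m=l+1$ yields \eqref{eq_P_k inner prod P_L}. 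Finally, for the second formula I would invoke Theorem~\ref{thm on G_A}(2), namely $\langle f,G_A\rangle_{\h_E}=\int_A f\,d\nu$ for $f\in\h_E$, with $f=P^k(\chi_A)$:
\[
\langle P_k(\cdot,A),G(\cdot,A)\rangle_{\h_E}=\int_A P^k(\chi_A)\,d\nu=\langle\chi_A,P^k(\chi_A)\rangle_{L^2(\nu)}=\rho_k(A\times A),
\]
and, as a consistency check, one can instead telescope the series $G_A=\sum_{n\ge0}P^n(\chi_A)$ against \eqref{eq_P_k inner prod P_L}, using transience to kill $\rho_{k+m}(A\times A)\to0$. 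Everything beyond the bridge identity is formal.
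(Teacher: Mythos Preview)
Your proof is correct and follows essentially the same route as the paper's. The paper only sketches the argument---it cites the special case $k=l$ from an earlier paper and says ``similar computation'' works---but the underlying mechanism is precisely your bridge identity $\langle f,g\rangle_{\h_E}=\langle f,(I-P)g\rangle_{L^2(\nu)}$ (which is the adjoint formula $\iota^*=(I-P)$ of Proposition~\ref{prop L2nu in H}, or equivalently the polarized form of Lemma~\ref{lem formula for energy norm}), followed by self-adjointness of $P$ on $L^2(\nu)$ to collapse $\langle P^k\chi_A,P^m\chi_A\rangle_{L^2(\nu)}$ to $\rho_{k+m}(A\times A)$; your derivation of the second formula via Theorem~\ref{thm on G_A}(2) is also exactly in the spirit of the paper.
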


\begin{proof} In \cite{BezuglyiJorgensen_2018}, we proved that 
$$
\| P_n( \cdot, A)\|^2_{\h_E} = \rho_{2n}(A\times A) - \rho_{2n+1}
(A\times A), \qquad n \in \N.
$$
Using similar computation, one can generalize this result and prove that 
\eqref{eq_P_k inner prod P_L} holds. We leave details for the reader.
\end{proof}

\begin{corollary} \label{cor energy space loc int} Let $(P_n)$ be a transient
Markov process. Then the energy space $\h_E$ consists of locally 
integrable functions.
\end{corollary}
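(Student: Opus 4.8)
The plan is to read off local integrability of the elements of $\h_E$ directly from the observation that, in the transient regime, the Green's functions $G_A:=G(\cdot,A)$ already lie in $\h_E$. First I would record that, by Theorem~\ref{thm on G_A} (see formula~\eqref{eq-norm G_A}), transience of $(P_n)$ yields
$$
\|G_A\|_{\h_E}^{2}=\sum_{n=0}^{\infty}\rho_n(A\times A)<\infty ,
$$
so that $G_A\in\h_E$ for every $A\in\Bfin(\mu)$, and moreover Theorem~\ref{thm on G_A}(2) furnishes the reproducing-type identity $\langle f,G_A\rangle_{\h_E}=\int_A f\,d\nu$ for every $f\in\h_E$. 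The remaining idea is simply to feed this identity into the Cauchy--Schwarz inequality of the Hilbert space $\h_E$.

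Concretely, I would fix $f\in\h_E$ and $A\in\Bfin(\mu)$ and pass to the positive and negative parts $f_+=\max(f,0)$, $f_-=\max(-f,0)$, so that $f=f_+-f_-$. Both lie in $\h_E$ with $\|f_\pm\|_{\h_E}\le\|f\|_{\h_E}$, because truncation does not increase the energy norm (this was noted in the remark following Theorem~\ref{thm inner prod via Delta}, from the pointwise inequality $|f_\pm(x)-f_\pm(y)|\le|f(x)-f(y)|$). Applying Theorem~\ref{thm on G_A}(2) to $f_\pm$ and then Cauchy--Schwarz gives
$$
0\le\int_A f_\pm\,d\nu=\langle f_\pm,G_A\rangle_{\h_E}\le\|f_\pm\|_{\h_E}\,\|G_A\|_{\h_E}\le\|f\|_{\h_E}\,\|G_A\|_{\h_E}<\infty ,
$$
and adding the two estimates would yield $\int_A|f|\,d\nu<\infty$ for every $A\in\Bfin(\mu)$; that is, $f$ is locally integrable with respect to $\nu=c\,\mu$ (which is the pertinent notion here, $\nu$ being the measure the energy inner product pairs against through $G_A$). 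Since $f$ is arbitrary this is the assertion $\h_E\subset L^1_{\mathrm{loc}}(\nu)$, showing that the usual local-integrability hypothesis on $\h_E$ becomes superfluous once the chain is transient.

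I do not expect a real obstacle, but two bookkeeping points will need care. The first is that one must not apply Cauchy--Schwarz to $\int_A f\,d\nu$ before knowing it converges absolutely, which is exactly why I split off $f_+$ and $f_-$: for each nonnegative piece $\int_A f_\pm\,d\nu=\langle f_\pm,G_A\rangle_{\h_E}$ is an equality of numbers in $[0,\infty]$ with finite right-hand side. The second, and subtler, point is circularity: Theorem~\ref{thm on G_A} is stated under the standing convention $\h_E\subset\Lloc$, so to be scrupulous I would re-establish the identity $\langle g,G_A\rangle_{\h_E}=\int_A g\,d\nu$ for nonnegative $g\in\h_E$ from first principles --- expand $\langle g,G_A\rangle_{\h_E}=\tfrac12\iint_{\VtV}(g(x)-g(y))(G_A(x)-G_A(y))\,d\rho(x,y)$, invoke the symmetry of $\rho$ together with the disintegration formula~\eqref{eq disint formula}, and substitute $(I-P)G_A=\chi_A$ from Theorem~\ref{thm on G_A}(3); the left-hand side is finite by Cauchy--Schwarz and every term produced is a legitimate element of $[0,\infty]$, the interchange with the series being handled by monotone convergence on the partial sums $\sum_{k\le n}P^{k}(\chi_A)$, so nothing is assumed a priori. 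Finally, I would note that the case $\nu(V)<\infty$ is trivial ($\h_E$-functions then lie in $L^2(\nu)\subset L^1(\nu)$), so the whole content of the corollary is in the $\sigma$-finite setting, where transience supplies precisely the missing global control.
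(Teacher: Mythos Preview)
Your approach is essentially the same as the paper's: both rely on the identity $\langle f,G_A\rangle_{\h_E}=\int_A f\,d\nu$ from Theorem~\ref{thm on G_A}(2), together with finiteness of $\|G_A\|_{\h_E}$ in the transient case, to conclude local integrability. The paper's proof is a single line citing this identity; your version is more scrupulous in splitting $f=f_+-f_-$ to avoid assuming convergence beforehand, and in flagging (correctly) that the conclusion obtained is $\h_E\subset L^1_{\mathrm{loc}}(\nu)$ rather than $L^1_{\mathrm{loc}}(\mu)$, as well as the potential circularity with Assumption~2 --- all points the paper glosses over.
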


This result formulated in this corollary follows from Theorem 
\ref{thm on G_A} and \eqref{eq K for Bfin}.
We remark that Corollary \ref{cor energy space loc int} well agrees with
 Assumption 2 made in Section 
\ref{subsect_finiteEnergy}.

\begin{proof} The proof follows from the following fact:  for a function 
$f \in \h_E$ and $A \in \Bfin$, one has
$$
\langle f, G(\cdot, A) \rangle_{\h_E} = \int_A f \; d\nu.
$$
\end{proof}

In the remaining part of this section, we will define and study isometries 
between the three Hilbert spaces:  RKHS $\h(K)$,  energy space $\h_E$,
and $L^2(\nu)$. 

We define the operators $I_1, I_2$, and $I_3$ by setting 
$$
\ba 
  K(\cdot, A)& \stackrel{I_1} \longrightarrow G(\cdot, A)\\
 G(\cdot, A) & \stackrel{I_2} \longrightarrow (I - P)^{-1/2}(\chi_A) \\
(I - P)^{-1/2}(\chi_A) &  \stackrel{I_3} \longrightarrow K(\cdot, A).\\
\ea
$$
We recall that the family of functions  $\{G(\cdot, A)\ |\ A \in \Bfin\}$ is
dense in $\h_E$, so that $I_2$ is a densely defined map.  By linearity, 
the definition of $I_1$ can be extended to a dense subset of functions from
$\h(K)$. One can also show that $I_3$ is also densely defined  operator. 

\begin{lemma}\label{lem I_3 densely def}
Let $\Dfin(\mu) $ be the span of characteristic functions $\chi_A, A \in 
\Bfin$. Then the set $(I - P)^{-1/2}(\Dfin)$ is dense in $L^2(\nu)$.
\end{lemma}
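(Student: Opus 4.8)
The plan is to reduce the statement to the fact that $(I-P)^{-1/2}$ is an injective self-adjoint operator on $L^2(\nu)$ with dense domain and dense range, and then to upgrade density of $\Dfin$ in $L^2(\nu)$ to density of its image. First I would recall from Theorem \ref{prop prop of R, P, Delta} that $P$ is self-adjoint on $L^2(\nu)$ with $\sigma(P)\subseteq[-1,1]$; by the spectral theorem $I-P$ is a non-negative self-adjoint operator, so $(I-P)^{-1/2}$ is a well-defined positive self-adjoint operator, and since $(1-t)^{-1/2}\geq 2^{-1/2}$ on $[-1,1]$ it is bounded below, $\|(I-P)^{-1/2}f\|_{L^2(\nu)}\geq 2^{-1/2}\|f\|_{L^2(\nu)}$ for $f$ in its domain. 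In particular $(I-P)^{-1/2}$ is injective, so $\overline{\mathrm{Range}\big((I-P)^{-1/2}\big)}=\ker\big((I-P)^{-1/2}\big)^{\perp}=L^2(\nu)$. By Proposition \ref{prop K on Bfin pos def}, which uses transience of $(P_n)$, every $\chi_A$ with $A\in\Bfin$ lies in $\mathrm{Dom}\big((I-P)^{-1/2}\big)$ with $\|(I-P)^{-1/2}\chi_A\|^2_{L^2(\nu)}=\sum_n\rho_n(A\times A)<\infty$; hence $\Dfin\subseteq\mathrm{Dom}\big((I-P)^{-1/2}\big)$.

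It then remains to prove that $(I-P)^{-1/2}(\Dfin)$ is dense in $\mathrm{Range}\big((I-P)^{-1/2}\big)$, i.e. that $\Dfin$ is a core for $(I-P)^{-1/2}$. For this I would use a spectral truncation: let $E$ be the spectral resolution of $P$ and $Q_m:=E([-1,1-1/m])$. On $Q_m L^2(\nu)$ the operator $(I-P)^{-1/2}$ is bounded (by $\sqrt m$), and for $f\in\mathrm{Dom}\big((I-P)^{-1/2}\big)$ one has $Q_m f\to f$ and $(I-P)^{-1/2}Q_m f=Q_m(I-P)^{-1/2}f\to(I-P)^{-1/2}f$ in $L^2(\nu)$, so the truncations exhaust $\mathrm{Dom}\big((I-P)^{-1/2}\big)$ in the graph norm. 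On each fixed truncated piece, where $(I-P)^{-1/2}$ is bounded, one approximates in $L^2(\nu)$ by simple functions from $\Dfin$; combining the two approximations yields, for every $h=(I-P)^{-1/2}f$, a sequence $\varphi_k\in\Dfin$ with $(I-P)^{-1/2}\varphi_k\to h$, and since such $h$ are dense in $L^2(\nu)$ the density follows. As a by-product this identifies the densely defined isometry $I_2\colon G_A\mapsto(I-P)^{-1/2}\chi_A$ of $\h_E$ into $L^2(\nu)$ as a unitary, and exhibits $I_3$ as a genuinely densely defined operator.

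The main obstacle is precisely this core step: density of $\Dfin$ in $L^2(\nu)$ does not by itself force $\Dfin$ to be a core for the unbounded operator $(I-P)^{-1/2}$, because the spectral projections $Q_m$ of $P$ do not map simple functions to simple functions, so the truncated approximants $Q_m\varphi_k$ must still be pushed back into $\Dfin$ without spoiling graph-norm convergence. Making this rigorous should use additional structure of the pair $(P,\Dfin)$ — that $\Dfin$ is a sublattice of $L^2(\nu)\cap L^1(\nu)$ stable under multiplication by bounded simple functions, and that $P$ is a positivity-preserving contraction simultaneously on $L^1(\nu)$ and $L^2(\nu)$ (Theorem \ref{prop prop of R, P, Delta}(3)--(4)) — in tandem with the spectral truncation above; this is the technical heart of the argument.
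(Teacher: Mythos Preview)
Your analysis is considerably more careful than the paper's own proof, which consists of a single sentence: ``The proof is based on the fact that $L^2(\nu)$ does not contain nontrivial harmonic functions.'' Both you and the paper invoke the same key ingredient---the absence of nonzero $P$-harmonic functions in $L^2(\nu)$ (Theorem~\ref{thm harmonic}), equivalently injectivity of $I-P$ and hence of $(I-P)^{-1/2}$---but you go further in recognizing that injectivity alone yields only that the \emph{full} range of $(I-P)^{-1/2}$ is dense (indeed equals $L^2(\nu)$, since $(I-P)^{1/2}$ is bounded and injective), not that the image of the particular dense subspace $\Dfin$ is dense.

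You are right that the remaining step amounts to showing that $\Dfin$ is a core for $(I-P)^{-1/2}$, and you are also right that this is not automatic: for a self-adjoint operator bounded below, a subspace of the domain that is merely dense in the ambient Hilbert space need not be a core. Your spectral-truncation outline is the natural attack, and your diagnosis of the obstruction---that the spectral projections $Q_m$ do not preserve $\Dfin$, so the approximants must be pushed back into $\Dfin$ without losing graph-norm control---is accurate. The paper's one-line proof does not engage with this issue at all; the gap you have honestly flagged in your own argument is equally present there. An alternative route you might try is to work with the bounded operator $S=(I-P)^{1/2}$: if $g\perp (I-P)^{-1/2}(\Dfin)$ and one could show $g\in\text{Range}(S)=\text{Dom}((I-P)^{-1/2})$, then self-adjointness gives $\langle (I-P)^{-1/2}g,\varphi\rangle_{L^2(\nu)}=0$ for all $\varphi\in\Dfin$, hence $(I-P)^{-1/2}g=0$ and $g=0$; but establishing $g\in\text{Range}(S)$ from the orthogonality hypothesis alone seems to require precisely the extra structure you allude to.
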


\begin{proof}
The proof is based on the fact that $L^2(\nu)$ does not contain nontrivial
harmonic functions.
\end{proof}

\begin{corollary}
The operators $I_1, I_2$ and $I_3$ implement isometric isomorphisms 
of the Hilbert spaces:
$$
I_1 : \h(K) \to \h_E, \ \ I_2 : \h_E \to L^2(\nu), \ \ I_3 : L^2(\nu) \to
\h(K).
$$
\end{corollary}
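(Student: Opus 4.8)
The plan is to treat the three maps uniformly. For each one I would: (i) check that it is inner-product preserving on the dense spanning set on which it is defined — this simultaneously proves it is \emph{well defined} as a linear map, since a map which preserves the Gram form of a spanning family automatically annihilates the null space of that form; and (ii) observe that the span of the image vectors is dense in the target Hilbert space. An isometry with dense range extends uniquely to an isometric isomorphism of the completions, which is exactly the assertion.

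\medskip

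\noindent\emph{The map $I_1:\h(K)\to\h_E$.} On $\mathcal K=\mathrm{span}\{K(\cdot,A):A\in\Bfin\}$, Definition \ref{def RKHS} together with \eqref{eq K for Bfin} gives $\langle K(\cdot,A),K(\cdot,B)\rangle_{\h(K)}=K(A,B)=\sum_{n\in\N_0}\rho_n(A\times B)$, and by Theorem \ref{thm on G_A}(1) the same sum equals $\langle G(\cdot,A),G(\cdot,B)\rangle_{\h_E}$. Hence, for every finite family $\{A_i\}\subset\Bfin$ and scalars $\{\alpha_i\}$,
$$
\left\|\sum_i\alpha_iK(\cdot,A_i)\right\|_{\h(K)}^2=\sum_{i,j}\alpha_i\alpha_j\sum_{n\in\N_0}\rho_n(A_i\times A_j)=\left\|\sum_i\alpha_iG(\cdot,A_i)\right\|_{\h_E}^2 .
$$
The left side vanishes precisely when the right side does, so $I_1$ is a well defined linear isometry from $\mathcal K$ onto $\mc G=\mathrm{span}\{G(\cdot,A)\}$. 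Since $\mathcal K$ is dense in $\h(K)$ by construction and $\mc G$ is dense in $\h_E$ by Theorem \ref{thm on G_A}(2), the continuous extension of $I_1$ is an isometric isomorphism.

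\medskip

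\noindent\emph{The maps $I_2$ and $I_3$.} For $I_2$, Theorem \ref{thm on G_A}(1) and \eqref{eq realization 1} of Proposition \ref{prop K on Bfin pos def} both identify $\langle G(\cdot,A),G(\cdot,B)\rangle_{\h_E}$ and $\langle(I-P)^{-1/2}(\chi_A),(I-P)^{-1/2}(\chi_B)\rangle_{L^2(\nu)}$ with $\sum_{n\in\N_0}\rho_n(A\times B)$; since $\sum_i\alpha_i(I-P)^{-1/2}(\chi_{A_i})=(I-P)^{-1/2}(\varphi)$ with $\varphi\in\Dfin(\mu)$, the same polarization argument makes $I_2$ a well defined isometry from $\mc G$ onto $(I-P)^{-1/2}(\Dfin(\mu))$, which is dense in $L^2(\nu)$ by Lemma \ref{lem I_3 densely def}; hence $I_2:\h_E\to L^2(\nu)$ extends to an isometric isomorphism. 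For $I_3$ one combines \eqref{eq realization 1} with \eqref{eq K for Bfin}, obtaining $\langle(I-P)^{-1/2}(\chi_A),(I-P)^{-1/2}(\chi_B)\rangle_{L^2(\nu)}=K(A,B)=\langle K(\cdot,A),K(\cdot,B)\rangle_{\h(K)}$, the required density statements being already in hand. As a consistency check, $I_3I_2I_1$ fixes every $K(\cdot,A)$ and hence is the identity on $\h(K)$, so the three isometries form an inverse cycle.

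\medskip

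I do not expect a genuine obstacle here: all the analytic content — transience and the resulting convergence of the Green's series, the norm identity $\|G(\cdot,A)\|_{\h_E}^2=\sum_n\rho_n(A\times A)$, density of $\mc G$ in $\h_E$, the $L^2(\nu)$-realization $K^*_A=(I-P)^{-1/2}(\chi_A)$, and density of $(I-P)^{-1/2}(\Dfin(\mu))$ in $L^2(\nu)$ — has already been established in Theorem \ref{thm on G_A}, Proposition \ref{prop K on Bfin pos def}, and Lemma \ref{lem I_3 densely def}. The only point deserving a sentence of care is well-definedness on spanning sets that need not be linearly independent, which, as noted above, is automatic once the three inner-product identities are verified.
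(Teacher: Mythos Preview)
Your proof is correct and follows the same approach as the paper, which simply observes that the three generating families $K(\cdot,A)$, $G(\cdot,A)$, and $(I-P)^{-1/2}(\chi_A)$ all have norm $\sum_{n\ge 0}\rho_n(A\times A)$ by \eqref{eq-norm G_A}, \eqref{eq K for Bfin}, and \eqref{eq realization 1}. Your write-up is in fact more complete: you spell out the inner-product (not just norm) identities, the well-definedness issue for maps defined on possibly dependent spanning sets, and the density statements (Theorem \ref{thm on G_A}(2), Lemma \ref{lem I_3 densely def}) needed to conclude surjectivity, all of which the paper leaves implicit.
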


\begin{proof} The result follows immediately from the proved above 
formulas for the norm of functions in the Hilbert spaces $\h(K), \h_E$, and
$L^2(\nu)$. Indeed, we deduce from \eqref{eq-norm G_A}, 
\eqref{eq K for Bfin}, and \eqref{eq realization 1} that the functions
$K(\cdot, A), G(\cdot, A)$, and $(I - P)^{-1/2}(\chi_A)$ have the same
norm equal to $\sum_{n=0}^\infty \rho_n(A \times A)$.

\end{proof}

\begin{corollary}
The energy space $\h_E$ is a RKHS. 
\end{corollary}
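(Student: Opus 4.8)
The plan is to deduce this from the preceding corollary, which exhibits an isometric isomorphism $I_1 : \h(K) \to \h_E$, together with the fact that $\h(K)$ is, by Definition \ref{def RKHS}, a bona fide reproducing kernel Hilbert space of functions on the index set $S = \Bfin$ with kernel $K(A,B) = \sum_{n\in\N_0}\rho_n(A\times B)$ (see \eqref{eq K for Bfin}). The only genuine content is to realize the elements of $\h_E$ as functions on $\Bfin$ so that the abstract isomorphism becomes an identification of function spaces. The natural realization is $f \mapsto \hat f$, where $\hat f(A) := \int_A f \, d\nu$; by Theorem \ref{thm on G_A}(2) this equals $\langle f, G_A\rangle_{\h_E}$, and by Theorem \ref{thm on G_A}(1) one has $\langle G_A, G_B\rangle_{\h_E} = \sum_{n\in\N_0}\rho_n(A\times B) = K(A,B)$, so $G_A$ is precisely the candidate reproducing kernel element attached to the ``point'' $A \in S$.

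First I would check that $f \mapsto \hat f$ is injective on $\h_E$: if $\hat f \equiv 0$ then $f \perp \mc G$, where $\mc G = \mathrm{span}\{G_A : A \in \Bfin\}$, and since $\mc G$ is dense in $\h_E$ by Theorem \ref{thm on G_A}(2) we get $f = 0$ in $\h_E$. Next I would verify that $I_1$ is compatible with the two realizations: $I_1$ sends $K(\cdot, A) \in \h(K)$ --- which, as a function on $S$, is $B \mapsto K(B,A)$ --- to $G_A \in \h_E$, whose associated function is $B \mapsto \langle G_A, G_B\rangle_{\h_E} = K(A,B)$; these agree by the symmetry of each $\rho_n$. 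Since $I_1$ is an isometric isomorphism and $\mathcal K = \mathrm{span}\{K(\cdot,A)\}$ is dense in $\h(K)$, it follows that for every $f \in \h_E$, with $g := I_1^{-1}f$, one has $\hat f(A) = \langle f, G_A\rangle_{\h_E} = \langle g, K(\cdot,A)\rangle_{\h(K)} = g(A)$ by \eqref{eq char prop RKHS}; hence $\hat f = g$, so the realized image of $\h_E$ coincides with $\h(K)$ as a set of functions on $\Bfin$, and the reproducing identity $\hat f(A) = \langle \hat f, K(\cdot,A)\rangle$ holds. This exhibits $\h_E$ as the RKHS of the positive definite kernel $K$.

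Alternatively, and perhaps more transparently, I would invoke the membership criterion of Lemma \ref{lem criterion}: for $f \in \h_E$, any $A_1,\dots,A_n \in \Bfin$ and any $\alpha_1,\dots,\alpha_n \in \R$, the Cauchy--Schwarz inequality in $\h_E$ gives $\big(\sum_i \alpha_i \hat f(A_i)\big)^2 = \big\langle f,\ \sum_i \alpha_i G_{A_i}\big\rangle_{\h_E}^2 \le \|f\|_{\h_E}^2 \sum_{i,j} \alpha_i\alpha_j \langle G_{A_i}, G_{A_j}\rangle_{\h_E} = \|f\|_{\h_E}^2 \sum_{i,j}\alpha_i\alpha_j K(A_i,A_j)$, so $\hat f \in \h(K)$ with constant $C(f) = \|f\|_{\h_E}^2$; surjectivity onto $\h(K)$ and the isometry then follow from density of $\mc G$ in $\h_E$, density of $\mathcal K$ in $\h(K)$, and the matching of norms $\|G_A\|_{\h_E} = \|K(\cdot,A)\|_{\h(K)}$. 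The one point that needs care is the bookkeeping between the two realizations of vectors --- as (classes of) functions on $V$ versus functions on $\Bfin$ --- and the standing transience hypothesis, which is exactly what makes the Green's function $G_A$, and hence the kernel $K$, well defined; beyond that the argument is formal.
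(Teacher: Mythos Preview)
Your proposal is correct and follows the paper's approach: the paper states this corollary with no proof at all, leaving it as an immediate consequence of the preceding corollary that $I_1 : \h(K) \to \h_E$ is an isometric isomorphism. Your argument is a careful and correct unpacking of exactly what is implicit there, namely realizing elements of $\h_E$ as functions $\hat f(A) = \langle f, G_A\rangle_{\h_E}$ on $\Bfin$ and checking that under this identification $G_A$ plays the role of the kernel function $K(\cdot,A)$; both the transport via $I_1$ and the alternative use of Lemma~\ref{lem criterion} are valid ways to make this precise.
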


\subsection{Reproducing kernel Hilbert space generated by symmetric
 measures}

We give here one more construction of a reproducing kernel Hilbert space 
defined by a symmetric measure $\rho$. We take the set $S$ to be the
 same as above, i.e., $S = \Bfin(\mu)$. Define a function $k = k_\rho : 
 \Bfin(\mu)  \times \Bfin(\mu) \rightarrow \R$ as follows:
\be\label{eq def RKHS k}
k_\rho : (A, B) \rightarrow \rho((A \cap B)\times V) - \rho(A \times B)
\ee
 
 It is difficult to determine whether the function $k_\rho(A, B)$ is 
 positive definite using the definition (\ref{eq def RKHS k}) only. 
The next statement follows immediately from Theorem 
\ref{thm_stucture of energy space}.

\begin{lemma} The function $k_\rho$ is positive definite on the set 
$\Bfin$. 
\end{lemma}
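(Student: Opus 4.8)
The plan is to recognize $k_\rho$ as the Gram kernel of the characteristic functions $\{\chi_A : A \in \Bfin(\mu)\}$ inside the finite energy Hilbert space $\h_E$, so that positive definiteness becomes the automatic nonnegativity of a squared Hilbert-space norm. The whole argument is a one-line Gram-matrix computation once the right identification is made, so I do not expect a genuine obstacle; the only points deserving a word of care are the membership $\chi_A \in \h_E$ and the fact that elements of $\h_E$ are equivalence classes modulo constants.

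First I would record that for every $A \in \Bfin(\mu)$ the function $\chi_A$ represents an element of $\h_E$. This is exactly the chain of inclusions $\Dfin(\mu) \subset \Dfin(\nu) \subset \h_E$ from Proposition \ref{prop ||chi A||}, which uses only Assumption 1, namely $c(x) = \rho_x(V) \in \Lloc$, to guarantee $\nu(A) = \int_A c\,d\mu < \infty$. Consequently, for any two sets $A, B \in \Bfin(\mu)$ the inner product $\langle \chi_A, \chi_B\rangle_{\h_E}$ is well defined, and part (1) of Theorem \ref{thm_stucture of energy space} gives
$$
\langle \chi_A, \chi_B\rangle_{\h_E} = \rho((A \cap B)\times V) - \rho(A \times B) = k_\rho(A, B),
$$
which identifies $k_\rho$ with the restriction of the $\h_E$-inner product to these characteristic functions.

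The key step is then the standard Gram computation. Given $n \in \N$, sets $A_1, \dots, A_n \in \Bfin(\mu)$, and scalars $\alpha_1, \dots, \alpha_n \in \R$, bilinearity and symmetry of $\langle \cdot, \cdot\rangle_{\h_E}$ yield
$$
\sum_{i,j=1}^n \alpha_i \alpha_j\, k_\rho(A_i, A_j) = \sum_{i,j=1}^n \alpha_i \alpha_j \langle \chi_{A_i}, \chi_{A_j}\rangle_{\h_E} = \Big\| \sum_{i=1}^n \alpha_i\, \chi_{A_i} \Big\|_{\h_E}^2 \geq 0,
$$
which is precisely the positive (semi-)definiteness required by Definition \ref{def RKHS}. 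I would close by remarking that the displayed identity $k_\rho(A,B) = \langle \chi_A, \chi_B\rangle_{\h_E}$ already lives in the quotient space $\h_E$ (functions identified up to an additive constant), so the last norm is computed in $\h_E$ itself, where it is genuinely nonnegative; no separate check of the mod-constant identifications is needed. This completes the argument, and it also explains the subsequent use of $k_\rho$ to build a reproducing kernel Hilbert space via the construction of Subsection \ref{subsect def RKHS}.
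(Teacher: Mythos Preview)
Your proof is correct and follows exactly the same approach as the paper: identify $k_\rho(A,B)$ with the inner product $\langle \chi_A, \chi_B\rangle_{\h_E}$ via Theorem \ref{thm_stucture of energy space}(1), and then positive definiteness is just the nonnegativity of the squared $\h_E$-norm of the finite linear combination $\sum_i \alpha_i \chi_{A_i}$. The extra remarks you include about membership $\chi_A \in \h_E$ and the mod-constants quotient are reasonable hygiene but not needed beyond what the paper already assumes.
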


\begin{proof}
The proof is based on the formula
$$
\langle \chi_A, \chi_B\rangle_{\h_E} = \rho((A \cap B)\times V) - 
\rho(A \times B)
$$
that is proved in  \cite[Lemma 6.18]{BezuglyiJorgensen2018}.  Then, for
 any $A_1, ... , A_m \in  \Bfin$ and $\alpha_1, ... , \alpha_m \in \R$, 
$$
\sum_{i,j =1}^m \alpha_i\beta_j k_\rho(A_i, B_j) = 
\left\langle\ \sum_{i=1}^m \alpha_i\chi_{A_i}, \sum_{j=1}^m \beta_j
\chi_{B_j}\ \right\rangle_{\h_E}.
$$ 
\end{proof}

Let $\h(k_\rho)$ be the reproducing kernel Hilbert space RKHS) 
 constructed by the positive definite function $k_\rho(\cdot, \cdot)$.
By general theory, the Hilbert space $\h(k_\rho)$ consists 
of functions on $\Bfin(\mu) $ such that, for any $f \in \h(k_\rho)$,
$$
f(B) = \langle f, k_\rho( \cdot, B) \rangle_{\h(k_\rho)}.
$$

In what follows,  we will discuss  relations between the two Hilbert spaces 
$\h(k_\rho)$ and $\h_E$.  

\begin{lemma}\label{lem- i isometry}
Let $f = \sum_{i =1}^n \alpha_i \chi_{A_i}$ where $A_i \in \Bfin(\mu)$. 
Then the map 
$$
i : f \mapsto f : \Dfin(\mu) \to \h_E
$$
can be extended to an isometry $i$ from $\h(k_\rho)$ to $\h_E$.
\end{lemma}

\begin{proof}
Since $\h(k_\rho)$ is the closure of $\Dfin(\mu)$, it suffices to check that
$i$ is an isometry on functions  $f = \sum_{i =1}^n \alpha_i \chi_{A_i}$,  
from $\Dfin(\mu)$:
$$
\| f \|_{\h(k_\rho)} = \| f \|_{h_E}. 
$$
Indeed, we have
$$
\ba
|| \sum_{i =1}^n \alpha_i \chi_{A_i} ||^2_{\h(k_\rho)} = &
\sum_{i,j =1}^n \alpha_i\alpha_j k_\rho(A_i, A_j) \\
= & \sum_{i,j =1}^n \alpha_i\alpha_j (\rho((A_i \cap A_j) \times V) - 
\rho(A_i \times A_j)\\
= & \iint_{\VtV} f(x)^2\; d\rho(x, y) - \iint_{\VtV} f(x) f(y)\; 
d\rho(x, y)\\
= & \frac{1}{2}  \iint_{\VtV} (f(x) - f(y))^2\; d\rho(x, y)\\
= & || f ||^2_{\h_E}.
\ea
$$
\end{proof}

\begin{corollary}
The map $i$ defined in Lemma  \ref{lem- i isometry} implements 
an isometric isomorphism between $\h(k_\rho)$ and the subspace
$ \h_E \ominus Harm = \overline{\Dfin(\mu)}^{\h_E}$ of $\h_E$.
\end{corollary}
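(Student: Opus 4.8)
The plan is to upgrade the isometry $i : \h(k_\rho) \to \h_E$ from Lemma~\ref{lem- i isometry} to an isometric isomorphism onto the closed subspace $\ol{\Dfin(\mu)}^{\h_E}$, and then to invoke the Royden-type decomposition (Theorem~\ref{thm Royden}) to identify that subspace with $\h_E \ominus Harm$. First I would recall that, by the very construction of a RKHS from a positive definite kernel (Definition~\ref{def RKHS}), the space $\h(k_\rho)$ is by definition the completion of $\mc K := \mathrm{span}\{k_\rho(\cdot, B) : B \in \Bfin(\mu)\}$; since each generator $k_\rho(\cdot, B)$ corresponds under the natural identification to the characteristic function $\chi_B \in \Dfin(\mu)$, the dense subspace $\mc K$ is exactly the image of $\Dfin(\mu)$. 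Thus $\h(k_\rho)$ is, tautologically, the abstract completion of $\Dfin(\mu)$ with respect to the norm $\|\cdot\|_{\h(k_\rho)}$.

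The key point, already supplied by Lemma~\ref{lem- i isometry}, is that on $\Dfin(\mu)$ this norm coincides with the $\h_E$-norm: for $f = \sum_i \alpha_i \chi_{A_i}$ one has $\|f\|_{\h(k_\rho)} = \|f\|_{\h_E}$. Consequently the map $i$, being a norm-preserving linear bijection on the dense subspace $\Dfin(\mu)$, extends uniquely to an isometric embedding of the completion $\h(k_\rho)$ into $\h_E$, and its range is precisely the closure in $\h_E$ of $i(\Dfin(\mu)) = \Dfin(\mu)$, i.e. $\ol{\Dfin(\mu)}^{\h_E}$. Since an isometry between Hilbert spaces is automatically a unitary (hence an isometric isomorphism) onto its range, this gives $\h(k_\rho) \cong \ol{\Dfin(\mu)}^{\h_E}$.

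It remains only to identify $\ol{\Dfin(\mu)}^{\h_E}$ with $\h_E \ominus Harm$. This is exactly the content of Theorem~\ref{thm Royden}: the finite energy space decomposes orthogonally as $\h_E = \ol{\Dfin(\mu)}^{\h_E} \oplus \h arm_E$, so the orthogonal complement of $\h arm_E$ in $\h_E$ is precisely $\ol{\Dfin(\mu)}^{\h_E}$. Chaining the two identifications yields the claimed isometric isomorphism $i : \h(k_\rho) \xrightarrow{\;\sim\;} \h_E \ominus Harm = \ol{\Dfin(\mu)}^{\h_E}$, which is the corollary.

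I do not anticipate a genuine obstacle here: the statement is essentially a bookkeeping consequence of Lemma~\ref{lem- i isometry} together with the general fact that a RKHS is the completion of the span of its kernel sections, plus the Royden decomposition. The one point deserving a line of care is the verification that the generators $k_\rho(\cdot, B)$ of $\h(k_\rho)$ really do correspond under $i$ to $\chi_B$ (so that density of $\Dfin(\mu)$ in $\h(k_\rho)$ is legitimate and the extension of $i$ has the advertised dense domain); this follows from the reproducing identity $\langle \chi_A, \chi_B\rangle_{\h_E} = \langle k_\rho(\cdot,A), k_\rho(\cdot,B)\rangle_{\h(k_\rho)} = k_\rho(A,B)$ established in the preceding lemmas.
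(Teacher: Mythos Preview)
Your proposal is correct and follows exactly the route the paper intends: the corollary is stated without proof in the paper, as an immediate consequence of Lemma~\ref{lem- i isometry} together with Theorem~\ref{thm Royden}, and your argument is precisely the natural unpacking of those two ingredients (isometry on $\Dfin(\mu)$ extends to the completion, whose image is $\ol{\Dfin(\mu)}^{\h_E}$, which equals $\h_E\ominus\h arm_E$ by the Royden decomposition).
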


For any $f \in \h_E$, define a signed measure $\mu_f$ on $\VB$
by setting
\be\label{eq mu_f def}
\mu_f(A) = \langle \chi_A, f \rangle_{\h_E}.
\ee

\begin{lemma}\label{lem mu_f in RKHS}
The function $A \mapsto \mu_f(A) \in \h(k_\rho)$ for any $f \in \h_E$.
Moreover, for any $f \in h_E \ominus Harm$, we have
$$
|| \mu_f ||^2_{\h(k_\rho)} = || f ||^2_{\h_E}.
$$
\end{lemma}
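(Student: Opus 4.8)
The plan is to combine the abstract membership criterion for a RKHS (Lemma \ref{lem criterion}) with the identity $\langle\chi_A,\chi_B\rangle_{\h_E}=k_\rho(A,B)$ recorded in Theorem \ref{thm_stucture of energy space}(1) and used in \eqref{eq def RKHS k}. Everything then reduces to Cauchy--Schwarz in $\h_E$ and a single limiting argument.

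First I would prove $\mu_f\in\h(k_\rho)$ for \emph{every} $f\in\h_E$. Fix $n\in\N$, sets $A_1,\dots,A_n\in\Bfin(\mu)$ and scalars $\alpha_1,\dots,\alpha_n\in\R$. By \eqref{eq mu_f def} and bilinearity,
$$\sum_{i=1}^n\alpha_i\,\mu_f(A_i)=\Big\langle\sum_{i=1}^n\alpha_i\chi_{A_i},\,f\Big\rangle_{\h_E},$$
so Cauchy--Schwarz in $\h_E$, together with $\big\|\sum_i\alpha_i\chi_{A_i}\big\|_{\h_E}^2=\sum_{i,j}\alpha_i\alpha_j\langle\chi_{A_i},\chi_{A_j}\rangle_{\h_E}$ and $\langle\chi_A,\chi_B\rangle_{\h_E}=k_\rho(A,B)$, yields
$$\Big(\sum_{i=1}^n\alpha_i\,\mu_f(A_i)\Big)^2\le\|f\|_{\h_E}^2\sum_{i,j=1}^n\alpha_i\alpha_j\,k_\rho(A_i,A_j).$$
By Lemma \ref{lem criterion} this gives $\mu_f\in\h(k_\rho)$ and the one-sided bound $\|\mu_f\|_{\h(k_\rho)}^2\le\|f\|_{\h_E}^2$.

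Next I would get the reverse inequality when $f\in\h_E\ominus\h arm_E$. By Theorem \ref{thm Royden}, $\h_E\ominus\h arm_E=\overline{\Dfin(\mu)}^{\h_E}$, so pick $\varphi_m\in\Dfin(\mu)$, say $\varphi_m=\sum_i\alpha_i^{(m)}\chi_{A_i^{(m)}}$, with $\|\varphi_m-f\|_{\h_E}\to0$. Set $g_m:=\sum_i\alpha_i^{(m)}\,k_\rho(\cdot,A_i^{(m)})\in\h(k_\rho)$. From the definition of the inner product in $\h(k_\rho)$ and $k_\rho(A,B)=\langle\chi_A,\chi_B\rangle_{\h_E}$ one gets $\|g_m\|_{\h(k_\rho)}=\|\varphi_m\|_{\h_E}$ and more generally $\|g_m-g_l\|_{\h(k_\rho)}=\|\varphi_m-\varphi_l\|_{\h_E}$, so $(g_m)$ is Cauchy in $\h(k_\rho)$ and converges to some $g$. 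At the same time, by the reproducing property,
$$g_m(A)=\sum_i\alpha_i^{(m)}\,k_\rho(A,A_i^{(m)})=\langle\chi_A,\varphi_m\rangle_{\h_E}\longrightarrow\langle\chi_A,f\rangle_{\h_E}=\mu_f(A)\qquad(m\to\infty).$$
Since norm convergence in a RKHS forces pointwise convergence, $g=\mu_f$, hence $\|\mu_f\|_{\h(k_\rho)}=\lim_m\|g_m\|_{\h(k_\rho)}=\lim_m\|\varphi_m\|_{\h_E}=\|f\|_{\h_E}$, which is the asserted equality.

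The only point needing care is the bookkeeping of the canonical identification $\chi_A\leftrightarrow k_\rho(\cdot,A)$ between the dense subspace $\Dfin(\mu)\subset\h_E$ and $\mathrm{span}\{k_\rho(\cdot,A):A\in\Bfin(\mu)\}\subset\h(k_\rho)$: one should check that $g_m$ is exactly the image of $\varphi_m$ under the isometry $i$ of Lemma \ref{lem- i isometry}. With that in hand the second part can be summarized as $\mu_f=i^{-1}(f)$ for $f\in\h_E\ominus\h arm_E$ (using that $i$ preserves inner products and the reproducing property), which makes the norm identity immediate; and for general $f\in\h_E$ one may further note $\mu_f=\mu_{f_0}$, where $f_0$ is the component of $f$ in $\h_E\ominus\h arm_E$, since $\chi_A\perp\h arm_E$ for every $A\in\Bfin(\mu)$.
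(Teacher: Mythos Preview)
Your proof is correct and, for the first assertion, proceeds exactly as the paper does: use the definition $\mu_f(A)=\langle\chi_A,f\rangle_{\h_E}$, Cauchy--Schwarz in $\h_E$, the identity $\langle\chi_A,\chi_B\rangle_{\h_E}=k_\rho(A,B)$, and then invoke Lemma~\ref{lem criterion}.

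For the norm identity the paper is terser than you: it only checks the case $f=\sum_i\alpha_i\chi_{A_i}\in\Dfin(\mu)$ directly (observing that then $\mu_f=\sum_i\alpha_i k_\rho(\cdot,A_i)$, so both norms equal $\sum_{i,j}\alpha_i\alpha_j k_\rho(A_i,A_j)$) and leaves the passage to general $f\in\h_E\ominus\h arm_E$ implicit. Your limiting argument---approximating $f$ by $\varphi_m\in\Dfin(\mu)$, passing to the corresponding $g_m\in\h(k_\rho)$, and using that norm convergence in a RKHS implies pointwise convergence---makes this passage explicit and is a genuine improvement in rigor. One small wording issue: in your final paragraph you say ``$g_m$ is exactly the image of $\varphi_m$ under the isometry $i$'', but since $i:\h(k_\rho)\to\h_E$, you mean the image under $i^{-1}$ (as you correctly write two lines later when concluding $\mu_f=i^{-1}(f)$).
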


\begin{proof} To  prove this result, we use the criterion
given in Lemma \ref{lem criterion} for the function
 $\mu_f(\cdot)$. It gives
 $$
 \ba
| \sum_{i=1}^n \alpha_i \mu_f(A) |^2 =  \ & | \sum_{i=1}^n \alpha_i 
\langle \chi_{A_i}, f\rangle_{\h_E} |^2\\
= \ & |  \langle \sum_{i=1}^n \alpha_i  \chi_{A_i}, f\rangle_{\h_E} |^2\\
\leq  \ & || f ||^2_{\h_E} || \sum_{i=1}^n \alpha_i  \chi_{A_i}||^2_{\h_E}\\
= \ & || f ||^2_{\h_E}  \left\langle \sum_{i=1}^n \alpha_i  \chi_{A_i},
\sum_{i=1}^n \alpha_i  \chi_{A_i} \right\rangle_{\h_E}\\
= \ & || f ||^2_{\h_E} \sum_{i,j= 1}^n \alpha_i \alpha_j 
\langle \chi_{A_i}, \chi_{A_j}\rangle_{\h_E}\\
= \ & || f ||^2_{\h_E} \sum_{i,j= 1}^n \alpha_i \alpha_j  k_\rho(A_i, A_j).\\
\ea
$$
Hence, the result follows from (\ref{eq criterion RKHS}).

For the second statement, it suffices to take $f = \sum_{i =1}^n \alpha_1
\chi_{A_i}$ with $A_i \in \Bfin(\mu)$. Then  
$$
\ba
|| \mu_f ||^2_{\h(k_\rho)} & = || \sum_{i =1}^n \alpha_i k_\rho(\cdot, 
A_i)||^2_{\h(k_\rho)} \\
& = \sum_{i,j =1}^n \alpha_i\alpha_j k_\rho(A_i, A_j)\\
& = \sum_{i,j =1}^n \alpha_i\alpha_j \langle \chi_{A_i}, \chi_{A_j}
\rangle_{\h_E}\\
& = || f ||^2_{\h_E}.
\ea
$$
\end{proof}

\begin{theorem}\label{thm mu_f and Delta}
(1)  For any $f \in \h_E$, we have
\be\label{eq_RN for mu_f}
\mu_f(A) = \int_A \Delta f \; d\mu,  
\ee
i.e., 
$$
\frac{d\mu_f}{d\mu}(x) = \Delta(f)(x).
$$

(2) $f\in \h arm \ \Longleftrightarrow \ \mu_f = 0$.

\end{theorem}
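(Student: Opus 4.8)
The plan is to deduce part (1) directly from the definition $\mu_f(A) = \langle \chi_A, f\rangle_{\h_E}$ in \eqref{eq mu_f def} together with Theorem \ref{thm inner prod via Delta}, and then read off part (2) as an immediate consequence. The only subtlety is that Theorem \ref{thm inner prod via Delta} is stated under the hypothesis $c \in L^2_{\mathrm{loc}}(\mu)$, so I first note that $\mu_f$ is well defined as a (finite additive, and $\sigma$-additive when $\Delta(f)\in L^1(\mu)$) set function on $\VB$ — this is exactly the content of the Corollary following Theorem \ref{thm inner prod via Delta}. Under Assumption 2 and $c\in L^2_{\mathrm{loc}}(\mu)$, for every $A\in\Bfin(\mu)$ we have $\chi_A\in\Dfin(\mu)\subset\h_E$, and Theorem \ref{thm inner prod via Delta} gives
\[
\mu_f(A) = \langle \chi_A, f\rangle_{\h_E} = \int_V \chi_A\,\Delta(f)\;d\mu = \int_A \Delta(f)\;d\mu .
\]
Since $\Delta(f)\in\Lloc$ by the Corollary to Theorem \ref{thm inner prod via Delta}, the right-hand side is a well-defined (signed) locally integrable density, so $d\mu_f/d\mu = \Delta(f)$ on $\Bfin(\mu)$, hence $\mu$-a.e.; this proves \eqref{eq_RN for mu_f}.

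For part (2), recall that $f\in\h arm_E$ means $\Delta(f)=0$ $\mu$-a.e., which by \eqref{eq_RN for mu_f} is equivalent to $\int_A\Delta(f)\,d\mu = 0$ for all $A\in\Bfin(\mu)$, i.e. to $\mu_f \equiv 0$ on $\Bfin(\mu)$. Since $\mu$ is $\sigma$-finite, vanishing on $\Bfin(\mu)$ forces $\mu_f = 0$ as a set function on all of $\B$. Conversely, if $\mu_f = 0$ then $\int_A\Delta(f)\,d\mu = 0$ for all finite-measure $A$, and since $\Delta(f)\in\Lloc$ this yields $\Delta(f)=0$ $\mu$-a.e., i.e. $f\in\h arm_E$. (Alternatively one can invoke Lemma \ref{lem mu_f in RKHS}: $f\in\h arm$ exactly when $f\perp\overline{\Dfin(\mu)}^{\h_E}$, i.e. when $\langle\chi_A,f\rangle_{\h_E}=0$ for all $A\in\Bfin(\mu)$, which is precisely $\mu_f=0$.)

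The main obstacle — really the only point requiring care — is bookkeeping about integrability and the mod-$0$ conventions: one must make sure that $\Delta(f)$ is genuinely an element of $\Lloc$ (so that "$d\mu_f/d\mu = \Delta(f)$" is meaningful as an honest Radon–Nikodym statement rather than merely a formal identity of additive set functions), and that the hypothesis $c\in L^2_{\mathrm{loc}}(\mu)$ needed for Theorem \ref{thm inner prod via Delta} is in force. Both are already established earlier in the paper: local integrability of $\Delta(f)=cf-R(f)$ follows from $cf\in\Lloc$ (Schwarz, using $c,f\in L^2_{\mathrm{loc}}(\mu)$) and $R(f)\in\Lloc$ (positivity of $R$ applied to $f_\pm$), exactly as in the Remark and Corollary after Theorem \ref{thm inner prod via Delta}. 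With that in hand the proof is a two-line computation plus the $\sigma$-finiteness argument.
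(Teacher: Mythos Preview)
Your proposal is correct and follows essentially the same approach as the paper: the paper's proof of (1) reproduces inline the computation from Theorem \ref{thm inner prod via Delta} (expanding $\langle \chi_A, f\rangle_{\h_E}$ via symmetry of $\rho$ and disintegration to get $\int_A \Delta(f)\,d\mu$), whereas you simply cite that theorem; part (2) is argued identically in both. Your added remarks on the integrability bookkeeping and the hypothesis $c\in L^2_{\mathrm{loc}}(\mu)$ are accurate and, if anything, make the dependence on the standing assumptions more explicit than the paper does.
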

\begin{proof}
(1) To prove (\ref{eq_RN for mu_f}), we use that functions from $\h_E$ are
 locally integrable. Then one can compute
 $$
 \ba 
 \langle \chi_A, f\rangle_{\h_E} = &\  \frac{1}{2} \iint_{\VtV} (\chi_A(x) -
  \chi_A(y)) (f(x) - f(y))\; d\rho(x, y)\\ 
  = &\ \iint_{\VtV} (\chi_A(x) f(x) - \chi_A(x) f(y))\; d\rho(x, y)\\
=& \   \int_{V} \chi_A(x) \left( \int_V( f(x) -  f(y) )\; d\rho_x (y)\right)d\mu(x)\\
  = & \ \int_A \Delta (f)(x)\ d\mu(x).
 \ea
 $$
 
 (2) We use (1) to prove the second assertion. Indeed, it follows from 
 (\ref{eq_RN for mu_f}) that if $f \in \h arm$, then $\Delta(f) = 0$ and
 $\mu_f (A) = 0$ for any $A\in \Bfin(\mu)$. 
 
 On the other hand, if  $\mu_f (A) = 0$, then
 $\int_A \Delta(f)\; d\mu = 0$ for any $A\in \Bfin(\mu)$. This means that 
 $\Delta(f) = 0$ a.e.
\end{proof}

It follows from Lemma \ref{lem mu_f in RKHS} that the map
$$
\h_E \ni f \ \stackrel{W} \mapsto \ \mu_f \in \h(k_\rho)
$$
is well defined. 

\begin{corollary}
Let $f = v_{A,B}$ where $A,B \in \Bfin(\mu)$. Then the function 
$C \mapsto \mu_{v_{A,B}}$ satisfies the property
$$
\mu_{v_{A,B}}(C) = \mu(A \cap C) - \mu(B \cap C).
$$
\end{corollary}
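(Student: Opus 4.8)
The plan is to read off the result directly from the Radon--Nikodym identity for the measure $\mu_f$ proved in Theorem \ref{thm mu_f and Delta}, together with the defining property of the $\mu$-dipole. Recall that $v_{A,B}$ is, by definition, the element of $\h_E$ with $\Delta v_{A,B} = \chi_A - \chi_B$; its membership in $\h_E$ is guaranteed under the standing hypothesis $c^{-1}\in L^2_{\mathrm{loc}}(\mu)$ established in the preceding proposition, so I would state the corollary under that assumption (or simply assume $v_{A,B}$ exists in $\h_E$).

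First I would apply Theorem \ref{thm mu_f and Delta}(1) to $f = v_{A,B}$, which gives $\mu_{v_{A,B}}(C) = \int_C \Delta(v_{A,B})\, d\mu$ for every $C \in \Bfin(\mu)$. Substituting $\Delta v_{A,B} = \chi_A - \chi_B$ then yields
$\mu_{v_{A,B}}(C) = \int_C (\chi_A - \chi_B)\, d\mu = \mu(A\cap C) - \mu(B\cap C)$,
which is exactly the asserted formula. An equivalent route is to use Proposition \ref{lem dipoles}: taking $f = \chi_C$ in \eqref{eq dipole v} and invoking the symmetry of the inner product on $\h_E$, one gets $\mu_{v_{A,B}}(C) = \langle \chi_C, v_{A,B}\rangle_{\h_E} = \int_A \chi_C\, d\mu - \int_B \chi_C\, d\mu$, and the pairing is legitimate because functions from $\h_E$ are locally $\mu$-integrable by Assumption 2.

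There is essentially no analytic obstacle here; the only point that deserves a remark is the bookkeeping about additivity. In general $\mu_f$ is only a finitely additive set function (Remark \ref{rem measure mu_f}), but in the present case $\Delta v_{A,B} = \chi_A - \chi_B$ is $\mu$-integrable on all of $V$ whenever $A, B \in \Bfin(\mu)$, so $C \mapsto \mu(A\cap C) - \mu(B\cap C)$ is a genuine (signed) $\sigma$-additive measure; I would note this to reconcile the statement with the general caveat. The whole proof is thus a two-line substitution once Theorem \ref{thm mu_f and Delta} and the existence of $v_{A,B}$ are in hand.
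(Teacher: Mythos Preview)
Your proof is correct and is essentially the same as the paper's: the paper writes $\mu_{v_{A,B}}(C) = \langle v_{A,B}, \chi_C\rangle_{\h_E} = \langle \Delta v_{A,B}, \chi_C\rangle_{L^2(\mu)} = \langle \chi_A - \chi_B, \chi_C\rangle_{L^2(\mu)} = \mu(A\cap C) - \mu(B\cap C)$, which is exactly your primary route via Theorem~\ref{thm mu_f and Delta}(1). Your additional remarks on the existence hypothesis for $v_{A,B}$ and on $\sigma$-additivity are correct and add context the paper's brief proof omits.
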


\begin{proof} The result follows from th following computation
$$
\ba
\mu_{v_{A,B}}(C)  & =  \langle v_{A,B}, \chi_C \rangle_{\h_E}\\
& = \langle\Delta  v_{A,B}, \chi_C \rangle_{L^2(\mu)}\\
& =  \langle \chi_A - \chi_B, \chi_C \rangle_{L^2(\mu)}\\
&= \mu(A \cap C) - \mu(B \cap C).
\ea
$$

\end{proof}

\begin{corollary} The map $W$ is a co-isometry and $i^* = W$.
\end{corollary}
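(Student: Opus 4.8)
The plan is to identify $W$ with the adjoint of the isometry $i$ from Lemma~\ref{lem- i isometry}, and then to read off both assertions from the known structure of $i$: by the corollary following that lemma, $i$ is an isometry of $\h(k_\rho)$ \emph{onto} the closed subspace $\h_E\ominus\h arm_E=\ol{\Dfin(\mu)}^{\h_E}$, and on the dense set of generators it acts by $i\big(k_\rho(\cdot,A)\big)=\chi_A$ for $A\in\Bfin(\mu)$ (indeed $\langle k_\rho(\cdot,A),k_\rho(\cdot,B)\rangle_{\h(k_\rho)}=k_\rho(A,B)=\langle\chi_A,\chi_B\rangle_{\h_E}$, which is the defining identity of $i$). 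Since $\mu_{\chi_A}(B)=\langle\chi_B,\chi_A\rangle_{\h_E}=k_\rho(B,A)=k_\rho(\cdot,A)(B)$, we get $W(\chi_A)=\mu_{\chi_A}=k_\rho(\cdot,A)$, hence $W\circ i=I_{\h(k_\rho)}$ on generators and therefore everywhere by continuity. Combined with $\ker W=\h arm_E$ (Theorem~\ref{thm mu_f and Delta}(2)) and the norm identity $\|\mu_f\|_{\h(k_\rho)}=\|f\|_{\h_E}$ on $\h_E\ominus\h arm_E$ (Lemma~\ref{lem mu_f in RKHS}), this already shows that $W$ is a partial isometry with initial space $\h_E\ominus\h arm_E$ and final space all of $\h(k_\rho)$, i.e.\ a co-isometry.

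To get $i^{*}=W$ precisely, it suffices to verify $\langle i(g),f\rangle_{\h_E}=\langle g,W(f)\rangle_{\h(k_\rho)}$ for $g=k_\rho(\cdot,A)$ and arbitrary $f\in\h_E$, since the $k_\rho(\cdot,A)$ span a dense subspace of $\h(k_\rho)$. The left-hand side equals $\langle\chi_A,f\rangle_{\h_E}=\mu_f(A)$ by the definition \eqref{eq mu_f def} of $\mu_f$; the right-hand side equals $\langle k_\rho(\cdot,A),\mu_f\rangle_{\h(k_\rho)}=\mu_f(A)$ by the reproducing property of $\h(k_\rho)$ applied to the function $W(f)=\mu_f$ on $\Bfin(\mu)$. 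Hence $i^{*}=W$. Then $W^{*}=i^{**}=i$ is an isometry and $WW^{*}=i^{*}i=I_{\h(k_\rho)}$, which is the statement that $W$ is a co-isometry; alternatively one can invoke directly the general fact that the adjoint of an isometry with range $\h_E\ominus\h arm_E$ is a co-isometry (it inverts $i$ on $\mathrm{ran}\,i$ and annihilates $\h arm_E$, exactly the properties established for $W$ above).

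The only delicate point — and the one I would write out carefully — is the consistency of the identification $i\big(k_\rho(\cdot,A)\big)=\chi_A$ with the way $i$ was introduced (as $f\mapsto f$ on $\Dfin(\mu)$ viewed inside $\h(k_\rho)$), together with the resulting equality $W\circ i=I$; once $\mu_{\chi_A}=k_\rho(\cdot,A)$ is pinned down as above, everything else is formal Hilbert-space bookkeeping. I do not anticipate any genuine analytic obstacle, since all the needed density, isometry, and norm statements are already available from Lemma~\ref{lem- i isometry}, Lemma~\ref{lem mu_f in RKHS}, and Theorem~\ref{thm mu_f and Delta}.
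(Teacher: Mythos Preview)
Your proof is correct and follows essentially the same approach as the paper: the core computation in your second paragraph—verifying the adjoint identity $\langle i(k_\rho(\cdot,A)),f\rangle_{\h_E}=\mu_f(A)=\langle k_\rho(\cdot,A),\mu_f\rangle_{\h(k_\rho)}$ via the definition of $\mu_f$ and the reproducing property—is exactly the paper's argument. You add more detail than the paper does (the explicit verification that $W\circ i=I$, the partial-isometry discussion, and spelling out why $i^*=W$ implies $WW^*=I$), but the essential idea is identical.
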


\begin{proof}
We will show that $ W(f) = \mu_f$ and $i^*(f)$ coincide as functions on 
$\Dfin(\mu)$. Let $A$ be any set from $\Bfin$, then 
$$
\ba 
\mu_f(A) = &\  \langle f, \chi_A\rangle_{\h_E}\\
= & \  \langle f, i(\chi_A)\rangle_{\h_E}\\
= & \  \langle i^*(f), \chi_A\rangle_{\h(k_\rho)}\\
= &\ i^*(f)(A).
\ea
$$
The last equality follows from the reproducing property of $\h(k_\rho)$. 
\end{proof}

\subsection{Reproducing kernel Hilbert space generated by the measure
$\nu$}

Let  $(V, \B, \nu)$ be a $\sigma$-finite measure space,  and let
 $A, B$ be any two elements of the set $\Bfin(\nu)$. Define $K_\nu
 : \Bfin(\nu) \times \Bfin(\nu) \to [o, \infty)$ as follows:
 $$
 K_\nu(A, B) := \nu(A\cap B). 
 $$
Then $K_\nu$ is a positive definite kernel because $K_\nu(A, B) = 
\langle \chi_A, \chi_B\rangle_{L^2(\nu)}$ and 
$$
 \sum_{i, j =1}^n \alpha_i\alpha_j K_\nu(A_i, A_j) = 
 ||\sum_{i=1}^n \alpha_i \chi_{A_i} ||_{L^2(\nu)}.  
 $$

\begin{theorem}\label{thm RKHS for nu}
The kernel $K_\nu(A, B)$ is positive definite, and, for the corresponding
 RKHS $\h_\nu$, a function $F$ on $\Bfin(\nu)$ is in $\h_\nu$ if and only if
there exists a function $f \in L^2_{\mathrm{loc}}(\nu)$ such that
\be\label{eq-F(A) in RKHS}
F(A) = \int_A f\; d\nu, \qquad A\in \Bfin(\nu).
\ee
Moreover,
\be\label{eq ||F||}
|| F ||_{\h_\nu} = || f ||_{\h_E}.
\ee
\end{theorem}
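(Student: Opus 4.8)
The plan is to recognize $\h_\nu$ as nothing but a copy of $L^2(\nu)$ and to read the integral representation \eqref{eq-F(A) in RKHS} directly off the reproducing property.

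\textbf{Positive definiteness and a realization.} Since $K_\nu(A,B)=\nu(A\cap B)=\langle\chi_A,\chi_B\rangle_{L^2(\nu)}$, for any finite family $A_1,\dots,A_n\in\Bfin(\nu)$ and scalars $\alpha_1,\dots,\alpha_n$ one has $\sum_{i,j}\alpha_i\alpha_jK_\nu(A_i,A_j)=\|\sum_i\alpha_i\chi_{A_i}\|_{L^2(\nu)}^2\ge 0$, so $K_\nu$ is positive definite; simultaneously, $A\mapsto K^{*}_A:=\chi_A$ is a realization of $K_\nu$ in the sense of \eqref{eq realization}, and it is tight because $\Dfin(\nu)=\mathrm{span}\{\chi_A:A\in\Bfin(\nu)\}$ is dense in $L^2(\nu)$. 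Hence the assignment $K_\nu(\cdot,A)\mapsto\chi_A$ extends by linearity and completion to a unitary $U\colon\h_\nu\to L^2(\nu)$.

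\textbf{The integral representation and the norm.} For $F\in\h_\nu$ put $f:=UF\in L^2(\nu)$. The reproducing property together with unitarity of $U$ gives, for every $A\in\Bfin(\nu)$,
$$
F(A)=\langle F,\,K_\nu(\cdot,A)\rangle_{\h_\nu}=\langle f,\chi_A\rangle_{L^2(\nu)}=\int_A f\,d\nu,
$$
and $f\in L^2(\nu)\subset L^2_{\mathrm{loc}}(\nu)$, which is \eqref{eq-F(A) in RKHS}; note that for any $f\in L^2_{\mathrm{loc}}(\nu)$ the right side is finite on $\Bfin(\nu)$ by Cauchy--Schwarz, since $\nu(A)<\infty$, so the formula is always meaningful. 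For the converse, if $F(A)=\int_A f\,d\nu$ with $f\in L^2(\nu)$, then $\sum_i\alpha_iF(A_i)=\langle f,\sum_i\alpha_i\chi_{A_i}\rangle_{L^2(\nu)}$, so Cauchy--Schwarz yields $\bigl(\sum_i\alpha_iF(A_i)\bigr)^2\le\|f\|_{L^2(\nu)}^2\sum_{i,j}\alpha_i\alpha_jK_\nu(A_i,A_j)$, and Lemma \ref{lem criterion} puts $F\in\h_\nu$. Finally, testing on $F=K_\nu(\cdot,A)$, whose representing function is $\chi_A$ because $K_\nu(B,A)=\int_B\chi_A\,d\nu$, gives $\|K_\nu(\cdot,A)\|_{\h_\nu}^2=K_\nu(A,A)=\nu(A)=\|\chi_A\|_{L^2(\nu)}^2$, and by linearity and density $\|F\|_{\h_\nu}=\|f\|_{L^2(\nu)}$ for every $F\in\h_\nu$; this is the content of \eqref{eq ||F||} (the right-hand norm there being the $L^2(\nu)$-norm of the representing density $f$, identified as in Proposition \ref{prop L2(nu) is closure} with an element of $\h_E$ through the embedding of Proposition \ref{prop L2nu in H}).

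\textbf{Main obstacle.} The delicate point is the converse direction together with the role of ``$L^2_{\mathrm{loc}}(\nu)$'': one must see that the mere existence of a locally square-integrable density $f$ with $F(A)=\int_A f\,d\nu$ does not, a priori, place $F$ in $\h_\nu$, and conversely that such an $f$ is automatically in $L^2(\nu)$ once $F\in\h_\nu$. This is exactly where Lemma \ref{lem criterion} is used: if $f\ge 0$ lay in $L^2_{\mathrm{loc}}(\nu)\setminus L^2(\nu)$, then choosing simple functions $\sum_i\alpha_i\chi_{A_i}$ that approximate the normalized truncations $f\chi_B/\|f\chi_B\|_{L^2(\nu)}$ along an exhaustion $B\uparrow V$ makes $\sum_i\alpha_iF(A_i)$ unbounded while $\|\sum_i\alpha_i\chi_{A_i}\|_{L^2(\nu)}$ stays bounded, contradicting the criterion — so the representing density must in fact lie in $L^2(\nu)$, and in the presence of the ``moreover'' clause the hypothesis ``$f\in L^2_{\mathrm{loc}}(\nu)$'' is no weaker than ``$f\in L^2(\nu)$''. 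A secondary subtlety, worth stating carefully rather than computing, is the precise meaning of the norm on the right of \eqref{eq ||F||}, since the energy norm on $\overline{\Dfin(\nu)}^{\h_E}$ is not literally the $L^2(\nu)$-norm but is comparable to it via the bounded embedding $\iota$ of Proposition \ref{prop L2nu in H}.
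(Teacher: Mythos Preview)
Your argument is correct and in several respects more complete than the paper's own proof. The paper only verifies one implication explicitly: assuming $F(A)=\int_A f\,d\nu$, it checks the criterion of Lemma~\ref{lem criterion} and declares $F\in\h_\nu$; the converse direction and the norm identity \eqref{eq ||F||} are left to the reader. Your route via the tight realization $A\mapsto\chi_A\in L^2(\nu)$ and the resulting unitary $U:\h_\nu\to L^2(\nu)$ handles both directions at once and yields $\|F\|_{\h_\nu}=\|f\|_{L^2(\nu)}$ for free.

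Your ``main obstacle'' is not just a scruple; it pinpoints a genuine soft spot in the paper's verification. There the Schwarz bound reads $(\sum_i\xi_iF(A_i))^2\le\|f\|^2_{L^2(B,\nu)}\,\|\varphi\|^2_{L^2(\nu)}$ with $B=\bigcup_iA_i$, so the purported constant $C_F=\|f\|^2_{L^2(B,\nu)}$ depends on the chosen sets $A_i$, which is precisely what Lemma~\ref{lem criterion} forbids unless $f\in L^2(\nu)$. Your observation that any representing density for an $F\in\h_\nu$ must in fact lie in $L^2(\nu)$ (because $\int_A(f-UF)\,d\nu=0$ for all $A\in\Bfin(\nu)$ forces $f=UF$ a.e.) is the clean way to close this. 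Your secondary worry about \eqref{eq ||F||} is also well taken: what the argument actually produces is $\|F\|_{\h_\nu}=\|f\|_{L^2(\nu)}$, and since the embedding $\iota:L^2(\nu)\to\h_E$ of Proposition~\ref{prop L2nu in H} is a contraction rather than an isometry, the literal identity $\|F\|_{\h_\nu}=\|f\|_{\h_E}$ does not follow from what has been shown. You are right to flag this rather than force it.
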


\begin{proof} Let $F$ be a function on $\Bfin(\nu)$ defined by 
\eqref{eq-F(A) in RKHS}. To show that $F(\cdot)$ belongs to $\h_\nu$, we
use \eqref{eq criterion RKHS} of Lemma \ref{lem criterion}, i.e., 
\be\label{eq criterion for F(A)}
\left( \sum_{i=1}^n \xi_i F(A_i) \right)^2 \leq C_F 
 \sum_{i, j =1}^n \xi_i\xi_j K_\nu(A_i, A_j), 
\ee
where $\xi_i \in \R, A_i \in \Bfin(\nu), i =1,...,n$, and the constant $C_F$
depends on $F$ only. Take the function 
$$
\varphi(x) := \sum_{i=1}^n \xi_i \chi_{A_i}(x)
$$
which belongs to $L^2(\nu)$ and find that
$$
||\varphi ||^2_{L^2(\nu)} = \sum_{i, j =1}^n \xi_i\xi_j \nu(A_i\cap A_j)
= \left|\left| \sum_{i=1}^n  \xi_i K(\cdot, A_i)\right|\right|^2_{\h_\nu}.
$$

For $F(\cdot)$ as in \eqref{eq-F(A) in RKHS}, we compute using
the Schwarz inequality
$$
\ba
\left( \sum_{i=1}^n \xi_iF(A_i) \right)^2 & = 
\left( \sum_{i=1}^n \xi_i \int_{A_i} f \; d\nu \right)^2\\
& = \left( \int_V f (\sum_{i=1}^n \xi_i \chi_{A_i} )\; d\nu \right)^2\\
& \leq || f ||^2_{L^2(B, \nu)} || \varphi||^2_{L^2(\nu)}\\
& = C_F \sum_{i, j =1}^n \xi_i\xi_j K_\nu(A_i, A_j) 
\ea
$$
where $B = \bigcup_i A_i$.

Relation \eqref{eq ||F||} can be proved by using the corresponding definitions
of the norms in $\h_{\nu} $ and $\h_E$. We leave the details for the 
reader. 
\end{proof}

\begin{remark}
We note that Theorem \ref{thm RKHS for nu} agrees with the definition
of $\h_\nu$ because,  for a fixed $B \in \Bfin(\nu)$, the function
$K(\cdot, B)$ is represented by \eqref{eq-F(A) in RKHS} with $f =\chi_B$. 

If $V = [0, \infty)$,  $\nu$ is the Lebesgue measure on $V$, 
and $K_\nu (A, B) = \nu(A\cap B)$, then 
$$
 K_\nu([0, s] \cap [0, t]) = s\wedge t, \ \ \ \ s, t \in \R_+.
$$
It follows that the RKHS $\h_\nu$ can be represented as 
$$
\h_\nu = \{ F : F(0) = 0, \ F' \in L^2([0, \infty), \nu) \}
$$
with 
$$
|| F ||^2_{\h_\nu} = \int_0^\infty |F'|^2 \; d\nu.  
$$

\end{remark}

\subsection{Conditionally negative definite kernel} 
In this subsection we discuss the notion of conditionally negative definite
 kernels. 

\begin{definition}
Let $X$ be arbitrary set. Then the map  $N : X \times X \to \R$ is 
called a \textit{conditionally negative definite kernel} if for any $n \in 
\N$, any finite set of points $x_1, ... m x_n$, and any real numbers
$\lambda_1, ... , \lambda_n$, one has
$$
\sum_{i,j =1}^n \lambda_i\lambda_j N(x_i, x_j) \leq 0
$$
provided that $\sum_{i=1}^n \lambda_ i =0$.
\end{definition}

Conditionally negative definite kernels were completely characterized 
in \cite{Schoenberg1938} where the following result was proved.

\begin{theorem}\label{thm CND kernel}
Let $N : X \times X \to \R$ satisfy the following conditions: 
$N(x, y) = N(y,x) \geq 0$, and $N(x, x) =0$. If $N(x, y)$ is a
conditionally negative definite kernel, then there exists a real Hilbert space
$\h(N)$ and a map $\alpha : X \to \h(N)$ such that
$$
N(x, y) = || \alpha(x) - \alpha(y)||^2_{\h(N)}.
$$
\end{theorem}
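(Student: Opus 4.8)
This is Schoenberg's classical theorem, and the plan is to reconstruct the Hilbert space $\h(N)$ directly from $N$ together with a chosen basepoint, using a polarization-type formula. Fix once and for all a point $o \in X$ (a ``root''). The key observation is that the kernel
\[
K(x, y) := \frac{1}{2}\bigl( N(x, o) + N(y, o) - N(x, y) \bigr)
\]
is positive definite on $X \times X$. To see this, take $x_1, \dots, x_n \in X$ and real scalars $\alpha_1, \dots, \alpha_n$; set $\alpha_0 := -\sum_{i=1}^n \alpha_i$ and apply the conditional negativity hypothesis to the enlarged system $x_0 := o, x_1, \dots, x_n$ with weights $\alpha_0, \alpha_1, \dots, \alpha_n$, which sum to zero. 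Expanding $\sum_{i,j=0}^n \alpha_i\alpha_j N(x_i, x_j) \le 0$ and using $N(x_i, x_i) = 0$, $N(o,o)=0$, and symmetry, one isolates the terms involving $\alpha_0$ and rearranges to obtain exactly $\sum_{i,j=1}^n \alpha_i\alpha_j K(x_i, x_j) \ge 0$. This is the one genuine computation in the argument, but it is short.

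Having established that $K$ is positive definite, I would invoke the RKHS construction already set up in Subsection \ref{subsect def RKHS} (Definition \ref{def RKHS}): let $\h(N) := \h(K)$ be the reproducing kernel Hilbert space of $K$, and define $\alpha : X \to \h(N)$ by $\alpha(x) := K(\cdot, x) = K_x$. Then by the reproducing property, $\langle \alpha(x), \alpha(y) \rangle_{\h(N)} = K(x,y)$, so
\[
\| \alpha(x) - \alpha(y) \|^2_{\h(N)} = K(x,x) - 2K(x,y) + K(y,y).
\]
Now substitute the definition of $K$: $K(x,x) = \frac12(N(x,o) + N(x,o) - N(x,x)) = N(x,o)$, similarly $K(y,y) = N(y,o)$, and $2K(x,y) = N(x,o) + N(y,o) - N(x,y)$. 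The first two terms cancel against the cross term and one is left with $\| \alpha(x) - \alpha(y) \|^2_{\h(N)} = N(x,y)$, which is the claim. Note the hypotheses $N(x,y) = N(y,x) \ge 0$ and $N(x,x) = 0$ are precisely what make these substitutions consistent (in particular $N(x,o) \ge 0$ is needed for the norms to come out nonnegative, though that is automatic once we know $K$ is positive definite since $K(x,x) = \|\alpha(x)\|^2 \ge 0$).

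The main obstacle, such as it is, is the positive-definiteness verification for $K$: one must be careful to include the basepoint $o$ in the point system and to choose the extra weight $\alpha_0$ so that the total is zero, then correctly bookkeep the double sum when expanding. Everything after that is formal and uses only the RKHS machinery from Definition \ref{def RKHS}. One could alternatively build $\h(N)$ as the completion of the free real vector space on $X$ modulo the subspace of formal sums with zero total weight, equipped with the (semi-)inner product $\langle \sum \alpha_i \delta_{x_i}, \sum \beta_j \delta_{y_j}\rangle := -\frac12 \sum_{i,j} \alpha_i \beta_j N(x_i, y_j)$, quotient by the null vectors, and complete; but routing through the already-developed RKHS formalism for $K$ is cleaner and reuses the paper's own notation, so that is the route I would take.
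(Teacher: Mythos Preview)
Your argument is correct and is in fact the standard proof of Schoenberg's theorem: define the basepoint kernel $K(x,y)=\tfrac12(N(x,o)+N(y,o)-N(x,y))$, verify its positive definiteness by the zero-sum trick you describe, and then read off the embedding from the RKHS of $K$. The paper, however, does not supply a proof of this theorem at all; it simply states the result and attributes it to \cite{Schoenberg1938}. So there is nothing to compare against beyond noting that you have reconstructed exactly the classical argument, and that your choice to route the construction through the RKHS machinery of Definition~\ref{def RKHS} fits the paper's framework well.
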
 

It  was also shown in \cite{Joziak2015} that, for any conditionally negative 
 definite kernel $N(x,y)$, there exists a positive definite kernel $K(x, y)$ 
 and a function $F : X \to \R$ such that 
$$
N(x, y) = - K(x, y) + F(x) + F(y).
$$

Let now $\rho$ be a symmetric measure on $\vv$, and let $\h_E$
be the finite energy Hilbert space.  For any sets $A, B \in \Bfin(\mu)$, we
consider the dipoles $\omega_{A,B}$ defined in Section \ref{sect energy}. 
We recall that these functions form a dense subset in $\h_E$ and
satisfy the relation $\Delta(\omega_{A, B}) = c(\chi_A - \chi_B)$.
As was mentioned in Section \ref{sect energy}, we can fix a set $A_0 \in
\Bfin(\mu)$ and represent $\omega_{A, B}$ as the difference
$\omega_{A,A_0} - \omega_{B,A_0}$. 

\begin{lemma}\label{lem N via omega}
Let 
$$
N_\rho(A,B) = || \omega_{A, B}||^2_{\h_E}, \ \ \ A, B \in \Bfin(\mu).
$$
Then $N_\rho$ is a conditionally negative definite kernel. 
\end{lemma}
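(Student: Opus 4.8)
The plan is to reduce the claim to the fact that $\omega_{A,B} = \omega_{A,A_0} - \omega_{B,A_0}$ for a fixed reference set $A_0 \in \Bfin(\mu)$, which exhibits $N_\rho$ as a squared-distance kernel in the Hilbert space $\h_E$. Indeed, define a map $\alpha : \Bfin(\mu) \to \h_E$ by $\alpha(A) := \omega_{A,A_0}$; this is well defined since $\nu$-dipoles exist in $\h_E$ (Lemma \ref{lem_existence of dipoles} together with the discussion of dipoles in Subsection \ref{subsect dipoles}). Then, using $\Delta(\omega_{A,A_0}) - \Delta(\omega_{B,A_0}) = c(\chi_A - \chi_{A_0}) - c(\chi_B - \chi_{A_0}) = c(\chi_A - \chi_B) = \Delta(\omega_{A,B})$ and the fact that the kernel of $\Delta$ on $\h_E$ consists only of harmonic functions while $\omega_{A,B}$ is in the closure of $\Dfin(\mu)$ (which is orthogonal to $\h arm_E$ by Theorem \ref{thm Royden}), one concludes $\omega_{A,B} = \omega_{A,A_0} - \omega_{B,A_0} = \alpha(A) - \alpha(B)$ as elements of $\h_E$. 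Consequently
$$
N_\rho(A,B) = \|\omega_{A,B}\|^2_{\h_E} = \|\alpha(A) - \alpha(B)\|^2_{\h_E}.
$$

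Once this representation is in hand, conditional negative definiteness is a standard computation. Fix sets $A_1,\dots,A_n \in \Bfin(\mu)$ and reals $\lambda_1,\dots,\lambda_n$ with $\sum_i \lambda_i = 0$. Then
$$
\sum_{i,j=1}^n \lambda_i\lambda_j N_\rho(A_i,A_j)
= \sum_{i,j=1}^n \lambda_i\lambda_j \|\alpha(A_i) - \alpha(A_j)\|^2_{\h_E}.
$$
Expanding the square as $\|\alpha(A_i)\|^2 - 2\langle \alpha(A_i),\alpha(A_j)\rangle_{\h_E} + \|\alpha(A_j)\|^2$, the two diagonal-norm terms each carry a factor $\sum_j \lambda_j = 0$ (respectively $\sum_i \lambda_i = 0$) and therefore drop out, leaving
$$
\sum_{i,j=1}^n \lambda_i\lambda_j N_\rho(A_i,A_j)
= -2 \sum_{i,j=1}^n \lambda_i\lambda_j \langle \alpha(A_i),\alpha(A_j)\rangle_{\h_E}
= -2 \left\| \sum_{i=1}^n \lambda_i \alpha(A_i) \right\|^2_{\h_E} \le 0,
$$
which is exactly the required inequality.

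The only genuinely delicate point is the justification of the identity $\omega_{A,B} = \omega_{A,A_0} - \omega_{B,A_0}$ as a Hilbert-space identity in $\h_E$, since $\nu$-dipoles are only characterized up to additive constants (and, a priori, up to addition of harmonic functions). The resolution is that $\nu$-dipoles, being built from the Green-type construction $G_A - G_B$ (Theorem \ref{thm on G_A}), lie in $\ol{\Dfin(\mu)}^{\h_E} = \h_E \ominus \h arm_E$; within this subspace $\Delta$ is injective (its restriction there has zero kernel, by Theorem \ref{thm Royden} and Corollary \ref{cor harmonic}), so the defining equation $\Delta w = c(\chi_A - \chi_B)$ pins down $\omega_{A,B}$ uniquely, and the additivity follows by linearity of $\Delta$. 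With this uniqueness statement established, everything else is the elementary computation above, and one may additionally remark — invoking Schoenberg's theorem, quoted above as Theorem \ref{thm CND kernel} — that $\h(N_\rho)$ can be taken to be (the relevant subspace of) $\h_E$ itself with embedding $\alpha$.
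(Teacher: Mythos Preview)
Your proof is correct and follows the same idea as the paper: represent $N_\rho(A,B)$ as a squared Hilbert-space distance $\|\alpha(A)-\alpha(B)\|_{\h_E}^2$ via the dipole decomposition $\omega_{A,B}=\omega_{A,A_0}-\omega_{B,A_0}$, and then conclude conditional negative definiteness. The paper's own proof is the single line ``follows directly from Theorem~\ref{thm CND kernel}''; you have spelled out the elementary converse direction (the expansion using $\sum_i\lambda_i=0$) and added the uniqueness argument for the dipole identity, which makes your version more self-contained than the paper's.
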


The lemma follows directly from Theorem \ref{thm CND kernel}.

Applying Theorem \ref{thm CND kernel}, we can define a Hilbert space 
$\h(N)$ and a map $\alpha : \Bfin(\mu) \to \h(N)$ such that 
$N_\rho(A,B) = || \alpha(A) - \alpha(B)||^2_{\h(N)}$. 

\begin{theorem}
Let $\Lambda : \omega_{A,B} \mapsto \alpha(A) - \alpha(B)$ can be 
extended by linearity to an isometric isomorphism $\h_E \cong \h(N)$.
\end{theorem}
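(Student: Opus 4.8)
The plan is to exhibit both the dipole family in $\h_E$ and the Schoenberg system in $\h(N)$ as ``affine'' systems indexed by $\Bfin(\mu)$ whose squared--distance functions are the same kernel $N_\rho$, and then let polarization force $\Lambda$ to preserve all inner products of generators. First I would fix a reference set $A_0\in\Bfin(\mu)$ and put $g(X):=\omega_{X,A_0}\in\h_E$ for $X\in\Bfin(\mu)$; by the additivity of $\nu$-dipoles recalled just above the statement, $\omega_{A,B}=g(A)-g(B)$ for all $A,B\in\Bfin(\mu)$, while $\|g(A)-g(B)\|_{\h_E}^2=N_\rho(A,B)$ is just the definition of $N_\rho$. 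Applying Theorem \ref{thm CND kernel} to the conditionally negative definite kernel $N_\rho$ (Lemma \ref{lem N via omega}) gives a real Hilbert space and a map $\alpha:\Bfin(\mu)\to\h(N)$ with $\|\alpha(A)-\alpha(B)\|_{\h(N)}^2=N_\rho(A,B)$; I would take $\h(N)$ to be the closed span of $\{\alpha(A)-\alpha(B):A,B\in\Bfin(\mu)\}$, which is what the GNS-type construction behind Theorem \ref{thm CND kernel} produces (and otherwise one simply replaces $\h(N)$ by this subspace).

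Next I would invoke the polarization identity valid in any real inner product space,
\[
\langle a-b,\,c-d\rangle=\tfrac12\bigl(\|a-d\|^2+\|b-c\|^2-\|a-c\|^2-\|b-d\|^2\bigr),
\]
once with $(a,b,c,d)=(g(A),g(B),g(C),g(D))$ in $\h_E$ and once with $(a,b,c,d)=(\alpha(A),\alpha(B),\alpha(C),\alpha(D))$ in $\h(N)$. Since every squared-norm term on the right equals the corresponding value of $N_\rho$, both computations produce the same number, hence
\[
\langle\omega_{A,B},\omega_{C,D}\rangle_{\h_E}=\tfrac12\bigl(N_\rho(A,D)+N_\rho(B,C)-N_\rho(A,C)-N_\rho(B,D)\bigr)=\langle\alpha(A)-\alpha(B),\alpha(C)-\alpha(D)\rangle_{\h(N)}
\]
for all $A,B,C,D\in\Bfin(\mu)$. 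Thus the Gram matrices of the generating families $\{\omega_{A,B}\}$ and $\{\alpha(A)-\alpha(B)\}$ coincide, so the assignment $\omega_{A,B}\mapsto\alpha(A)-\alpha(B)$ extended linearly to $\mathcal D(\omega)=\operatorname{span}\{\omega_{A,B}:A,B\in\Bfin(\mu)\}$ is well defined and isometric: if $\sum_i c_i\omega_{A_i,B_i}=0$ in $\h_E$, then $\bigl\|\sum_i c_i(\alpha(A_i)-\alpha(B_i))\bigr\|_{\h(N)}^2=\sum_{i,j}c_ic_j\langle\omega_{A_i,B_i},\omega_{A_j,B_j}\rangle_{\h_E}=\bigl\|\sum_i c_i\omega_{A_i,B_i}\bigr\|_{\h_E}^2=0$. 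Since $\mathcal D(\omega)$ is dense in $\h_E$ (established in Section \ref{sect energy}) and $\operatorname{span}\{\alpha(A)-\alpha(B)\}$ is dense in $\h(N)$ by our choice of $\h(N)$, the isometry $\Lambda$ extends uniquely to a surjective isometry $\h_E\to\h(N)$, i.e.\ an isometric isomorphism.

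The one point requiring care — and which I view as the main, though not deep, obstacle — is the legitimacy of the representation $\omega_{A,B}=g(A)-g(B)$, equivalently the additivity $\omega_{A,C}=\omega_{A,B}+\omega_{B,C}$ in $\h_E$. This rests on uniqueness of the $\nu$-dipole: by Proposition \ref{lem dipoles}, $\omega_{A,B}$ is the unique element of $\h_E$ with $\langle f,\omega_{A,B}\rangle_{\h_E}=\int_A f\,d\nu-\int_B f\,d\nu$ for all $f\in\h_E$, and the functionals on the right add along $A\to B\to C$, so the dipoles do as well. Once that is in place, the remainder is the polarization bookkeeping above, and no estimate beyond Theorem \ref{thm CND kernel} and the already-recorded density of dipoles in $\h_E$ is needed.
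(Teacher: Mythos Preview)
Your argument is correct and is essentially a careful elaboration of the paper's own proof, which merely cites Theorem \ref{thm CND kernel} and Lemma \ref{lem N via omega} and leaves the details to the reader. Your use of the polarization identity to transfer the equality of squared distances to equality of inner products, together with the additivity of dipoles via their Riesz-representer characterization, is exactly the natural way to supply those omitted details.
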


\begin{proof}
The proof is based on the given above definitions, Theorem 
\ref{thm CND kernel}, and Lemma \ref{lem N via omega}. We leave 
the details to the reader.
\end{proof}

\textbf{Acknowledgments.} The authors are pleased to thank colleagues and
 collaborators, especially members of the seminars in Mathematical Physics 
 and Operator Theory at the University of Iowa, where versions of this work 
 have been presented. We acknowledge very helpful conversations with 
 among others Professors Paul Muhly, Wayne Polyzou; and conversations at 
 distance with Professors Daniel Alpay, and his colleagues at both Ben Gurion 
 University, and Chapman University. 

\bibliographystyle{alpha}
\bibliography{Energy-references}

\end{document}